\numberwithin{equation}{section}
\crefname{alphatheorem}{Theorem}{Theorems}
\newtheorem{theorem}{Theorem}[section]
\newtheorem{lemma}[theorem]{Lemma}
\newtheorem{proposition}[theorem]{Proposition}
\newtheorem{definition}[theorem]{Definition}
\newtheorem{conjecture}[theorem]{Conjecture}
\newtheorem{corollary}[theorem]{Corollary}
\newtheorem{example}[theorem]{Example}
\theoremstyle{remark}
\newtheorem{rem}[theorem]{Remark}
\newenvironment{customthm}[1]
{\innercustomthm}{\endinnercustomthm}
\newcommand{\clr}{rgb:black,2;blue,2;red,0}
\tikzset{anchorbase/.style={baseline={([yshift=-0.5ex]current bounding box.center)}}}
\tikzset{wipe/.style={white,line width=4pt}}
\DeclareFontFamily{OT1}{pzc}{}
\DeclareFontShape{OT1}{pzc}{m}{it}{ <-> s*[1.2] pzcmi7t }{}
\DeclareMathAlphabet{\mathpzc}{OT1}{pzc}{m}{it}
\newcommand{\AH}{\mathpzc{AH}}
\newcommand{\HH}{\mathpzc{H}}
\newcommand{\W}{\mathpzc{Web}}
\newcommand{\AW}{\mathpzc{Web}^\bullet}  
\newcommand{\AWC}{\W^{\bullet\prime}}
\newcommand{\Sch}{\mathpzc{Schur}}
\newcommand{\ASch}{\mathpzc{Schur}^\bullet} 
\newcommand{\WSch}{\mathpzc{wSchur}}
\newcommand{\Mat}{\text{Mat}}
\newcommand{\PMat}{\text{ParMat}}
\newcommand{\Par}{\text{Par}}
\newcommand{\x}{\textsc{x}}
\newcommand{\bfc}{\mathbf{c}}
\newcommand{\bfu}{\mathbf{u}}
\newcommand\C{\mathbb{C}}
\newcommand\Z{\mathbb{Z}}
\newcommand\N{\mathbb{N}}
\newcommand\kk{\Bbbk}
\newcommand\la{\lambda}
\newcommand{\Hom}{{\rm Hom}}
\newcommand{\End}{{\rm End}}
\newcommand{\rot}{\rotatebox[origin=c]{180}}
\newcommand{\arxiv}[1]{\href{http://arxiv.org/abs/#1}{\tt arXiv:\nolinkurl{#1}}}
\def\t{\mathfrak t}
\def\Sc{\mathcal S}
\newcommand{\str}{\begin{tikzpicture}[baseline = 10pt, scale=0.5, color=\clr]
            \draw[-,thick] (0,0.5)to[out=up,in=down](0,1.7);
            \draw (0,0.2) node{$\scriptstyle 1$};
\end{tikzpicture} 
}
\newcommand{\stra}{\begin{tikzpicture}[baseline = 10pt, scale=0.5, color=\clr]
            \draw[-,line width=1.6pt] (0,0.5)to[out=up,in=down](0,1.7);
            \draw (0,0.2) node{$\scriptstyle a$};
\end{tikzpicture} 
}
\newcommand{\strm}{\begin{tikzpicture}[baseline = 10pt, scale=0.5, color=\clr]
            \draw[-,line width=1.6pt] (0,0.5)to[out=up,in=down](0,1.7);
            \draw (0,0.2) node{$\scriptstyle m$};
\end{tikzpicture} 
}
\newcommand{\bdot}{ node[circle, draw, fill=\clr, thick, inner sep=0pt, minimum width=3pt]{}}
\newcommand{\merge}
{\begin{tikzpicture}[baseline = -.5mm,scale=.8, color=\clr]
	\draw[-,line width=1pt] (0.28,-.3) to (0.08,0.04);
	\draw[-,line width=1pt] (-0.12,-.3) to (0.08,0.04);
	\draw[-,line width=1.5pt] (0.08,.4) to (0.08,0);
        \node at (-0.22,-.4) {$\scriptstyle a$};
        \node at (0.35,-.4) {$\scriptstyle b$};\node at (0,.55){$\scriptstyle a+b$};\end{tikzpicture} }
\newcommand{\splits}
{\begin{tikzpicture}[baseline = -.5mm,scale=.8, color=\clr]
	\draw[-,line width=1.5pt] (0.08,-.3) to (0.08,0.04);
	\draw[-,line width=1pt] (0.28,.4) to (0.08,0);
	\draw[-,line width=1pt] (-0.12,.4) to (0.08,0);
        \node at (-0.22,.5) {$\scriptstyle a$};
        \node at (0.36,.5) {$\scriptstyle b$};
        \node at (0.1,-.45){$\scriptstyle a+b$};
\end{tikzpicture}}
\newcommand{\dotgen}
{\begin{tikzpicture}[baseline = 3pt, scale=0.5, color=\clr]
\draw[-,thick] (0,0) to[out=up, in=down] (0,1.4);
\draw(0,0.6) \bdot;
\node at (0,-.3) {$\scriptstyle 1$};
\end{tikzpicture}}
\newcommand{\cross}{\begin{tikzpicture}[baseline=-.5mm,color=\clr]
	\draw[-,thick] (-0.3,-.3) to (.3,.4);
	\draw[-,thick] (0.3,-.3) to (-.3,.4);
\end{tikzpicture}}
\newcommand{\crossing}{\begin{tikzpicture}[baseline=-.5mm,scale=.8, color=\clr]
	\draw[-,line width=1.2pt] (-0.3,-.3) to (.3,.4);
	\draw[-,line width=1.2pt] (0.3,-.3) to (-.3,.4);
        \node at (0.3,-.4) {$\scriptstyle b$};
        \node at (-0.3,-.4) {$\scriptstyle a$};
         \node at (0.3,.55) {$\scriptstyle a$};
        \node at (-0.3,.55) {$\scriptstyle b$};
\end{tikzpicture}}
\newcommand{\wkdota}{\begin{tikzpicture}[baseline = 3pt, scale=0.5, color=\clr]
\draw[-,line width=1.2pt] (0,0) to[out=up, in=down] (0,1.4);
\draw(0,0.6) \bdot; 
\draw (0.7,0.6) node {$\scriptstyle \omega_r$};
\node at (0,-.3) {$\scriptstyle a$};
\end{tikzpicture} }
\newcommand{\wkdotaa}{\begin{tikzpicture}[baseline = 3pt, scale=0.5, color=\clr]
\draw[-,line width=1.5pt] (0,0) to[out=up, in=down] (0,1.4);
\draw(0,0.6) \bdot; 
\draw (0.7,0.6) node {$\scriptstyle \omega_a$};
\node at (0,-.3) {$\scriptstyle a$};
\end{tikzpicture} }
\newcommand{\wkdotr}{
\begin{tikzpicture}[baseline = 3pt, scale=0.5, color=\clr]
\draw[-,line width=1.2pt] (0,0) to[out=up, in=down] (0,1.4);
\draw(0,0.6) \bdot; 
\draw (0.7,0.6) node {$\scriptstyle \omega_r$};
\node at (0,-.3) {$\scriptstyle r$};
\end{tikzpicture}
}
\newcommand{\wkdotm}{\begin{tikzpicture}[baseline = 3pt, scale=0.5, color=\clr]
\draw[-,line width=1.2pt] (0,0) to[out=up, in=down] (0,1.4);
\draw(0,0.6) \bdot; 
\draw (0.7,0.6) node {$\scriptstyle \omega_r$};
\node at (0,-.3) {$\scriptstyle m$};
\end{tikzpicture} }
\newcommand{\wdotgen}{
\begin{tikzpicture}[baseline = 3pt, scale=0.5, color=\clr]
\draw[-,line width=1.2pt] (0,0) to[out=up, in=down] (0,1.4);
\draw(0,0.6) \bdot; 
\draw (0.7,0.6) node {$\scriptstyle \omega_1$};
\node at (0,-.3) {$\scriptstyle 1$};
\end{tikzpicture}}
\newcommand{\wxdota}{\begin{tikzpicture}[baseline = 3pt, scale=0.5, color=\clr]
\draw[-,line width=1.2pt] (0,0) to[out=up, in=down] (0,1.4);
\draw(0,0.6) \bdot; 
\draw (0.7,0.6) node {$\scriptstyle \omega_1$};
\node at (0,-.3) {$\scriptstyle a$};
\end{tikzpicture}}
\newcommand{\blue}[1]{{\color{blue}#1}}
\begin{document}
\setlength{\baselineskip}{17pt}
\title{Affine and cyclotomic webs}

\author{Linliang Song}
\address{School of Mathematical Science, Key Laboratory of Intelligent Computing and Applications(Ministry of Education),
Tongji University, Shanghai, 200092, China}\email{llsong@tongji.edu.cn}

 \author{Weiqiang Wang}
 \address{Department of Mathematics, University of Virginia,
Charlottesville, VA 22903, USA}\email{ww9c@virginia.edu}

\subjclass[2020]{Primary 18M05, 20C08.}

\keywords{Affine web category, cyclotomic web category, finite $W$-algebras.}

\begin{abstract}
Generalizing the polynomial web category, we introduce a diagrammatic $\Bbbk$-linear monoidal category, {\em the affine web category}, for any commutative ring $\Bbbk$. Integral bases consisting of elementary diagrams are obtained for the affine web category and its cyclotomic quotient categories. Connections between cyclotomic web categories and finite $W$-algebras are established, leading to a diagrammatic presentation of idempotent subalgebras of $W$-Schur algebras introduced by Brundan-Kleshchev. The affine web category will be used as a basic building block of another $\Bbbk$-linear monoidal category, {\em the affine Schur category}, formulated in a sequel. 
\end{abstract}

\maketitle

\tableofcontents

%
\section{Introduction}

\subsection{A categorical Schur duality}

The classical Schur duality relates polynomial representations of $\mathfrak{gl}_n$ and representations of the symmetric group $\mathfrak S_m$. Often one can replace $\mathfrak{gl}_n$ by the corresponding Schur algebras. 

Throughout this paper, all categories and functors will be $\kk$-linear, where $\kk$ is a commutative ring with $1$. The (polynomial) web category $\W$ formulated in \cite{BEEO} is a $\mathfrak{gl}_\infty$-variant of the $\mathfrak{sl}_n$-web category introduced in \cite{CKM}; this variant is simpler and arguably more fundamental at least for our purpose. It is a strict monoidal category with generating objects denoted $\stra$ for $a\in \Z_{\ge 1}$ (that is, with strict compositions as objects) and generating morphisms $\merge, \splits,  \blue{\crossing}
$ subject to the relations \eqref{webassoc}--\eqref{equ:r=0}. 

The Schur category $\Sch$ is a monoidal category with the same objects as for $\W$ and the morphism spaces $\Hom_\Sch (\mu,\nu)$ being $\Hom_{\kk\mathfrak{S}_m} (M^\mu, M^\nu)$, where $M^\mu$ are permutation $\kk \mathfrak{S}_m$-modules.
It was established in \cite{BEEO} that $\W \cong \Sch$ as monoidal categories. This can be viewed as a categorical reformulation and upgrade of the $(\mathfrak{gl}_\infty, \mathfrak S_m)$-Schur duality. Let $\HH$ be the Hecke algebra category, which is the monoidal subcategory of $\W$ with only one generating object $\str$ (that is, with $m$ strands as objects for $m\in \N$); the endomorphism algebra of the $m$ strands in $\HH$ is the group algebra $\kk \mathfrak S_m$. 

\subsection{Goal}

This is the first of a series of papers on constructions of new diagrammatic monoidal categories, called affine web categories and affine Schur categories, and their cyclotomic quotients. In this paper and its companion paper \cite{SW2} we shall deal with the degenerate type A constructions. The results and the approaches developed in these two papers will serve as blueprints for further generalizations and extensions; see \S\ref{subsec:future} for more details.

The goal of this paper is to introduce and study in depth a diagrammatic monoidal category, the {\em affine web category} $\AW$, and its cyclotomic quotient categories $\W_\bfu$, for $\bfu \in \kk^\ell$ and $\ell \ge 1$. We provide integral bases for both $\AW$ and $\W_\bfu$. For $\ell=1$, $\W_\bfu$ reduces to the web category $\W$. 
Recall the path algebra for a small $\kk$-linear category $\mathcal C$ is the locally unital algebra $\oplus_{a,b\in \text{ob}\mathcal C}\Hom_{\mathcal C}(a,b)$ with multiplication induced by the composition.
We then show that $\W_\bfu$ is isomorphic to $\WSch_\bfu$, the $W$-Schur category whose path algebras (for various full subcategories) are distinguished idempotent subalgebras of the $W$-Schur algebras arising from Brundan-Kleshchev's higher level Schur duality between finite $W$-algebras and cyclotomic Hecke algebras $\mathcal{H}_{m,\bfu}$; moreover, these subalgebras are Morita equivalent (over $\C$ and conjectually over any $\kk$) to the $W$-Schur algebras.
This is a notable generalization of the aforementioned isomorphism $\W \cong \Sch$.

\subsection{From $\W$ and $\AH$ to $\AW$}

It is natural to extend $\HH$ to the affine Hecke algebra category $\AH$, a strict monoidal category with the same generating object $\str$ as for $\HH$ and two generating morphisms $\cross, \dotgen$ subject to the slider relations \eqref{dotmovecrossingC}. Just as the algebra $\kk \mathfrak S_m$ is both a subalgebra and a quotient algebra of the degenerate affine Hecke algebra $\widehat{\mathcal H}_m$ (which is the endomorphism algebra of the $m$-strand object in $\AH$), $\HH$ is both a full subcategory and a quotient category of $\AH.$ 

Let $\Lambda_{\text{st}} =\cup_m \Lambda_{\text{st}}(m)$ be the set of all strict compositions. According to Definition~\ref{def-affine-web}, the affine web category $\AW$ has $\Lambda_{\text{st}}$ as the object set (the same as $\W$) and the generating morphisms $\merge, \splits, \crossing$ and 
\begin{align} \label{dota}
    \wkdotaa ,
\end{align} 
for $a,b \in \Z_{\ge 1}$, subject to the relations \eqref{webassoc}--\eqref{intergralballon}. 

These four categories fit well into the following commutative diagram of categories:
\begin{align}
    \label{diag:AHAW}
\xymatrix{
\HH \ar[r]   \ar[d] 
&\AH 
 \ar[d]  
  \\
\W \ar[r]  
& \AW~.    
}
\end{align}

For $\kk=\C$, one can replace the generating morphisms \eqref{dota} for $\AW$ by a single one 
$
 \begin{tikzpicture}[baseline = 3pt, scale=0.4, color=\clr]
\draw[-,thick] (0,0) to[out=up, in=down] (0,1.4);
\draw(0,0.6) \bdot;
\node at (0,-.3) {$\scriptstyle 1$};
\end{tikzpicture} \; ,
$ 
thanks to the identities in \eqref{newgendot} and discussions in Section~\ref{sec:C}. The definition of $\AW$ over $\C$ can be much simplified with all relations arising from $\W$ and $\AH$ through the diagram \eqref{diag:AHAW}; see Definition~\ref{def-affine-webC}.

\subsection{A basis theorem for $\AW$}

We first describe the simplest endomorphism algebras, i.e., those of a generating object $\stra$ in $\AW$. It is not a priori clear that the algebra $\End_{\AW}\big(\stra \big)$ is commutative. Define new morphisms $\wkdota$, for $1\le r \le a$, as in \eqref{equ:r=0andr=a}. 

\begin{customthm} {\bf A}  [Theorem~\ref{thm:Da}]
\label{th:A}
    Let $a \ge 1$. The endomorphism $\kk$-algebra   $\End_{\AW}\big(\stra \big)$ is a polynomial algebra in generators $\omega_{a,r} :=\wkdota$, for $1 \le r \le a$. 
\end{customthm}

Theorem~\ref{th:A} can be alternatively stated that $\End_{\AW}\big(\stra\big)$ has a multiplicative basis given by the elementary dot packets $\omega_{a,\nu} =\omega_{a,\nu_1} \omega_{a,\nu_2} \ldots$, parameterized by an arbitrary partition $\nu =(\nu_1, \nu_2,\ldots)$ with each part $\le a$. One can regard $\End_{\AW}\big(\stra\big)$ as the algebra of symmetric polynomials in $a$ variables, where $\omega_{a,\nu}$ has the elementary symmetric polynomial $e_\nu(x_1,\ldots,x_a)$ as its highest degree term. Our proof of Theorem~\ref{th:A} requires a basis result as given in Theorem~\ref{th:B} below. 

It is often one of the most challenging problems to establish a basis for each morphism space of a $\kk$-linear category defined via generators and relations, especially over $\kk=\Z$ (which gives rise to a basis over any commutative ring by base change). For the web category of \cite{CKM}, an integral basis (called double ladder basis) was obtained later by Elias \cite{El15}. 
It was shown \cite{BEEO} that $\Hom_{\W}(\mu,\lambda)$ admits a basis of reduced chicken foot diagrams (CFD) $\Mat_{\la,\mu}$, which can be naturally identified with non-negative integer matrices whose column (and row) sum vector is $\la$ (and $\mu$). In this identification, a matrix entry represents the thickness of a leg in a reduced CFD. 

Denote by $\Par_m$ the set of partitions $\nu$ with all parts  $\le m.$ An elementary CFD means a reduced CFD with each of its thin strands (or legs) of thickness $m$ decorated by an elementary dot packet $\omega_{m,\nu}$, for $\nu \in \Par_m$. 
The set of elementary CFDs from $\mu$ to $\la$ can be identified with  $\PMat_{\lambda,\mu}$ defined in \eqref{dottedreduced}, which stands for partition-enhanced matrices built on $\Mat_{\la,\mu}$.  

\begin{customthm} {\bf B}
[Theorem~\ref{basisAW}]
\label{th:B}
The Hom-space $\Hom_{\AW}(\mu,\lambda)$ has a basis $\PMat_{\lambda,\mu}$ which consists of all elementary CFDs from $\mu$ to $\la$, for any $\mu,\lambda\in \Lambda_{\text{st}}(m)$.  
\end{customthm}

To prove that $\PMat_{\lambda,\mu}$ forms a basis for $\Hom_{\AW}(\mu,\lambda)$, we verify that it is a spanning set and it is linearly independent. 
The spanning part of Theorem~\ref{th:B} is proved by showing that multiplications between any generating morphism and any elementary CFD is a linear combination of elementary CFD's. The proof of the linear independence part of Theorem~\ref{th:B} relies on representations of $\AW$ in the form of functors 
\[
\mathcal F: \AW \longrightarrow \mathfrak{gl}_N\text{-mod}
\]
for various $N$ (see Proposition~\ref{functorofaff}), extending the functors (see \cite{BEEO, CKM})
\begin{align}
 \label{eq:Phi}
 \Phi: \W \longrightarrow \mathfrak{gl}_N\text{-mod}.
\end{align}

\subsection{Cyclotomic web categories $\W_\bfu$}

The affine Hecke algebra $\widehat{\mathcal H}_m$ admits quotient algebras known as cyclotomic Hecke algebras $\mathcal H_{m,\bfu}$, for any $\bfu\in \kk^\ell$ and $\ell \ge 1$, which afford beautiful representation theory and connections to categorification; see \cite{Ar96}. This can be reformulated as a functor from $\AH$ to a cyclotomic Hecke algebra category $\HH_\bfu$. Accordingly, it is natural for us to introduce quotient categories of $\AW$ called cyclotomic web categories and denoted $\W_\bfu$ (see Definition~\ref{def:cycWeb}), which fit into the following commutative diagram (with all vertical arrows as inclusions of full subcategories): 
\[
\xymatrix{
\HH \ar[r]   \ar[d] 
&\AH \ar[r]  
 \ar[d]  & \HH_{\bfu}
 \ar[d]  
  \\
\W \ar[r]  
& \AW \ar[r]  
& \W_\bfu
}
\]

We shall describe an integral basis for any cyclotomic web category. 
An $(\ell-1)$-restricted elementary CFD means a reduced CFD with each of its legs of thickness $m$ decorated with an elementary dot packet $\omega_{\nu}$ for $\nu \in \Par_m$ such that $l(\nu)\le \ell-1$. 
The set of level $\ell$ elementary CFDs from $\mu$ to $\la$ can be identified with $\PMat_{\lambda,\mu}^\ell$ defined in \eqref{Def-spansetof-cycweb}, the set of $\ell$-bounded partition-enhanced matrices. 

\begin{customthm} {\bf C}   [Theorem~\ref{basis-cyc-web}]
\label{th:C} 
Suppose that ${\mathbf u} =(u_1, \ldots, u_\ell) \in\kk^\ell$. The Hom-space $\Hom_{\W_{\mathbf u}}(\mu,\lambda)$ has a basis given by $\PMat_{\lambda,\mu}^\ell$, for any $\lambda,\mu \in\Lambda_{\text{st}}(m)$.   
\end{customthm}

It is rather straightforward to prove that $\PMat_{\lambda,\mu}^\ell$ forms a spanning set for the Hom-space $\Hom_{\W_{\mathbf u}}(\mu,\lambda)$. The linear independence will follow most easily from the corresponding result for the cyclotomic Schur category $\Sch_\bfu$ in \cite{SW2} by viewing $\W_\bfu$ as a subcategory of $\Sch_\bfu$. 

For $\ell=1$, we have $\PMat_{\lambda,\mu}^\ell =\Mat_{\lambda,\mu}$, and Theorem~\ref{th:C} reduces to a basis theorem for $\W$ in \cite{BEEO}. Bearing in mind the isomorphism $\W \cong \Sch$, one hopes to identify explicitly the path algebras in $\W_\bfu$. This turns out to have a beautiful (and unexpected to us initially) answer. 

\subsection{The $W$-Schur categories}

Let $\Lambda=(\lambda_1\le \ldots\le \lambda_n)$ be a strict composition of $N$ such that $l(\Lambda)=n$ and  $\lambda_n=\ell$. Let $e\in \mathfrak{gl}_N$ be a nilpotent matrix of Jordan form $\Lambda$. The finite $W$-algebra $W(\Lambda)$ associated to $e$ can be viewed as a subalgebra of $U(\mathfrak p)$ (with a twisting by a fixed vector $\bfc \in \kk^\ell$) for a certain parabolic subalgebra $\mathfrak p$ of $\mathfrak{gl}_N$ and $V$ is the natural $\mathfrak{gl}_N$-module. Generalizing the classic Schur duality (which corresponds to the case for $\ell=1$), Brundan and Kleshchev \cite{BK08} established a $(W(\Lambda), \mathcal{H}_{m,\bfu})$-duality on $V_{\bfc}^{\otimes m}$, where $\bfu$ is specified by $\Lambda$ and $\bfc$ as in \eqref{BKui}. Define the $W$-Schur algebra $W_{m,\bfu}(\Lambda)$ as in \eqref{WSalg}, which reduces to the usual Schur algebra when $\ell=1$ (and hence, $e=0$ and $\Lambda=(1^n)$).

We extend the functor $\Phi$ in \eqref{eq:Phi} to a functor $\hat\Phi: \AW \rightarrow \kk \text{-mod}^\wedge$ and show in Proposition \ref{prop:Phihat} that it factors through the cyclotomic web category $\W_\bfu$ and gives rise to a functor 
\begin{align}  \label{PhiWW}
\hat\Phi : \W_\bfu \longrightarrow W(\Lambda)\text{-mod}. 
\end{align}
We show that the functor $\hat\Phi$ in \eqref{PhiWW} is {\em asymptotically faithful}; see Theorem~\ref{thm:sameHom} for a precise statement. For $\ell=1$ this was established in \cite{CKM} (and also \cite[Remark~4.15]{BEEO}). 

Let $\mathfrak {Web}_{m,\bfu}$, for $m\ge 1$,  be the path algebras in the cyclotomic web category $\W_\bfu$; see \eqref{algWebu}. Given $\Lambda=(\lambda_1,\ldots, \lambda_n)$ with  $\lambda_n=\ell$, we denote by $\Lambda^a$ (for $a\in \N \cup \{\infty\}$) by adding $a$ parts equal to $\ell$ to the end of $\Lambda$. Note the $\bfu$ in \eqref{BKui} attached to $\Lambda^{a}$ is independent of $a$. Associated to the $W$-Schur algebra $W_{m,\bfu}(\Lambda^\infty)$ we form a $W$-Schur category $\WSch_\bfu$ with $\Lambda_{\text{st}}$ as the object set and Hom-spaces as in \eqref{HomWSu}. 

\begin{customthm} {\bf D} [Theorem \ref{thm:CycWeb=WSchur}]
\label{th:D} 
 The $W$-Schur category $\WSch_\bfu$ is isomorphic to $\W_{-\bfu}$. Moreover,  for $\kk=\C$ and for any $m$, the category $W_{m,\bfu}(\Lambda^\infty)$-mod is equivalent to $\mathfrak {Web}_{m,-\bfu}$-mod.
\end{customthm}
We expect that the assumption $\kk=\C$ in Theorem~\ref{th:D} can be removed.
Theorem~\ref{th:D} specializes at $\ell=1$ to the isomorphism of categories $\W \cong \Sch$ in \cite{BEEO}, which can be viewed as a categorical formulation of the Schur duality.

\subsection{Future works}
 \label{subsec:future}

We have initiated a research program in this paper on affine and cyclotomic web categories (and affine and cyclotomic Schur categories in a sequel \cite{SW2}). A simple but key observation is that constructions of variants of affine web categories will follow variants of the diagram \eqref{diag:AHAW} (with variants of affine Hecke algebra category and webs as input) first over $\C$, and then over $\Z$ and $\kk$. 

The web categories in \cite{CKM, BEEO} and affine/cyclotomic web categories studied in this paper are of type $A$ and degenerate. The $q$-version of web categories of type $A$ appeared in \cite{CKM}. The constructions of web categories have been extended to other types: types $Sp/O$ in \cite{ST19} and \cite{BERT21, BT24, BWu23}, type $Q$ in \cite{BKu21}, and type $P$ in \cite{DKM21}. None of the papers in the literature addressed the constructions of affine web categories of any type. However, we were informed that the affine web categories of (degenerate) types $A$ and $Q$ are defined and studied in a unified framework independently in a forthcoming paper by Davidson, Kujawa, Muth and Zhu \cite{DKMZ}. 

More explicitly, our constructions in this paper and \cite{SW2} will be extended to the degenerate types $Sp/O, Q$, $P$ with affine Brauer category and its $Q, P$-variants as additional inputs. The constructions will also be extended to the quantum setting at least for type $A$ and $Sp/O$. Affine/cyclotomic {\em rational} web categories and beyond can also be formulated with oriented Brauer category (or Heisenberg category) as another input.

The affine web category $\AW$ will serve as a key building block in our construction of a new monoidal category called affine Schur category in a sequel \cite{SW2}, where the path algebras of the associated cyclotomic Schur categories are shown to be isomorphic to Dipper-James-Mathas' (degenerate) cyclotomic Schur algebras \cite{DJM98}. 

\subsection{Organization} 
The paper is organized as follows. In Section~\ref{sec:AWeb}, we formulate the affine web category $\AW$ and establish many additional relations which are to be used in later sections. The basis theorem for $\AW$ (Theorems~\ref{th:A} and \ref{th:B}) is established in Section~\ref{sec:AWbasis}.

In Section~\ref{sec:WSchur}, we introduce the cyclotomic web category $\W_\bfu$, formulate its basis theorem (Theorem~\ref{th:C}), and develop its connections to finite $W$-algebra $W(\Lambda)$. We show that $\W_\bfu$ is isomorphic to the $W$-Schur category $\WSch_\bfu$ (Theorem~\ref{th:D}). 

In Section~\ref{sec:C}, for $\kk$ a field of characteristic 0, we give simplified presentations for the affine web category $\AW$ and the cyclotomic web categories $\W_\bfu$. 

\vspace{2mm}

\noindent {\bf Acknowledgement.} 
We thank Robert Muth for helpful explanation and discussion of their ongoing work \cite{DKMZ} during the UGA conference in May 2024. 
LS is partially supported by NSFC (Grant No. 12071346), and he thanks Institute of Mathematical Science and Department of Mathematics at University of Virginia for hospitality and support. WW is partially supported by DMS--2401351. 
\section{The affine web category}
  \label{sec:AWeb}

In this section we introduce a diagrammatic monoidal category, the affine web category.

\subsection{Definition of affine web category} 

Throughout this paper, let $\kk$ be a commutative ring with $1$. All categories and functors will be $\kk$-linear without further mention.

\begin{definition}
\label{def-affine-web}
The affine web category $\AW$ is the strict monoidal category generated by objects $a\in \mathbb Z_{\ge1}$. The object $a$ and its identity morphism  will be drawn as a vertical strand labeled by $a$:
\[
\begin{tikzpicture}[baseline = 10pt, scale=0.4, color=\clr]
            \draw[-,line width=1.5pt] (0,0.5)to[out=up,in=down](0,2.2);
            \draw (0,0.1) node{$ a$};
\end{tikzpicture} \; .
\]
The generating morphisms are the merges, splits and (thick) crossings depicted as
\begin{align}
\label{merge+split+crossing}
\begin{tikzpicture}[baseline = -.5mm,color=\clr]
	\draw[-,line width=1pt] (0.28,-.3) to (0.08,0.04);
	\draw[-,line width=1pt] (-0.12,-.3) to (0.08,0.04);
	\draw[-,line width=1.5pt] (0.08,.4) to (0.08,0);
        \node at (-0.22,-.4) {$\scriptstyle a$};
        \node at (0.35,-.4) {$\scriptstyle b$};\node at (0,.55){$\scriptstyle a+b$};\end{tikzpicture} 
&:(a,b) \rightarrow (a+b),&
\begin{tikzpicture}[baseline = -.5mm,color=\clr]
	\draw[-,line width=1.5pt] (0.08,-.3) to (0.08,0.04);
	\draw[-,line width=1pt] (0.28,.4) to (0.08,0);
	\draw[-,line width=1pt] (-0.12,.4) to (0.08,0);
        \node at (-0.22,.5) {$\scriptstyle a$};
        \node at (0.36,.5) {$\scriptstyle b$};
        \node at (0.1,-.45){$\scriptstyle a+b$};
\end{tikzpicture}
&:(a+b)\rightarrow (a,b),&
\begin{tikzpicture}[baseline=-.5mm,color=\clr]
	\draw[-,line width=1pt] (-0.3,-.3) to (.3,.4);
	\draw[-,line width=1pt] (0.3,-.3) to (-.3,.4);
        \node at (0.3,-.45) {$\scriptstyle b$};
        \node at (-0.3,-.45) {$\scriptstyle a$};
         \node at (0.3,.55) {$\scriptstyle a$};
        \node at (-0.3,.55) {$\scriptstyle b$};
\end{tikzpicture}
&:(a,b) \rightarrow (b,a),
\end{align}
and 
\begin{equation}
\label{dotgenerator}
\begin{tikzpicture}[baseline = 3pt, scale=0.5, color=\clr]
\draw[-,line width=1.5pt] (0,0) to[out=up, in=down] (0,1.4);
\draw(0,0.6) \bdot; 
\draw (0.7,0.6) node {$\scriptstyle \omega_a$};
\node at (0,-.3) {$\scriptstyle a$};
\end{tikzpicture}  
\;,  
\end{equation}
for $a,b \in \Z_{\ge 1}$,
subject to the following relations \eqref{webassoc}--\eqref{intergralballon}, for $a,b,c,d \in \Z_{\ge 1}$ with $d-a=c-b$:
\begin{align}
\label{webassoc}
\begin{tikzpicture}[baseline = 0,color=\clr]
	\draw[-,thick] (0.35,-.3) to (0.08,0.14);
	\draw[-,thick] (0.1,-.3) to (-0.04,-0.06);
	\draw[-,line width=1pt] (0.085,.14) to (-0.035,-0.06);
	\draw[-,thick] (-0.2,-.3) to (0.07,0.14);
	\draw[-,line width=1.5pt] (0.08,.45) to (0.08,.1);
        \node at (0.45,-.41) {$\scriptstyle c$};
        \node at (0.07,-.4) {$\scriptstyle b$};
        \node at (-0.28,-.41) {$\scriptstyle a$};
\end{tikzpicture}
&=
\begin{tikzpicture}[baseline = 0, color=\clr]
	\draw[-,thick] (0.36,-.3) to (0.09,0.14);
	\draw[-,thick] (0.06,-.3) to (0.2,-.05);
	\draw[-,line width=1pt] (0.07,.14) to (0.19,-.06);
	\draw[-,thick] (-0.19,-.3) to (0.08,0.14);
	\draw[-,line width=1.5pt] (0.08,.45) to (0.08,.1);
        \node at (0.45,-.41) {$\scriptstyle c$};
        \node at (0.07,-.4) {$\scriptstyle b$};
        \node at (-0.28,-.41) {$\scriptstyle a$};
\end{tikzpicture}\:,
\qquad
\begin{tikzpicture}[baseline = -1mm, color=\clr]
	\draw[-,thick] (0.35,.3) to (0.08,-0.14);
	\draw[-,thick] (0.1,.3) to (-0.04,0.06);
	\draw[-,line width=1pt] (0.085,-.14) to (-0.035,0.06);
	\draw[-,thick] (-0.2,.3) to (0.07,-0.14);
	\draw[-,line width=1.5pt] (0.08,-.45) to (0.08,-.1);
        \node at (0.45,.4) {$\scriptstyle c$};
        \node at (0.07,.42) {$\scriptstyle b$};
        \node at (-0.28,.4) {$\scriptstyle a$};
\end{tikzpicture}
=\begin{tikzpicture}[baseline = -1mm, color=\clr]
	\draw[-,thick] (0.36,.3) to (0.09,-0.14);
	\draw[-,thick] (0.06,.3) to (0.2,.05);
	\draw[-,line width=1pt] (0.07,-.14) to (0.19,.06);
	\draw[-,thick] (-0.19,.3) to (0.08,-0.14);
	\draw[-,line width=1.5pt] (0.08,-.45) to (0.08,-.1);
        \node at (0.45,.4) {$\scriptstyle c$};
        \node at (0.07,.42) {$\scriptstyle b$};
        \node at (-0.28,.4) {$\scriptstyle a$};
\end{tikzpicture}\:,
\\
\label{mergesplit}
\begin{tikzpicture}[baseline = 7.5pt,scale=.8, color=\clr]
	\draw[-,line width=1.2pt] (0,0) to (.275,.3) to (.275,.7) to (0,1);
	\draw[-,line width=1.2pt] (.6,0) to (.315,.3) to (.315,.7) to (.6,1);
        \node at (0,1.13) {$\scriptstyle b$};
        \node at (0.63,1.13) {$\scriptstyle d$};
        \node at (0,-.1) {$\scriptstyle a$};
        \node at (0.63,-.1) {$\scriptstyle c$};
\end{tikzpicture}
&=
\sum_{\substack{0 \leq s \leq \min(a,b)\\0 \leq t \leq \min(c,d)\\t-s=d-a}}
\begin{tikzpicture}[baseline = 7.5pt,scale=.8, color=\clr]
	\draw[-,thick] (0.58,0) to (0.58,.2) to (.02,.8) to (.02,1);
	\draw[-,thick] (0.02,0) to (0.02,.2) to (.58,.8) to (.58,1);
	\draw[-,thick] (0,0) to (0,1);
	\draw[-,line width=1pt] (0.61,0) to (0.61,1);
        \node at (0,1.13) {$\scriptstyle b$};
        \node at (0.6,1.13) {$\scriptstyle d$};
        \node at (0,-.1) {$\scriptstyle a$};
        \node at (0.6,-.1) {$\scriptstyle c$};
        \node at (-0.1,.5) {$\scriptstyle s$};
        \node at (0.77,.5) {$\scriptstyle t$};
\end{tikzpicture},
\\
  \label{equ:r=0}
\begin{tikzpicture}[baseline = -1mm,scale=.8,color=\clr]
\draw[-,line width=1.5pt] (0.08,-.8) to (0.08,-.5);
\draw[-,line width=1.5pt] (0.08,.3) to (0.08,.6);
\draw[-,thick] (0.1,-.51) to [out=45,in=-45] (0.1,.31);
\draw[-,thick] (0.06,-.51) to [out=135,in=-135] (0.06,.31);
\node at (-.33,-.05) {$\scriptstyle a$};
\node at (.45,-.05) {$\scriptstyle b$};
\end{tikzpicture}
&= 
\binom{a+b}{a}\:
\begin{tikzpicture}[baseline = -1mm,scale=.7,color=\clr]
	\draw[-,line width=1.5pt] (0.08,-.8) to (0.08,.6);
        \node at (.08,-1) {$\scriptstyle a+b$};
\end{tikzpicture}, 
\\
\label{equ:dotmovecrossing-simplify}
\begin{tikzpicture}[baseline = 7.5pt, scale=0.4, color=\clr]
\draw[-,line width=1.2pt] (0,-.2) to  (1,2.2);
\draw[-,line width=1.2pt] (1,-.2) to  (0,2.2);
\draw(0.2,1.6)\bdot;
\draw(-.4,1.6) node {$\scriptstyle \omega_b$};
\node at (0, -.5) {$\scriptstyle a$};
\node at (1, -.5) {$\scriptstyle b$};
\end{tikzpicture}
&= 
 \sum_{0 \leq t \leq \min(a,b)}t!~
\begin{tikzpicture}[baseline = 7.5pt,scale=.8, color=\clr]
\draw[-,thick] (0.58,-.2) to (0.58,0) to (-.48,.8) to (-.48,1);
\draw[-,thick] (-.48,-.2) to (-.48,0) to (.58,.8) to (.58,1);
\draw[-,thick] (-.5,-.2) to (-.5,1);
\draw[-,thick] (0.605,-.2) to (0.605,1);
\draw(.35,0.2)\bdot;
\draw(.1,0) node {$\scriptstyle {\omega_{b-t}}$};
\node at (-.5,-.3) {$\scriptstyle a$};
\node at (0.6,-.3) {$\scriptstyle b$};
\node at (0.77,.5) {$\scriptstyle t$};
\end{tikzpicture},
\qquad
\begin{tikzpicture}[baseline = 7.5pt, scale=0.4, color=\clr]
\draw[-, line width=1.2pt] (0,-.2) to (1,2.2);
\draw[-,line width=1.2pt] (1,-.2) to(0,2.2);
\draw(.2,0.2)\bdot;
\draw(-.4,.2)node {$\scriptstyle \omega_b$};
\node at (0, -.5) {$\scriptstyle b$};
\node at (1, -.5) {$\scriptstyle a$};
\end{tikzpicture}
=  
 \sum_{0 \leq t \leq \min(a,b)}t!~
\begin{tikzpicture}[baseline = 7.5pt,scale=.8, color=\clr]
\draw[-,thick] (0.58,-.2) to (0.58,0) to (-.48,.8) to (-.48,1);
\draw[-,thick] (-.48,-.2) to (-.48,0) to (.58,.8) to (.58,1);
\draw[-,thick] (-.5,-.2) to (-.5,1);
\draw[-,thick] (0.605,-.2) to (0.605,1);
\draw(.35,0.6)\bdot;
\draw(.1,0.8) node {$\scriptstyle {\omega_{b-t}}$};
\node at (-.5,-.3) {$\scriptstyle b$};
\node at (0.6,-.3) {$\scriptstyle a$};
\node at (0.77,.5) {$\scriptstyle t$};
\end{tikzpicture},
\\
 \label{dotmovesplits+merge-simplify}
\begin{tikzpicture}[baseline = -.5mm,scale=.8,color=\clr]
\draw[-,line width=1.5pt] (0.08,-.5) to (0.08,0.04);
\draw[-,line width=1pt] (0.34,.5) to (0.08,0);
\draw[-,line width=1pt] (-0.2,.5) to (0.08,0);
\node at (-0.22,.6) {$\scriptstyle a$};
\node at (0.36,.65) {$\scriptstyle b$};
\draw (0.08,-.2) \bdot;
\draw (0.6,-.2) node{$\scriptstyle \omega_{a+b}$};
\end{tikzpicture} 
&=
\begin{tikzpicture}[baseline = -.5mm,scale=.8,color=\clr]
\draw[-,line width=1.5pt] (0.08,-.5) to (0.08,0.04);
\draw[-,line width=1pt] (0.34,.5) to (0.08,0);
\draw[-,line width=1pt] (-0.2,.5) to (0.08,0);
\node at (-0.22,.6) {$\scriptstyle a$};
\node at (0.36,.65) {$\scriptstyle b$};
\draw (-.05,.24) \bdot;
\draw (-0.35,.2) node{$\scriptstyle \omega_{a}$};
\draw (0.6,.2) node{$\scriptstyle \omega_{b}$};
\draw (.22,.24) \bdot;
\end{tikzpicture}, \quad 
\begin{tikzpicture}[baseline = -.5mm, scale=.8, color=\clr]
\draw[-,line width=1pt] (0.3,-.5) to (0.08,0.04);
\draw[-,line width=1pt] (-0.2,-.5) to (0.08,0.04);
\draw[-,line width=1.5pt] (0.08,.6) to (0.08,0);
\node at (-0.22,-.6) {$\scriptstyle a$};
\node at (0.35,-.6) {$\scriptstyle b$};
\draw (0.08,.2) \bdot;
\draw (0.6,.2) node{$\scriptstyle \omega_{a+b}$};  
\end{tikzpicture}
 =~
\begin{tikzpicture}[baseline = -.5mm,scale=.8, color=\clr]
\draw[-,line width=1pt] (0.3,-.5) to (0.08,0.04);
\draw[-,line width=1pt] (-0.2,-.5) to (0.08,0.04);
\draw[-,line width=1.5pt] (0.08,.6) to (0.08,0);
\node at (-0.22,-.6) {$\scriptstyle a$};
\node at (0.35,-.6) {$\scriptstyle b$};
\draw (-.08,-.3) \bdot; \draw (.22,-.3) \bdot;
\draw (-0.41,-.3) node{$\scriptstyle \omega_{a}$};
\draw (0.6,-.3) node{$\scriptstyle \omega_{b}$};
\end{tikzpicture}.
\\
\label{intergralballon}
\begin{tikzpicture}[baseline = 1.5mm, scale=.5, color=\clr]
\draw[-, line width=1.5pt] (0.5,2) to (0.5,2.5);
\draw[-, line width=1.5pt] (0.5,0) to (0.5,-.4);
\draw[-,thin]  (0.5,2) to[out=left,in=up] (-.5,1)
to[out=down,in=left] (0.5,0);
\draw[-,thin]  (0.5,2) to[out=left,in=up] (0,1)
 to[out=down,in=left] (0.5,0);      
\draw[-,thin] (0.5,0)to[out=right,in=down] (1.5,1)
to[out=up,in=right] (0.5,2);
\draw[-,thin] (0.5,0)to[out=right,in=down] (1,1) to[out=up,in=right] (0.5,2);
\node at (0.5,.7){$\scriptstyle\cdots$};
\draw (-0.5,1) \bdot; 
\draw (0,1) \bdot; 
\node at (0.5,-.6) {$\scriptstyle a$};
\draw (1,1) \bdot;
\draw (1.5,1) \bdot; 
\node at (-.22,0) {$\scriptstyle 1$};
\node at (1.2,0) {$\scriptstyle 1$};
\node at (.3,0.3) {$\scriptstyle 1$};
\node at (.7,0.3) {$\scriptstyle 1$};
\end{tikzpicture}
&=~a!~
\begin{tikzpicture}[baseline = 1.5mm, scale=.7, color=\clr]
\draw[-,line width=1.5pt] (0,-0.1) to[out=up, in=down] (0,1.4);
\draw(0,0.6) \bdot; 
\draw (0.4,0.6) node {$\scriptstyle \omega_a$};
\node at (0,-.3) {$\scriptstyle a$};
\end{tikzpicture} ,
\end{align}
where we have adopted the simplified notation  $\dotgen :=\wdotgen$. 
\end{definition}
We define the following morphisms in $\AW$
\begin{equation}
\label{equ:r=0andr=a}
\begin{tikzpicture}[baseline = -1mm,scale=.6,color=\clr]
\draw[-,line width=1.5pt] (0.08,-.7) to (0.08,.5);
\node at (.08,-.9) {$\scriptstyle a$};
\draw(0.08,0) \bdot;
\draw(.5,0)node {$\scriptstyle \omega_r$};
\end{tikzpicture}
:=
\begin{tikzpicture}[baseline = -1mm,scale=.8,color=\clr]
\draw[-,line width=1.5pt] (0.08,-.8) to (0.08,-.5);
\draw[-,line width=1.5pt] (0.08,.3) to (0.08,.6);
\draw[-,line width=1pt] (0.1,-.51) to [out=45,in=-45] (0.1,.31);
\draw[-,line width=1pt] (0.06,-.51) to [out=135,in=-135] (0.06,.31);
\draw(-.1,0) \bdot;
\draw(-.4,0)node {$\scriptstyle \omega_r$};
\node at (-.2,-.35) {$\scriptstyle r$};
\node at (.08,-.9) {$\scriptstyle a$};
\end{tikzpicture} \qquad\quad (1\le r\le a).
\end{equation}
We adopt the convention throughout the paper that
\[ \begin{tikzpicture}[baseline = 10pt, scale=0.5, color=\clr]
            \draw[-,line width=1.5pt] (0,0.5)to[out=up,in=down](0,1.9);
\node at (0,.2) {$\scriptstyle a$};
\end{tikzpicture}=0 
\text{ unless  } a \ge 0, 
\quad \text{ }
\begin{tikzpicture}[baseline = 3pt, scale=0.5, color=\clr]
\draw[-,line width=1.5pt] (0,0) to[out=up, in=down] (0,1.4);
\draw(0,0.6) \bdot; 
\draw (0.65,0.6) node {$\scriptstyle \omega_r$};
\node at (0,-.3) {$\scriptstyle a$};
\end{tikzpicture}=
0  \text{ unless } 0\le r\le a, \quad \text{ and } 
\begin{tikzpicture}[baseline = 3pt, scale=0.5, color=\clr]
\draw[-,line width=1.5pt] (0,0) to[out=up, in=down] (0,1.4);
\draw(0,0.6) \bdot; 
\draw (0.65,0.6) node {$\scriptstyle \omega_0$};
\node at (0,-.3) {$\scriptstyle a$};
\end{tikzpicture} 
=
\begin{tikzpicture}[baseline = 10pt, scale=0.5, color=\clr]
            \draw[-,line width=1.5pt] (0,0.5)to[out=up,in=down](0,1.9);
\node at (0,.2) {$\scriptstyle a$};
\end{tikzpicture}.
 \]
These conventions were already implicitly used in \eqref{equ:dotmovecrossing-simplify}.
 
Note that the $a$-fold merges and splits in \eqref{intergralballon} are defined by using \eqref{webassoc} via compositions of 2-fold merges and splits. For example, the following are 3-fold merge and split.
\begin{align}
\begin{tikzpicture}[baseline = 0, color=\clr]
	\draw[-,thick] (0.35,-.3) to (0.09,0.14);
	\draw[-,thick] (0.08,-.3) to (.08,0.1);
	\draw[-,thick] (-0.2,-.3) to (0.07,0.14);
	\draw[-,line width=1.5pt] (0.08,.45) to (0.08,.1);
        \node at (0.45,-.41) {$\scriptstyle c$};
        \node at (0.07,-.4) {$\scriptstyle b$};
        \node at (-0.28,-.41) {$\scriptstyle a$};
\end{tikzpicture}
:=
\begin{tikzpicture}[baseline = 0, color=\clr]
	\draw[-,thick] (0.35,-.3) to (0.08,0.14);
	\draw[-,thick] (0.1,-.3) to (-0.04,-0.06);
	\draw[-,line width=1pt] (0.085,.14) to (-0.035,-0.06);
	\draw[-,thick] (-0.2,-.3) to (0.07,0.14);
	\draw[-,line width=1.5pt] (0.08,.45) to (0.08,.1);
        \node at (0.45,-.41) {$\scriptstyle c$};
        \node at (0.07,-.4) {$\scriptstyle b$};
        \node at (-0.28,-.41) {$\scriptstyle a$};
\end{tikzpicture}
&=
\begin{tikzpicture}[baseline = 0, color=\clr]
	\draw[-,thick] (0.36,-.3) to (0.09,0.14);
	\draw[-,thick] (0.06,-.3) to (0.2,-.05);
	\draw[-,line width=1pt] (0.07,.14) to (0.19,-.06);
	\draw[-,thick] (-0.19,-.3) to (0.08,0.14);
	\draw[-,line width=1.5pt] (0.08,.45) to (0.08,.1);
        \node at (0.45,-.41) {$\scriptstyle c$};
        \node at (0.07,-.4) {$\scriptstyle b$};
        \node at (-0.28,-.41) {$\scriptstyle a$};
\end{tikzpicture}\:,&
\begin{tikzpicture}[baseline = 0, color=\clr]
	\draw[-,thick] (0.35,.3) to (0.09,-0.14);
	\draw[-,thick] (0.08,.3) to (.08,-0.1);
	\draw[-,thick] (-0.2,.3) to (0.07,-0.14);
	\draw[-,line width=1.5pt] (0.08,-.45) to (0.08,-.1);
        \node at (0.45,.41) {$\scriptstyle c$};
        \node at (0.07,.43) {$\scriptstyle b$};
        \node at (-0.28,.41) {$\scriptstyle a$};
\end{tikzpicture}
:=
\begin{tikzpicture}[baseline = 0, color=\clr]
	\draw[-,thick] (0.35,.3) to (0.08,-0.14);
	\draw[-,thick] (0.1,.3) to (-0.04,0.06);
	\draw[-,line width=1pt] (0.085,-.14) to (-0.035,0.06);
	\draw[-,thick] (-0.2,.3) to (0.07,-0.14);
	\draw[-,line width=1.5pt] (0.08,-.45) to (0.08,-.1);
        \node at (0.45,.41) {$\scriptstyle c$};
        \node at (0.07,.43) {$\scriptstyle b$};
        \node at (-0.28,.41) {$\scriptstyle a$};
\end{tikzpicture}
&=
\begin{tikzpicture}[baseline = 0, color=\clr]
	\draw[-,thick] (0.36,.3) to (0.09,-0.14);
	\draw[-,thick] (0.06,.3) to (0.2,.05);
	\draw[-,line width=1pt] (0.07,-.14) to (0.19,.06);
	\draw[-,thick] (-0.19,.3) to (0.08,-0.14);
	\draw[-,line width=1.5pt] (0.08,-.45) to (0.08,-.1);
        \node at (0.45,.41) {$\scriptstyle c$};
        \node at (0.07,.43) {$\scriptstyle b$};
        \node at (-0.28,.41) {$\scriptstyle a$};
\end{tikzpicture}\:.
\end{align}    
 
\begin{rem}
The generators and defining relations for $\AW$ over a field $\kk$ of characteristic zero can be much simplified; see \S \ref{affineweboverC}.  
\end{rem}

\begin{rem}
\label{rem:finiteaffineweb}
For any $n\in \N$, let $\AW_n$ be the quotient of $ \AW$ by the tensor ideal generated by all $1_m$ with $m> n$. It is a monoidal category with the same generating objects and morphisms as $\AW$ with the additional relations 
\[
1_m=0, \text{ for all } m>n.
\]
\end{rem}
For any $ m\in \mathbb N$, a composition $\mu=(\mu_1,\mu_2,\ldots,\mu_n)$ of $m$ is a sequence of non-negative integers such that $\sum_{i\ge 1} \mu_i=m$. The length $l(\mu)$ of $\mu$ is the total number $n$ of parts. Let $\Lambda(m)$ denote the set of all compositions of $m$, and $\Lambda=\cup_m \Lambda(m)$ denote the set of all compositions.
Let $\Par(m)$ denote the set of all partitions of $m$ and $\Par=\cup_m \Par(m)$ denote the set of all partitions. A composition $\mu$ is called strict if  $\mu_i>0$ for any $i\ge 1$.  
We denote by  $\Lambda_{\text{st}}$ (and respectively, $\Lambda_{\text{st}}(m)$) the set of all strict compositions (and respectively, of $m$). 

The set of objects in $\AW$ is $\Lambda_{\text{st}}$ with the empty sequence being the unit object. 
Note that we may omit the labels on stands when it is clear from the context. For example, the label for the top object of the first relation in  \eqref{webassoc} is $ a+b +c$. We also can label the unit object by $0$ but we may often omit this stand.

Any morphism in $\AW$ is a linear combination of diagrams obtained from the tensor product and composition of generators (i.e., splits, merges, crossing, and dotted vertical strands in \eqref{dotgenerator}) together with identity morphism. We call such a diagram a dotted web diagram. In particular, for any 
$\lambda,\mu \in \Lambda_{\text{st}}(m)$, the morphism space 
$\Hom_{\AW}(\lambda,\mu)$ is spanned by all dotted web diagrams with the bottom (and respectively, top) thickness labeled by $\lambda$ (and respectively, $\mu$). In this case, we call such a dotted diagram is of type $\lambda\rightarrow \mu$.
Moreover, we have $\Hom_{\AW}(\lambda,\mu)=0$ if $\lambda\in \Lambda_{\text{st}}(m)$ and $\mu \in\Lambda_{\text{st}}(m')$ with $m\ne m'$.

\subsection{The web category}

 We define the (polynomial) web category $\W$ as the monoidal category with generating objects $a\in \Z_{\ge 1}$ and generating morphisms in \eqref{merge+split+crossing} subject to relations \eqref{webassoc}, \eqref{mergesplit} and \eqref{equ:r=0}. There exists a natural functor from $\W$ to $\AW$ (which will be shown to be faithful). 
The web category $\W$ appeared in \cite[Definition~ 4.7]{BEEO}, which is a $\mathfrak{gl}_\infty$-version of the original $\mathfrak{sl}_n$-web category introduced in \cite{CKM}. 

We list some well-known implied relations in $\W$ (see \cite{CKM} and \cite[(4.30)--(4.33)]{BEEO}), and hence also in $\AW$, which shall be used frequently:
\begin{align}
\begin{tikzpicture}[baseline = .3mm,scale=.7,color=\clr]
	\draw[-,line width=1.5pt] (0.08,.3) to (0.08,.5);
\draw[-,thick] (-.2,-.8) to [out=45,in=-45] (0.1,.31);
\draw[-,thick] (.36,-.8) to [out=135,in=-135] (0.06,.31);
        \node at (-.3,-.95) {$\scriptstyle a$};
        \node at (.45,-.95) {$\scriptstyle b$};
\end{tikzpicture}
=
\begin{tikzpicture}[baseline = -.6mm,scale=.7,color=\clr]
	\draw[-,line width=1.5pt] (0.08,.1) to (0.08,.5);
\draw[-,thick] (.46,-.8) to [out=100,in=-45] (0.1,.11);
\draw[-,thick] (-.3,-.8) to [out=80,in=-135] (0.06,.11);
        \node at (-.3,-.95) {$\scriptstyle a$};
        \node at (.43,-.95) {$\scriptstyle b$};
\end{tikzpicture},
& \qquad
\begin{tikzpicture}[anchorbase,scale=.7,color=\clr]
	\draw[-,line width=1.5pt] (0.08,-.3) to (0.08,-.5);
\draw[-,thick] (-.2,.8) to [out=-45,in=45] (0.1,-.31);
\draw[-,thick] (.36,.8) to [out=-135,in=135] (0.06,-.31);
        \node at (-.3,.95) {$\scriptstyle a$};
        \node at (.45,.95) {$\scriptstyle b$};
\end{tikzpicture}
=
\begin{tikzpicture}[anchorbase,scale=.7,color=\clr]
	\draw[-,line width=1.5pt] (0.08,-.1) to (0.08,-.5);
\draw[-,thick] (.46,.8) to [out=-100,in=45] (0.1,-.11);
\draw[-,thick] (-.3,.8) to [out=-80,in=135] (0.06,-.11);
        \node at (-.3,.95) {$\scriptstyle a$};
        \node at (.43,.95) {$\scriptstyle b$};
\end{tikzpicture},
\label{swallows}
\\
\begin{tikzpicture}[anchorbase,scale=0.7,color=\clr]
	\draw[-,thick] (0.4,0) to (-0.6,1);
	\draw[-,thick] (0.08,0) to (0.08,1);
	\draw[-,thick] (0.1,0) to (0.1,.6) to (.5,1);
        \node at (0.6,1.13) {$\scriptstyle c$};
        \node at (0.1,1.16) {$\scriptstyle b$};
        \node at (-0.65,1.13) {$\scriptstyle a$};
\end{tikzpicture}
\!\!=\!\!
\begin{tikzpicture}[anchorbase,scale=0.7,color=\clr]
	\draw[-,thick] (0.7,0) to (-0.3,1);
	\draw[-,thick] (0.08,0) to (0.08,1);
	\draw[-,thick] (0.1,0) to (0.1,.2) to (.9,1);
        \node at (0.9,1.13) {$\scriptstyle c$};
        \node at (0.1,1.16) {$\scriptstyle b$};
        \node at (-0.4,1.13) {$\scriptstyle a$};
\end{tikzpicture},\:
\begin{tikzpicture}[anchorbase,scale=0.7,color=\clr]
	\draw[-,thick] (-0.4,0) to (0.6,1);
	\draw[-,thick] (-0.08,0) to (-0.08,1);
	\draw[-,thick] (-0.1,0) to (-0.1,.6) to (-.5,1);
        \node at (0.7,1.13) {$\scriptstyle c$};
        \node at (-0.1,1.16) {$\scriptstyle b$};
        \node at (-0.6,1.13) {$\scriptstyle a$};
\end{tikzpicture}
\!\!=\!\!
\begin{tikzpicture}[anchorbase,scale=0.7,color=\clr]
	\draw[-,thick] (-0.7,0) to (0.3,1);
	\draw[-,thick] (-0.08,0) to (-0.08,1);
	\draw[-,thick] (-0.1,0) to (-0.1,.2) to (-.9,1);
        \node at (0.4,1.13) {$\scriptstyle c$};
        \node at (-0.1,1.16) {$\scriptstyle b$};
        \node at (-0.95,1.13) {$\scriptstyle a$};
\end{tikzpicture},
&
\:\begin{tikzpicture}[baseline=-3.3mm,scale=0.7,color=\clr]
	\draw[-,thick] (0.4,0) to (-0.6,-1);
	\draw[-,thick] (0.08,0) to (0.08,-1);
	\draw[-,thick] (0.1,0) to (0.1,-.6) to (.5,-1);
        \node at (0.6,-1.13) {$\scriptstyle c$};
        \node at (0.07,-1.13) {$\scriptstyle b$};
        \node at (-0.6,-1.13) {$\scriptstyle a$};
\end{tikzpicture}
\!\!=\!\!
\begin{tikzpicture}[baseline=-3.3mm,scale=0.7,color=\clr]
	\draw[-,thick] (0.7,0) to (-0.3,-1);
	\draw[-,thick] (0.08,0) to (0.08,-1);
	\draw[-,thick] (0.1,0) to (0.1,-.2) to (.9,-1);
        \node at (1,-1.13) {$\scriptstyle c$};
        \node at (0.1,-1.13) {$\scriptstyle b$};
        \node at (-0.4,-1.13) {$\scriptstyle a$};
\end{tikzpicture},\:
\begin{tikzpicture}[baseline=-3.3mm,scale=0.7,color=\clr]
	\draw[-,thick] (-0.4,0) to (0.6,-1);
	\draw[-,thick] (-0.08,0) to (-0.08,-1);
	\draw[-,thick] (-0.1,0) to (-0.1,-.6) to (-.5,-1);
        \node at (0.6,-1.13) {$\scriptstyle c$};
        \node at (-0.1,-1.13) {$\scriptstyle b$};
        \node at (-0.6,-1.13) {$\scriptstyle a$};
\end{tikzpicture}
\!\!=\!\!
\begin{tikzpicture}[baseline=-3.3mm,scale=0.7,color=\clr]
	\draw[-,thick] (-0.7,0) to (0.3,-1);
	\draw[-,thick] (-0.08,0) to (-0.08,-1);
	\draw[-,thick] (-0.1,0) to (-0.1,-.2) to (-.9,-1);
        \node at (0.34,-1.13) {$\scriptstyle c$};
        \node at (-0.1,-1.13) {$\scriptstyle b$};
        \node at (-0.95,-1.13) {$\scriptstyle a$};
\end{tikzpicture},
\label{sliders}
\\
\label{symmetric}
\mathord{
\begin{tikzpicture}[baseline = -1mm,scale=0.8,color=\clr]
	\draw[-,thick] (0.28,0) to[out=90,in=-90] (-0.28,.6);
	\draw[-,thick] (-0.28,0) to[out=90,in=-90] (0.28,.6);
	\draw[-,thick] (0.28,-.6) to[out=90,in=-90] (-0.28,0);
	\draw[-,thick] (-0.28,-.6) to[out=90,in=-90] (0.28,0);
        \node at (0.3,-.75) {$\scriptstyle b$};
        \node at (-0.3,-.75) {$\scriptstyle a$};
\end{tikzpicture}
}
&=
\mathord{
\begin{tikzpicture}[baseline = -1mm,scale=0.8,color=\clr]
	\draw[-,thick] (0.2,-.6) to (0.2,.6);
	\draw[-,thick] (-0.2,-.6) to (-0.2,.6);
        \node at (0.2,-.75) {$\scriptstyle b$};
        \node at (-0.2,-.75) {$\scriptstyle a$};
\end{tikzpicture}
}\:,
\\
\label{braid}
\mathord{
\begin{tikzpicture}[baseline = -1mm,scale=0.8,color=\clr]
	\draw[-,thick] (0.45,.6) to (-0.45,-.6);
	\draw[-,thick] (0.45,-.6) to (-0.45,.6);
        \draw[-,thick] (0,-.6) to[out=90,in=-90] (-.45,0);
        \draw[-,thick] (-0.45,0) to[out=90,in=-90] (0,0.6);
        \node at (0,-.77) {$\scriptstyle b$};
        \node at (0.5,-.77) {$\scriptstyle c$};
        \node at (-0.5,-.77) {$\scriptstyle a$};
\end{tikzpicture}
}
&=
\mathord{
\begin{tikzpicture}[baseline = -1mm,scale=0.8,color=\clr]
	\draw[-,thick] (0.45,.6) to (-0.45,-.6);
	\draw[-,thick] (0.45,-.6) to (-0.45,.6);
        \draw[-,thick] (0,-.6) to[out=90,in=-90] (.45,0);
        \draw[-,thick] (0.45,0) to[out=90,in=-90] (0,0.6);
        \node at (0,-.77) {$\scriptstyle b$};
        \node at (0.5,-.77) {$\scriptstyle c$};
        \node at (-0.5,-.77) {$\scriptstyle a$};
\end{tikzpicture}
}\:.
\end{align}

\begin{rem}  [An alternative definition of $\W$]
\label{rem:cross}
We include the crossings as generating morphisms in $\W$, but $\W$ can be defined with the merges and splits as the generating morphisms, subject to \eqref{webassoc} and 
\begin{equation}\label{rung-sawp}
\begin{tikzpicture}[baseline = 2mm,scale=.8,color=\clr]
	\draw[-,thick] (0,0) to (0,1);
	\draw[-,thick] (.015,0) to (0.015,.2) to (.57,.4) to (.57,.6)
        to (.015,.8) to (.015,1);
	\draw[-,line width=1.2pt] (0.6,0) to (0.6,1);
        \node at (0.6,-.1) {$\scriptstyle b$};
        \node at (0,-.1) {$\scriptstyle a$};
        \node at (0.3,.84) {$\scriptstyle c$};
        \node at (0.3,.19) {$\scriptstyle d$};
\end{tikzpicture}
=
\sum_{t \ge 0}
\binom{a\! -\!b\!+\!c\!-\!d}{t}
\begin{tikzpicture}[baseline = 2mm,scale=.8, color=\clr]
	\draw[-,line width=1.2pt] (0,0) to (0,1);
	\draw[-,thick] (0.8,0) to (0.8,.2) to (.03,.4) to (.03,.6)
        to (.8,.8) to (.8,1);
	\draw[-,thin] (0.82,0) to (0.82,1);
        \node at (0.81,-.1) {$\scriptstyle b$};
        \node at (0,-.1) {$\scriptstyle a$};
        \node at (0.4,.9) {$\scriptstyle d-t$};
        \node at (0.4,.13) {$\scriptstyle c-t$};
\end{tikzpicture},
\end{equation} 
where diagrams containing a negatively-labeled strand are regarded as zero (cf. \cite{CKM}, \cite[Remark 4.8]{BEEO}). Then the crossing can be defined via (e.g., \cite{CKM}, \cite[(4.36)]{BEEO})  
\begin{equation}\label{crossgen}
\begin{tikzpicture}[baseline = 2mm,scale=.8, color=\clr]
	\draw[-,line width=1.2pt] (0,0) to (.6,1);
	\draw[-,line width=1.2pt] (0,1) to (.6,0);
        \node at (0,-.1) {$\scriptstyle a$};
        \node at (0.6,-0.1) {$\scriptstyle b$};
\end{tikzpicture}
:=\sum_{t=0}^{\min(a,b)}
(-1)^t
\begin{tikzpicture}[baseline = 2mm,scale=.8, color=\clr]
	\draw[-,thick] (0,0) to (0,1);
	\draw[-,thick] (.015,0) to (0.015,.2) to (.57,.4) to (.57,.6)
        to (.015,.8) to (.015,1);
	\draw[-,line width=1.2pt] (0.6,0) to (0.6,1);
        \node at (0.6,-.1) {$\scriptstyle b$};
        \node at (0,-.1) {$\scriptstyle a$};
         \node at (0.6,1.1) {$\scriptstyle a$};
        \node at (0,1.1) {$\scriptstyle b$};        \node at (-0.13,.5) {$\scriptstyle t$};
\end{tikzpicture}.
\end{equation}
For example, by \eqref{mergesplit}, we have 
\begin{equation}
\begin{tikzpicture}[baseline = 2mm,scale=.8, color=\clr]
	\draw[-,thick] (0,0) to (.28,.3) to (.28,.7) to (0,1);
	\draw[-,thick] (.6,0) to (.31,.3) to (.31,.7) to (.6,1);
        \node at (0,1.13) {$\scriptstyle 1$};
        \node at (0.63,1.13) {$\scriptstyle 1$};
        \node at (0,-.2) {$\scriptstyle 1$};
        \node at (0.63,-.2) {$\scriptstyle 1$};
\end{tikzpicture}
=
\begin{tikzpicture}[baseline = 2mm, scale=.8, color=\clr]
\draw[-,thick] (0,0)to[out=up,in=down](0,1);
\draw[-,thick] (0.5,0)to[out=up,in=down](0.5,1); 
\node at (0,-.2) {$\scriptstyle 1$};
 \node at (0.5,-0.2) {$\scriptstyle 1$};
 \end{tikzpicture}
 + 
 \begin{tikzpicture}[baseline = 2mm,scale=.8, color=\clr]
	\draw[-,thick] (0,0) to (.6,1);
	\draw[-,thick] (0,1) to (.6,0);
        \node at (0,-.2) {$\scriptstyle 1$};
        \node at (0.6,-0.2) {$\scriptstyle 1$};
\end{tikzpicture}.    
\end{equation}
\end{rem}
\subsection{The Schur category}
A Schur category was defined in \cite{BEEO} whose path algebras are certain idempotent subalgebras of the Schur algebras. It was shown that this Schur category is isomorphic to the polynomial web category $\W$ as strict monoidal categories \cite[Theorem~ 4.10]{BEEO}. We recall this result in this subsection.

For any $m\in \mathbb N$, let $\mathcal H_m:= \kk \mathfrak S_m$ be the group algebra of the symmetric group $\mathfrak S_m$.
For any $\lambda=(\lambda_1,\ldots, \lambda_k)\in \Lambda_{\text{st}}(m)$, let $\mathfrak S_\lambda$ be the Young subgroup $\mathfrak S_{\lambda_1}\times \ldots \times \mathfrak S_{\lambda_k}$ and define 
\begin{equation}
    \textsc{x}_{\lambda}=\sum_{w\in \mathfrak S_\lambda} w.
\end{equation}
Then the permutation module of $\mathcal H_m$ is defined to be
$$ M^\lambda: = \x_\lambda \mathcal H_m.$$
The Schur category $\Sch$ is the category with the object set $\Lambda_{\text{st}}$. For any $\lambda\in \Lambda_{\text{st}}(m)$ and $\mu\in\Lambda_{\text{st}}(m')$, the morphism space 
$\Hom_{\Sch}(\lambda,\mu)$ is $0$ unless $m=m'$; in this case, 
$\Hom_{\Sch}(\lambda,\mu)=\Hom_{\mathcal H_m}(M^\lambda,M^\mu)$.

For any $\lambda =(\lambda_1, \ldots,\lambda_{k})\in \Lambda_{\text{st}}$, the {\em $i$-th merge} of $\lambda$ is defined to be     $\la^{\vartriangle_i}$.
$$\la^{\vartriangle_i}:=(\lambda_1, \lambda_2,\ldots, \lambda_{i-1},\lambda_{i}+\lambda_{i+1},\lambda_{i+2},\ldots, \lambda_k),$$ 
for $1\le i\le k-1$. Moreover, we define $\sigma_{\lambda_i, \lambda_{i+1}}\in \mathcal H_m$  such that 
\begin{equation}\label{glambdai}
    \x_{\la^{\vartriangle_i}}=\x_\lambda \sigma_{\lambda_i,\lambda_{i+1}} = \sigma_{\lambda_i,\lambda_{i+1}}^* \x_\lambda,
\end{equation}
where $*$ is the anti-involution of $\mathcal H_m$ fixing all generators $s_i$'s
and each summand of $\sigma_{\lambda_i,\lambda_{i+1}}$ is of minimal length representative element of $\mathfrak S_{\lambda}\backslash \mathfrak S_{\lambda^{\vartriangle_i}}$,
i.e., $\sigma_{\lambda_i,\lambda_{i+1}}=\sum_{w\in (\mathfrak S_\la\backslash\mathfrak S_{\la^{\vartriangle_i}})_{\text{min}} }w$.

\begin{proposition}
[{\cite[Theorem 4.10]{BEEO}}]
\label{cor-isom-schur}
 We have an algebra isomorphism  
\[
\phi: \bigoplus_{\lambda,\mu \in \Lambda_{\text{st}}(m)}\Hom_{\W}(\lambda,\mu) \stackrel{\cong}{\longrightarrow} \End_{\mathcal H_m}(\oplus _{\lambda\in \Lambda_{\text{st}}(m)} M^\lambda)
\]
such that the generators are sent by
$$\begin{aligned}\phi\Big(1_{*}\begin{tikzpicture}[baseline = -.5mm,color=\clr]
	\draw[-,line width=1pt] (0.28,-.3) to (0.08,0.04);
	\draw[-,line width=1pt] (-0.12,-.3) to (0.08,0.04);
	\draw[-,line width=1.5pt] (0.08,.4) to (0.08,0);
        \node at (-0.22,-.4) {$\scriptstyle \lambda_i$};
        \node at (0.4,-.45) {$\scriptstyle \lambda_{i+1}$};\end{tikzpicture}1_{*}\Big) &: M^{\lambda} \rightarrow M^{\la^{\vartriangle_i} }, \quad  \x_\lambda h\mapsto \x_{\la^{\vartriangle_i}}h, \\     
\phi\Big(1_{*}\begin{tikzpicture}[baseline = -.5mm,color=\clr]
	\draw[-,line width=1.5pt] (0.08,-.3) to (0.08,0.04);
	\draw[-,line width=1pt] (0.28,.4) to (0.08,0);
	\draw[-,line width=1pt] (-0.12,.4) to (0.08,0);
        \node at (-0.22,.6) {$\scriptstyle \lambda_i$};
        \node at (0.36,.6) {$\scriptstyle \lambda_{i+1}$};
\end{tikzpicture}1_{*} \Big)&: M^{\la^{\vartriangle_i}}\mapsto M^{\lambda},\quad  \x_{\la^{\vartriangle_i}}h\mapsto \x_{\la^{\vartriangle_i}}h,
\end{aligned}$$
for any $h\in \mathcal H_m$, where $1_*$ stands for suitable identity morphisms.   
\end{proposition}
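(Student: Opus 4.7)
The plan is to realize $\phi$ as the map induced on Hom-spaces by a strict monoidal functor $\Phi : \W \to \mathcal{H}_m\text{-mod}$ which sends the object $\lambda \in \Lambda_{\text{st}}(m)$ to the permutation module $M^\lambda$ and sends the generating merges and splits to the $\x$-multiplication maps displayed in the statement. Once $\Phi$ is well defined as a functor, functoriality automatically makes $\phi$ an algebra homomorphism of locally unital algebras, so only well-definedness and bijectivity on each Hom-space remain. For the crossing I would invoke Remark~\ref{rem:cross} to present $\W$ using merges and splits alone, subject to \eqref{webassoc} and \eqref{rung-sawp}; the image of $\crossing$ is then determined by \eqref{crossgen}.

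For well-definedness I would verify the remaining relations. Associativity \eqref{webassoc} is immediate because iterated merges and splits correspond to left-multiplications by $\x$'s indexed by nested Young subgroups, and the product depends only on the underlying subgroup. The bubble relation \eqref{equ:r=0} reduces to the coset-count identity $\sigma_{a,b}^*\,\x_a\x_b = \binom{a+b}{a}\,\x_{a+b}$ arising from minimal-length representatives of $\mathfrak{S}_{a+b}/(\mathfrak{S}_a\times\mathfrak{S}_b)$. The crux is the square-switch \eqref{mergesplit} (equivalently \eqref{rung-sawp}): its image expresses an element of $\x_{(b,d)}\mathcal{H}_{a+c}\x_{(a,c)}$ as a sum over parameters $(s,t)$ with $s\le \min(a,b)$, $t\le \min(c,d)$, and $t-s=d-a$, and this is precisely the Mackey double-coset decomposition of that bimodule piece --- minimal-length representatives of $\mathfrak{S}_{(b,d)}\backslash \mathfrak{S}_{a+c}/\mathfrak{S}_{(a,c)}$ biject with $2\times 2$ non-negative integer matrices having prescribed row sums $(b,d)$ and column sums $(a,c)$, and $(s,t)$ merely reads off two of the four entries of such a matrix.

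For bijectivity I would compare two bases. On the target, $\Hom_{\mathcal{H}_m}(M^\lambda,M^\mu)$ carries the classical Mackey basis $\{\theta_A : A \in \Mat_{\mu,\lambda}\}$ indexed by non-negative integer matrices with row-sum vector $\mu$ and column-sum vector $\lambda$, where $\theta_A$ sends $\x_\lambda$ to $\x_\mu$ times a distinguished double-coset representative. On the source, $\Hom_\W(\lambda,\mu)$ is spanned by reduced chicken-foot diagrams, also in bijection with $\Mat_{\mu,\lambda}$ via reading the thickness $A_{ij}$ of the leg connecting $\mu_i$ on top to $\lambda_j$ on the bottom. Applying $\phi$ to such a diagram and unwinding iterated instances of \eqref{mergesplit} produces $\theta_A$ plus terms indexed by matrices strictly smaller than $A$ in a natural partial order (measuring deviation from the diagonal block-structure), giving an upper-unitriangular transition matrix; hence $\phi$ is a bijection on each Hom-space and the reduced chicken-foot diagrams form a $\kk$-basis of $\Hom_\W(\lambda,\mu)$.

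The main obstacle is the square-switch identity and its companion triangularity estimate: all the nontrivial combinatorics of Young-subgroup double cosets is concentrated in the Mackey bookkeeping at both steps, and once that is organized cleanly the rest of the argument is formal. Since this is the content of \cite[Theorem~4.10]{BEEO}, I would follow their organization, which builds the bijection $\Mat_{\mu,\lambda} \leftrightarrow \mathfrak{S}_\mu\backslash \mathfrak{S}_m/\mathfrak{S}_\lambda$ once and then reuses it at each step.
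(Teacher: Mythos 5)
Your proposal is correct in outline, but it takes a genuinely different route from the paper. The paper does not reprove anything on the Hecke side: its proof is a two-line reduction that treats the statement as a transfer of the cited result, using Schur--Weyl duality on $V^{\otimes m}$ (for $GL_n$ with $n\ge m$) and the isomorphism of right $\mathcal H_m$-modules $v_\lambda\mathcal H_m\cong M^\lambda$, $v_\lambda\mapsto \x_\lambda$, so that the identification of $\W$ with the Schur category built from the modules $v_\lambda\mathcal H_m$ becomes the stated isomorphism for permutation modules. You instead rebuild the isomorphism directly: define the functor on generators, check \eqref{webassoc}, \eqref{equ:r=0} and \eqref{mergesplit} by Young-subgroup coset combinatorics (with \eqref{mergesplit} as a Mackey double-coset decomposition), and then compare the reduced chicken-foot spanning set with the Mackey basis $\theta_A$ of $\Hom_{\mathcal H_m}(M^\lambda,M^\mu)$ by a triangularity argument. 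What your route buys is self-containedness over an arbitrary commutative ring and, as a byproduct, an independent proof that reduced CFDs form a basis of $\Hom_\W(\lambda,\mu)$; what it costs is exactly the bookkeeping the paper delegates to the citation (well-definedness of the merge/split maps via \eqref{glambdai}, the coefficient check that each double coset occurs with coefficient $1$ in the image of \eqref{mergesplit}, and the triangularity estimate). One small slip to fix: by \eqref{glambdai} one has $\sigma_{a,b}^*\,\x_a\x_b=\x_{a+b}$ with no binomial factor; the identity actually needed for \eqref{equ:r=0} is $\x_{a+b}\,\sigma_{a,b}=\binom{a+b}{a}\,\x_{a+b}$, obtained by writing $\x_{a+b}h=\x_{(a,b)}\sigma_{a,b}h$ before applying the merge.
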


\begin{proof}
Let $GL_n:=GL_n(\kk)$ be the general linear group and $V$ be its defining representation with basis $\{v_i\mid 1\le i\le n\}$ with $n\ge m$. 
Then $V^{\otimes m}$ is naturally a right $\mathcal H_m$-module. 
Then the result  is implied by \cite[Theorem 4.10]{BEEO} since we have a well-known isomorphism as right $\mathcal H_m$-modules
$v_\lambda \mathcal H_m\stackrel{\simeq}{\longrightarrow} M^\lambda$ sending $v_\lambda \mapsto \x_\lambda$, where 
$v_{\lambda}:=v_1\otimes \ldots\otimes v_1\otimes v_2\otimes v_2\otimes \ldots$, in which each $v_i$ appears  $\lambda_i$ times.
\end{proof}

The explicit isomorphism given in Proposition \ref{cor-isom-schur} will be used in the study of the path algebras in the cyclotomic Schur category.

\subsection{More relations in $\AW$}

Recall $\wkdota$ from \eqref{equ:r=0andr=a}, for $0\le r \le a$.

\begin{lemma}
\label{lem:splitmerge-simplify}
  The following relation holds in $\AW$:
 \begin{align}
\label{splitmerge}
\begin{tikzpicture}[baseline = -1mm,scale=.8,color=\clr]
\draw[-,line width=1.5pt] (0.08,-.8) to (0.08,-.5);
\draw[-,line width=1.5pt] (0.08,.3) to (0.08,.6);
\draw[-,line width=1pt] (0.1,-.51) to [out=45,in=-45] (0.1,.31);
\draw[-,line width=1pt] (0.06,-.51) to [out=135,in=-135] (0.06,.31);
\draw(-.1,0) \bdot;
\draw(-.4,0)node {$\scriptstyle \omega_r$};
\node at (-.33,-.35) {$\scriptstyle a$};
\node at (.45,-.35) {$\scriptstyle b$};
\end{tikzpicture}
&= 
\binom{a+b-r}{b}\:\:
\begin{tikzpicture}[baseline = -1mm,scale=.6,color=\clr]
\draw[-,line width=1.5pt] (0.08,-.7) to (0.08,.5);
\node at (.08,-.9) {$\scriptstyle a+b$};
\draw(0.08,0) \bdot;
\draw(.5,0)node {$\scriptstyle \omega_r$};
\end{tikzpicture}
, \qquad \text{ for } 0\le r \le a. 
\end{align}
\end{lemma}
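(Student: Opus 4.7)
The plan is to unfold the definition of the decorated strand $\wkdota$ via \eqref{equ:r=0andr=a}, apply the associativity relations \eqref{webassoc} to regroup the splits and merges, and then collapse a split-merge pair via \eqref{equ:r=0} to produce the binomial coefficient.

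First, I would expand the left strand $\wkdota$ appearing on the LHS using \eqref{equ:r=0andr=a}: it is by definition the split of the thickness-$a$ strand into strands of thickness $r$ and $a-r$, decorated with $\omega_r$ on the $r$-strand, followed by a merge back. Substituting this into the LHS produces a diagram whose central piece is a 3-fold split of the thickness-$(a+b)$ strand into strands of thicknesses $(r,a-r,b)$ (carrying the dot $\omega_r$ on the $r$-strand), followed by a 3-fold merge back to thickness $a+b$.

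Next, I would apply \eqref{webassoc} to both the 3-fold split and the 3-fold merge so as to group the $(a-r)$- and $b$-strands together in the middle. Concretely, the 3-fold split $a+b\to(r,a-r,b)$ is rewritten as first splitting $a+b\to(r,a+b-r)$ and then splitting $(a+b-r)\to(a-r,b)$; similarly for the merge. After this regrouping, the central portion of the diagram is a split of $(a+b-r)$ into $(a-r,b)$ immediately followed by a merge of $(a-r,b)$ back to $(a+b-r)$.

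I would then invoke the digon/bubble relation \eqref{equ:r=0} to replace this split-merge pair by the scalar $\binom{a+b-r}{a-r}=\binom{a+b-r}{b}$ times the identity on the $(a+b-r)$-strand. What remains of the diagram is precisely a split of the $(a+b)$-strand into $(r,a+b-r)$ with an $\omega_r$ dot on the $r$-strand, followed by a merge back to thickness $a+b$, which is by \eqref{equ:r=0andr=a} the decorated strand on the RHS. No step should pose a genuine obstacle; the only point requiring care is keeping track of the several applications of associativity, but since all reshuffles take place among splits (respectively merges) of the same three pieces, \eqref{webassoc} applies directly.
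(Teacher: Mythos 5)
Your proposal is correct and follows essentially the same route as the paper: expand the $\omega_r$-decorated $a$-strand via \eqref{equ:r=0andr=a}, regroup the resulting splits and merges using \eqref{webassoc} so that the $(a-r)$- and $b$-strands form a digon, collapse that digon to $\binom{a+b-r}{b}$ by \eqref{equ:r=0}, and recognize the remainder as the RHS via \eqref{equ:r=0andr=a}. The paper's proof is exactly this computation, with the associativity regrouping carried out implicitly in the diagram.
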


\begin{proof}
 Using \eqref{equ:r=0} and \eqref{equ:r=0andr=a}  we have 
   \begin{align*}
   \begin{tikzpicture}[baseline = -1mm,scale=.8,color=\clr]
\draw[-,line width=1.5pt] (0.08,-.8) to (0.08,-.5);
\draw[-,line width=1.5pt] (0.08,.3) to (0.08,.6);
\draw[-,line width=1pt] (0.1,-.51) to [out=45,in=-45] (0.1,.31);
\draw[-,line width=1pt] (0.06,-.51) to [out=135,in=-135] (0.06,.31);
\draw(-.1,0) \bdot;
\draw(-.4,0)node {$\scriptstyle \omega_r$};
\node at (-.33,-.35) {$\scriptstyle a$};
\node at (.45,-.35) {$\scriptstyle b$};
\end{tikzpicture}
= \begin{tikzpicture}[baseline = -1mm,scale=.8,color=\clr]
\draw[-,line width=1.5pt] (0.08,-.8) to (0.08,-.5);
\draw[-,line width=1.5pt] (0.08,.3) to (0.08,.6);
\draw[-,thick] (0.1,-.51) to (0.1,.31);
\draw[-,thick] (0.06,-.51) to [out=135,in=-135] (0.06,.31);
\draw[-,line width=1pt] (0.11,-.8) to [out=135,in=down] (.5,-.1) to[out=up, in =-135] (0.11,.6);
\draw(-.1,0) \bdot;
\draw(-.4,0)node {$\scriptstyle \omega_r$};
\node at (-.28,-.35) {$\scriptstyle r$};
\node at (.6,-.35) {$\scriptstyle b$};
\end{tikzpicture}
=
\binom{a+b-r}{b}\:\:
\begin{tikzpicture}[baseline = -1mm,scale=.6,color=\clr]
\draw[-,line width=1.5pt] (0.08,-.7) to (0.08,.5);
\node at (.08,-.9) {$\scriptstyle a+b$};
\draw(0.08,0) \bdot;
\draw(.5,0)node {$\scriptstyle \omega_r$};
\end{tikzpicture}.     
   \end{align*}
   The lemma is proved.
\end{proof}

\begin{lemma}
The following relations hold in $\AW$:
 \begin{equation}
\label{dotmovecrossing}
\begin{tikzpicture}[baseline = 7.5pt, scale=0.4, color=\clr]
\draw[-,line width=1.2pt] (0,-.2) to  (1,2.2);
\draw[-,line width=1.2pt] (1,-.2) to  (0,2.2);
\draw(0.2,1.6)\bdot;
\draw(-.4,1.6) node {$\scriptstyle \omega_r$};
\node at (0, -.5) {$\scriptstyle a$};
\node at (1, -.5) {$\scriptstyle b$};
\end{tikzpicture}
= 
 \sum_{0 \leq t \leq \min(a,b)}t!~
\begin{tikzpicture}[baseline = 7.5pt,scale=.8, color=\clr]
\draw[-,thick] (0.58,-.2) to (0.58,0) to (-.48,.8) to (-.48,1);
\draw[-,thick] (-.48,-.2) to (-.48,0) to (.58,.8) to (.58,1);
\draw[-,thick] (-.5,-.2) to (-.5,1);
\draw[-,thick] (0.605,-.2) to (0.605,1);
\draw(.35,0.2)\bdot;
\draw(.1,0) node {$\scriptstyle {\omega_{r-t}}$};
\node at (-.5,-.3) {$\scriptstyle a$};
\node at (0.6,-.3) {$\scriptstyle b$};
\node at (0.77,.5) {$\scriptstyle t$};
\end{tikzpicture},
\qquad
\begin{tikzpicture}[baseline = 7.5pt, scale=0.4, color=\clr]
\draw[-, line width=1.2pt] (0,-.2) to (1,2.2);
\draw[-,line width=1.2pt] (1,-.2) to(0,2.2);
\draw(.2,0.2)\bdot;
\draw(-.4,.2)node {$\scriptstyle \omega_r$};
\node at (0, -.5) {$\scriptstyle b$};
\node at (1, -.5) {$\scriptstyle a$};
\end{tikzpicture}
=  
 \sum_{0 \leq t \leq \min(a,b)}t!~
\begin{tikzpicture}[baseline = 7.5pt,scale=.8, color=\clr]
\draw[-,thick] (0.58,-.2) to (0.58,0) to (-.48,.8) to (-.48,1);
\draw[-,thick] (-.48,-.2) to (-.48,0) to (.58,.8) to (.58,1);
\draw[-,thick] (-.5,-.2) to (-.5,1);
\draw[-,thick] (0.605,-.2) to (0.605,1);
\draw(.35,0.6)\bdot;
\draw(.1,0.9) node {$\scriptstyle {\omega_{r-t}}$};
\node at (-.5,-.3) {$\scriptstyle b$};
\node at (0.6,-.3) {$\scriptstyle a$};
\node at (0.77,.5) {$\scriptstyle t$};
\end{tikzpicture},
\end{equation}
for $0\le r \le b$. 
\end{lemma}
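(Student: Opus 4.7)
The plan is to reduce the general relation \eqref{dotmovecrossing} (for arbitrary $0\le r\le b$) to the already-given special case \eqref{equ:dotmovecrossing-simplify} (which handles $r=b$, i.e.\ the full top dot). The essential tool is the definition \eqref{equ:r=0andr=a}, which realizes $\omega_r$ on a $b$-strand as a \emph{balloon}: split $b\to r\otimes(b-r)$, place the top dot $\omega_r$ on the $r$-piece, then merge back.

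I would proceed in four steps. First, apply \eqref{equ:r=0andr=a} to rewrite the $\omega_r$ on the post-crossing $b$-strand as a balloon. Second, use the slider relations \eqref{sliders} together with associativity \eqref{webassoc} to slide the balloon's split and merge down past the crossing; after this manipulation, the $r$-sub-strand carrying the top dot $\omega_r$ crosses the full $a$-strand, and the $(b-r)$-sub-strand crosses $a$ separately. Third, apply \eqref{equ:dotmovecrossing-simplify} (with $b$ replaced by $r$) to the resulting $(a,r)$-subcrossing topped with the full dot $\omega_r$, obtaining $\sum_{0\le t\le\min(a,r)}t!\cdot D_t^{(a,r)}$, where $D_t^{(a,r)}$ is the criss-cross of outer thickness $t$ with an $(r-t)$-diagonal carrying $\omega_{r-t}$. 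Fourth, observe that the $(r-t)$-diagonal, the adjacent $(b-r)$-strand from the original balloon, and the top merge together form a sub-balloon sitting inside a $(b-t)$-thickness strand with $\omega_{r-t}$ placed on its $(r-t)$-piece; by \eqref{equ:r=0andr=a} this sub-balloon equals the top dot $\omega_{r-t}$ on a single $(b-t)$-thickness diagonal. Since $\omega_{r-t}=0$ for $t>r$ (by the vanishing convention introduced after \eqref{equ:r=0andr=a}), the sum extends vacuously to $0\le t\le\min(a,b)$, yielding precisely the first identity in \eqref{dotmovecrossing}.

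The main obstacle is the second step: the single crossing $a\otimes b\to b\otimes a$ must be correctly decomposed into two successive crossings ($a\times r$ followed by $a\times(b-r)$) via the sliders, with the $\omega_r$ dot placed at the right position on the $r$-sub-strand so that step three becomes a valid application of \eqref{equ:dotmovecrossing-simplify}. Once this sliding is accomplished, step four is simply a reorganization of splits and merges by \eqref{webassoc} to expose the sub-balloon, and the coefficient $t!$ is preserved (the sub-balloon contributes factor $1$). The second identity in \eqref{dotmovecrossing}, with $\omega_r$ on the pre-crossing $b$-strand, follows by an entirely symmetric argument using the companion form of \eqref{equ:dotmovecrossing-simplify}.
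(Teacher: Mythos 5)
Your proposal matches the paper's own proof essentially step for step: expand the dot $\omega_r$ as a balloon via \eqref{equ:r=0andr=a}, slide the split past the crossing with \eqref{sliders}, apply the defining relation \eqref{equ:dotmovecrossing-simplify} to the resulting $(a,r)$-crossing, and then reabsorb the leftover $(b-r)$-strand into a sub-balloon using \eqref{equ:r=0andr=a} and \eqref{sliders}, with the vanishing convention extending the summation range to $\min(a,b)$. The argument is correct and no different in substance from the paper's.
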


\begin{proof}
We only check the first equality in \eqref{dotmovecrossing} and the second one can be obtained similarly. We have 
\begin{align*}
 \begin{tikzpicture}[baseline = 7.5pt, scale=0.4, color=\clr]
\draw[-,line width=1.2pt] (0,-.2) to  (1,2.2);
\draw[-,line width=1.2pt] (1,-.2) to  (0,2.2);
\draw(0.2,1.6)\bdot;
\draw(-.4,1.6) node {$\scriptstyle \omega_r$};
\node at (0, -.5) {$\scriptstyle a$};
\node at (1, -.5) {$\scriptstyle b$};
\end{tikzpicture}
&\overset{\eqref{equ:r=0andr=a}}{\underset{\eqref{sliders}}{=}} ~
\begin{tikzpicture}[baseline = 7.5pt, scale=0.45, color=\clr]
\draw[-,line width=1.2pt](0,2.2) to (0.3,1.9);
\draw (.6,1.3)\bdot;
\draw(.4,1)node{$\scriptstyle \omega_{r}$};
\draw[-,thick](0.3,1.9) to[out=-80,in=135](1.5,.5) to[out=-45,in=45](2,.5);
\draw[-,thick](0.3,1.9) to[out=45,in=135](1.5,1.5) to[out=-45,in=45] (2,.5);
\draw[-,line width=1.2pt](2,.5) to (2.5,0);
\draw[-,line width=1.2pt](0,0) to (2.5,2.2);
\node at (1.5, .25) {$\scriptstyle r$};
\node at (0, -.2) {$\scriptstyle a$};
\node at (2.5, -.2) {$\scriptstyle b$};
\end{tikzpicture}
\overset{\eqref{equ:dotmovecrossing-simplify}}{=}
\sum_{0\le t\le \min\{a,r\}}t!
\begin{tikzpicture}[baseline = 7.5pt,scale=.8, color=\clr]
\draw[-,thick] (0.58,-.4) to (0.58,0) to (-.48,.8) to (-.48,1);
\draw[-,thick](-.5,1) to (-.5,1.6);
\draw[-,thick] (-.5,1.4) to [out=-45,in=145](.58,1) to [out=right,in=45] (0.58,-.2);
\draw[-,thick](.58,1) to (.58,1.5);
\draw[-,thick] (-.48,-.2) to (-.48,0) to (.58,.8) to (.58,1);
\draw[-,thick] (-.5,-.2) to (-.5,1);
\draw[-,thick] (0.605,-.2) to (0.605,1);
\draw(.2,0.3)\bdot;
\draw(0.1,0) node {$\scriptstyle {\omega_{r-t}}$};
\node at (-.5,-.5) {$\scriptstyle a$};
\node at (0.6,-.6) {$\scriptstyle b$};
\node at (0.45,.5) {$\scriptstyle t$};
\end{tikzpicture}
\overset{\eqref{equ:r=0andr=a}}{\underset{\eqref{sliders}}{=}}
 \sum_{0 \leq t \leq \min(a,b)}t!~
\begin{tikzpicture}[baseline = 7.5pt,scale=.8, color=\clr]
\draw[-,thick] (0.58,-.2) to (0.58,0) to (-.48,.8) to (-.48,1);
\draw[-,thick] (-.48,-.2) to (-.48,0) to (.58,.8) to (.58,1);
\draw[-,thick] (-.5,-.2) to (-.5,1);
\draw[-,thick] (0.605,-.2) to (0.605,1);
\draw(.35,0.2)\bdot;
\draw(.1,0) node {$\scriptstyle {\omega_{b-t}}$};
\node at (-.5,-.3) {$\scriptstyle a$};
\node at (0.6,-.3) {$\scriptstyle b$};
\node at (0.77,.5) {$\scriptstyle t$};
\end{tikzpicture}.
\end{align*}
The lemma is proved.
\end{proof}

\begin{lemma}
  The following relations hold in $\AW$:
 \begin{align}\label{dotmovesplitss}
\begin{tikzpicture}[baseline = -.5mm,scale=.8,color=\clr]
\draw[-,line width=1.5pt] (0.08,-.5) to (0.08,0.04);
\draw[-,line width=1pt] (0.34,.5) to (0.08,0);
\draw[-,line width=1pt] (-0.2,.5) to (0.08,0);
\node at (-0.22,.6) {$\scriptstyle a$};
\node at (0.36,.65) {$\scriptstyle b$};
\draw (0.08,-.2) \bdot;
\draw (0.5,-.2) node{$\scriptstyle \omega_r$};
\end{tikzpicture} 
&=
\sum_{c+d+e=r} e!\binom{a-c}{e}\binom{b-d}{e}~
\begin{tikzpicture}[baseline = -.5mm,scale=.8,color=\clr]
\draw[-,line width=1.5pt] (0.08,-.5) to (0.08,0.04);
\draw[-,line width=1pt] (0.34,.5) to (0.08,0);
\draw[-,line width=1pt] (-0.2,.5) to (0.08,0);
\node at (-0.22,.6) {$\scriptstyle a$};
\node at (0.36,.65) {$\scriptstyle b$};
\draw (-.05,.24) \bdot;
\draw (-0.4,.2) node{$\scriptstyle \omega_{c}$};
\draw (0.6,.2) node{$\scriptstyle \omega_{d}$};
\draw (.22,.24) \bdot;
\end{tikzpicture},
\\
\label{dotmovemergess}
\begin{tikzpicture}[baseline = -.5mm, scale=.8, color=\clr]
\draw[-,line width=1pt] (0.3,-.5) to (0.08,0.04);
\draw[-,line width=1pt] (-0.2,-.5) to (0.08,0.04);
\draw[-,line width=1.5pt] (0.08,.6) to (0.08,0);
\node at (-0.22,-.6) {$\scriptstyle a$};
\node at (0.35,-.6) {$\scriptstyle b$};
\draw (0.08,.2) \bdot;
\draw (0.45,.2) node{$\scriptstyle \omega_r$};  
\end{tikzpicture}
 &= 
\sum_{c+d+e=r} e!\binom{a-c}{e}\binom{b-d}{e}~
\begin{tikzpicture}[baseline = -.5mm,scale=.8, color=\clr]
\draw[-,line width=1pt] (0.3,-.5) to (0.08,0.04);
\draw[-,line width=1pt] (-0.2,-.5) to (0.08,0.04);
\draw[-,line width=1.5pt] (0.08,.6) to (0.08,0);
\node at (-0.22,-.6) {$\scriptstyle a$};
\node at (0.35,-.6) {$\scriptstyle b$};
\draw (-.08,-.3) \bdot; \draw (.22,-.3) \bdot;
\draw (-0.4,-.3) node{$\scriptstyle \omega_{c}$};
\draw (0.6,-.3) node{$\scriptstyle \omega_{d}$};
\end{tikzpicture},
\end{align}
for $0\le r \le a+b.$
\end{lemma}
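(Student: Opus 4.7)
My strategy is to deduce both identities from the defining relation \eqref{equ:r=0andr=a} together with the already-proved base case \eqref{dotmovesplits+merge-simplify} (which is exactly the $r=a+b$ case, with the unique surviving term $c=a$, $d=b$, $e=0$), the square relation \eqref{mergesplit}, and Lemma~\ref{lem:splitmerge-simplify}. The two relations \eqref{dotmovesplitss} and \eqref{dotmovemergess} are related by vertical reflection, and once one is established the other follows by the same argument run upside down, since every defining relation of $\AW$ used below is reflection-symmetric.

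First I would unfold the left-hand side of \eqref{dotmovesplitss} using \eqref{equ:r=0andr=a}: replace the $\omega_r$ decoration on the $(a+b)$-strand by the sub-diagram consisting of a split $(a+b)\to(a+b-r,r)$, an $\omega_r$-decoration on the top of the $r$-strand, and a merge $(a+b-r,r)\to(a+b)$. Composing this with the outer split $(a+b)\to(a,b)$ produces an internal merge-split pattern of shape $(a+b-r,r)\to(a+b)\to(a,b)$ immediately below the decoration. I then resolve this merge-split using the square relation \eqref{mergesplit}, which replaces it with a sum, indexed by an intermediate pair $(s,t)$ with $t-s = r-b$, of ``ladder'' diagrams with binomial coefficients baked in.

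Next I would transport the $\omega_r$ dot down past the remaining merges and splits. Using \eqref{dotmovesplits+merge-simplify} applied to the top of the $r$-strand (viewed as a merge of three pieces of widths $c$, $d$, $e$ with $c+d+e=r$), the dot $\omega_r$ decomposes as the tensor product $\omega_c\otimes\omega_d\otimes\omega_e$ on the three sub-strands; here $c$ is the width contributing to the outgoing $a$-leg, $d$ is the width contributing to the outgoing $b$-leg, and $e$ is the width of the ``middle'' strand which will be reabsorbed. Coassociativity \eqref{webassoc} and the slider relations \eqref{sliders} let me place each of $\omega_c$ and $\omega_d$ on the corresponding outgoing leg. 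The middle $e$-strand appears as a small closed bigon that gets absorbed; invoking \eqref{intergralballon} together with Lemma~\ref{lem:splitmerge-simplify} contributes the factor $e!$.

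Finally, there is a combinatorial bookkeeping step, which I expect to be the main obstacle: the coefficient of a given $(c,d,e)$-term must be shown to be $e!\binom{a-c}{e}\binom{b-d}{e}$. After combining the binomials coming from the square relation \eqref{mergesplit}, the factors $\binom{a-c}{e}$ and $\binom{b-d}{e}$ arising from reabsorbing the middle $e$-strand into the $a$- and $b$-legs via \eqref{splitmerge}, and the $e!$ from the ballon relation \eqref{intergralballon}, all other summation indices should collapse, leaving exactly the claimed coefficient. This reduces to a small Vandermonde-type identity. Once \eqref{dotmovesplitss} is established, \eqref{dotmovemergess} is obtained by applying the vertical reflection to the whole argument (which swaps split with merge and preserves \eqref{equ:r=0andr=a}, \eqref{mergesplit}, \eqref{dotmovesplits+merge-simplify}, \eqref{splitmerge}, and \eqref{intergralballon}).
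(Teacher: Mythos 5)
Your opening moves coincide with the paper's: unfold $\omega_r$ via \eqref{equ:r=0andr=a}, resolve the resulting merge--split via \eqref{mergesplit}, and split the dot packet along the legs using \eqref{dotmovesplits+merge-simplify}; and your reduction of \eqref{dotmovemergess} to \eqref{dotmovesplitss} by the vertical reflection is fine. The gap is in the step that is supposed to produce the factor $e!$ and, more importantly, the loss of $e$ units of dot-degree. Every relation in your toolkit --- \eqref{webassoc}, \eqref{mergesplit}, \eqref{equ:r=0}, \eqref{sliders}, \eqref{equ:r=0andr=a}, \eqref{dotmovesplits+merge-simplify}, \eqref{splitmerge}, and even the balloon relation \eqref{intergralballon} (whose left side carries $e$ single dots, so it is degree-preserving as well) --- preserves the total dot-degree, whereas the target identity \eqref{dotmovesplitss} strictly lowers it in every summand with $e>0$. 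Consequently no manipulation using only those relations can yield the $e>0$ terms: your ``$\omega_c\otimes\omega_d\otimes\omega_e$'' decomposition leaves the $\omega_e$ alive, and absorbing an $\omega_e$-dotted bigon via Lemma~\ref{lem:splitmerge-simplify} gives a binomial coefficient times $\omega_e$ on the ambient strand, not $e!$ times an undotted strand. The only degree-lowering relation in $\AW$ is the dot-past-crossing relation \eqref{equ:dotmovecrossing-simplify} (equivalently \eqref{dotmovecrossing}), and it is unavoidable here: after \eqref{mergesplit} the dotted portion of the $r$-strand that must reach the $b$-leg crosses the strand headed for the $a$-leg, and pushing the dot through that crossing is exactly what generates the $t!$-type factor.

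This also explains why your bookkeeping for the binomials does not match. In the paper's computation the coefficients $\binom{a-c}{e}$ and $\binom{b-d}{e}$ arise from reabsorbing an \emph{undotted} $e$-strand (its dots having been consumed by \eqref{equ:dotmovecrossing-simplify}) into the two legs after the double crossing, via \eqref{equ:r=0}/\eqref{splitmerge} and the slider/swallow moves; a single $\omega_e$-dotted ``bigon'' cannot be absorbed into both legs, so your picture cannot produce both binomials either. To repair the argument you would have to reinstate \eqref{equ:dotmovecrossing-simplify} at the point where the dotted strand meets the crossing --- at which point your proof becomes the paper's proof (which, after \eqref{equ:r=0andr=a}, \eqref{mergesplit} and \eqref{dotmovesplits+merge-simplify}, applies \eqref{equ:dotmovecrossing-simplify} and then cleans up with \eqref{equ:r=0}, \eqref{sliders}, \eqref{swallows}).
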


\begin{proof}
    We only show \eqref{dotmovemergess}, as the identity 
    \eqref{dotmovesplitss} follows by symmetry. 
    We have 
    \begin{align*}
\begin{tikzpicture}[baseline = -.5mm,scale=1,color=\clr]
\draw[-,line width=1.5pt] (0.08,-.5) to (0.08,0.04);
\draw[-,line width=1pt] (0.34,.5) to (0.08,0);
\draw[-,line width=1pt] (-0.2,.5) to (0.08,0);
\node at (-0.22,.6) {$\scriptstyle a$};
\node at (0.36,.65) {$\scriptstyle b$};
\draw (0.08,-.2) \bdot;
\draw (0.35,-.2) node{$\scriptstyle \omega_r$};
\end{tikzpicture} 
&
\overset{\eqref{equ:r=0andr=a}}=
\:\:
\begin{tikzpicture}[baseline = -.5mm,scale=1,color=\clr]
\draw[-,line width=1.5pt] (0.08,-.3) to (0.08,-.5);
\draw[-,line width=1.5pt] (0.08,.5) to (0.08,.3);
\draw[-,line width=1pt] (0.34,.8) to (0.08,0.5);
\draw[-,line width=1pt] (-0.2,.8) to (0.08,0.5);
\node at (-0.22,.9) {$\scriptstyle a$};
\node at (0.36,.9) {$\scriptstyle b$};
\draw (-.1,0) \bdot;
\node at (-.15, -.3) {$\scriptstyle r$};
\draw[-,line width=1pt](0.08,.3) to [out=left,in=left] (0.08, -.3);
\draw[-,line width=1pt](0.08,.3) to [out=right,in=right] (0.08, -.3);
\draw (-0.4,0) node{$\scriptstyle \omega_{r}$};
\end{tikzpicture} 
\overset{\eqref{mergesplit}}{=}
\:\:
\sum_{\substack{0 \leq s \leq \min(a,r)\\0 \leq t \leq \min(a+b-r,b)\\t-s=b-r}}
\begin{tikzpicture}[baseline = 7.5pt,scale=.9, color=\clr]
	\draw[-,thick] 
    (0.58,.2) to (.02,.8) to (.02,1);
	\draw[-,thick] 
    (0.02,.2) to (.58,.8) to (.58,1);
	\draw[-,thick] (0,0.1) to (0,1);
	\draw[-,line width=1pt] (0.61,0.1) to (0.61,1);
 \draw[-,line width=1pt](.3,-.4) to [out=left,in=up] (0,0.1);
 \draw[-,line width=1pt](.3,-.4) to [out=right,in=up] (0.6,0.1);
 \draw (.1,-.2) \bdot;
 \draw (-.2,-.2) node{$\scriptstyle \omega_r$};
 \draw[-,line width=1.5pt](.3,-.6) to (.3,-.4);
        \node at (0,1.16) {$\scriptstyle a$};
        \node at (0.6,1.16) {$\scriptstyle b$};
        \node at (-0.1,.5) {$\scriptstyle s$};
        \node at (0.77,.5) {$\scriptstyle t$};
\end{tikzpicture}
\\
&
\overset{\eqref{dotmovesplits+merge-simplify}}{=}
\sum_{\substack{0 \leq s \leq \min(a,r)\\0 \leq t \leq \min(a+b-r,b)\\t-s=b-r}}
\begin{tikzpicture}[baseline = 7.5pt,scale=1.1, color=\clr]
	\draw[-,thick] 
    (0.58,.2) to (.02,.8) to (.02,1);
	\draw[-,thick] 
    (0.02,.2) to (.58,.8) to (.58,1);
	\draw[-,thick] (0,0.1) to (0,1);
	\draw[-,thick] (0.6,0.1) to (0.6,1);
 \draw[-,line width=1pt](.3,-.4) to [out=left,in=up] (0,0.1);
 \draw[-,line width=1pt](.3,-.4) to [out=right,in=up] (0.6,0.1);
 \draw (0,.5) \bdot;
 \draw(.2,.35) \bdot;
 \draw (.3,.1) node{${\scriptstyle \omega_{r-s}}$};
 \draw[-,line width=1.5pt](.3,-.6) to (.3,-.4);
        \node at (0,1.16) {$\scriptstyle a$};
        \node at (0.6,1.16) {$\scriptstyle b$};
        \node at (-0.3,.5) {$\scriptstyle \omega_s$};
        \node at (0.77,.5) {$\scriptstyle t$};
\end{tikzpicture}
\\
&\overset{\eqref{equ:dotmovecrossing-simplify}}{=}
\sum_{\substack{0 \leq s \leq \min(a,r)\\0 \leq t \leq \min(a+b-r,b)\\t-s=b-r}}
\sum_{0\le e\le \min(a-s,r-s)}e!~\:\:
\begin{tikzpicture}[baseline = 7.5pt,scale=.8, color=\clr]
\draw[-,line width=1.5pt](-.8,1.5) to (-.8,1.2) to (-.48,.8);
\draw[-,thick](-.8,1.2) to (-.8,-.2);
\draw[-,line width=1.5pt] (-.8,-.2) to (-.48,0);
\draw[-,thick]  (0.58,0) to (-.48,.8);
\draw[-,line width=1.5pt](.58,0) to (.9,-.3)to (.9,-.4);
\draw[-,thick]  (-.48,0) to (.58,.8);
\draw[-,thick] (-.5,0) to (-.5,.8);
\draw[-,thick] (0.605,0) to (0.605,.8);
\draw(.35,0.6)\bdot;
\draw(-.8,0.5)\bdot;
\draw(-1.1,.5)node {$\scriptstyle {\omega_{s}}$};
\draw(0.01,.99) node {$\scriptstyle {\omega_{r-s-e}}$};
\node at (-.9,-.5) {$\scriptstyle r$};
\node at (0.7,.5) {$\scriptstyle e$};
\node at (1,.5) {$\scriptstyle t$};
\draw[-,line width=1.5pt](-.8,-.4) to (-.8,-.2);
\draw[-,thick](.9,-.3) to (.9,1.2);
\draw[-,line width=1.5pt](.9,1.2) to (.9,1.5) to (.58,.8);
\draw[-,thick](0,-.8) to [out=left,in=down] (-.8,-.2);
\draw[-,thick](0,-.8) to [out=right,in=down] (.9,-.2);
\draw[-,line width=1.5pt] (0,-1) to (0,-.8);
\end{tikzpicture}\\
&\overset{\eqref{equ:r=0},\eqref{equ:r=0andr=a}}{\underset{\eqref{sliders},\eqref{swallows}}{=}}
\sum_{c+d+e=r} e!\binom{a-c}{e}\binom{b-d}{e}~
\begin{tikzpicture}[baseline = -.5mm,scale=1,color=\clr]
\draw[-,line width=1.5pt] (0.08,-.5) to (0.08,0.04);
\draw[-,line width=1pt] (0.34,.5) to (0.08,0);
\draw[-,line width=1pt] (-0.2,.5) to (0.08,0);
\node at (-0.22,.6) {$\scriptstyle a$};
\node at (0.36,.65) {$\scriptstyle b$};
\draw (-.05,.24) \bdot;
\draw (-0.3,.2) node{$\scriptstyle \omega_{c}$};
\draw (0.55,.2) node{$\scriptstyle \omega_{d}$};
\draw (.22,.24) \bdot;
\end{tikzpicture}.
    \end{align*}
    The lemma is proved.
\end{proof}

\begin{rem}
    An alternative definition of $\AW$ would be adding to Definition~\ref{def-affine-web} more generating morphisms $\wkdota$,
 for $1\le r \le a$, and additional relations \eqref{splitmerge}, \eqref{dotmovecrossing}, \eqref{dotmovesplitss} and \eqref{dotmovemergess}. These additional generating morphisms make the basis theorem for $\AW$ (cf. Theorem~\ref{basisAW} and Theorem \ref{thm:Da}) more transparent.
\end{rem}

\subsection{Commutativity and  balloon relations}

By the symmetry of the defining relations in Definition \ref{def-affine-web}, there is an isomorphism of strict monoidal categories
\begin{equation}
\label{anti-auto}
\div : \AW\longrightarrow \mathpzc{Web}^{\bullet\,\text{op}}
\end{equation}
by reflecting each diagram along a horizontal axis, i.e., swapping the merge and split and fixing the dotted strand generators. 

Given $a\ge 1$, let $D_a$ be the subalgebra of $\End_{\AW}(a)$ generated by the elements $\wkdota$, for $0\le r \le a$. 

\begin{lemma}
\label{lem:commute}
The following commutativity relation holds in $\AW$:
\begin{equation}
    \label{commutato}
    \begin{tikzpicture}[baseline = 1.5mm, scale=.8, color=\clr]
\draw[-,line width=1.5pt] (0,-.4) to[out=up, in=down] (0,.8);
\draw(0,0) \bdot; 
\draw (0.4,0) node {$\scriptstyle \omega_r$};
\draw(0,0.4) \bdot; 
\draw (0.4,0.4) node {$\scriptstyle \omega_t$};
\node at (0,-.5) {$\scriptstyle a$};
\end{tikzpicture}
~=~
\begin{tikzpicture}[baseline = 1.5mm, scale=.8, color=\clr]
\draw[-,line width=1.5pt] (0,-.4) to[out=up, in=down] (0,.8);
\draw(0,0) \bdot; 
\draw (0.4,0) node {$\scriptstyle \omega_t$};
\draw(0,0.4) \bdot; 
\draw (0.4,0.4) node {$\scriptstyle \omega_r$};
\node at (0,-.5) {$\scriptstyle a$};
\end{tikzpicture},
\end{equation}    
for any admissible $r,t,a$. In particular, $D_a$ is a commutative subalgebra of $\End_{\AW}(a)$. 
\end{lemma}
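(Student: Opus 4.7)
The plan is to prove the commutation relation \eqref{commutato} by induction on the thickness $a$; the commutativity of the subalgebra $D_a$ then follows immediately, since $D_a$ is generated by the (pairwise commuting) elements $\omega_r$ for $0 \le r \le a$. The base case $a=1$ is vacuous because the only nontrivial dot on a $1$-strand is $\omega_1$. For $a \ge 2$, I will assume by induction that $\omega_r \omega_t = \omega_t \omega_r$ holds on every $a'$-strand with $a' < a$.

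I split the inductive step into two cases. First, suppose one of $r, t$ equals $a$; by symmetry take $r = a$ with $t < a$ (the case $t = a$ being trivial). I expand $\omega_t$ on the $a$-strand as a ballon via \eqref{equ:r=0andr=a} and then push the external generator $\omega_a$ through the upper merge (for $\omega_a\omega_t$) and through the lower split (for $\omega_t\omega_a$) using the dot-move relation \eqref{dotmovesplits+merge-simplify}, which replaces $\omega_a$ by $\omega_t \otimes \omega_{a-t}$ on the two inner strands. Both computations yield the same diagram $\mathrm{merge} \circ (\omega_t^2 \otimes \omega_{a-t}) \circ \mathrm{split}$: the two copies of $\omega_t$ produced sit on the same inner $t$-strand and their composite is a single morphism independent of stacking order.

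In the remaining case $r, t < a$, I again expand $\omega_r$ on the $a$-strand as a ballon via \eqref{equ:r=0andr=a}, with inner dot the generator $\omega_r$ on an inner $r$-strand. To evaluate $\omega_r\omega_t$, I push $\omega_t$ up through the ballon's split via \eqref{dotmovesplitss}; to evaluate $\omega_t\omega_r$, I push $\omega_t$ down through the ballon's merge via \eqref{dotmovemergess}. Both operations produce the same sum indexed by triples $c + d + e = t$ with identical coefficient $e!\binom{r-c}{e}\binom{a-r-d}{e}$; they differ only in that one sum carries $\omega_r\omega_c$ on the inner $r$-strand while the other carries $\omega_c\omega_r$. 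Since $r < a$, the inductive hypothesis equates these term by term, completing the inductive step.

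The main obstacle is the boundary case where one of $r, t$ equals $a$: because $\omega_a$ is a generator and admits no nontrivial ballon expansion through \eqref{equ:r=0andr=a}, it cannot be moved past inner dots in the same uniform manner as in the interior case $r, t < a$, and one must instead invoke \eqref{dotmovesplits+merge-simplify} directly. Once that edge case is isolated, the rest of the induction is a clean application of \eqref{dotmovesplitss} and \eqref{dotmovemergess} combined with the hypothesis on a strictly smaller strand.
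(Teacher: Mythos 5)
Your proof is correct and follows essentially the same route as the paper's: expand one dot as a balloon via \eqref{equ:r=0andr=a} and push the other dot through the balloon using \eqref{dotmovesplitss}/\eqref{dotmovemergess}, reducing term by term to commutativity of dots on the thinner $r$-strand. The only differences are cosmetic: the paper performs a single split-side computation, inducts on $r+t+a$, and invokes the reflection anti-automorphism $\div$ of \eqref{anti-auto} to obtain the opposite ordering for free, whereas you compute both orderings explicitly, induct on the thickness $a$, and handle the boundary case $r=a$ separately via \eqref{dotmovesplits+merge-simplify} (a case the paper absorbs by always expanding the strictly smaller dot).
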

(It will be shown in Theorem \ref{thm:Da} that $\End_{\AW}(a) =D_a$ and that it is a polynomial algebra.)

\begin{proof}
   We may assume $1\le r\le t\le a$. We prove by induction on $r+t+a$. The case $r=t=1=a$ is trivial. In the following, we may assume that $\le r <t\le a$.
   By \eqref{splitmerge}, 
   we have 
   \[ 
\begin{tikzpicture}[baseline = -1mm,scale=.8,color=\clr]
\draw[-,line width=1.5pt] (0.08,-.7) to (0.08,.5);
\node at (.08,-.8) {$\scriptstyle a$};
\draw(0.08,0) \bdot;
\draw(.45,0)node {$\scriptstyle \omega_r$};
\end{tikzpicture}
= 
\begin{tikzpicture}[baseline = -1mm,scale=.8,color=\clr]
	\draw[-,line width=1.5pt] (0.08,-.8) to (0.08,-.5);
	\draw[-,line width=1.5pt] (0.08,.3) to (0.08,.6);
\draw[-,thick] (0.1,-.51) to [out=45,in=-45] (0.1,.31);
\draw[-,thick] (0.06,-.51) to [out=135,in=-135] (0.06,.31);
\draw(-.1,0) \bdot;
\draw(-.4,0)node {$\scriptstyle \omega_r$};
\node at (-.22,-.35) {$\scriptstyle r$};
\node at (.6,-.35) {$\scriptstyle a-r$};
\end{tikzpicture}. 
\]
Then, we have 
\begin{align*}
  \begin{tikzpicture}[baseline = 1.5mm, scale=.8, color=\clr]
\draw[-,line width=1.5pt] (0,-.4) to[out=up, in=down] (0,.8);
\draw(0,0) \bdot; 
\draw (0.4,0) node {$\scriptstyle \omega_t$};
\draw(0,0.4) \bdot; 
\draw (0.4,0.4) node {$\scriptstyle \omega_r$};
\node at (0,-.5) {$\scriptstyle a$};
\end{tikzpicture}&= 
\begin{tikzpicture}[baseline = -1mm,scale=.8,color=\clr]
\draw[-,line width=1.5pt] (0.08,-1) to (0.08,-.5);
\draw[-,line width=1.5pt] (0.08,.3) to (0.08,.6);
\draw[-,thick] (0.1,-.51) to [out=45,in=-45] (0.1,.31);
\draw[-,thick] (0.06,-.51) to [out=135,in=-135] (0.06,.31);
\draw(-.1,0) \bdot;
\draw(-.45,0)node {$\scriptstyle \omega_r$};
\node at (-.22,-.35) {$\scriptstyle r$};
\node at (.6,-.35) {$\scriptstyle a-r$};
\draw(.08,-.7) \bdot;
\draw(0.45,-.7)node {$\scriptstyle \omega_t$};
\end{tikzpicture}
\overset{\eqref{dotmovesplitss}} =
\sum_{c+d+e=t} e!\binom{r-c}{e}\binom{a-r-d}{e}~
\begin{tikzpicture}[baseline = -1mm,scale=.8,color=\clr]
\draw[-,line width=1.5pt] (0.08,-1) to (0.08,-.5);
\draw[-,line width=1.5pt] (0.08,.3) to (0.08,.6);
\draw[-,thick] (0.1,-.51) to [out=45,in=-45] (0.1,.31);
\draw[-,thick] (0.06,-.51) to [out=135,in=-135] (0.06,.31);
\draw(-.1,0) \bdot;
\draw(-.45,0)node {$\scriptstyle \omega_r$};
\node at (-.43,-.35) {$\scriptstyle \omega_{c}$};
\node at (.6,-.35) {$\scriptstyle \omega_{d}$};
\draw(.2,-.34) \bdot;
\draw(-.1,-.34) \bdot;
\end{tikzpicture}.
\end{align*}
Then the identity \eqref{commutato} follows since each diagram in the summation above is fixed under $\div$ by the induction hypothesis.
\end{proof}

For any $r\ge 1$ and $u\in \kk$, we define 
\begin{equation}
    \label{def-gau}
 g_{r,u}:=\sum_{0\le i\le r}(-1)^i\prod_{0\le j\le i-1}(u+j)\:\:
 \begin{tikzpicture}[baseline = -1mm,scale=.8,color=\clr]
\draw[-,line width=1.5pt] (0.08,-.6) to (0.08,.5);
\node at (.08,-.8) {$\scriptstyle r$};
\draw(0.08,0) \bdot;
\draw(.65,0)node {$\scriptstyle \omega_{r-i}$};
\end{tikzpicture}
\in \End_{\AW}(r).
\end{equation}
The element $g_{r,u}$ and the following lemma will  be used for the cyclotomic web category later and also for the affine Schur category \cite{SW2}.

\begin{lemma}
 \label{lem:gru}
 For any $r\ge 1$ and $u\in \kk$, the following relation (called a {\em balloon relation}) holds in $\AW$:
\[
\begin{tikzpicture}[baseline = 5pt, scale=.5, color=\clr]
\draw[-, line width=1.2pt] (0.5,2) to (0.5,2.3);
\draw[-, line width=1.2pt] (0.5,0) to (0.5,-.3);
\draw[-,thin](0.5,2) to[out=left,in=up] (-.5,1) to[out=down,in=left] (0.5,0);
\draw[-,thin]  (0.5,2) to[out=left,in=up] (0,1) to[out=down,in=left] (0.5,0);   
\draw[-,thin] (0.5,0)to[out=right,in=down] (1.5,1)to[out=up,in=right] (0.5,2);
\draw[-,thin] (0.5,0)to[out=right,in=down] (1,1)
 to[out=up,in=right] (0.5,2);
\node at (0.5,.7){$\scriptstyle \cdots$};
\draw (-0.5,1) \bdot; 
 \node at (-.75,1) {$\scriptstyle g$}; 
\draw (0,1) \bdot; 
\node at (0.3,1) {$\scriptstyle g$};
\draw (1,1) \bdot;
\draw (1.5,1) \bdot; 
\node at (1.8,1) {$\scriptstyle g$};
\node at (-.4,0) {$\scriptstyle 1$};
 \node at (.2,0.3) {$\scriptstyle 1$};
\node at (.7,0.3) {$\scriptstyle 1$};
\node at (1.2,0) {$\scriptstyle 1$};
\draw (.5,-.5) node{$\scriptstyle {r}$};
\end{tikzpicture}
=
r!\, g_{r,u},
\qquad \text{ where }
 \begin{tikzpicture}[baseline = 3pt, scale=0.5, color=\clr]
\draw[-,thin] (0,0) to[out=up, in=down] (0,1.4);
\draw(0,0.6) \bdot; 
\draw (0.65,0.6) node {$\scriptstyle g$};
\node at (0,-.3) {$\scriptstyle 1$};
\end{tikzpicture} 
=
\begin{tikzpicture}[baseline = 3pt, scale=0.5, color=\clr]
\draw[-,thin] (0,0) to[out=up, in=down] (0,1.4);
\draw(0,0.6) \bdot; 
\draw (0.65,0.6) node {$\scriptstyle \omega_1$};
\node at (0,-.3) {$\scriptstyle 1$};
\end{tikzpicture}
-u 
\begin{tikzpicture}[baseline = 3pt, scale=0.5, color=\clr]
\draw[-,thin] (0,0) to[out=up, in=down] (0,1.4);
\node at (0,-.3) {$\scriptstyle 1$};
\end{tikzpicture} .
\]
\end{lemma}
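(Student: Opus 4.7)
The strategy is to use bilinearity on the $g$-dots: writing $g = \omega_1 - u\cdot 1_1$, the balloon on $r$ strands expands as a sum over subsets $S\subseteq\{1,\ldots,r\}$,
\[
\sum_{S}(-u)^{r-|S|}\,B_S,
\]
where $B_S$ is the balloon with $\omega_1$-dots on the strands in $S$ and no dot elsewhere. The permutation symmetry of the balloon, which follows from \eqref{symmetric}--\eqref{swallows}, ensures that $B_S$ depends only on $i := |S|$, so the sum collapses to $\sum_{i=0}^{r}\binom{r}{i}(-u)^{r-i}B_i$.

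To compute $B_i$, I would use the associativity relation \eqref{webassoc} to factor the $r$-fold split and merge through an intermediate step $r\to(i,r-i)$. The balloon then factorizes as a sub-balloon of size $i$ carrying all $\omega_1$-dots, tensored with an empty sub-balloon of size $r-i$, joined by an outer split-merge on $(i,r-i)$. The fully-dotted sub-balloon evaluates to $i!\,\omega_i^{(i)}$ by \eqref{intergralballon}; iterated application of \eqref{equ:r=0} gives $(r-i)!\cdot 1_{r-i}$ for the empty sub-balloon; and Lemma~\ref{lem:splitmerge-simplify} applied to the outer split-merge (with parameters $a=i$, $b=r-i$, $r'=i$, so that the binomial coefficient $\binom{r-i}{r-i}$ is trivially $1$) gives the single-strand form.

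Substituting and simplifying, one obtains an expression of the form $r!\sum_{i}\alpha_{i}\,\omega_{i}^{(r)}$ for some coefficients $\alpha_i \in \kk[u]$, which after a change of summation index $j = r-i$ must be identified with $r!\,g_{r,u}$ as defined in \eqref{def-gau}. The commutativity of $\End_{\AW}(\stra)$ (Lemma~\ref{lem:commute}) ensures that the order in which dots are collected on a single thick strand is immaterial.

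The main obstacle is precisely the last step: one must reconcile the coefficients arising from the bilinear expansion of the balloon with the rising-factorial coefficients $(-1)^j u(u+1)\cdots(u+j-1)$ appearing in $g_{r,u}$. The cleanest way to do this is probably by induction on $r$: decomposing the balloon as $M_{(r-1,1)\to r}\circ(\mathrm{Balloon}_{r-1}(g)\otimes g)\circ A_{r\to(r-1,1)}$ and applying the induction hypothesis $\mathrm{Balloon}_{r-1}(g)=(r-1)!\,g_{r-1,u}$, one expands $g = \omega_1 - u$ on the last thin strand and pushes the resulting $\omega_1$-dot across the merge using \eqref{dotmovesplits+merge-simplify} and \eqref{dotmovemergess}; the "lower order" correction terms produced by these moves are exactly what shift the naive $u^j$ coefficient to the rising factorial $u(u+1)\cdots(u+j-1)$, giving the recursive identity that matches \eqref{def-gau}.
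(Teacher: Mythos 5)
Your first route contains a genuine error: the claim that $B_S$ depends only on $|S|$ is false. Relations \eqref{symmetric}--\eqref{swallows} only let you absorb or cancel crossings of \emph{undotted} legs; to move a dot from one leg to another you must slide it through a crossing, and \eqref{dotmovecrossing} produces a correction term when you do. Concretely, for $r=2$ one has $B_{\{1\}}=\omega_{2,1}$ by the very definition \eqref{equ:r=0andr=a}, while \eqref{dotmovemergess} together with \eqref{equ:r=0} gives
\begin{equation*}
2\,\omega_{2,1}=\omega_{2,1}\circ(\mathrm{merge}\circ\mathrm{split})=B_{\{1\}}+B_{\{2\}}+2\cdot 1_2,
\qquad\text{so }B_{\{2\}}=\omega_{2,1}-2\cdot 1_2\neq B_{\{1\}}.
\end{equation*}
If your collapse to $\sum_i\binom{r}{i}(-u)^{r-i}B_i$ with $B_i=i!(r-i)!\,\omega_{r,i}$ were valid, the balloon would equal $r!\sum_i(-u)^{r-i}\omega_{r,i}$, i.e.\ the coefficients would be powers $u^j$ rather than the rising factorials $u(u+1)\cdots(u+j-1)$ in \eqref{def-gau}; it is exactly the asymmetry $B_{\{2\}}=B_{\{1\}}-2$ that restores $2u(u+1)$ in the $r=2$ check. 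So the "obstacle" you flag at the end is not a bookkeeping nuisance but the failure of the symmetry premise.

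Your fallback induction is the right idea and is in fact the paper's proof: decompose the $r$-balloon through $(r-1,1)$, apply the inductive hypothesis to get $\sum_i(-1)^i(r-1)!\,u(u+1)\cdots(u+i-1)$ times the diagram with $\omega_{r-1-i}$ on the thick leg and $g$ on the thin leg, then evaluate. But the decisive computation is left unproved in your sketch, and \eqref{dotmovesplits+merge-simplify} is not the right tool for it (it only treats the full packet $\omega_{a+b}$). What is needed is the exact identity
\begin{equation*}
\mathrm{merge}\circ\bigl(\omega_{r-1-i}\otimes g\bigr)\circ\mathrm{split}
=(r-i)\,\omega_{r-i}-(i+1)(u+i)\,\omega_{r-1-i},
\end{equation*}
whose telescoping against the coefficients $(-1)^i(r-1)!\,u(u+1)\cdots(u+i-1)$ yields $r!\,g_{r,u}$. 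This identity can be obtained either as in the paper --- rewrite the thick dot via \eqref{splitmerge} into a small balloon, rearrange with \eqref{webassoc}, \eqref{swallows}, push the $\omega_1$ through a crossing by \eqref{dotmovecrossing}, and finish with \eqref{intergralballon} and \eqref{splitmerge} --- or, closer to your wording, from \eqref{dotmovemergess} (with $a=r-1$, $b=1$, $r$ replaced by $r-i$) combined with Lemma~\ref{lem:splitmerge-simplify} and \eqref{equ:r=0}, as in the $r=2$ computation above. Until that step is carried out, the proof is incomplete; with it, your induction coincides with the paper's argument.
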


\begin{proof}
We prove by induction on $r$.
The case $r=1$ is trivial by definition.
Suppose $r\ge2$. Write $a_{i}=\prod_{0\le j\le i-1}(u+j)$ for $1\le i\le r$. By inductive assumption on $r-1$, we have 
\begin{align*}
\begin{tikzpicture}[baseline = 5pt, scale=.5, color=\clr]
\draw[-, line width=1.2pt] (0.5,2) to (0.5,2.3);
\draw[-, line width=1.2pt] (0.5,0) to (0.5,-.3);
\draw[-,thin](0.5,2) to[out=left,in=up] (-.5,1) to[out=down,in=left] (0.5,0);
\draw[-,thin]  (0.5,2) to[out=left,in=up] (0,1) to[out=down,in=left] (0.5,0);   
\draw[-,thin] (0.5,0)to[out=right,in=down] (1.5,1)to[out=up,in=right] (0.5,2);
\draw[-,thin] (0.5,0)to[out=right,in=down] (1,1)
 to[out=up,in=right] (0.5,2);
\node at (0.5,.7){$\scriptstyle \cdots$};
\draw (-0.5,1) \bdot; 
 \node at (-.75,1) {$\scriptstyle g$}; 
\draw (0,1) \bdot; 
\node at (0.3,1) {$\scriptstyle g$};
\draw (1,1) \bdot;
\draw (1.5,1) \bdot; 
\node at (1.8,1) {$\scriptstyle g$};
\node at (-.4,0) {$\scriptstyle 1$};
 \node at (.2,0.3) {$\scriptstyle 1$};
\node at (.7,0.3) {$\scriptstyle 1$};
\node at (1.2,0) {$\scriptstyle 1$};
\draw (.5,-.5) node{$\scriptstyle {r}$};
\end{tikzpicture}
&
= 
 \sum_{0\le i\le r-1}(-1)^i  (r-1)!a_{i} \:\:
 \begin{tikzpicture}[baseline = -1mm,scale=1,color=\clr]
	\draw[-,line width=1.5pt] (0.08,-.8) to (0.08,-.5);
	\draw[-,line width=1.5pt] (0.08,.3) to (0.08,.6);
\draw[-,thin] (0.1,-.51) to [out=45,in=-45] (0.1,.31);
\draw[-,thick] (0.06,-.51) to [out=135,in=-135] (0.06,.31);
\draw(-.1,0) \bdot;
\draw(.25,0) \bdot;
\node at (.45,0) {$\scriptstyle g$};
\draw(-.8,0)node {$\scriptstyle \omega_{r-1-i}$};
\node at (-.4,-.45) {$\scriptstyle r-1$};
\node at (.4,-.45) {$\scriptstyle 1$};
\end{tikzpicture}
\\
&
\overset{\eqref{splitmerge}}=
\sum_{0\le i\le r-1}(-1)^i (r-1)! a_{i} \:\:
\begin{tikzpicture}[baseline = -1mm,scale=.8,color=\clr]
\draw[-,line width=1.5pt] (0,-.8) to (0.,-.5);
\draw[-,line width=1.5pt] (0.,.5) to (0,.8);
\draw[-,thick] (0,-.5) to [out=45,in=-45] (0.,.5);
\draw[-,thin] (0,-.5) to [out=right,in=down] (0.5,0)to [out=up,in=right](0,.5);
\draw[-,thick] (0,-.5) to [out=left,in=down] (-0.5,0)to [out=up,in=left](0,.5);
\draw(-.5,0) \bdot;
\draw(.5,0) \bdot;
\node at (.7,0) {$\scriptstyle g$};
\draw(-1.2,0)node {$\scriptstyle \omega_{r-1-i}$};
\node at (0,-.2) {$\scriptstyle i$};
\node at (.4,-.45) {$\scriptstyle 1$};
\end{tikzpicture}
\\
&
\overset{\eqref{webassoc}}{\underset{\eqref{swallows}}{=}}
\sum_{0\le i\le r-1}(-1)^i (r-1)! a_i \:\:
\begin{tikzpicture}[baseline = -1mm,scale=.7,color=\clr]
\draw[-,line width=1.5pt] (0,-1.2) to (0.,-1);
\draw[-,line width=1.5pt] (0.,1) to (0,1.2);
\draw[-,thick] (0,-1) to [out=right,in=down] (0.25,-.75);
\draw[-,thick] (0.25,-.75) to [out=45,in=down] (0.5,-.5);
\draw[-,thick] (0.25,-.75) to [out=135,in=down] (0,-.5);
\draw[-,thick] (0,1) to [out=right,in=up] (0.25,.75);
\draw[-,thick] (0.25,.75) to [out=135,in=up] (0.5,.5);
\draw[-,thick] (0.25,.75) to [out=45,in=up] (0,.5);
\draw[-,thick] (0,-1) to [out=left,in=down] (-0.5,0)to [out=up,in=left](0,1);
\draw[-,thick](0,-.5) to (.5,.5);
\draw[-,thick](0,.5) to (.5,-.5);
\draw(-.5,0) \bdot;
\draw(.4,0.35) \bdot;
\node at (.6,.3) {$\scriptstyle g$};
\draw(-1.3,0)node {$\scriptstyle \omega_{r-1-i}$};
\node at (0.5,-.7) {$\scriptstyle i$};
\node at (-.1,-.7) {$\scriptstyle 1$};
\end{tikzpicture}\\
&
\overset{\eqref{dotmovecrossing}}{=}
\sum_{0\le i\le r-1}(-1)^i (r-1)! a_i \:\:
\left( 
\begin{tikzpicture}[baseline = -1mm,scale=.7,color=\clr]
\draw[-,line width=1.5pt] (0,-1.2) to (0.,-1);
\draw[-,line width=1.5pt] (0.,1) to (0,1.2);
\draw[-,thick] (0,-1) to [out=right,in=down] (0.25,-.75);
\draw[-,thick] (0.25,-.75) to [out=45,in=down] (0.5,-.5);
\draw[-,thick] (0.25,-.75) to [out=135,in=down] (0,-.5);
\draw[-,thick] (0,1) to [out=right,in=up] (0.25,.75);
\draw[-,thick] (0.25,.75) to [out=135,in=up] (0.5,.5);
\draw[-,thick] (0.25,.75) to [out=45,in=up] (0,.5);
\draw[-,thick] (0,-1) to [out=left,in=down] (-0.5,0)to [out=up,in=left](0,1);
\draw[-,thick](0,-.5) to (.5,.5);
\draw[-,thick](0,.5) to (.5,-.5);
\draw(-.5,0) \bdot;
\draw(0.05,-.4) \bdot;
\node at (-.2,-.35) {$\scriptstyle g$};
\draw(-1.3,0)node {$\scriptstyle \omega_{r-1-i}$};
\node at (0.5,-.7) {$\scriptstyle i$};
\node at (-.1,-.7) {$\scriptstyle 1$};
\end{tikzpicture}
-
\begin{tikzpicture}[baseline = -1mm,scale=.7,color=\clr]
\draw[-,line width=1.5pt] (0,-1.2) to (0.,-1);
\draw[-,line width=1.5pt] (0.,1) to (0,1.2);
\draw[-,thick] (0,-1) to [out=right,in=down] (0.25,-.75);
\draw[-,thick] (0.25,-.75) to [out=45,in=down] (0.5,-.5);
\draw[-,thick] (0.25,-.75) to [out=135,in=down] (0,-.5);
\draw[-,thick] (0,1) to [out=right,in=up] (0.25,.75);
\draw[-,thick] (0.25,.75) to [out=135,in=up] (0.5,.5);
\draw[-,thick] (0.25,.75) to [out=45,in=up] (0,.5);
\draw[-,thick] (0,-1) to [out=left,in=down] (-0.5,0)to [out=up,in=left](0,1);
\draw[-,thick](0,.5) to (.5,-.5);
\draw[-,thick](0,-.5) to (0,.5);
\draw[-,thick](0.5,.5) to (.5,-.5);
\draw(-.5,0) \bdot;
\draw(-1.3,0)node {$\scriptstyle \omega_{r-1-i}$};
\node at (0.5,-.7) {$\scriptstyle i$};
\node at (-.1,-.7) {$\scriptstyle 1$};
\node at (.6,.5) {$\scriptstyle 1$};
\end{tikzpicture}
\right)\\
&
\overset{\eqref{webassoc}}{\underset{\eqref{swallows}\eqref{splitmerge}}{=}}
\sum_{0\le i\le r-1}(-1)^i (r-1)! a_i \:\:
\left( 
\begin{tikzpicture}[baseline = -1mm,scale=.8,color=\clr]
\draw[-,line width=1.5pt] (0,-.8) to (0.,-.5);
\draw[-,line width=1.5pt] (0.,.5) to (0,.8);
\draw[-,thick] (0,-.5) to  (0.,.5);
\draw[-,thick] (0,-.5) to [out=right,in=down] (0.5,0)to [out=up,in=right](0,.5);
\draw[-,thick] (0,-.5) to [out=left,in=down] (-0.5,0)to [out=up,in=left](0,.5);
\draw(-.5,0) \bdot;
\draw(0,0) \bdot;
\node at (0.2,0) {$\scriptstyle g$};
\draw(-1.3,0)node {$\scriptstyle \omega_{r-1-i}$};
\node at (.4,-.45) {$\scriptstyle i$};
\end{tikzpicture}
-
i(i+1)~~\:\:
 \begin{tikzpicture}[baseline = -1mm,scale=.8,color=\clr]
\draw[-,line width=1.5pt] (0.08,-.6) to (0.08,.5);
\node at (.08,-.8) {$\scriptstyle r$};
\draw(0.08,0) \bdot;
\draw(.9,0)node {$\scriptstyle \omega_{r-1-i}$};
\end{tikzpicture}
\right)
\\
&\overset{\eqref{dotmovesplits+merge-simplify}}{\underset{\eqref{splitmerge}}{=}}
\sum_{0\le i\le r-1}(-1)^i (r-1)! a_i \:\:
\left( 
\begin{tikzpicture}[baseline = -1mm,scale=.8,color=\clr]
\draw[-,line width=1.5pt] (0,-.8) to (0.,-.3);
\draw[-,line width=1.5pt] (0.,.5) to (0,1);
\draw[-,thick] (0,-.5) to [out=right,in=down] (0.6,0.1)to [out=up,in=right](0,.85);
\draw[-,line width=1pt] (0,-.3) to [out=left,in=down] (-0.4,.1)to [out=up,in=left](0,.5);
\draw[-,thick] (0,-.3) to [out=right,in=down] (0.4,0.1)to [out=up,in=right](0,.5);
\draw(0,.7) \bdot;
\draw(-.6,0.7)node {$\scriptstyle \omega_{r-i}$};
\node at (.4,-.45) {$\scriptstyle i$};
\node at (-1,-.2) {$\scriptstyle r-1-i$};
\node at (0,-.95) {$\scriptstyle r$};
\end{tikzpicture}
-
(i+1)(u+i)~~\:\:
 \begin{tikzpicture}[baseline = -1mm,scale=.8,color=\clr]
\draw[-,line width=1.5pt] (0.08,-.6) to (0.08,.5);
\node at (.08,-.8) {$\scriptstyle r$};
\draw(0.08,0) \bdot;
\draw(.9,0)node {$\scriptstyle \omega_{r-1-i}$};
\end{tikzpicture}
\right)\\
&\overset{\eqref{equ:r=0}}{\underset{\eqref{splitmerge}}{=}}
\sum_{0\le i\le r-1}(-1)^i (r-1)! a_i \:\:
\left( 
(r-i)~
\begin{tikzpicture}[baseline = -1mm,scale=.8,color=\clr]
\draw[-,line width=1.5pt] (0.08,-.6) to (0.08,.5);
\node at (.08,-.8) {$\scriptstyle r$};
\draw(0.08,0) \bdot;
\draw(.7,0)node {$\scriptstyle \omega_{r-i}$};
\end{tikzpicture}
-
(i+1)(u+i)~~\:\:
 \begin{tikzpicture}[baseline = -1mm,scale=.8,color=\clr]
\draw[-,line width=1.5pt] (0.08,-.6) to (0.08,.5);
\node at (.08,-.8) {$\scriptstyle r$};
\draw(0.08,0) \bdot;
\draw(.9,0)node {$\scriptstyle \omega_{r-1-i}$};
\end{tikzpicture}
\right)\\
&=
\sum_{0\le i\le r}(-1)^i r! a_i \:\: 
\begin{tikzpicture}[baseline = -1mm,scale=.8,color=\clr]
\draw[-,line width=1.5pt] (0.08,-.6) to (0.08,.5);
\node at (.08,-.8) {$\scriptstyle r$};
\draw(0.08,0) \bdot;
\draw(.7,0)node {$\scriptstyle \omega_{r-i}$};
\end{tikzpicture}= r! \, g_{r,u}.
\end{align*}
The lemma is proved.
\end{proof}

\subsection{Relations modulo lower degree terms}

By assigning degree $r$ to $\wkdota$ and $0$ to other generating morphisms, we define the degree of any dotted web diagram $D$ additively by summing up degrees of local diagrams of $D$. Let $\Hom_{\AW}(\lambda,\mu)_{\le k}$ (and respectively, $\Hom_{\AW}(\lambda,\mu)_{< k}$) be the $\kk$-span of all dotted web diagram of type $\lambda\rightarrow \mu$ with degree $\le k$ (and respectively, $<k$). For any two dotted web diagrams $D$ and $D'$  in  $\Hom_{\AW}(\lambda,\mu)_{\le k}$, we write 
\begin{align}  \label{modLOT}
D\equiv D' \quad \text{ if } D=D'  \mod \Hom_{\AW}(\lambda,\mu)_{< k}.
\end{align}

It follows from relations \eqref{dotmovecrossing}, \eqref{dotmovesplitss} and \eqref{dotmovemergess} that we may move dots through crossings, splits and merges freely up to lower degree terms, as formulated in the following lemma. This fact will be used frequently without any explicit reference.

\begin{lemma} In $\AW$, the following $\equiv$-relations hold:
\label{dotmovefreely}
 \begin{enumerate}
  \item 
    $\begin{tikzpicture}[baseline = 7.5pt, scale=0.4, color=\clr]
\draw[-,line width=1.2pt] (0,-.2) to  (1,2.2);
\draw[-,line width=1.2pt] (1,-.2) to  (0,2.2);
\draw(0.2,1.6)\bdot;
\draw(-.4,1.6) node {$\scriptstyle \omega_r$};
\node at (0, -.5) {$\scriptstyle a$};
\node at (1, -.5) {$\scriptstyle b$};
\end{tikzpicture}
~\equiv~ 
\begin{tikzpicture}[baseline = 7.5pt, scale=0.4, color=\clr]
\draw[-, line width=1.2pt] (0,-.2) to (1,2.2);
\draw[-,line width=1.2pt] (1,-.2) to (0,2.2);
\draw(.8,0.3)\bdot;
\draw(1.5,0.3) node {$\scriptstyle \omega_r$};
\node at (0, -.5) {$\scriptstyle a$};
\node at (1, -.5) {$\scriptstyle b$};
\end{tikzpicture}, 
\quad 
\begin{tikzpicture}[baseline = 7.5pt, scale=0.4, color=\clr]
\draw[-, line width=1.2pt] (0,-.2) to (1,2.2);
\draw[-,line width=1.2pt] (1,-.2) to(0,2.2);
\draw(.2,0.2)\bdot;
\draw(-.4,.2)node {$\scriptstyle \omega_r$};
\node at (0, -.5) {$\scriptstyle b$};
\node at (1, -.5) {$\scriptstyle a$};
\end{tikzpicture}
~\equiv~
\begin{tikzpicture}[baseline = 7.5pt, scale=0.4, color=\clr]
\draw[-,line width=1.2pt] (0,-.2) to  (1,2.2);
\draw[-,line width=1.2pt] (1,-.2) to  (0,2.2);
\draw(0.8,1.6)\bdot;
\draw(1.5,1.6)node {$\scriptstyle \omega_r$};
\node at (0, -.5) {$\scriptstyle b$};
\node at (1, -.5) {$\scriptstyle a$};
\end{tikzpicture}
$;
\item
$
\begin{tikzpicture}[baseline = -.5mm,scale=.85,color=\clr]
\draw[-,line width=1.5pt] (0.08,-.5) to (0.08,0.04);
\draw[-,line width=1pt] (0.34,.5) to (0.08,0);
\draw[-,line width=1pt] (-0.2,.5) to (0.08,0);
\node at (-0.22,.6) {$\scriptstyle a$};
\node at (0.36,.65) {$\scriptstyle b$};
\draw (0.08,-.2) \bdot;
\draw (0.45,-.2) node{$\scriptstyle \omega_r$};
\end{tikzpicture} 
\equiv~
\sum\limits_{c+d=r} ~
\begin{tikzpicture}[baseline = -.5mm,scale=.85,color=\clr]
\draw[-,line width=1.5pt] (0.08,-.5) to (0.08,0.04);
\draw[-,line width=1pt] (0.34,.5) to (0.08,0);
\draw[-,line width=1pt] (-0.2,.5) to (0.08,0);
\node at (-0.22,.6) {$\scriptstyle a$};
\node at (0.36,.65) {$\scriptstyle b$};
\draw (-.05,.24) \bdot;
\draw (-0.4,.2) node{$\scriptstyle \omega_{c}$};
\draw (0.6,.2) node{$\scriptstyle \omega_{d}$};
\draw (.22,.24) \bdot;
\end{tikzpicture}~
$;
\item 
$
\begin{tikzpicture}[baseline = -.5mm, scale=.85, color=\clr]
\draw[-,line width=1pt] (0.3,-.5) to (0.08,0.04);
\draw[-,line width=1pt] (-0.2,-.5) to (0.08,0.04);
\draw[-,line width=1.5pt] (0.08,.6) to (0.08,0);
\node at (-0.22,-.6) {$\scriptstyle a$};
\node at (0.35,-.6) {$\scriptstyle b$};
\draw (0.08,.2) \bdot;
\draw (0.45,.2) node{$\scriptstyle \omega_r$};  
\end{tikzpicture}
 ~\equiv~
\sum\limits_{c+d=r}~
\begin{tikzpicture}[baseline = -.5mm,scale=.85, color=\clr]
\draw[-,line width=1pt] (0.3,-.5) to (0.08,0.04);
\draw[-,line width=1pt] (-0.2,-.5) to (0.08,0.04);
\draw[-,line width=1.5pt] (0.08,.6) to (0.08,0);
\node at (-0.22,-.6) {$\scriptstyle a$};
\node at (0.35,-.6) {$\scriptstyle b$};
\draw (-.08,-.3) \bdot; \draw (.22,-.3) \bdot;
\draw (-0.4,-.3) node{$\scriptstyle \omega_{c}$};
\draw (0.6,-.3) node{$\scriptstyle \omega_{d}$};
\end{tikzpicture}
$. 
\end{enumerate}  
\end{lemma}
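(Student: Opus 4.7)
The plan is to read off all three $\equiv$-relations as the ``leading degree'' parts of the exact identities already established in \eqref{dotmovecrossing}, \eqref{dotmovesplitss}, and \eqref{dotmovemergess}. Under the degree convention, a dot $\omega_r$ contributes $r$ and the merges, splits, and crossings contribute $0$; hence each left-hand side in (1), (2), (3) has total degree exactly $r$. It therefore suffices to isolate the terms of degree $r$ on the right-hand sides of the three exact identities and check that all remaining summands land in $\Hom_{\AW}(\lambda,\mu)_{<r}$.

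For (1), the summand of \eqref{dotmovecrossing} indexed by $t$ carries a single dot $\omega_{r-t}$ (of degree $r-t$) together with a rung of thickness $t$ (of degree $0$), so its total degree is $r-t$. Only $t=0$ reaches degree $r$, and the coefficient $0!=1$ turns that term into precisely the diagram displayed on the right of (1). The companion formula in (1) follows by the same argument applied to the second identity in \eqref{dotmovecrossing}.

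For (2), each term on the right of \eqref{dotmovesplitss} is indexed by $(c,d,e)$ with $c+d+e=r$ and carries two dots $\omega_c,\omega_d$ of total degree $c+d=r-e$. The condition ``degree $=r$'' forces $e=0$, and in that case the coefficient $e!\binom{a-c}{e}\binom{b-d}{e}$ collapses to $1$, leaving exactly $\sum_{c+d=r}$ of the claimed diagrams. Relation (3) is obtained in exactly the same way from \eqref{dotmovemergess}, since the degree analysis and the $e=0$ specialization of the coefficient are identical.

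There is no serious obstacle here: once the degree filtration is in place, the lemma is a one-line consequence of the previously proved exact ``dot-move'' identities, simply by dropping all summands of degree $<r$. The only thing to note is the slight bookkeeping for the coefficients at $e=0$, which evaluate to $1$ and thus match the plain sums written in (2) and (3).
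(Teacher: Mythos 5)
Your proposal is correct and matches the paper's own justification: the lemma is stated there as an immediate consequence of the exact identities \eqref{dotmovecrossing}, \eqref{dotmovesplitss}, \eqref{dotmovemergess}, obtained precisely by discarding the summands with $t\ge 1$ (resp.\ $e\ge 1$), which have degree $r-t$ (resp.\ $r-e$) $<r$, and noting that the surviving $t=0$ (resp.\ $e=0$) terms have coefficient $1$.
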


For any two strict  partitions  $\lambda=(\lambda_1,\ldots, \lambda_k)$ and $\mu=(\mu_1,\ldots,\mu_l)$ with  $1\le \lambda_i\le a$ and $ 1\le \mu_j \le b$, 
let $b_{\lambda,\mu}\in \End_{\AW}(a+b)$ be given as 
\begin{equation}
\label{blambdamu}
\begin{tikzpicture}[baseline = 5pt, scale=.7, color=\clr] 
\node at (-2,1){$ b_{\lambda,\mu}:=$};
\draw[-, line width=1.5pt] (0.5,2) to (0.5,2.5);
\draw[-, line width=1.5pt] (0.5,0) to (0.5,-.5);
\draw[-,line width=1pt]  (0.5,2) to[out=left,in=up] (0,1)to[out=down,in=left] (0.5,0);      
\draw[-,line width=1pt] (0.5,0)to[out=right,in=down] (1,1)to[out=up,in=right] (0.5,2);
\node at (-.2,.9){$\scriptstyle \vdots$};
\draw (0.1,1.7) \bdot;
\node at (-.4,1.7) {$\scriptstyle \omega_{\lambda_1}$}; 
\draw (0,1.2) \bdot; 
\node at (-0.45,1.2) {$\scriptstyle \omega_{\lambda_2}$};
\draw (0.05,.4) \bdot; 
\node at (-0.45,.4) {$\scriptstyle \omega_{\lambda_k}$};
\draw (1,1.2) \bdot;
\draw (.95,.4) \bdot; 
\node at (1.5,.4) {$\scriptstyle \omega_{\mu_l}$};
\draw (.9,1.7) \bdot; 
\node at (1.45,1.7) {$\scriptstyle \omega_{\mu_1}$};
\node at (1.5,1.2) {$\scriptstyle \omega_{\mu_2}$};
\node at (1.2,.9){$\scriptstyle \vdots$};
\node at (0.2,-.1) {$\scriptstyle a$};
\node at (.9,-0.1) {$\scriptstyle b$};
\end{tikzpicture}.  
\end{equation}
Note the degree 
$\deg (b_{\lambda,\mu})= |\lambda|+|\mu|$.
\begin{lemma}
\label{wkdotsspan}
For any $\lambda$ and $\mu$, we have $b_{\lambda,\mu}\in D_{a+b}$.
    \end{lemma}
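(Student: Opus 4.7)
The plan is a double induction: primary on the total dot-degree $R=|\lambda|+|\mu|$, and, for fixed $R$, a secondary induction on the refinement partial order of the combined multiset $\nu := \lambda\cup\mu$ of dot-sizes. The base case $R=0$ is immediate from \eqref{equ:r=0}, giving $b_{\emptyset,\emptyset} = \binom{a+b}{a}\cdot 1_{a+b} \in D_{a+b}$.

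For the inductive step, I would push the top-most dot $\omega_{a,\lambda_1}$ through the merge using \eqref{dotmovemergess} (rearranged to isolate the term $M\circ[\omega_{a,\lambda_1}\otimes 1_b]$). The resulting expansion reads
\[
b_{\lambda,\mu} = \omega_{a+b,\lambda_1}\cdot b_{\lambda',\mu} \;-\; b_{\lambda',(\lambda_1,\mu)} \;-\; \sum_{\substack{c,d\ge 1\\ c+d=\lambda_1}} b_{(c,\lambda'),(d,\mu)} \;-\; (\text{lower-degree balloons}),
\]
where $\lambda'=(\lambda_2,\ldots,\lambda_k)$. The main term and lower-degree corrections lie in $D_{a+b}$ by the primary induction. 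Each correction $b_{(c,\lambda'),(d,\mu)}$ with $c,d\ge 1$ has combined multiset $(\nu\setminus\{\lambda_1\})\cup\{c,d\}$, strictly finer than $\nu$, hence lies in $D_{a+b}$ by the secondary induction. What remains is the ``sign-flip'' congruence
\[
b_{\lambda,\mu} + b_{\lambda',(\lambda_1,\mu)} \in D_{a+b}.
\]

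Iterating this relation, together with its $\mu$-side counterpart obtained by pushing $\omega_{b,\mu_1}$, identifies every balloon of combined multiset $\nu$ with $\pm b_{\nu,\emptyset}$ modulo $D_{a+b}$, reducing the problem to showing $b_{\nu,\emptyset} \in D_{a+b}$. To close this final step I would expand the product $\omega_{a+b,\nu_1}\cdots\omega_{a+b,\nu_n}\cdot\binom{a+b}{a}$ by iterated applications of \eqref{dotmovemergess}: the expansion produces $b_{\nu,\emptyset}$ with coefficient one, together with same-multiset balloons (reducible to $\pm b_{\nu,\emptyset}$ via the sign-flip), strictly-refined-multiset balloons (handled by the secondary induction), and lower-degree balloons (handled by the primary induction). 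Solving the resulting linear relation yields $b_{\nu,\emptyset}\in D_{a+b}$ and hence, via the sign-flip, the general statement.

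The main obstacle lies in the final step: the sign-flip symmetry is strong enough that, if one is not careful, the same-multiset contributions can cancel among themselves in the iterated expansion (the alternating sum $\sum_S (-1)^{|S|}$ over partitionings vanishes), leaving only a tautology. Breaking this degeneracy will likely require combining the merge-expansion with an independent relation obtained by pushing a dot \emph{downward} through the split via \eqref{dotmovesplitss}, thereby producing a relation that is not invariant under the top/bottom symmetry $\div$ of \eqref{anti-auto} and so can pin down the individual balloon rather than merely its sign-flipped sum.
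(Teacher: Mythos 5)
Your derivation of the sign-flip congruence is sound: rearranging \eqref{dotmovemergess} to isolate the term with the dot entirely on the $a$-leg does give $b_{\lambda,\mu}+b_{\lambda',(\lambda_1,\mu)}\in D_{a+b}$ under your two inductive hypotheses, and together with its $\mu$-side analogue it reduces the lemma to showing $b_{\nu,\emptyset}\in D_{a+b}$ (a reduction analogous in spirit to the paper's first step of clearing the right leg). The gap is exactly where you place it, and the repair you suggest cannot close it. The relation \eqref{dotmovesplitss} is the image of \eqref{dotmovemergess} under the anti-isomorphism $\div$ of \eqref{anti-auto}; every balloon $b_{\alpha,\beta}$ is fixed by $\div$ (the packets on a leg commute by Lemma~\ref{lem:commute}), and $D_{a+b}$ is preserved as well. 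Consequently, pushing a dot downward through the split produces, modulo $D_{a+b}$ and your inductive hypotheses, the very same sign-flip congruence rather than an independent relation. Since in your framework $a$ and $b$ are held fixed and the only inductions are on total dot degree and on refinement of $\nu$, all the relations at your disposal are $\div$-symmetric and can only pin down the multiset-$\nu$ balloons up to the sign-flip equivalence --- which is precisely what your vanishing alternating sum $\sum_S(-1)^{|S|}=0$ detects. The element $b_{\nu,\emptyset}$ itself therefore remains out of reach.

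The ingredient the paper uses to break this degeneracy is an induction on the thickness of the dot-free leg, i.e.\ on $b$ itself, not only on the dot data. Having reduced to $b_{\lambda,\emptyset}$, it argues by induction on $b$, on the number $k$ of packets, and on $\lambda_k$. The one-packet case is settled exactly by \eqref{splitmerge}, namely $b_{(r),\emptyset}=\binom{a+b-r}{b}\,\omega_{a+b,r}$; in the inductive step, whenever a correction term acquires a packet $\omega_r$ with $1\le r<b$ on the $b$-leg, that leg is re-split as $r+(b-r)$ (see \eqref{equ:lessb}) so that the hypothesis for strictly smaller leg thickness applies twice, and the boundary case $r=b$ is removed by further dot moves that decrease $k$ or $\lambda_k$. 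It is this freedom to change the legs' thicknesses in mid-argument --- absent from your scheme --- that supplies the asymmetry needed to determine $b_{\nu,\emptyset}$, and some such mechanism is required to complete the proof.
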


\begin{proof}
We will proceed by using four layers of induction. The first is induction on the total degree $|\lambda|+|\mu|$ of \eqref{blambdamu}.
Using Lemma \ref{dotmovefreely} (3) repeatedly we may move the dots from the right of the balloon to the left up to moving them outside the balloon (modulo lower degree terms), reducing to the case for $\mu= \emptyset$. Thus, by induction on degree $|\lambda|+|\mu|$, it suffices to prove that $b_{\lambda,\emptyset}\in D_{a+b}$ for all $\lambda$. 

Keeping the simple induction on the total degree in the background (which allows us to use $\equiv$ modulo lower terms whenever it is needed), we now proceed by a second induction on the thickness $b$, and then further apply double induction on $k$ (\# dot generators) and on $\lambda_k$ for a given $b$.

Suppose $b=1$ first. We proceed by double induction on $k$ and on $\lambda_k$. If $k=1$, then by \eqref{splitmerge},
\[
b_{\lambda,\emptyset} =(a+1-\lambda_1)
\begin{tikzpicture}[baseline = -1mm,scale=.8,color=\clr]
\draw[-,line width=1.5pt] (0.08,-.7) to (0.08,.5);
\node at (.08,-.8) {$\scriptstyle a+1$};
\draw(0.08,0) \bdot;
\draw(.5,0)node {$\scriptstyle \omega_{\lambda_1}$};
\end{tikzpicture}
\in D_{a+1}.
\]
Suppose $k\ge 2$, first with $\lambda_k=1$. Using Lemma \ref{dotmovefreely}(2)
we have 
\begin{align}
\label{b=1lambdak=1}
b_{\lambda,\emptyset}= 
\begin{tikzpicture}[baseline = 5pt, scale=.7, color=\clr] 
\draw[-, line width=1.5pt] (0.5,1.5) to (0.5,1.8);
\draw[-, line width=1.5pt] (0.5,-.5) to (0.5,-1);
\draw[-,line width=1pt]  (0.5,1.5) to[out=left,in=up] (0,0.5)to[out=down,in=left] (0.5,-.5);      
\draw[-,line width=1pt] (0.5,-.5)to[out=right,in=down] (1,0.5)to[out=up,in=right] (0.5,1.5);
\node at (-.2,.4){$\scriptstyle \vdots$};
\draw (0.1,1.2) \bdot;
\node at (-.4,1.2) {$\scriptstyle \omega_{\lambda_1}$}; 
\draw (0,.7) \bdot; 
\node at (-0.45,.7) {$\scriptstyle \omega_{\lambda_2}$};
\draw (0.05,-.1) \bdot; 
\node at (-0.45,-.1) {$\scriptstyle \omega_{\lambda_k}$};
\node at (0.2,-.6) {$\scriptstyle a$};
\node at (.9,-0.6) {$\scriptstyle 1$};
\end{tikzpicture}
~\equiv~
\begin{tikzpicture}[baseline = 5pt, scale=.7, color=\clr] 
\draw[-, line width=1.5pt] (0.5,1.5) to (0.5,1.8);
\draw[-, line width=1.5pt] (0.5,-.5) to (0.5,-1.2);
\draw[-,line width=1pt]  (0.5,1.5) to[out=left,in=up] (0,0.5)to[out=down,in=left] (0.5,-.5);      
\draw[-,line width=1pt] (0.5,-.5)to[out=right,in=down] (1,0.5)to[out=up,in=right] (0.5,1.5);
\node at (-.2,.4){$\scriptstyle \vdots$};
\draw (0.1,1.2) \bdot;
\node at (-.4,1.2) {$\scriptstyle \omega_{\lambda_1}$}; 
\draw (0,.7) \bdot; 
\node at (-0.45,.7) {$\scriptstyle \omega_{\lambda_2}$};
\draw (0.05,-.1) \bdot; 
\node at (-0.6,-.1) {$\scriptstyle \omega_{\lambda_{k-1}}$};
\draw (0.5,-.8) \bdot; 
\node at (1.1,-.8) {$\scriptstyle \omega_{\lambda_k}$};
\node at (0.2,-.6) {$\scriptstyle a$};
\end{tikzpicture}
-
\begin{tikzpicture}[baseline = 5pt, scale=.7, color=\clr] 
\draw[-, line width=1.5pt] (0.5,1.5) to (0.5,1.8);
\draw[-, line width=1.5pt] (0.5,-.5) to (0.5,-1);
\draw[-,line width=1pt]  (0.5,1.5) to[out=left,in=up] (0,0.5)to[out=down,in=left] (0.5,-.5);      
\draw[-,line width=1pt] (0.5,-.5)to[out=right,in=down] (1,0.5)to[out=up,in=right] (0.5,1.5);
\node at (-.2,.4){$\scriptstyle \vdots$};
\draw (0.1,1.2) \bdot;
\node at (-.4,1.2) {$\scriptstyle \omega_{\lambda_1}$}; 
\draw (0,.7) \bdot; 
\node at (-0.5,.7) {$\scriptstyle \omega_{\lambda_2}$};
\draw (0.05,-.1) \bdot; 
\node at (-0.6,-.1) {$\scriptstyle \omega_{\lambda_{k-1}}$};
\draw (1,.7) \bdot; 
\node at (1.4,.7) {$\scriptstyle \omega_{1}$};
\node at (0.2,-.6) {$\scriptstyle a$};
\end{tikzpicture}.
\end{align}
The first summand is already in $D_{a+1}$ by the inductive assumption on $k-1$. The second summand can be handled using  Lemma \ref{dotmovefreely}(2) again:
\begin{align}
\label{b=1}
\begin{tikzpicture}[baseline = 5pt, scale=.7, color=\clr] 
\draw[-, line width=1.5pt] (0.5,1.5) to (0.5,1.8);
\draw[-, line width=1.5pt] (0.5,-.5) to (0.5,-1);
\draw[-,line width=1pt]  (0.5,1.5) to[out=left,in=up] (0,0.5)to[out=down,in=left] (0.5,-.5);      
\draw[-,line width=1pt] (0.5,-.5)to[out=right,in=down] (1,0.5)to[out=up,in=right] (0.5,1.5);
\node at (-.2,.4){$\scriptstyle \vdots$};
\draw (0.1,1.2) \bdot;
\node at (-.4,1.2) {$\scriptstyle \omega_{\lambda_1}$}; 
\draw (0,.7) \bdot; 
\node at (-0.5,.7) {$\scriptstyle \omega_{\lambda_2}$};
\draw (0.05,-.1) \bdot; 
\node at (-0.6,-.1) {$\scriptstyle \omega_{\lambda_{k-1}}$};
\draw (1,.7) \bdot; 
\node at (1.4,.7) {$\scriptstyle \omega_{1}$};
\node at (0.2,-.6) {$\scriptstyle a$};
\end{tikzpicture}
\equiv
\begin{tikzpicture}[baseline = 5pt, scale=.7, color=\clr] 
\draw[-, line width=1.5pt] (0.5,1.5) to (0.5,1.8);
\draw[-, line width=1.5pt] (0.5,-.5) to (0.5,-1.2);
\draw[-,line width=1pt]  (0.5,1.5) to[out=left,in=up] (0,0.5)to[out=down,in=left] (0.5,-.5);      
\draw[-,line width=1pt] (0.5,-.5)to[out=right,in=down] (1,0.5)to[out=up,in=right] (0.5,1.5);
\node at (-.2,.4){$\scriptstyle \vdots$};
\draw (0.1,1.2) \bdot;
\node at (-.4,1.2) {$\scriptstyle \omega_{\lambda_1}$}; 
\draw (0,.7) \bdot; 
\node at (-0.5,.7) {$\scriptstyle \omega_{\lambda_2}$};
\draw (0.05,-.1) \bdot; 
\node at (-0.65,-.1) {$\scriptstyle \omega_{\lambda_{k-2}}$};
\draw (0.5,-.8) \bdot; 
\node at (1.5,-.8) {$\scriptstyle \omega_{\lambda_{k-1}+1}$};
\node at (0.2,-.6) {$\scriptstyle a$};
\end{tikzpicture}
-
\begin{tikzpicture}[baseline = 5pt, scale=.7, color=\clr] 
\draw[-, line width=1.5pt] (0.5,1.5) to (0.5,1.8);
\draw[-, line width=1.5pt] (0.5,-.5) to (0.5,-1);
\draw[-,line width=1pt]  (0.5,1.5) to[out=left,in=up] (0,0.5)to[out=down,in=left] (0.5,-.5);      
\draw[-,line width=1pt] (0.5,-.5)to[out=right,in=down] (1,0.5)to[out=up,in=right] (0.5,1.5);
\node at (-.2,.4){$\scriptstyle \vdots$};
\draw (0.1,1.2) \bdot;
\node at (-.4,1.2) {$\scriptstyle \omega_{\lambda_1}$}; 
\draw (0,.7) \bdot; 
\node at (-0.5,.7) {$\scriptstyle \omega_{\lambda_2}$};
\draw (0.05,-.1) \bdot; 
\node at (-.9,-.1) {$\scriptstyle \omega_{\lambda_{k-1}+1}$};
\node at (0.2,-.6) {$\scriptstyle a$};
\end{tikzpicture} \in D_{a+1}
\end{align}
 by the induction hypothesis  on $< k$ dot generators. This settles the case for $\lambda_k=1$.
Now let $\lambda_k\ge 2$.
Then we have 
 \[
b_{\lambda,\emptyset}= 
\begin{tikzpicture}[baseline = 5pt, scale=.7, color=\clr] 
\draw[-, line width=1.5pt] (0.5,1.5) to (0.5,1.8);
\draw[-, line width=1.5pt] (0.5,-.5) to (0.5,-1);
\draw[-,line width=1pt]  (0.5,1.5) to[out=left,in=up] (0,0.5)to[out=down,in=left] (0.5,-.5);      
\draw[-,line width=1pt] (0.5,-.5)to[out=right,in=down] (1,0.5)to[out=up,in=right] (0.5,1.5);
\node at (-.2,.4){$\scriptstyle \vdots$};
\draw (0.1,1.2) \bdot;
\node at (-.4,1.2) {$\scriptstyle \omega_{\lambda_1}$}; 
\draw (0,.7) \bdot; 
\node at (-0.5,.7) {$\scriptstyle \omega_{\lambda_2}$};
\draw (0.05,-.1) \bdot; 
\node at (-0.5,-.1) {$\scriptstyle \omega_{\lambda_k}$};
\node at (0.2,-.6) {$\scriptstyle a$};
\node at (.9,-0.6) {$\scriptstyle 1$};
\end{tikzpicture}
~\equiv~
\begin{tikzpicture}[baseline = 5pt, scale=.7, color=\clr] 
\draw[-, line width=1.5pt] (0.5,1.5) to (0.5,1.8);
\draw[-, line width=1.5pt] (0.5,-.5) to (0.5,-1.2);
\draw[-,line width=1pt]  (0.5,1.5) to[out=left,in=up] (0,0.5)to[out=down,in=left] (0.5,-.5);      
\draw[-,line width=1pt] (0.5,-.5)to[out=right,in=down] (1,0.5)to[out=up,in=right] (0.5,1.5);
\node at (-.2,.4){$\scriptstyle \vdots$};
\draw (0.1,1.2) \bdot;
\node at (-.4,1.2) {$\scriptstyle \omega_{\lambda_1}$}; 
\draw (0,.7) \bdot; 
\node at (-0.5,.7) {$\scriptstyle \omega_{\lambda_2}$};
\draw (0.05,-.1) \bdot; 
\node at (-0.6,-.1) {$\scriptstyle \omega_{\lambda_{k-1}}$};
\draw (0.5,-.8) \bdot; 
\node at (1.1,-.8) {$\scriptstyle \omega_{\lambda_k}$};
\node at (0.2,-.6) {$\scriptstyle a$};
\end{tikzpicture}
-
\begin{tikzpicture}[baseline = 5pt, scale=.7, color=\clr] 
\draw[-, line width=1.5pt] (0.5,1.5) to (0.5,1.8);
\draw[-, line width=1.5pt] (0.5,-.5) to (0.5,-1);
\draw[-,line width=1pt]  (0.5,1.5) to[out=left,in=up] (0,0.5)to[out=down,in=left] (0.5,-.5);      
\draw[-,line width=1pt] (0.5,-.5)to[out=right,in=down] (1,0.5)to[out=up,in=right] (0.5,1.5);
\node at (-.2,.4){$\scriptstyle \vdots$};
\draw (0.1,1.2) \bdot;
\node at (-.4,1.2) {$\scriptstyle \omega_{\lambda_1}$}; 
\draw (0,.7) \bdot; 
\node at (-0.5,.7) {$\scriptstyle \omega_{\lambda_2}$};
\draw (0.05,0) \bdot; 
\node at (-0.6,0) {$\scriptstyle \omega_{\lambda_{k-1}}$};
\draw (0.15,-.3) \bdot; 
\node at (-0.6,-.4) {$\scriptstyle \omega_{\lambda_{k}-1}$};
\draw (1,.7) \bdot; 
\node at (1.4,.7) {$\scriptstyle \omega_{1}$};

\end{tikzpicture}.
\]
The first summand is in $D_{a+1}$ by inductive assumption on $k-1$ dot generators. For the second summand, we have 
\begin{align*}
\begin{tikzpicture}[baseline = 5pt, scale=.7, color=\clr] 
\draw[-, line width=1.5pt] (0.5,1.5) to (0.5,1.8);
\draw[-, line width=1.5pt] (0.5,-.5) to (0.5,-1);
\draw[-,line width=1pt]  (0.5,1.5) to[out=left,in=up] (0,0.5)to[out=down,in=left] (0.5,-.5);      
\draw[-,line width=1pt] (0.5,-.5)to[out=right,in=down] (1,0.5)to[out=up,in=right] (0.5,1.5);
\node at (-.2,.4){$\scriptstyle \vdots$};
\draw (0.1,1.2) \bdot;
\node at (-.4,1.2) {$\scriptstyle \omega_{\lambda_1}$}; 
\draw (0,.7) \bdot; 
\node at (-0.5,.7) {$\scriptstyle \omega_{\lambda_2}$};
\draw (0.05,0) \bdot; 
\node at (-0.6,0) {$\scriptstyle \omega_{\lambda_{k-1}}$};
\draw (0.15,-.3) \bdot; 
\node at (-0.6,-.4) {$\scriptstyle \omega_{\lambda_{k}-1}$};
\draw (1,.7) \bdot; 
\node at (1.4,.7) {$\scriptstyle \omega_{1}$};
\end{tikzpicture}
\equiv
\begin{tikzpicture}[baseline = 5pt, scale=.7, color=\clr] 
\draw[-, line width=1.5pt] (0.5,1.5) to (0.5,1.8);
\draw[-, line width=1.5pt] (0.5,-.5) to (0.5,-1.2);
\draw[-,line width=1pt]  (0.5,1.5) to[out=left,in=up] (0,0.5)to[out=down,in=left] (0.5,-.5);      
\draw[-,line width=1pt] (0.5,-.5)to[out=right,in=down] (1,0.5)to[out=up,in=right] (0.5,1.5);
\node at (-.2,.4){$\scriptstyle \vdots$};
\draw (0.1,1.2) \bdot;
\node at (-.4,1.2) {$\scriptstyle \omega_{\lambda_1}$}; 
\draw (0,.7) \bdot; 
\node at (-0.5,.7) {$\scriptstyle \omega_{\lambda_2}$};
\draw (0.05,0) \bdot; 
\node at (-0.65,0) {$\scriptstyle \omega_{\lambda_{k-2}}$};
\draw (0.5,-.8) \bdot; 
\node at (1.5,-.8) {$\scriptstyle \omega_{\lambda_{k-1}+1}$};
\draw (0.15,-.3) \bdot; 
\node at (-0.6,-.4) {$\scriptstyle \omega_{\lambda_{k}-1}$};
\end{tikzpicture}
-
\begin{tikzpicture}[baseline = 5pt, scale=.7, color=\clr] 
\draw[-, line width=1.5pt] (0.5,1.5) to (0.5,1.8);
\draw[-, line width=1.5pt] (0.5,-.5) to (0.5,-1);
\draw[-,line width=1pt]  (0.5,1.5) to[out=left,in=up] (0,0.5)to[out=down,in=left] (0.5,-.5);      
\draw[-,line width=1pt] (0.5,-.5)to[out=right,in=down] (1,0.5)to[out=up,in=right] (0.5,1.5);
\node at (-.2,.4){$\scriptstyle \vdots$};
\draw (0.1,1.2) \bdot;
\node at (-.4,1.2) {$\scriptstyle \omega_{\lambda_1}$}; 
\draw (0,.7) \bdot; 
\node at (-0.5,.7) {$\scriptstyle \omega_{\lambda_2}$};
\draw (0.05,0) \bdot; 
\node at (-.8,0) {$\scriptstyle \omega_{\lambda_{k-1}+1}$};
\draw (0.15,-.3) \bdot; 
\node at (-0.6,-.4) {$\scriptstyle \omega_{\lambda_{k}-1}$};
\end{tikzpicture} \in D_{a+1}
\end{align*}
since the first (and respectively, second) summand of the RHS of the above equation is in $D_{a+1}$ by the inductive assumption on $k-1$ dot generators (and respectively, on $\lambda_{k}-1$). 
 This completes the proof when $b=1$.

 Suppose that $b\ge 2$. We proceed by induction on $k$ and on $\lambda_k$ as above. The case $k=1$ again is clear by \eqref{splitmerge}. The case $k\ge 2$ with $\lambda_k=1$ can be argued in the same way as in \eqref{b=1lambdak=1}--\eqref{b=1}. Now let $\lambda_k\ge 2$. Using Lemma \ref{dotmovefreely}(2) again we have 
\[
b_{\lambda,\emptyset}= 
\begin{tikzpicture}[baseline = 5pt, scale=.7, color=\clr] 
\draw[-, line width=1.5pt] (0.5,1.5) to (0.5,1.8);
\draw[-, line width=1.5pt] (0.5,-.5) to (0.5,-1);
\draw[-,line width=1pt]  (0.5,1.5) to[out=left,in=up] (0,0.5)to[out=down,in=left] (0.5,-.5);      
\draw[-,line width=1pt] (0.5,-.5)to[out=right,in=down] (1,0.5)to[out=up,in=right] (0.5,1.5);
\node at (-.2,.4){$\scriptstyle \vdots$};
\draw (0.1,1.2) \bdot;
\node at (-.4,1.2) {$\scriptstyle \omega_{\lambda_1}$}; 
\draw (0,.7) \bdot; 
\node at (-0.5,.7) {$\scriptstyle \omega_{\lambda_2}$};
\draw (0.05,-.1) \bdot; 
\node at (-0.5,-.1) {$\scriptstyle \omega_{\lambda_k}$};
\node at (0.2,-.6) {$\scriptstyle a$};
\node at (.9,-0.6) {$\scriptstyle b$};
\end{tikzpicture}
~\equiv~
\begin{tikzpicture}[baseline = 5pt, scale=.7, color=\clr] 
\draw[-, line width=1.5pt] (0.5,1.5) to (0.5,1.8);
\draw[-, line width=1.5pt] (0.5,-.5) to (0.5,-1.2);
\draw[-,line width=1pt]  (0.5,1.5) to[out=left,in=up] (0,0.5)to[out=down,in=left] (0.5,-.5);      
\draw[-,line width=1pt] (0.5,-.5)to[out=right,in=down] (1,0.5)to[out=up,in=right] (0.5,1.5);
\node at (-.2,.4){$\scriptstyle \vdots$};
\draw (0.1,1.2) \bdot;
\node at (-.4,1.2) {$\scriptstyle \omega_{\lambda_1}$}; 
\draw (0,.7) \bdot; 
\node at (-0.5,.7) {$\scriptstyle \omega_{\lambda_2}$};
\draw (0.05,-.1) \bdot; 
\node at (-0.6,-.1) {$\scriptstyle \omega_{\lambda_{k-1}}$};
\draw (0.5,-.8) \bdot; 
\node at (1.1,-.8) {$\scriptstyle \omega_{\lambda_k}$};
\node at (0.2,-.6) {$\scriptstyle a$};
\node at (1,-0.3) {$\scriptstyle b$};
\end{tikzpicture}
-\sum_{1\le r\le \min \{b,\lambda_k\}}
\begin{tikzpicture}[baseline = 5pt, scale=.7, color=\clr] 
\draw[-, line width=1.5pt] (0.5,1.5) to (0.5,1.8);
\draw[-, line width=1.5pt] (0.5,-.5) to (0.5,-1);
\draw[-,line width=1pt]  (0.5,1.5) to[out=left,in=up] (0,0.5)to[out=down,in=left] (0.5,-.5);      
\draw[-,line width=1pt] (0.5,-.5)to[out=right,in=down] (1,0.5)to[out=up,in=right] (0.5,1.5);
\node at (-.2,.4){$\scriptstyle \vdots$};
\draw (0.1,1.2) \bdot;
\node at (-.4,1.2) {$\scriptstyle \omega_{\lambda_1}$}; 
\draw (0,.7) \bdot; 
\node at (-0.5,.7) {$\scriptstyle \omega_{\lambda_2}$};
\draw (0.05,0) \bdot; 
\node at (-0.6,0) {$\scriptstyle \omega_{\lambda_{k-1}}$};
\draw (0.15,-.3) \bdot; 
\node at (-0.55,-.45) {$\scriptstyle \omega_{\lambda_{k}-r}$};
\draw (1,.7) \bdot; 
\node at (1.4,.7) {$\scriptstyle \omega_{r}$};
\node at (1.1,-0.3) {$\scriptstyle b$};
\end{tikzpicture} . 
\]
The first summand on the right-hand side above is in $D_{a+b}$ by the inductive assumption on $k-1$ dot generators. For the summand in the above summation with $1\le r<b$, we have 
\begin{equation}
\label{equ:lessb}
    \begin{tikzpicture}[baseline = 5pt, scale=.7, color=\clr] 
\draw[-, line width=1.5pt] (0.5,1.5) to (0.5,1.8);
\draw[-, line width=1.5pt] (0.5,-.5) to (0.5,-1);
\draw[-,line width=1pt]  (0.5,1.5) to[out=left,in=up] (0,0.5)to[out=down,in=left] (0.5,-.5);      
\draw[-,line width=1pt] (0.5,-.5)to[out=right,in=down] (1,0.5)to[out=up,in=right] (0.5,1.5);
\node at (-.2,.4){$\scriptstyle \vdots$};
\draw (0.1,1.2) \bdot;
\node at (-.4,1.2) {$\scriptstyle \omega_{\lambda_1}$}; 
\draw (0,.7) \bdot; 
\node at (-0.5,.7) {$\scriptstyle \omega_{\lambda_2}$};
\draw (0.05,0) \bdot; 
\node at (-0.6,0) {$\scriptstyle \omega_{\lambda_{k-1}}$};
\draw (0.15,-.3) \bdot; 
\node at (-0.55,-.45) {$\scriptstyle \omega_{\lambda_{k}-r}$};
\draw (1,.7) \bdot; 
\node at (1.4,.7) {$\scriptstyle \omega_{r}$};
\node at (1.1,-0.3) {$\scriptstyle b$};
\end{tikzpicture}
\overset{\eqref{splitmerge}}{=}
\begin{tikzpicture}[baseline = 5pt, scale=.7, color=\clr] 
\draw[-, line width=1.5pt] (0.5,1.5) to (0.5,1.8);
\draw[-, line width=1.5pt] (0.5,-.5) to (0.5,-1);
\draw[-,line width=1pt]  (0.5,1.5) to[out=left,in=up] (0,0.5)to[out=down,in=left] (0.5,-.5);      
\draw[-,line width=1pt] (0.5,-.5)to [out=right,in=down](.8,.5);
\draw[-,line width=1pt](0.5,1.5)to[out=right,in=up] (.8,.5);
\draw[-,line width=1pt] (0.5,-.6)to[out=right,in=down] (1.3,0.5)to[out=up,in=right] (0.5,1.6);
\node at (-.2,.4){$\scriptstyle \vdots$};
\draw (0.1,1.2) \bdot;
\node at (-.4,1.2) {$\scriptstyle \omega_{\lambda_1}$}; 
\draw (0,.7) \bdot; 
\node at (-0.5,.7) {$\scriptstyle \omega_{\lambda_2}$};
\draw (0.05,0) \bdot; 
\node at (-0.6,0) {$\scriptstyle \omega_{\lambda_{k-1}}$};
\draw (0.15,-.3) \bdot; 
\node at (-0.55,-.45) {$\scriptstyle \omega_{\lambda_{k}-r}$};
\draw (.8,.7) \bdot; 
\node at (.4,.7) {$\scriptstyle \omega_{r}$};
\node at (.6,-.1) {$\scriptstyle r$};
\node at (1.5,-0.3) {$\scriptstyle b-r$};
\end{tikzpicture} ,
\end{equation}
which is in $D_{a+b}$ by the induction hypothesis on the thickness $r<b$ and $b-r<b$, twice.
 For the last summand  with $r=b$ (and $b\le \lambda_k$), using Lemma \ref{dotmovefreely}(2) again we have  
\begin{align*}  
\begin{tikzpicture}[baseline = 5pt, scale=.7, color=\clr] 
\draw[-, line width=1.5pt] (0.5,1.5) to (0.5,1.8);
\draw[-, line width=1.5pt] (0.5,-.5) to (0.5,-1);
\draw[-,line width=1pt]  (0.5,1.5) to[out=left,in=up] (0,0.5)to[out=down,in=left] (0.5,-.5);      
\draw[-,line width=1pt] (0.5,-.5)to[out=right,in=down] (1,0.5)to[out=up,in=right] (0.5,1.5);
\node at (-.2,.4){$\scriptstyle \vdots$};
\draw (0.1,1.2) \bdot;
\node at (-.4,1.2) {$\scriptstyle \omega_{\lambda_1}$}; 
\draw (0,.7) \bdot; 
\node at (-0.5,.7) {$\scriptstyle \omega_{\lambda_2}$};
\draw (0.05,0) \bdot; 
\node at (-0.7,0) {$\scriptstyle \omega_{\lambda_{k-1}}$};
\draw (0.15,-.3) \bdot; 
\node at (-0.55,-.45) {$\scriptstyle \omega_{\lambda_{k}-b}$};
\draw (1,.7) \bdot; 
\node at (1.4,.7) {$\scriptstyle \omega_{b}$};
\node at (1.1,-0.3) {$\scriptstyle b$};
\end{tikzpicture}
~\equiv~
\begin{tikzpicture}[baseline = 5pt, scale=.7, color=\clr] 
\draw[-, line width=1.5pt] (0.5,1.5) to (0.5,1.8);
\draw[-, line width=1.5pt] (0.5,-.5) to (0.5,-1.2);
\draw[-,line width=1pt]  (0.5,1.5) to[out=left,in=up] (0,0.5)to[out=down,in=left] (0.5,-.5);      
\draw[-,line width=1pt] (0.5,-.5)to[out=right,in=down] (1,0.5)to[out=up,in=right] (0.5,1.5);
\node at (-.2,.4){$\scriptstyle \vdots$};
\draw (0.1,1.2) \bdot;
\node at (-.4,1.2) {$\scriptstyle \omega_{\lambda_1}$}; 
\draw (0,.7) \bdot; 
\node at (-0.5,.7) {$\scriptstyle \omega_{\lambda_2}$};
\draw (0.05,0) \bdot; 
\node at (-0.7,0) {$\scriptstyle \omega_{\lambda_{k-2}}$};
\draw (0.15,-.3) \bdot; 
\node at (-0.55,-.45) {$\scriptstyle \omega_{\lambda_{k}-b}$};
\draw (0.5,-.8) \bdot; 
\node at (1.5,-.8) {$\scriptstyle \omega_{\lambda_{k-1}+b}$};
\node at (1.1,-0.3) {$\scriptstyle b$};
\end{tikzpicture}
- 
\begin{tikzpicture}[baseline = 5pt, scale=.7, color=\clr] 
\draw[-, line width=1.5pt] (0.5,1.5) to (0.5,1.8);
\draw[-, line width=1.5pt] (0.5,-.5) to (0.5,-1.2);
\draw[-,line width=1pt]  (0.5,1.5) to[out=left,in=up] (0,0.5)to[out=down,in=left] (0.5,-.5);      
\draw[-,line width=1pt] (0.5,-.5)to[out=right,in=down] (1,0.5)to[out=up,in=right] (0.5,1.5);
\node at (-.2,.4){$\scriptstyle \vdots$};
\draw (0.1,1.2) \bdot;
\node at (-.4,1.2) {$\scriptstyle \omega_{\lambda_1}$}; 
\draw (0,.7) \bdot; 
\node at (-0.5,.7) {$\scriptstyle \omega_{\lambda_2}$};
\draw (0.05,0) \bdot; 
\node at (-0.9,0) {$\scriptstyle \omega_{\lambda_{k-1}+b}$};
\draw (0.15,-.3) \bdot; 
\node at (-0.55,-.45) {$\scriptstyle \omega_{\lambda_{k}-b}$};
\node at (1.1,-0.3) {$\scriptstyle b$};
\end{tikzpicture}
-
\sum_{1\le t< b}
\begin{tikzpicture}[baseline = 5pt, scale=.7, color=\clr] 
\draw[-, line width=1.5pt] (0.5,1.5) to (0.5,1.8);
\draw[-, line width=1.5pt] (0.5,-.5) to (0.5,-1);
\draw[-,line width=1pt]  (0.5,1.5) to[out=left,in=up] (0,0.5)to[out=down,in=left] (0.5,-.5);      
\draw[-,line width=1pt] (0.5,-.5)to[out=right,in=down] (1,0.5)to[out=up,in=right] (0.5,1.5);
\node at (-.2,.4){$\scriptstyle \vdots$};
\draw (0.1,1.2) \bdot;
\node at (-.4,1.2) {$\scriptstyle \omega_{\lambda_1}$}; 
\draw (0,.7) \bdot; 
\node at (-0.5,.7) {$\scriptstyle \omega_{\lambda_2}$};
\draw (0.05,0) \bdot; 
\node at (-1.1,-.1) {$\scriptstyle \omega_{\lambda_{k-1}+b-t}$};
\draw (0.15,-.35) \bdot; 
\node at (-0.55,-.55) {$\scriptstyle \omega_{\lambda_{k}-b}$};
\draw (1,.7) \bdot; 
\node at (1.4,.7) {$\scriptstyle \omega_{t}$};
\node at (1.1,-0.3) {$\scriptstyle b$};
\end{tikzpicture}.
\end{align*}
 Then the first two summands on the RHS of the above identity are in $D_{a+b}$ by the inductive assumption on $<k$ dot generators and on $\lambda_k-b$. Finally, each summand in the last summation of the above identity also belongs to $D_{a+b}$ by the same reasoning as for \eqref{equ:lessb} since $t<b$. This completes the proof for general $b$ and then for the lemma.
\end{proof}

\section{Basis for the affine web category}
 \label{sec:AWbasis}
In this section, we establish a basis theorem for the category $\AW$ by constructing a basis for each of its morphism spaces.

\subsection{A basis theorem for $\AW$}

To give a basis for an arbitrary morphism space in $\AW$, we shall use chicken foot diagrams. 
 
Let $\lambda,\mu\in \Lambda_{\text{st}}(m)$. A $\lambda\times \mu$ \emph{chicken foot diagram} (following \cite{BEEO}) is a  web diagram in $\Hom_{\W}(\mu,\lambda)$ consisting of three  horizontal  parts such that
\begin{itemize}
    \item the bottom part consists of only splits,
    \item the top part consists of only merges,
    \item the middle part consists of only crossings of the thinner strands (i.e., legs).
\end{itemize}
A chicken foot diagram (CFD) is called \emph{reduced} if there is at most one
intersection or one join between every pair of the legs. 

\begin{example}
Let $\lambda=(4,5)$ and $\mu=(2,3,4)$. The following are two $\lambda\times \mu$ chicken foot diagrams, where the first one is unreduced while the second one is reduced:
\begin{align}\label{ex-of-chickenfootd}
  \begin{tikzpicture}[anchorbase,scale=2,color=\clr]
\draw[-,line width=.6mm] (.212,.5) to (.212,.39);
\draw[-,line width=.75mm] (.595,.5) to (.595,.39);
\draw[-,line width=.15mm] (0.0005,-.396) to (.2,.4);
\draw[-,line width=.15mm]  (.2,.4)to (.4,-.4);
\draw[-,line width=.15mm](.2,.4) to (.2,.1) to(.4,-.4);
\draw[-,line width=.3mm] (0.01,-.4) to (.59,.4);
\draw[-,line width=.3mm] (.4,-.4) to (.607,.4);
\draw[-,line width=.45mm] (.79,-.4) to (.214,.4);
\draw[-,line width=.15mm] (.8035,-.398) to (.614,.4);
\draw[-,line width=.3mm] (.4006,-.5) to (.4006,-.395);
\draw[-,line width=.6mm] (.788,-.5) to (.788,-.395);
\draw[-,line width=.45mm] (0.011,-.5) to (0.011,-.395);
\node at (0.05,0.15) {$\scriptstyle 1$};
\node at (0.79,0.05) {$\scriptstyle 3$};
\node at (0.35,0.37) {$\scriptstyle 1$};
\node at (0.15,-0.32) {$\scriptstyle 1$};
\node at (0.5,-0.3) {$\scriptstyle 1$};
\end{tikzpicture},
\qquad \quad   
\begin{tikzpicture}[anchorbase,scale=2,color=\clr]
\draw[-,line width=.6mm] (.212,.5) to (.212,.39);
\draw[-,line width=.75mm] (.595,.5) to (.595,.39);
\draw[-,line width=.15mm] (0.0005,-.396) to (.2,.4);
\draw[-,line width=.15mm]  (.2,.4)to (.4,-.4);
\draw[-,line width=.3mm] (0.01,-.4) to (.59,.4);
\draw[-,line width=.3mm] (.4,-.4) to (.607,.4);
\draw[-,line width=.45mm] (.79,-.4) to (.214,.4);
\draw[-,line width=.15mm] (.8035,-.398) to (.614,.4);
\draw[-,line width=.3mm] (.4006,-.5) to (.4006,-.395);
\draw[-,line width=.6mm] (.788,-.5) to (.788,-.395);
\draw[-,line width=.45mm] (0.011,-.5) to (0.011,-.395);
\node at (0.05,0.15) {$\scriptstyle 1$};
\node at (0.79,0.05) {$\scriptstyle 2$};
\node at (0.35,0.37) {$\scriptstyle 2$};
\node at (0.17,-0.3) {$\scriptstyle 1$};
\node at (0.3,-0.3) {$\scriptstyle 1$};
\node at (0.5,-0.3) {$\scriptstyle 2$};
\end{tikzpicture} .
\end{align}    
\end{example}

For $\lambda,\mu\in \Lambda_{\text{st}}(m)$, let 
$\Mat_{\lambda,\mu}$ be the set of all $l(\lambda)\times l(\mu)$
non-negative integer matrices $A=(a_{ij})$ with row sum vector $\la$ and column sum vector $\mu$, i.e.,  $\sum_{1\le h\le l(\mu)}a_{ih}=\lambda_i$ and $\sum_{1\le h\le l(\lambda)}a_{hj}=\mu_j$ for all $1\le i\le l(\lambda) $ and $1\le j\le l(\mu)$. 
The information of each reduced chicken foot diagram is encoded in a matrix $A=(a_{ij})\in \Mat_{\lambda,\mu}$ such that 
$a_{ij}$ is the thickness of the unique strand connecting $\lambda_i$ and $\mu_j$. In this case, we say the reduced chicken foot diagram is of shape $A$.
For example, the shape for the second chicken foot diagram in \eqref{ex-of-chickenfootd} is 
$\begin{pmatrix} 1&1&2\\1&2&2\end{pmatrix}$.

We introduce a shorthand notation for morphisms of the form, called {\em the $r$th elementary dot packet (of thickness $a$)},
\[
\omega_{a,r}:= \wkdota.   
\]
We may also write $\omega_r=\omega_{a,r}$ if $a$ is clear in the context.
Let $\Par_a$ be the set of  partitions $\nu=(\nu_1,\ldots,\nu_k)$ such that each part  $\nu_i\le a$.
For any partition $\nu=(\nu_1,\nu_2,\ldots, \nu_k)\in \Par_a$, the {\em elementary dot packet (of thickness $a$)} is defined to be 
\[
\omega_{a, \nu}:= \omega_{a,\nu_1}\omega_{a,\nu_2}\cdots \omega_{a,\nu_k}\in \End_{\AW}(a).
\]
We write $\omega_\nu=\omega_{a,\nu}$ if $a$ is clear from the context and draw it as 
$
\begin{tikzpicture}[baseline = 3pt, scale=0.5, color=\clr]
\draw[-,line width=1.5pt] (0,-.2) to[out=up, in=down] (0,1.2);
\draw(0,0.5) \bdot; \node at (0.6,0.5) {$ \scriptstyle \nu$};
\node at (0,-.4) {$\scriptstyle a$};
\end{tikzpicture}$. 
There are many other morphisms in $\End_{\AW}(a)$ which are not of the form $\omega_{a, \nu}.$

\begin{definition} \label{def:diagram}
    For $\lambda,\mu\in \Lambda_{\text{st}}(m)$, a $\lambda\times \mu$ elementary chicken foot diagram (or simply elementary diagram) is a $\lambda\times \mu$ reduced chicken foot diagram with an elementary dot packet $\omega_{\nu}$, for some partition $\nu\in \Par_a$, attached at the bottom of each leg with thickness $a$. 
\end{definition}

Just as a reduced chicken diagram of shape $A$ is encoded by $A\in \Mat_{\lambda,\mu}$, the elementary chicken foot diagrams of shape $A$ are encoded in the matrix $A$ enriched by certain partitions. Denote  
\begin{equation}\label{dottedreduced}
 \PMat_{\lambda,\mu}:=\{ (A, P))\mid  A=(a_{ij})\in \Mat_{\lambda,\mu}, P=(\nu_{ij}), \nu_{ij}\in \Par_{a_{ij}} \}.   
\end{equation}
We will identify the set of all elementary chicken foot diagrams from $\mu$ to $\lambda$ with $\PMat_{\lambda,\mu}$.

\begin{example}
We have the following elementary chicken foot diagram:
\begin{align}
    \label{ex-of-dotchickenfootd}
\begin{tikzpicture}[anchorbase,scale=1.8,color=\clr]
\draw[-,line width=.6mm] (.212,.9) to (.212,.8);
\draw[-,line width=.75mm] (1,.9) to (1,.8);
\draw[-,line width=.15mm] (-1,-.396) to (.2,.8);
\draw[-,line width=.75mm](-1,-.39) to (-1.1,-.45);
\draw[-,line width=.15mm]  (.2,.8)to (.4,-.4);
\draw[-,line width=.75mm](.4,-.4) to (.4,-.5);
\draw[-,line width=.15mm]  (.2,.8)to (1.4,-.4);
\draw[-,line width=.75mm](1.4,-.4) to (1.4,-.5);
\draw[-,line width=.15mm] (1,.8) to (-1,-.39);
\draw[-,line width=.15mm] (1,.8) to (.4,-.39);
\draw[-,line width=.15mm] (1,.8) to (1.4,-.39);
\node at (-0.3,0.5) {$\scriptstyle 2$}; 
\draw (-.5,0.13)\bdot;
\node at (-.68,0.13) {$\nu^1$};
\draw (-.3,0)\bdot; \node at (-.3,-0.16) {$\nu^2$};
\draw (.35,-0.1)\bdot;\node at (.2,-0.2) {$\nu^3$};
\draw (.55,-0.1)\bdot;\node at (.65,-0.2) {$\nu^4$};
\draw (1.1,-0.1)\bdot;\node at (1,-0.25) {$\nu^5$};
\draw (1.3,-0.1)\bdot;\node at (1.5,-0.13) {$\nu^6$};
\node at (1.2,0.5) {$\scriptstyle 2$};
\node at (0.15,0.5) {$\scriptstyle 3$};
\node at (0.1,0.13) {$\scriptstyle 3$};
\node at (0.4,0.7) {$\scriptstyle 4$};
\node at (.9,.4) {$\scriptstyle 2$};
\end{tikzpicture} 
\end{align} 
where $\nu^i\in \Par_{a_i}$, with the thickness of legs are $(a_1,\ldots,a_6)=(2,3,3,2,4,2)$. 
\end{example}

\begin{theorem} [A basis theorem for $\Hom_{\AW}(\mu,\lambda)$]
 \label{basisAW}
For any $\mu,\lambda\in \Lambda_{\text{st}}(m)$, $\Hom_{\AW}(\mu,\lambda)$ has a basis $\PMat_{\lambda,\mu}$ which consists of all elementary chicken foot diagrams from $\mu$ to $\la$.  
\end{theorem}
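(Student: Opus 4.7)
\textbf{Proof proposal for Theorem~\ref{basisAW}.} The plan is to establish spanning and linear independence separately, with the bulk of the novelty on each side coming from handling the dot generators $\omega_{a,r}$; the underlying polynomial web-diagrammatic skeleton is already controlled by the $\W$-basis theorem of \cite{BEEO}.

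For spanning, I would show by induction on the degree (total number of dot generators, with $\omega_{a,r}$ having degree $r$) that every dotted web diagram $D\colon\mu\to\lambda$ is a $\kk$-linear combination of elements of $\PMat_{\lambda,\mu}$. Given $D$, stripping away all dot generators produces an undecorated web diagram $D_0\in\Hom_{\W}(\mu,\lambda)$ together with a specified dot placement. Using the ``modulo lower-degree'' sliding relations of Lemma~\ref{dotmovefreely}, all dots on $D$ can be migrated down to the bottoms of the legs of the middle (crossing) layer, at the cost of introducing diagrams of strictly smaller degree that are handled by the inductive hypothesis. The resulting error-free term now has its dots concentrated on the legs, while the underlying web $D_0$ can be expanded in the reduced CFD basis of $\W$ from \cite{BEEO}. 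On each leg of thickness $a$ the attached product of $\omega_{a,r}$'s lies, by Lemma~\ref{wkdotsspan} together with the commutativity Lemma~\ref{lem:commute}, in the commutative subalgebra $D_a$ of $\End_{\AW}(\stra)$, and hence (modulo lower-degree diagrams again) can be expanded as a $\kk$-linear combination of elementary dot packets $\omega_{a,\nu}$ with $\nu\in\Par_a$. This produces the required expansion in $\PMat_{\lambda,\mu}$.

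For linear independence, I would follow the route foreshadowed in the excerpt: build, for each sufficiently large $N$, a monoidal functor
\[
\mathcal F_N\colon \AW\longrightarrow \mathfrak{gl}_N\text{-mod}
\]
extending the known functor $\Phi\colon\W\to\mathfrak{gl}_N\text{-mod}$ of \cite{CKM,BEEO} (this is the content of Proposition~\ref{functorofaff}). On objects $\mathcal F_N(\stra)=\Lambda^a V$, on $\W$-generators $\mathcal F_N$ agrees with $\Phi$, and on the new generator $\omega_a$ it acts by a suitable element of $\End_{\mathfrak{gl}_N}(\Lambda^a V)$ coming from a canonical central/Casimir-type operator in $U(\mathfrak{gl}_N)$ (a $\mathfrak{gl}_\infty$-analogue of the Jucys--Murphy-type construction). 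Given such a family $\{\mathcal F_N\}$, one proves linear independence of $\PMat_{\lambda,\mu}$ as follows: on the undotted part, the images of distinct reduced CFDs under $\Phi$ are already linearly independent for $N$ large, and on each thickness-$a$ leg the images of the elementary dot packets $\omega_{a,\nu}$, $\nu\in\Par_a$, can be identified with a free family of symmetric polynomials in $a$ variables (with $\omega_{a,r}$ having leading term the elementary symmetric polynomial $e_r(x_1,\dots,x_a)$, as announced just after Theorem~\ref{th:A}). A triangular/leading-term separation argument, applied leg by leg and then combined across the CFD skeleton, yields the linear independence of $\PMat_{\lambda,\mu}$ in the image and hence in $\Hom_{\AW}(\mu,\lambda)$.

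The main obstacle is the construction of $\mathcal F_N$ in Proposition~\ref{functorofaff}, and in particular checking that the assignment for $\omega_a$ respects the balloon relation~\eqref{intergralballon}: this is the unique defining relation of $\AW$ that pins down $\omega_a$ absolutely rather than just up to sliding, so it must match the chosen $U(\mathfrak{gl}_N)$-operator exactly (up to the $a!$ factor). Once this compatibility is verified on $\Z$ (so that the basis result descends to an arbitrary commutative ring $\kk$ by base change), the remainder is routine: linear independence follows from the well-understood representation theory of $\mathfrak{gl}_N$, and the spanning argument completes the proof of Theorem~\ref{basisAW}.
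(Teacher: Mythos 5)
Your spanning argument is essentially the paper's: induction on degree, sliding dots to the legs via Lemma~\ref{dotmovefreely} modulo lower-degree terms, resolving the dotted balloons created by non-reduced pairs of strands via Lemma~\ref{wkdotsspan}, and quoting the reduced-CFD basis of $\W$ for the degree-zero part. That half is fine (up to being careful that the rewriting of the undotted skeleton is done generator-by-generator against elementary CFDs, as the paper does, rather than all at once).

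The linear independence half has a genuine gap. You propose monoidal functors $\mathcal F_N\colon\AW\to\mathfrak{gl}_N\text{-mod}$ with $\mathcal F_N(\stra)=\bigwedge^a V$ and with $\omega_a$ sent to an element of $\End_{\mathfrak{gl}_N}(\bigwedge^a V)$ coming from a central/Casimir-type element of $U(\mathfrak{gl}_N)$. But $\bigwedge^a V$ is an irreducible $\mathfrak{gl}_N$-module, so by Schur's lemma $\End_{\mathfrak{gl}_N}(\bigwedge^a V)$ is one-dimensional: any such assignment forces $\omega_a$, and hence every dot packet $\omega_{a,\nu}$, to act by a scalar, and there is no ``free family of symmetric polynomials in $a$ variables'' available in the target --- the variables simply are not there. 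More globally, $\Hom_{\mathfrak{gl}_N}\bigl(\bigwedge^\mu V,\bigwedge^\lambda V\bigr)$ is finite-dimensional (of dimension $|\Mat_{\lambda,\mu}|$ once $N$ is large), whereas $\PMat_{\lambda,\mu}$ is infinite, so no family of functors of this shape, however cleverly the dot is chosen and however large $N$ is taken, can separate the proposed basis; your worry about the balloon relation \eqref{intergralballon} is not the real obstruction. The paper's Proposition~\ref{functorofaff} avoids this by not landing in $\mathfrak{gl}_N$-mod with objects $\bigwedge^a V$: it targets endofunctors of $\mathfrak g$-mod, sending $a$ to $-\otimes\bigwedge^a V$ and the dot to the operator $\Omega=\sum_{i,j}e_{ij}\otimes e_{ji}$, which mixes the auxiliary module with the new tensor factor and so is not forced to be a scalar. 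Evaluating at the generic Verma module $M^{\mathrm{gen}}=U(\mathfrak g)\otimes_{U(\mathfrak b)}U(\mathfrak h)$ supplies the missing variables: by Lemma~\ref{lem:actionofdotlead} and Corollary~\ref{actionofelempoly}, $\omega_{a,\nu}$ acts, to leading filtration degree, by multiplication by the elementary symmetric polynomial $e_\nu$ in Cartan generators $h_{i_1},\dots,h_{i_a}$, and the leading-term argument on $M^{\mathrm{gen}}\otimes\bigwedge^\mu V$ (wedge part separating the matrices $A$, partial symmetric functions in disjoint variable sets separating the partition data $P$) is what actually yields independence over $\C$, hence over $\Z$ and then over any $\kk$ by base change. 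To repair your proof you would need to introduce this auxiliary infinite-dimensional module (or an equivalent device, e.g.\ extra tensor factors on which the dot acts nontrivially); as written, the proposed $\mathcal F_N$ cannot detect the dots at all.
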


We single out the following special result as a consequence of Theorem~\ref{basisAW}; it improves Lemma~\ref{lem:commute} on the commutative $\kk$-algebra $D_m \subseteq \End_{\AW}(m)$. The commutativity of the algebra $\End_{\AW}\big(\strm \big)$ is not a priori clear.

\begin{theorem}
    \label{thm:Da}
    Let $m \ge 1$. The $\kk$-algebra $\End_{\AW}\big(\strm \big)$ is a polynomial algebra in generators $\wkdotm$, for $1 \le r \le m$. In particular, $D_m = \End_{\AW}(m)$.
\end{theorem}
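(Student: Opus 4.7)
The plan is to deduce Theorem \ref{thm:Da} as an essentially formal consequence of the basis theorem \ref{basisAW} together with the partial commutativity established in Lemma \ref{lem:commute}.

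First I would specialize Theorem \ref{basisAW} to the case $\lambda=\mu=(m)$. The set $\Mat_{(m),(m)}$ consists of the single $1 \times 1$ matrix $(m)$, so the unique underlying reduced chicken foot diagram from $(m)$ to $(m)$ is just the identity strand of thickness $m$. Consequently $\PMat_{(m),(m)}$ is in bijection with $\Par_m$ via $\nu \mapsto \omega_{m,\nu}$, and Theorem \ref{basisAW} yields that
\begin{equation*}
\{\omega_{m,\nu} : \nu \in \Par_m\}
\end{equation*}
is a $\kk$-basis of $\End_{\AW}(\strm)$.

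Next I would invoke Lemma \ref{lem:commute}, which says that the generators $\omega_{m,1},\ldots,\omega_{m,m}$ pairwise commute (they all lie in the commutative subalgebra $D_m$). This commutativity makes the $\kk$-algebra homomorphism
\begin{equation*}
\pi : \kk[x_1,\ldots,x_m] \longrightarrow \End_{\AW}(\strm), \qquad x_r \longmapsto \omega_{m,r},
\end{equation*}
well defined. Writing any $\nu \in \Par_m$ uniquely in the form $(m^{k_m}, \ldots, 2^{k_2}, 1^{k_1})$, the standard monomial $x_1^{k_1} x_2^{k_2} \cdots x_m^{k_m}$ maps under $\pi$ to $\omega_{m,\nu}$. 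Thus $\pi$ sends the monomial $\kk$-basis of $\kk[x_1,\ldots,x_m]$ bijectively onto the basis of $\End_{\AW}(\strm)$ displayed above, and is therefore a $\kk$-algebra isomorphism. This proves the polynomial algebra claim; the equality $D_m = \End_{\AW}(\strm)$ is then immediate, since by definition $D_m$ is generated by the $\omega_{m,r}$.

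In this route the present theorem is formal once Theorem \ref{basisAW} is in hand, so the main obstacle is not internal to this argument but rather the dependency on Theorem \ref{basisAW}. The one conceptually subtle point worth highlighting is that Lemma \ref{lem:commute} alone only gives commutativity inside the a priori smaller subalgebra $D_m$; it is the basis theorem that simultaneously forces $D_m = \End_{\AW}(\strm)$ and promotes the description of this algebra to a genuine polynomial algebra, thereby also yielding the nontrivial fact that $\End_{\AW}(\strm)$ itself is commutative.
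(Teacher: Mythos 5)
Your proposal is correct and matches the paper's own argument: the paper proves Theorem~\ref{thm:Da} by noting it "follows directly from Theorem~\ref{basisAW} and Lemma~\ref{lem:commute}," which is precisely the deduction you spell out (the basis $\{\omega_{m,\nu}:\nu\in\Par_m\}$ of $\End_{\AW}(m)$ plus commutativity of the $\omega_{m,r}$ gives the polynomial-algebra isomorphism and the equality $D_m=\End_{\AW}(m)$). Your write-up simply makes explicit the specialization $\lambda=\mu=(m)$ and the monomial-to-basis bijection that the paper leaves implicit.
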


\begin{proof}
Follows directly from Theorem \ref{basisAW} and Lemma \ref{lem:commute}.
\end{proof}

The remainder of this section is devoted to the proof of Theorem \ref{basisAW}. We shall first prove in \S\ref{subsec:span} that $\PMat_{\lambda,\mu}$ spans $\Hom_{\AW}(\mu,\lambda)$. To prove the linear independence of $\PMat_{\lambda,\mu}$, it suffices to prove it over $\Z$ (by working over $\C$) and then do a base change to $\kk$ in \S\ref{subsec:independence}. To that end we shall construct in \S\ref{subsec:rep} a representation of $\AW$ on a module category of $\mathfrak{gl}_N$.

\subsection{Proof of the span part of Theorem \ref{basisAW}}
 \label{subsec:span}

\begin{proposition} \label{prospaningset}
For any $\lambda,\mu\in \Lambda_{\text{st}}(m)$, $\Hom_{\AW}(\mu,\lambda)$ is spanned by $\PMat_{\la,\mu}$ consisting of all $\lambda \times \mu$ elementary chicken foot diagrams from $\mu$ to $\la$. 
\end{proposition}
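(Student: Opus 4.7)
Any morphism in $\Hom_{\AW}(\mu,\lambda)$ is a $\kk$-linear combination of dotted web diagrams, each of which is a composition of the generating morphisms (merges, splits, crossings, and elementary dots $\omega_a$) tensored with identities. The plan is to induct on the number of such generators in a presentation; the base case (the identity) is a trivial elementary CFD with empty partitions on all legs. For the inductive step, it suffices to prove that, for any generator $g$ and any elementary CFD $E$, the composition obtained by attaching $g$ at the top of $E$ lies in the $\kk$-span of elementary CFDs of the new type; the analogous statement with $g$ attached at the bottom then follows by applying the symmetry $\div$ from \eqref{anti-auto}. A secondary induction on the total degree $d$ of the diagram (the sum of the subscripts of all dot symbols $\omega_a$) will allow me to absorb lower-degree terms produced by the relations.

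The four generator cases are handled as follows. When $g = \omega_a$ is an elementary dot attached at a top strand, I push it downward through the topmost merges via \eqref{dotmovemergess} and then through the leg crossings via \eqref{dotmovecrossing}. Each such move is an equality modulo lower-degree terms (Lemma \ref{dotmovefreely}), which the secondary induction disposes of. The dot eventually lands at the bottom of a leg, where, combined with the existing elementary dot packet and the commutativity relation \eqref{commutato}, it produces a new elementary dot packet of the required form. When $g$ is a crossing of two top strands, the top region of $E$ is dot-free (since all dots sit at the bottom of legs), so the spanning theorem for the web category $\W$ in \cite{BEEO}, together with the ladder relation \eqref{mergesplit} and the slider, braid, and symmetry relations \eqref{swallows}--\eqref{braid}, allows me to rewrite the composite as a linear combination of reduced CFDs on the new top profile while leaving the bottom dot packets untouched.

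The crux is the merge case. When $g$ merges two adjacent top strands of thicknesses $\lambda_i, \lambda_{i+1}$ in $E$ into a single strand of thickness $\lambda_i+\lambda_{i+1}$, the two column groups of $E$ ending at $\lambda_i$ and at $\lambda_{i+1}$ become fused. For each bottom index $j$, the two parallel legs from $\mu_j$ to the fused strand (of thicknesses $a_{ij}$ and $a_{i+1,j}$), together with the split at $\mu_j$ and the new top merge, form a decorated two-leg balloon whose legs already carry elementary dot packets. By Lemma \ref{wkdotsspan}, this balloon decoration lies in the subalgebra $D_{a_{ij}+a_{i+1,j}}$ generated by elementary dot packets on the consolidated leg; hence it can be replaced, modulo lower-degree terms, by a single elementary dot packet placed on a single leg of thickness $a_{ij}+a_{i+1,j}$. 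Iterating this consolidation across all $j$ and restoring reducedness of the leg crossings by further applications of \eqref{mergesplit} and \eqref{swallows} produces the desired elementary CFD for the new top profile, with any errors being of strictly smaller degree and therefore handled by the secondary induction. The split case is dual via $\div$. The principal obstacle is this bookkeeping for the merge case, specifically verifying that the combined decoration of every parallel-leg balloon produced by the fusion collapses, via Lemma \ref{wkdotsspan}, to an elementary dot packet on a single consolidated leg, which is exactly the content of the lemma and is why it was established first.
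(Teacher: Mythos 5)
Your dot case, your merge case (isolating the doubly-joined legs into decorated balloons and collapsing them via Lemma \ref{wkdotsspan}), and your overall induction scheme with lower-degree corrections absorbed by Lemma \ref{dotmovefreely} all match the paper's argument. The genuine gap is the split case, which you dismiss with ``the split case is dual via $\div$.'' This duality does not do what you need: $\div$ reflects a diagram across a horizontal axis, so it sends an elementary CFD (splits at the bottom, merges at the top, dot packets at the \emph{bottoms} of the legs) to a diagram with merges at the bottom, splits at the top and dot packets at the \emph{tops} of the legs, and it simultaneously converts ``generator attached at the top'' into ``generator attached at the bottom.'' Hence the $\div$-image of your proven statement ``a merge attached at the top of an elementary CFD lies in the span of elementary CFDs'' is a statement about a \emph{split attached at the bottom of a reflected (co-elementary) diagram}, not the statement you need, namely that a split attached at the top of an elementary CFD lies in the span. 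Since your induction peels generators off the top, the split-on-top case must be proved on its own; in the paper this is done by resolving the new split against the top merge with \eqref{mergesplit}, pulling the resulting splits down through the middle crossings with \eqref{sliders}, and then pushing the bottom dot packets onto the thinner legs via Lemma \ref{dotmovefreely}(2), all modulo lower-degree terms. Your toolkit suffices to run this argument, but as written the case is missing, and the symmetry claim that was supposed to cover it is false.

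A smaller point: in the crossing case your parenthetical ``while leaving the bottom dot packets untouched'' is not accurate. If two legs emanate from the same bottom split and end at the two top strands being crossed, the composite has both a join and a crossing between them; to restore reducedness the crossing must be absorbed into that split via \eqref{swallows}, which requires sliding it past the dot packets sitting at the bottoms of those legs, i.e.\ another application of Lemma \ref{dotmovefreely}(1) modulo lower-degree terms. This is repairable within your secondary induction, or you can avoid the case entirely by expanding the thick crossing into merges and splits via \eqref{crossgen}, which is how the paper reduces to only three types of generators.
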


\begin{proof}
We prove by induction on the degree $k$ for $\Hom_{\AW}(\mu,\lambda)_{\le k}$. By considering the obvious functor from $ \W $ to $\AW$, the degree 0 part follows from  \cite[Lemma~4.9]{BEEO} for the web category $\W$ since the degree 0 part is generated just by merges and splits.

Note that by \eqref{crossgen}, the generators of $\AW$
consist just of splits, merges, and dotted strands. It suffices to show that $fg$ can be written as a linear combination of elementary chicken foot diagrams up to lower degree terms (i.e, with degree $<\deg f +\deg g$)  for an arbitrary elementary chicken foot diagram $g$ and a specific generating morphism $f$ of the following three types: 
\begin{itemize}
    \item[(1)] Type I:  $1_* \wkdotaa 1_*$, i.e.,   a dotted strand tensored on the left and right by certain identity morphisms,
    \item[(2)]Type  $\rot{Y}$: $1_*\merge 1_*$,
    \item[(3)]Type Y: $1_*\splits 1_*$.
\end{itemize}
Here $1_*$ denotes suitable identity morphisms. 

We divided the proof according to the type of $f$. First note that by  Lemma \ref{dotmovefreely}, we can move dots freely through the crossing and move the dot from the thick strand to each of its legs up to lower degree terms. We will use this observation frequently below. 

(1) 
Suppose that $f$ is of type I with a dot on the $i$th strand.
Then $fg$ is obtained from $g$ by adding a dot to the thick strand at the $i$th vertex. Suppose that $g$ has a $s$-fold merge at its $i$th vertex.
Then we  use   \eqref{dotmovesplits+merge-simplify} to move the dot  to each leg of the $s$-fold merge, 
resulting a diagram $h$ obtained from $g$ by adding a thick dot $\omega_{r_j}$ on the $j$th leg of the $s$-fold merge at its $i$th vertex, where $r_j$ is the thickness of the $j$th leg of the $s$-fold merge.
Note that the diagram of $h$ without dots is the same as $g$ and hence is still a reduced chicken foot diagram.

For example, for $g$ as in \eqref{ex-of-dotchickenfootd} 
 and 
 $f=
 \begin{tikzpicture}[baseline = 3pt, scale=0.5, color=\clr]
\draw[-,line width=1.5pt] (0,-.2) to[out=up, in=down] (0,1.2);
\draw[-,line width=1.5pt] (0.5,-.2) to[out=up, in=down] (0.5,1.2);
\draw(0,0.5) \bdot; 
\draw(-.6,0.5) node{$\scriptstyle \omega_9$};
\node at (0,-.5) {$\scriptstyle 9$};
\node at (0.5,-.5) {$\scriptstyle 7$};
\end{tikzpicture} $, 
we have  
\begin{align*}
fg 
&=
\begin{tikzpicture}[anchorbase,scale=1.5,color=\clr]
\draw[-,line width=.6mm] (.212,1.1) to (.212,.8);
\draw(.212,.9) \bdot;
\draw(0,0.9) node{$\scriptstyle \omega_9$};
\draw[-,line width=.75mm] (1,1) to (1,.8);
\draw[-,line width=.15mm] (-1,-.396) to (.2,.8);
\draw[-,line width=.75mm](-1,-.39) to (-1.1,-.45);
\draw[-,line width=.15mm]  (.2,.8)to (.4,-.4);
\draw[-,line width=.75mm](.4,-.4) to (.4,-.5);
\draw[-,line width=.15mm]  (.2,.8)to (1.4,-.4);
\draw[-,line width=.75mm](1.4,-.4) to (1.4,-.5);
\draw[-,line width=.15mm] (1,.8) to (-1,-.39);
\draw[-,line width=.15mm] (1,.8) to (.4,-.39);
\draw[-,line width=.15mm] (1,.8) to (1.4,-.39);
\node at (-0.3,0.5) {$\scriptstyle 2$}; 
\draw (-.5,0.13)\bdot;
\node at (-.7,0.13) {$\nu^1$};
\draw (-.3,0)\bdot; \node at (-.3,-0.15) {$\nu^2$};
\draw (.35,-0.1)\bdot;\node at (.15,-0.18) {$\nu^3$};
\draw (.55,-0.1)\bdot;\node at (.7,-0.2) {$\nu^4$};
\draw (1.1,-0.1)\bdot;\node at (1,-0.3) {$\nu^5$};
\draw (1.3,-0.1)\bdot;\node at (1.5,-0.13) {$\nu^6$};
\node at (1.2,0.5) {$\scriptstyle 2$};
\node at (0.2,0.1) {$\scriptstyle 3$};
\node at (0.85,0.75) {$\scriptstyle 3$};
\node at (0.55,0.35) {$\scriptstyle 4$};
\node at (.9,.4) {$\scriptstyle 2$};
\end{tikzpicture}
=
\begin{tikzpicture}[anchorbase,scale=1.5,color=\clr]
\draw[-,line width=.6mm] (.212,.9) to (.212,.8);
\draw(0,.6) \bdot;
\draw(-.2,0.7) node{$\scriptstyle \omega_2$};
\draw(.212,.62) \bdot;
\draw(0.5,0.7) node{$\scriptstyle \omega_4$};
\draw(.3,.7) \bdot;
\draw(0.15,0.47) node{$\scriptstyle \omega_3$};
\draw[-,line width=.75mm] (1,.9) to (1,.8);
\draw[-,line width=.15mm] (-1,-.396) to (.2,.8);
\draw[-,line width=.75mm](-1,-.39) to (-1.1,-.45);
\draw[-,line width=.15mm]  (.2,.8)to (.4,-.4);
\draw[-,line width=.75mm](.4,-.4) to (.4,-.5);
\draw[-,line width=.15mm]  (.2,.8)to (1.4,-.4);
\draw[-,line width=.75mm](1.4,-.4) to (1.4,-.5);
\draw[-,line width=.15mm] (1,.8) to (-1,-.39);
\draw[-,line width=.15mm] (1,.8) to (.4,-.39);
\draw[-,line width=.15mm] (1,.8) to (1.4,-.39);
\node at (-0.4,0.4) {$\scriptstyle 2$}; 
\draw (-.5,0.13)\bdot;
\node at (-.7,0.13) {$\nu^1$};
\draw (-.3,0)\bdot; 
\node at (-.3,-0.15) {$\nu^2$};
\draw (.35,-0.1)\bdot;
\node at (.15,-0.18) {$\nu^3$};
\draw (.55,-0.1)\bdot;
\node at (.7,-0.2) {$\nu^4$};
\draw (1.1,-0.1)\bdot;
\node at (1,-0.3) {$\nu^5$};
\draw (1.3,-0.1)\bdot;
\node at (1.5,-0.13) {$\nu^6$};
\node at (1.2,0.5) {$\scriptstyle 2$};
\node at (0.2,0.1) {$\scriptstyle 3$};
\node at (0.85,0.75) {$\scriptstyle 3$};
\node at (0.55,0.35) {$\scriptstyle 4$};
\node at (.9,.4) {$\scriptstyle 2$};
\end{tikzpicture}.
\end{align*}
Next, we use Lemma \ref{dotmovefreely}(1) to move the new dot of $h$  through crossing down (up to lower degree terms) until it meets the dots on the $j$th leg such that it becomes a new elementary chicken foot diagram (recall all dots in a leg are commutative by Lemma \ref{lem:commute}). This completes the proof for $f$ of type I.

(2) Suppose that $f$ is of type $\rot{Y}$, a two-fold merge joining to the $i$th and $(i+1)$th strand at the top of $g$.
Suppose $g$ has an $s$-fold merge and $t$-fold merge on its $i$th and $(i+1)$st vertices, respectively. Then by \eqref{webassoc} $fg$
is a (dotted) chicken foot diagram obtained from $g$ by merge the $s$-fold merge and the $s$-fold merge to a $(s+t)$-fold merge.
Note that the resulting chicken foot diagram is not reduced in general since there may exist two legs joining twice. 
For example, for $g$ given in \eqref{ex-of-dotchickenfootd} and $f=
.$$
We then use Lemma \ref{dotmovefreely}(2) to move the dots to the thinner legs of the new split (up to lower degree terms). Finally, we use \eqref{webassoc} to obtain a linear combination of elementary chicken foot diagrams. This completes the proof for $f$ of type Y. The proposition is proved. 
\end{proof}

\subsection{A representation of $\AW$}
 \label{subsec:rep}

We shall construct a representation of $\AW$ on a module category of the Lie algebra $\mathfrak {gl}_N(\C)$. 

Let $V$ be the natural representation of the general linear Lie algebra $\mathfrak{g} =\mathfrak{gl}_N(\C)$ with standard basis $\{v_1,v_2,\ldots, v_N\}$. The matrix units $e_{ij}$, for $ i,j\in [N]:=\{1,2,\ldots,N\}$, form a basis for $\mathfrak{g}$. For any $a\in \N$, let $\bigwedge^a V$ be the $a$th exterior power of $V$ with basis 
$$
v_{i_1}\wedge v_{i_2}\wedge \ldots\wedge v_{i_a}, 1\le i_1<i_2<\ldots< i_a\le N.
$$
For any $\lambda=(\lambda_1,\lambda_2,\ldots,\lambda_k)\in \Lambda_{\text{st}}$, we define 
$\bigwedge ^\lambda V:=\bigwedge ^{\lambda_1}V\otimes \ldots \otimes \bigwedge^{\lambda_k}V.$

We recall the functor which relates $\W$ and $\mathfrak g$-mod (cf. \cite{CKM, BEEO}) by using the following three $\mathfrak g$-homomorphisms:
$$\begin{aligned}
\text{Y}_{a,b} : &
\bigwedge\nolimits^{a+b} V  
\rightarrow 
\bigwedge\nolimits ^a V\otimes \bigwedge\nolimits^b V,
\\
  v_{i_1}\wedge \ldots&\wedge v_{i_{a+b}} 
\mapsto  
\sum_{w\in (\mathfrak S_{a+b}/\mathfrak S_a\times \mathfrak S_b)_{\text{min}} }(-1)^{\ell(w)}v_{i_{w(1)}}\wedge\ldots\wedge v_{i_{w(a)}}\otimes v_{i_{w(a+1)}}\wedge \ldots\wedge v_{i_{w(a+b)}}, 
\\
\rot{Y}_{a,b}: &
\bigwedge\nolimits ^aV\otimes \bigwedge\nolimits ^bV
 \rightarrow 
 \bigwedge\nolimits^{a+b} V,
 \quad v_{i_1}\wedge \ldots\wedge v_{i_a}\otimes v_{i_{a+1}}\wedge \ldots\wedge v_{i_{a+b}} 
 \mapsto v_{i_1}\wedge \ldots \wedge v_{i_{a+b}},
 \\
 \text{X}_{a,b}: &
 \bigwedge\nolimits ^aV\otimes \bigwedge\nolimits ^b V
 \rightarrow 
 \bigwedge\nolimits ^b V\otimes \bigwedge\nolimits^a V, \quad v\otimes w \mapsto (-1)^{ab} w\otimes v,
\end{aligned}
  $$
where $(\mathfrak S_{a+b}/\mathfrak S_a\times\mathfrak S_b)_{\text{min}}$ denotes the set of minimal length coset representatives.

The following result is valid over any $\kk$. Note that throughout the paper the diagrams (as morphisms in the representation category) should be read from bottom to top and from left to right.
\begin{proposition} [{\cite[Theorem 4.14]{BEEO}}]
\label{functorofweb} 
   There is a strict monoidal functor $\Phi: \W\rightarrow \mathfrak g$-mod, sending the generating object $a$ to $\bigwedge ^aV$, and the morphisms $\merge$, $\splits$ and $\crossing$ to \rm{\rot{Y}}$_{a,b}$, 
   ${\rm Y}_{a,b}$ and ${\rm X}_{a,b}$, respectively. 
\end{proposition}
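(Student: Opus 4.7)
The plan is to verify that the assignments on objects and generating morphisms respect all defining relations of $\W$, namely \eqref{webassoc}, \eqref{mergesplit} and \eqref{equ:r=0}. First one checks that $\rot{Y}_{a,b}$, ${\rm Y}_{a,b}$, and ${\rm X}_{a,b}$ are $\mathfrak{g}$-module maps: $\rot{Y}_{a,b}$ is just the wedge product (restricted from the multiplication on the exterior algebra $\bigwedge V$), which is manifestly $\mathfrak{g}$-equivariant; $\text{Y}_{a,b}$ is the corresponding component of the coproduct on $\bigwedge V$ viewed as a Hopf algebra, and ${\rm X}_{a,b}$ is the graded flip, both of which are $\mathfrak{g}$-equivariant for formal reasons.

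The associativity relations \eqref{webassoc} are then immediate: they follow from associativity of the wedge product (for the merge) and its dual/coassociativity statement (for the split), each being an equality between two expressions of a trifold (co)product. The bubble/ladder relation \eqref{equ:r=0}, which asserts that a split followed by a merge on a single strand of thickness $a+b$ equals multiplication by $\binom{a+b}{a}$, reduces to counting: $\rot{Y}_{a,b} \circ \text{Y}_{a,b}$ sends $v_{i_1}\wedge\cdots\wedge v_{i_{a+b}}$ to a sum over $(\mathfrak{S}_{a+b}/\mathfrak{S}_a\times\mathfrak{S}_b)_{\min}$ of $(-1)^{\ell(w)} v_{i_{w(1)}}\wedge\cdots\wedge v_{i_{w(a+b)}}$, and each such term reassembles to $v_{i_1}\wedge\cdots\wedge v_{i_{a+b}}$ (signs cancel with the sorting), giving exactly $|(\mathfrak{S}_{a+b}/\mathfrak{S}_a\times\mathfrak{S}_b)_{\min}| = \binom{a+b}{a}$ copies.

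The main obstacle is the square relation \eqref{mergesplit}: split-then-merge on $(a,c)\to(b,d)$ equals a sum over admissible $(s,t)$ of a double chicken-foot diagram. The plan is to expand both sides directly on a basis vector $(v_I \otimes v_J)$, where $v_I = v_{i_1}\wedge\cdots\wedge v_{i_a}$ and $v_J = v_{j_1}\wedge\cdots\wedge v_{j_c}$. The left-hand side first merges to produce $v_I \wedge v_J \in \bigwedge^{a+c}V$, then splits into a signed sum over $(b,d)$-shuffles. The right-hand side sums, over all ways to partition $I = I_0\sqcup I_1$ and $J = J_0\sqcup J_1$ with $|I_1|=s$, $|J_1|=t$, $t-s=d-a$, the wedge $(v_{I_0}\wedge v_{J_1}) \otimes (v_{I_1}\wedge v_{J_0})$ with appropriate signs coming from the flip ${\rm X}$. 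One then checks that the signs match (using $\ell(w)$ on minimal coset representatives versus shuffle signs) and that the combinatorial indexing of shuffles on $I \sqcup J$ bijects with the choice of $(I_0,I_1,J_0,J_1)$ together with a $(b,d)$-shuffle of $(I_0\sqcup J_1, I_1\sqcup J_0)$. The sign bookkeeping is the delicate step, but follows from the standard sign rules in $\bigwedge V$.

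Once these three families of relations are checked, Proposition~\ref{functorofweb} follows by the universal property of the strict monoidal category presented by generators and relations. The quickest packaging is to invoke the alternative presentation of $\W$ (Remark~\ref{rem:cross}) in which crossings are derived rather than generating, so that only \eqref{webassoc}, \eqref{mergesplit} (in its merge-split form) and the ladder-square identity \eqref{rung-sawp} need to be verified; the image of the crossing defined by \eqref{crossgen} will then automatically coincide with $\text{X}_{a,b}$ (up to the factor $(-1)^{ab}$) by an explicit expansion on basis vectors.
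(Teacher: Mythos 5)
The paper contains no proof of Proposition~\ref{functorofweb}: it is quoted directly from \cite[Theorem 4.14]{BEEO} (the $\mathfrak{sl}_n$-ancestor being in \cite{CKM}), so your argument is not an alternative to anything in the paper but a re-derivation of the cited result, by the standard route of checking the defining relations \eqref{webassoc}, \eqref{mergesplit}, \eqref{equ:r=0} on the proposed images of the generators. Your equivariance checks, the (co)associativity argument for \eqref{webassoc}, and the computation for \eqref{equ:r=0} (each shuffle contributes $(-1)^{\ell(w)}\cdot(-1)^{\ell(w)}=+1$, giving $\binom{a+b}{a}$) are correct, and the whole verification is basis-theoretic with integer coefficients, so it is valid over arbitrary $\kk$, matching the sentence preceding the proposition.

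Two points should be tightened. First, in your treatment of \eqref{mergesplit} the size bookkeeping is off as written: with $|I_1|=s$, $|J_1|=t$ the term $(v_{I_0}\wedge v_{J_1})\otimes(v_{I_1}\wedge v_{J_0})$ has degrees $(a-s+t,\,s+c-t)=(d,b)$, landing in the wrong tensor factors; in the paper's labelling $s$ and $t$ are the thicknesses of the two \emph{vertical} legs (from $a$ to $b$ and from $c$ to $d$), so the crossed pieces have sizes $a-s$ and $c-t=b-s$. Since this shuffle-versus-partition bijection together with the sign match is precisely the nontrivial content of the whole proposition, it should be written out rather than asserted to follow from ``standard sign rules.'' Second, in the crossing-free presentation of Remark~\ref{rem:cross} the defining relations are \eqref{webassoc} and \eqref{rung-sawp} only --- \eqref{mergesplit} cannot be one of them, as its right-hand side contains crossings --- and one must then verify that the image of the crossing defined by \eqref{crossgen} is \emph{exactly} ${\rm X}_{a,b}$, sign $(-1)^{ab}$ included (``up to the factor'' is not enough: already the $a=b=c=d=1$ case of \eqref{mergesplit}, namely merge-then-split $=$ identity $+$ crossing, forces the sign, as one sees from $v_i\otimes v_j\mapsto v_i\otimes v_j-v_j\otimes v_i$).
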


Let $\End(\mathfrak g\text{-mod})$ be the category whose objects are all endofunctors of $\mathfrak g$-mod with transformation as morphisms. It is a strict monoidal category with identity functor $\text{Id}$ as the unit object. The tensor product  $\otimes$ is the horizontal composition, and $\circ $ is the vertical composition.

There is a triangular decomposition of $\mathfrak g=\mathfrak n^-\oplus \mathfrak h\oplus \mathfrak n^+$ such that $\mathfrak n^-$ (and respectively, $\mathfrak n^+$) consists of all strictly lower triangular (and respectively, upper triangular) matrices. Let $\mathfrak b=\mathfrak h\oplus \mathfrak n^+$. 
Moreover, the Cartan subalgebra $\mathfrak h$ has basis $\{h_i:= e_{i,i}\mid i\in [N] \}$. Let 
 \begin{equation}
     \Omega=\sum_{i,j\in[N]} e_{ij}\otimes e_{ji}.
 \end{equation}

Note that by \eqref{intergralballon}, the generator \begin{tikzpicture}[baseline = 3pt, scale=0.5, color=\clr]
\draw[-,line width=1.5pt] (0,0) to[out=up, in=down] (0,1.4);
\draw(0,0.6) \bdot; 
\draw (0.7,0.6) node {$\scriptstyle \omega_a$};
\node at (0,-.3) {$\scriptstyle a$};
\end{tikzpicture} 
is generated by $\dotgen$, splits and merges over a field of characteristic zero. Let us extend the functor in the previous proposition to $\AW.$

\begin{proposition} \label{functorofaff}
Suppose $\kk=\C$. There is a strict monoidal functor $\mathcal F:\AW\rightarrow \text{End}(\mathfrak {g}\text{-mod})$ sending the generating object $a$ to the functor $-\otimes \bigwedge^aV$, and sending the generating morphisms to
\begin{align*}
    \mathcal F\big(\splits\big)_M&=  \text{Id}_M\otimes\Phi \big(\splits\big),
    \\
    \mathcal F\big(\merge\big)_M&= \text{Id}_M\otimes\Phi \big(\merge\big),
    \\
    \mathcal F\big(\crossing\big)_M&=  \text{Id}_M\otimes\Phi \big(\crossing\big),
    \\
    \mathcal F\big(\dotgen\big)_M(m\otimes v) &=\Omega(m\otimes v), \quad\forall v\in V,m\in M.
\end{align*}
\end{proposition}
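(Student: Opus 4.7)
The plan is to verify that the specified assignment respects all defining relations of $\AW$, after which the universal property of strict monoidal categories presented by generators and relations produces the desired functor. I would use the simplified presentation of $\AW$ over $\C$ given in Definition~\ref{def-affine-webC}, whose relations are precisely those of $\W$ together with the degenerate affine Hecke dot-crossing slider relation, in accordance with the commutative diagram \eqref{diag:AHAW}. Throughout, I will treat each $\mathcal{F}(f)$ as a natural transformation between endofunctors of $\mathfrak{g}$-mod and verify well-definedness, naturality, and the relations separately.

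The web relations \eqref{webassoc}, \eqref{mergesplit}, and \eqref{equ:r=0} involve only merges, splits, and crossings, which $\mathcal{F}$ sends to $\text{Id}_M \otimes \Phi(-)$ acting purely in the rightmost tensor factors. Hence they are inherited from Proposition~\ref{functorofweb}: any identity of $\mathfrak{g}$-module maps between exterior tensor products remains valid after tensoring on the left by $\text{Id}_M$, and both $\mathfrak{g}$-linearity of each component and naturality in $M$ are automatic. The remaining well-definedness issue is that $\mathcal{F}(\dotgen)_M$ be a $\mathfrak{g}$-module map and that the assignment be natural in $M$. Both follow from the fundamental fact that $\Omega \in U(\mathfrak{g}) \otimes U(\mathfrak{g})$ is the split Casimir, so $[\Omega, \Delta(x)] = 0$ for every $x \in \mathfrak{g}$; naturality in $M$ is then immediate from the concrete formula $\Omega(m \otimes v) = \sum_{i,j} e_{ij} m \otimes e_{ji} v$ and $\mathfrak{g}$-linearity of any map $M \to M'$.

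The technical content of the argument lies in verifying the dot-crossing slider relation \eqref{dotmovecrossingC}. This reduces to a direct computation on $M \otimes V \otimes V$ using the explicit formulas $\Omega = \sum_{p,q} e_{pq} \otimes e_{qp}$, the matrix unit action $e_{pq} \cdot v_r = \delta_{qr} v_p$, and $\text{X}_{1,1}(v \otimes w) = -w \otimes v$. Evaluating both sides on a basis vector $m \otimes v_i \otimes v_j$ reduces the identity to elementary comparisons of terms indexed by $(p,q) \in [N]^2$. The main potential obstacle is sign bookkeeping from the factor $(-1)^{ab}$ in $\text{X}_{a,b}$ and from antisymmetrization, and confirming that the precise correction term produced by $\Omega_{12} \circ (\text{Id}_M \otimes \text{X}_{1,1}) - (\text{Id}_M \otimes \text{X}_{1,1}) \circ \Omega_{13}$ matches the form of \eqref{dotmovecrossingC} appearing in the presentation. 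Once the thin case is settled, no further verification is needed within the simplified $\C$-presentation; the higher-thickness slider identities of Definition~\ref{def-affine-web} then emerge as derived consequences, since the functor already respects the web relations used to define the higher dot packets $\omega_a$ via the balloon relation \eqref{intergralballon}.
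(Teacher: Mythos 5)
Your proposal is correct and follows essentially the same route as the paper: both reduce to the simplified characteristic-zero presentation of Definition~\ref{def-affine-webC}, inherit the relations \eqref{webassoc}--\eqref{equ:r=0} from Proposition~\ref{functorofweb}, and verify the thin dot-crossing relation \eqref{dotmovecrossingC} by an explicit computation on basis vectors $m\otimes v_i\otimes v_j$ of $M\otimes V\otimes V$. Your extra remarks on $\mathfrak{g}$-linearity of the dot via the split Casimir and on naturality in $M$ are points the paper leaves implicit, and your observation that the thick-dot relations are then automatic via \eqref{intergralballon} matches the paper's remark preceding the proposition.
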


\begin{proof}
It suffices to check the relations \eqref{webassoc}--\eqref{equ:r=0} and \eqref{dotmovecrossingC} according to the equivalent and simpler definition of $\AW$ over $\kk=\C$ in \S \ref{affineweboverC}. It follows from  Proposition~\ref{functorofweb} that 
$\mathcal F\big(\splits\big), \mathcal F\big(\merge\big), \mathcal F\big(\crossing\big)$ satisfy the defining relations \eqref{webassoc}--\eqref{equ:r=0}. We have
\begin{align*}
\mathcal F\big(
\begin{tikzpicture}[baseline = 7.5pt, scale=0.4, color=\clr]
\draw[-, line width=1pt] (0,-.2) to (1,1.6);
\draw[-,thick] (1,-.2) to (0,1.6);
\draw(.25,0.3)\bdot;
\node at (0, -.5) {$\scriptstyle 1$};
\node at (1, -.5) {$\scriptstyle 1$};
\end{tikzpicture}
    \big)_M(m\otimes v_i\otimes v_j)
&
=
-\sum_{k\in [N]} e_{ik}m\otimes v_j\otimes v_k,
\\
\mathcal F\big(
\begin{tikzpicture}[baseline = 7.5pt, scale=0.4, color=\clr]
\draw[-, line width=1pt] (0,-.2) to (1,1.6);
\draw[-,thick] (1,-.2) to (0,1.6);
\draw(.7,1)\bdot;
\node at (0, -.5) {$\scriptstyle 1$};
\node at (1, -.5) {$\scriptstyle 1$};
\end{tikzpicture} 
\big)_M(m\otimes v_i\otimes v_j)
&
=
-\sum_{k\in [N]} 
e_{ik}m\otimes v_j\otimes v_k -m\otimes v_i\otimes v_j,
\end{align*}
   proving the second relation in \eqref{dotmovecrossingC}. The first relation in \eqref{dotmovecrossingC} can be checked similarly.   
   \end{proof}
   
 \begin{rem}
 \label{remactfundot}
One can check that $\mathcal F\big(\wxdota \big)_M= \Omega(M\otimes \bigwedge^a V)$, for $a\ge 1$. 
 \end{rem}
 
 By evaluation at any module $M\in \mathfrak g$-mod, we obtain a functor $\mathcal F_M: \AW\rightarrow \mathfrak g$-mod. We specialize $M$ to be the generic Verma module $M^{\text{gen}}$ below. In this case, we shall work out the leading terms of the action of various morphisms on $M^{\text{gen}}\otimes \bigwedge^a V.$
 
 Let $U(\mathfrak g)$ be the universal enveloping algebra of $\mathfrak g$. We define the generic Verma module 
 \begin{equation}\label{defofgenericverma}
 M^{\text{gen}}:= U(\mathfrak g)\otimes _{U(\mathfrak b)}U(\mathfrak h),
 \end{equation}
 where $U(\mathfrak h)$ is the $U(\mathfrak b)$-module by inflation.
 Then $M^{\text{gen}}$ is a free right $U(\mathfrak h)$-module with a monomial basis in $\{e_{i,j}\mid 1\le j<i\le N\}$ (by fixing an arbitrary ordering of these elements). 
 We have an isomorphism of $\C$-vector spaces $M^{\text{gen}} \cong U(\mathfrak{n}^-) \otimes U(\mathfrak{h})$, and assigning $\deg h=1$ and $\deg f=0$ for $h\in \mathfrak h$ and $f\in \mathfrak n^-$ provides a $\Z$-grading on $M^{\text{gen}}$. This induces a $\Z$-grading on $M^{\text{gen}}\otimes \bigwedge^\lambda V$ by further assigning degree 0 to any element in $\bigwedge^\lambda V.$
 
 \begin{lemma}
 \label{lem:actionofdotlead}
 Suppose that $M=M^{\text{gen}}\otimes \bigwedge^b V$ for any $b\in \N$.
Then the action of $\mathcal F_M (\wxdota)$
 on $M\otimes \bigwedge^a V$ is of filtered degree 1. Moreover, up to lower degree terms, we have 
 \begin{equation*}
     \wxdota(1\otimes h\otimes v\otimes  v_{i_1}\wedge v_{i_2}\wedge\ldots \wedge v_{i_a})
     \equiv 1\otimes \sum_{1\le j\le a}h_{i_j}h\otimes v\otimes  v_{i_1}\wedge v_{i_2}\wedge \ldots \wedge v_{i_a},
 \end{equation*}  
 for any $h\in U(\mathfrak h)$, $v\in \bigwedge^b V$ and $1\le i_1, \ldots, i_a\le N$.
 \end{lemma}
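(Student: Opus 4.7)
\medskip

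\noindent\textbf{Proof proposal.}
The plan is to compute the action of $\mathcal{F}_{M^{\text{gen}}}(\wxdota)$ directly using Remark~\ref{remactfundot}, which identifies this operator with the action of $\Omega = \sum_{i,j\in[N]} e_{ij}\otimes e_{ji}$ on $M^{\text{gen}}\otimes \bigwedge^{a}V$. Thus I only need to trace through the two factors and keep track of the grading coming from the isomorphism $M^{\text{gen}}\cong U(\mathfrak{n}^-)\otimes U(\mathfrak{h})$ of $\C$-vector spaces (with $\deg f=0$ for $f\in \mathfrak{n}^-$ and $\deg h=1$ for $h\in\mathfrak{h}$).

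First I would expand on the exterior power: since each $e_{ji}\in \mathfrak{g}$ acts as a derivation on $\bigwedge^{a}V$ and sends $v_i\mapsto v_j$ (zero on other basis vectors), one gets
\begin{equation*}
e_{ji}(v_{i_1}\wedge\cdots\wedge v_{i_a}) = \sum_{k:\, i_k=i} v_{i_1}\wedge\cdots\wedge v_j\;(\text{at position }k)\wedge\cdots\wedge v_{i_a}.
\end{equation*}
Substituting this into $\Omega(1\otimes h\otimes v_{i_1}\wedge\cdots\wedge v_{i_a})$ collapses the $i$-summation and leaves
\begin{equation*}
\sum_{k=1}^{a}\sum_{j\in[N]} \bigl(e_{i_k,j}\cdot (1\otimes h)\bigr)\otimes \bigl(v_{i_1}\wedge\cdots\wedge v_j\;(\text{at }k)\wedge\cdots\wedge v_{i_a}\bigr).
\end{equation*}

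Next I would analyze $e_{i_k,j}\cdot(1\otimes h)$ according to the sign of $j-i_k$, using that $U(\mathfrak{n}^+)$ acts by zero on $U(\mathfrak{h})$ in the induced module. Concretely: if $j>i_k$, then $e_{i_k,j}\in \mathfrak{n}^+$ and $e_{i_k,j}\otimes h = 1\otimes (e_{i_k,j}\cdot h)=0$; if $j=i_k$, then $e_{i_k,j}=h_{i_k}\in \mathfrak{h}$ and $e_{i_k,j}\cdot(1\otimes h)=1\otimes h_{i_k}h$, which sits in filtered degree $\deg h +1$; if $j<i_k$, then $e_{i_k,j}\in \mathfrak{n}^-$ and $e_{i_k,j}\cdot(1\otimes h)=e_{i_k,j}\otimes h$, which lies in filtered degree $\deg h$ (lower). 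Hence the operator has filtered degree at most $1$, proving the first claim.

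Finally, collecting only the top-degree contributions (those with $j=i_k$), the second tensor factor becomes $v_{i_1}\wedge\cdots\wedge v_{i_a}$ unchanged, and summing over $k$ yields
\begin{equation*}
\Omega(1\otimes h\otimes v_{i_1}\wedge\cdots\wedge v_{i_a}) \equiv 1\otimes \Bigl(\sum_{k=1}^{a} h_{i_k}\Bigr)h \otimes v_{i_1}\wedge\cdots\wedge v_{i_a},
\end{equation*}
modulo terms of filtered degree $\le \deg h$, as required. The only mildly delicate point is the case $j>i_k$, where one must invoke the defining property of $M^{\text{gen}}$ (trivial action of $\mathfrak{n}^+$ on the $U(\mathfrak{h})$-factor) rather than attempting to commute $e_{i_k,j}$ past anything; everything else is a direct bookkeeping in the PBW basis.
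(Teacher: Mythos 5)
Your computation of the displayed formula is correct and follows essentially the same route as the paper: identify $\mathcal F_{M^{\text{gen}}}(\wxdota)$ with $\Omega$ via Remark~\ref{remactfundot}, observe that the strictly lower-triangular contributions have lower filtered degree, that the strictly upper-triangular ones kill $1\otimes h$ because $\mathfrak n^+$ acts by zero on the $U(\mathfrak h)$-factor of the induced module, and keep only the Cartan terms $e_{ii}\otimes e_{ii}$.

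There is, however, a gap in your proof of the \emph{first} assertion. The lemma claims that the operator is of filtered degree $1$ on all of $M^{\text{gen}}\otimes \bigwedge^a V$, but your case analysis is carried out only at vectors of the form $1\otimes h\otimes v_{i_1}\wedge\cdots\wedge v_{i_a}$, which span merely the subspace $1\otimes U(\mathfrak h)\otimes\bigwedge^a V$. On a general basis vector $f\otimes h\otimes(\cdots)$ with $f$ a nontrivial PBW monomial in $U(\mathfrak n^-)$, an element $e_{i_k,j}\in\mathfrak n^+$ does \emph{not} act by zero: one must commute it past $f$, and the resulting commutator terms are what has to be controlled. The needed input is the observation the paper uses, namely that left multiplication by $e_{ij}$ on $M^{\text{gen}}$ is of filtered degree $1$ for $i\le j$ and of degree $0$ for $i>j$ (a weight/PBW bookkeeping argument), which together with the degree-$0$ action on the wedge factor gives the bound on the whole module. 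This stronger statement is not redundant: in the linear independence argument the dotted diagrams are composed, so after one dot acts the vectors acquire nontrivial $U(\mathfrak n^-)$-components, and the degree bound must hold there too. Adding this observation (or proving it) would close the gap; the rest of your argument stands.
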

 
 \begin{proof}
 Note that the linear map defined by the left action of  $ e_{ij} $ on $M^{\text{gen}}$ is of filtered degree 1 (and respectively, of degree 0) for $i\le j$ (and respectively, $i>j$). This together with Remark \ref{remactfundot} implies the first statement. On the other hand, we have 
    $$
    \begin{aligned}
       \wxdota(1\otimes h\otimes v\otimes  v_{i_1}\wedge v_{i_2}\wedge\ldots\wedge v_{i_a})
       &=\Omega(1\otimes h\otimes v\otimes  v_{i_1}\wedge v_{i_2}\wedge\ldots\wedge v_{i_a} )\\
    &\equiv \sum_{1\le i\le j \le N} e_{ij}(1\otimes h\otimes v)\otimes  e_{ji} v_{i_1}\wedge v_{i_2}\wedge\ldots\wedge v_{i_a}\\
    & \equiv \sum_{1\le i\le  N} e_{ii}(1\otimes h\otimes v)\otimes  e_{ii} v_{i_1}\wedge v_{i_2}\wedge\ldots\wedge v_{i_a}\\
    &= 1\otimes \sum_{1\le j\le a}h_{i_j}h\otimes v\otimes v_{i_1}\wedge v_{i_2}\wedge \ldots \wedge v_{i_a}.
    \end{aligned}  $$
This proves the second statement. 
 \end{proof}
 
\begin{corollary}
\label{actionofelempoly}
Suppose that $M=M^{\text{gen}}\otimes \bigwedge^b V$ for any $b\in \N$.
    For any $v\in \bigwedge^b V$ and $\nu\in \Par_a$, we have 
 \[
 \omega_{a,\nu} ((1\otimes h\otimes v) \otimes v_{i_1}\wedge \ldots\wedge v_{i_a})
 \equiv
 \big( 1\otimes e_{\nu}(h_{i_1},\ldots, h_{i_a})h \otimes v\big) \otimes 
 v_{i_1}\wedge \ldots\wedge v_{i_a},
 \]
 where $e_{\nu}(x_1,\ldots, x_a)$ is the elementary symmetric polynomial associated with partition $\nu$.
\end{corollary}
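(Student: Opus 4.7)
\medskip
\noindent\textbf{Proof proposal.} Since $\omega_{a,\nu} = \omega_{a,\nu_1} \cdots \omega_{a,\nu_k}$ factors as a composition of commuting endomorphisms (by Lemma~\ref{lem:commute}), and since $e_\nu = e_{\nu_1} \cdots e_{\nu_k}$ inside the commutative algebra $U(\mathfrak h)$, the plan is first to establish the base case: for each $1 \le r \le a$,
\[
\omega_{a,r}(1\otimes h\otimes v_{i_1}\wedge \ldots\wedge v_{i_a})
\equiv \big( 1\otimes e_{r}(h_{i_1},\ldots, h_{i_a})h \big) \otimes v_{i_1}\wedge \ldots\wedge v_{i_a}.
\]
Once this is in hand, an easy induction on $k$ finishes the corollary: the operator $\omega_{a,\nu_s}$ has filtered degree $\nu_s$ (by Lemma~\ref{lem:actionofdotlead} and the factorisation below), multiplication by $e_{\nu_s}(h_{i_1},\ldots,h_{i_a})$ commutes with every other $e_{\nu_t}$, and the product of leading terms is $e_\nu(h_{i_1},\ldots,h_{i_a})$.

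To handle $\omega_{a,r}$ itself, the plan is to treat the case $r=a$ first using the balloon relation \eqref{intergralballon}, which identifies $a! \, \omega_{a,a}$ with the fully split balloon carrying a fundamental dot $\dotgen$ on each of its $a$ thin strands. Under the functor $\mathcal F_{M^{\text{gen}}}$ and Proposition~\ref{functorofweb}, the iterated split sends $v_{i_1}\wedge\cdots\wedge v_{i_a}$ to $\sum_{w\in \mathfrak S_a}\mathrm{sgn}(w)\, v_{i_{w(1)}}\otimes\cdots\otimes v_{i_{w(a)}}$; each fundamental dot acts (by Lemma~\ref{lem:actionofdotlead} applied in thickness one) via $\Omega$ whose leading part multiplies the central $U(\mathfrak h)$-factor by $h_{i_{w(s)}}$ on the $s$-th strand; and the iterated merge returns $\mathrm{sgn}(w)\, v_{i_1}\wedge\cdots\wedge v_{i_a}$. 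The two signs square to $1$, and each summand contributes the same monomial $h_{i_1}\cdots h_{i_a}$, so the balloon action on $1\otimes h\otimes v_{i_1}\wedge\cdots\wedge v_{i_a}$ equals $a!\,h_{i_1}\cdots h_{i_a} h\otimes v_{i_1}\wedge\cdots\wedge v_{i_a}$ modulo lower degree terms. Dividing by $a!$ and recognising $h_{i_1}\cdots h_{i_a} = e_a(h_{i_1},\ldots,h_{i_a})$ gives the claim for $r=a$.

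For a general $r<a$, the plan is to expand $\omega_{a,r}$ via its definition \eqref{equ:r=0andr=a} as the composition $\rot{\mathrm Y}_{r,a-r}\circ (\omega_{r,r}\otimes \mathrm{id})\circ \mathrm Y_{r,a-r}$. The split $\mathrm Y_{r,a-r}$ produces a signed sum, indexed by $r$-subsets $J = \{j_1<\cdots<j_r\}\subseteq\{1,\ldots,a\}$, of tensors $\varepsilon_J\,(v_{i_{j_1}}\wedge\cdots\wedge v_{i_{j_r}})\otimes(v_{i_{j^c_1}}\wedge\cdots\wedge v_{i_{j^c_{a-r}}})$, where $\varepsilon_J$ is the sign of the associated minimal $(r,a-r)$-shuffle. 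The already-established $r=a$ case, applied in thickness $r$, shows that $\omega_{r,r}$ acts on the first tensor factor as multiplication by $h_{i_{j_1}}\cdots h_{i_{j_r}}$ modulo lower degree terms; the merge $\rot{\mathrm Y}_{r,a-r}$ then reintroduces exactly the same sign $\varepsilon_J$ when reassembling the wedge. The signs cancel ($\varepsilon_J^2=1$) and the sum over all $r$-subsets $J$ is precisely
\[
\sum_{J}\prod_{s\in J} h_{i_s} = e_r(h_{i_1},\ldots,h_{i_a}),
\]
completing the base case and hence the corollary. The only nontrivial bookkeeping—and the main obstacle—is to verify that the split-sign and merge-sign conventions in Proposition~\ref{functorofweb} match so that the sum is unsigned; this is routine but must be done carefully to avoid ending up with an alternating sum.
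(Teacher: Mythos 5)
Your proposal is correct, but it takes a genuinely different route from the paper's. The paper also reduces at once to $\nu=(r)$, but then proceeds by induction on $r$: it rewrites $\omega_{a,r}$, up to the factor $\tfrac1r$, as a balloon with one thin leg carrying $\omega_1$ and a leg of thickness $a-1$ carrying $\omega_{a-1,r-1}$, and applies Lemma~\ref{lem:actionofdotlead} together with the inductive hypothesis; on leading terms this is exactly the recursion $r\,e_r(x_1,\ldots,x_a)=\sum_j x_j\,e_{r-1}(x_1,\ldots,\hat x_j,\ldots,x_a)$. You instead compute the generator $\omega_{r,r}$ once and for all from the fully exploded balloon \eqref{intergralballon} (picking up the factorial), and then expand $\omega_{a,r}$ through its definition \eqref{equ:r=0andr=a} as a split into $(r,a-r)$, which realises the monomial expansion $e_r=\sum_{|J|=r}\prod_{s\in J}x_s$ directly. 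Both computations are legitimate in the relevant setting (the functor $\mathcal F_{M^{\text{gen}}}$ is defined over $\C$, so your division by $a!$ is as harmless as the paper's division by $r$), and the sign issue you flag does work out: with the conventions of Proposition~\ref{functorofweb}, the coefficient $(-1)^{\ell(w_J)}$ produced by the split on the term indexed by $J$ is precisely the sign by which the merge reorders $v_{i_{j_1}}\wedge\cdots\wedge v_{i_{j_r}}\wedge v_{i_{J^c}}$ back into $v_{i_1}\wedge\cdots\wedge v_{i_a}$, so each summand appears with coefficient $+1$. Two points you should state explicitly to make the argument airtight: (i) when a dot packet acts with further tensor factors present, the operator $\Omega$ couples $M^{\text{gen}}$ only with the strand carrying the dot, so Lemma~\ref{lem:actionofdotlead} and your thickness-$r$ case apply verbatim with the extra factors inert; and (ii) the lower-degree terms created by one dot stay of strictly lower degree after the remaining dots (filtered degree $\le 1$ each) and the degree-zero splits and merges are applied, which is what justifies multiplying leading terms both in your shuffle sum and in your final induction on the parts of $\nu$ — the same implicit step the paper takes. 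What your route buys is the elimination of the induction on $r$ and a combinatorially transparent appearance of $e_r$; what the paper's route buys is lighter diagrammatic bookkeeping, since it only ever splits off a single strand rather than summing over all $r$-subsets.
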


\begin{proof}
    Since $\omega_{a,\nu}=\omega_{a,\nu_1}\cdots \omega_{a,\nu_k}$, it suffices to prove the case for $\nu=(r)$.  By Lemma ~\ref{lem:actionofdotlead} and induction on $r$, we have 
\begin{align*}
    \omega_{a,r}& (1\otimes h\otimes v\otimes  v_{i_1}\wedge \ldots\wedge v_{i_a})
   \\
&\overset{\eqref{intergralballon}}=
 \frac{1}{r}
\begin{tikzpicture}[baseline = 5pt, scale=0.4, color=\clr]
\draw[-,line width=1.5pt](0.5,1.5) to (.5,1.8);
\draw[-,line width=1.5pt](0.5,-.5) to (.5,-.8);
\draw[-,thin]  (0.5,1.5) to[out=left,in=up] (-.5,.5)to[out=down,in=left] (0.5,-0.5);
\draw[-,line width=1pt] (0.5,-0.5) to[out=right,in=down] (1.5,.5)
to[out=up,in=right] (0.5,1.5);   
\draw (1.5,.5) \bdot; 
\node at (3.3,.5) {$\scriptstyle \omega_{a-1,r-1}$};
\node at (-.3,-.5) {$\scriptstyle 1$};
\draw (-0.5,.5) \bdot; 
\node at (-1.1,.5) {$\scriptstyle \omega_1$};
\node at (0.5,-1.1) {$\scriptstyle a$};
\end{tikzpicture} (1\otimes h\otimes v\otimes v_{i_1}\wedge \ldots\wedge v_{i_a})
   \\
   &
   =
   \frac{1}{r} (
   \begin{tikzpicture}[baseline = 5pt, scale=0.4, color=\clr]
\draw[-,line width=1.5pt](0.5,1.5) to (.5,1.8);
\draw[-,thin]  (0.5,1.5) to[out=left,in=up] (-.5,-.2);
\draw[-,line width=1pt]  (1.5,-.2)
to[out=up,in=right] (0.5,1.5);   
\draw (1.5,.5) \bdot; 
\node at (3.3,.5) {$\scriptstyle \omega_{a-1,r-1}$};
\node at (-.5,-.5) {$\scriptstyle 1$};
\draw (-0.5,.5) \bdot; 
\node at (-1.1,.5) {$\scriptstyle \omega_1$};
\node at (1.5,-.5) {$\scriptstyle a-1$};
\end{tikzpicture}
)(1\otimes h\otimes v\otimes (\sum_{1\le j\le a}(-1)^{j-1}v_{i_j}\otimes v_{i_1}\wedge \ldots \wedge\hat v_{i_j}\wedge \ldots \wedge v_{i_a})
\\
 &
\equiv \frac{1}{r}( \begin{tikzpicture}[baseline = 5pt, scale=0.4, color=\clr]
\draw[-,line width=1.5pt](0.5,1.5) to (.5,1.8);
\draw[-,thin]  (0.5,1.5) to[out=left,in=up] (-.5,-.2);
\draw[-,line width=1pt]  (1.5,-.2)
to[out=up,in=right] (0.5,1.5);   
\draw (1.5,.5) \bdot; 
\node at (3.3,.5) {$\scriptstyle \omega_{a-1,r-1}$};
\node at (-.5,-.5) {$\scriptstyle 1$};
\node at (1.5,-.5) {$\scriptstyle a-1$};
\end{tikzpicture} )(\sum_{1\le j\le a}(1\otimes h_{i_j}h\otimes v)\otimes (-1)^{j-1}v_{i_j}\otimes v_{i_1}\wedge \ldots \wedge\hat v_{i_j}\wedge \ldots \wedge v_{i_a})
\\  &\equiv \frac{1}{r}
\sum_{1\le j\le a}(1\otimes h_{i_j}e_{r-1}(h_{i_1},\ldots, h_{i_{j-1}},\hat h_{i_j}, \ldots, h_{i_a}) h\otimes v\otimes  v_{i_1}\wedge \ldots\wedge v_{i_a})\\
&=\big( 1\otimes e_{r}(h_{i_1},\ldots, h_{i_a})h \big) \otimes v\otimes
 v_{i_1}\wedge \ldots\wedge v_{i_a}
\end{align*}
 where $e_{r-1}(h_{i_1},\ldots, h_{i_{j-1}},\hat h_{i_j}, \ldots, h_{i_a})$  is the $(r-1)$th elementary symmetric polynomial of variables $h_{i_1}, \ldots, h_{i_{j-1}}, h_{i_{j+1}},\ldots, h_{i_a}$.
This completes the proof. 
\end{proof}

\subsection{Linear independence of $\PMat_{\lambda,\mu}$}
\label{subsec:independence}

In this subsection we prove that $\PMat_{\lambda,\mu}$ is linearly independent in $\Hom_{\AW}(\mu,\lambda)$. Together with Proposition~\ref{prospaningset} this completes the proof of Theorem \ref{basisAW}.

Suppose $\lambda,\mu\in \Lambda_{\text{st}}(m)$ with $l(\lambda)=h$ and $l(\mu)=t$.
Recall for any reduced chicken foot diagram of shape $A \in\Mat_{\lambda,\mu}$, each $\mu_j$ splits first at the bottom to $a_{1j}, \ldots, a_{hj}$ from left to right, then the thin strands $a_{i1},\ldots, a_{it}$ merge back to  $\lambda_i$ at the top.
For any elementary chicken foot diagram of shape $A$, there is an elementary dot packet,  say $\omega_{\eta_{ij}}\in \Par_{a_{ij}}$ on each $a_{ij}$ at the bottom. 

We first assume that $\kk=\C$. Suppose that 
\begin{equation}
    L:=\sum_{A_\wp\in S}c_{A_\wp} A_\wp=0,
\end{equation}
for some finite subset $S\subset \PMat_{\lambda,\mu}$ with each $c_{A_\wp}\in \C^*$. 
Let $\hbar =\max\{\deg (A_\wp) \mid A_\wp\in S\}$ and $S_\hbar=\{A_\wp\in S\mid \deg (A_\wp)=\hbar \}$. 

It remains to prove $S_\hbar=\emptyset$. To prove this, we use the functor $\mathcal F_{M^{\text{gen}}}$ and compare the highest degree terms of $A_\wp\in S_\hbar$ on certain element of $M^{\text{gen}}\otimes \bigwedge ^\mu V$.

We choose $N\gg 0$ (e.g., $N>d$) and $u:=1\otimes 1\otimes w_\mu$, where 
$$w_\mu:= v_1\wedge \ldots\wedge v_{\mu_1} \otimes v_{\mu_1+1}\wedge \ldots \wedge v_{\mu_1+\mu_2}\otimes \ldots \otimes v_{\mu_1+\ldots+\mu_{t-1}+1}\wedge \ldots\wedge v_{d}.  $$
That is,   the indices of the vector in   $\bigwedge ^{\mu_j}V $ is  $I_{\mu_j}=\{\sum_{1\le s\le j-1}\mu_s+1, \ldots, \sum_{1\le s\le j}\mu_s \}$ and hence are disjoint for different $j$, $1\le j\le l(\mu)=t$.
First we claim that any two different CFD's $A,A'\in \Mat_{\lambda,\mu}$ (i.e., the CFD without dot packets) would act differently on $w_\mu$. In fact, write the action of $A$ on $w_\mu $ as  
\begin{equation}
\label{equ:actwithoutdot}
 A  w_\mu=\sum_{\mathbf i} c_{\mathbf i} v_{\mathbf i_{\lambda_1}}\otimes \ldots \otimes v_{\mathbf i_{\lambda_h}} 
\end{equation}
for some non-zero scalars $c_{\mathbf i}$, 
where $\mathbf i_{\lambda_j}=(i_{j,1}, i_{j,2},\ldots, i_{j,\lambda_j})$ and 
$v_{\mathbf i_{\lambda_j}}=v_{i_{j,1}}\wedge \ldots\wedge v_{i_{j,\lambda_j}}\in \bigwedge^{\lambda_j} V$. 
By the actions of merges, splits and thick crossings on the wedge space in Proposition \ref{functorofweb}, 
for each $v_{\mathbf i_{\lambda_j}}$, we must have 
\[\sharp \{ i_{j,k}\in I_{\mu_i}\mid 1\le k\le \lambda_j \}= a_{j,i}.\]
This shows that any summands in \eqref{equ:actwithoutdot} are different for different $A$ and $A'$ since  $I_{\mu_j}\cap I_{\mu_i}=\emptyset$ if $i\ne j$.

Now we consider the degree $\hbar$ component of $\mathcal F_{M^{\text{gen}}}(L)(u)$, which are contributed by $\mathcal F_{M^{\text{gen}}}(A_\wp)(u)$, $A_\wp\in S_\hbar$.
 Note that by Corollary \ref{actionofelempoly}, the wedge part of 
the highest degree component does not change under the elementary dot packet action. This together with the claim in the previous paragraph imply that the wedge part of the highest degree component of $F_{M^{\text{gen}}}(A_\wp)(u)$ is different from  $F_{M^{\text{gen}}}(A_\wp')(u)$ if $A\neq A'$,
where 
    $A_\wp=(A,P),  A_\wp'=(A',P')\in S_\hbar$.
Thus, we may assume that all $A_\wp$ in $S_\hbar$ have the same shape $A\in \Mat_{\lambda,\mu}$.

Next, we choose a unique element $z_{\lambda}$ appearing in the action of reduced chicken foot diagram $A$ on $w_\mu $, where   
 $z_{\lambda}=z_{\lambda_1}\otimes \ldots \otimes z_{\lambda_h} $ and 
  $z_{\lambda_i}= v_{d_1}\wedge \ldots\wedge v_{d_{\lambda_i}}$ with $(d_1,\ldots, d_{\lambda_i})=(\mathbf i_1, \ldots, \mathbf i_t )$, where 
  $$ \mathbf i_j=
  \Big(\sum_{1\le m\le j-1}\mu_m+\sum_{1\le s\le i-1}a_{sj}+1,\ldots, \sum_{1\le m\le j-1}\mu_m+\sum_{1\le s\le i}a_{sj} \Big). $$
    
Now use  Corollary \ref{actionofelempoly}  again to see that the  unique term of degree $\hbar$ component of $\mathcal F_{M^{\text{gen}}}(L)(u)$ with   wedge part  $z_\lambda$  must be  
$$
\sum_{B\in S_\hbar}c_{A_\wp} \prod_{i,j} e_{\eta_{ij}} \big(h_{\sum_{1\le s\le j-1}\mu_s+\sum_{1\le m\le i-1}a_{mj}+1}, \ldots,h_{\sum_{1\le s\le j-1}\mu_s+\sum_{1\le m\le i}a_{mj}} \big) z_\lambda.   
$$
This cannot be zero if $S_\hbar\neq \emptyset$ since the partial symmetric functions appearing above are linearly independent. Hence we conclude that $S_\hbar =\emptyset$.

This proves that $\PMat_{\lambda,\mu}$ is linearly independent over $\mathbb C$ and hence over $\Z$. 

Now suppose that $\kk$ is a commutation ring with $1$. 
Write $\AW_{\kk}$  the affine web category over $\kk$ and let
$\mathcal C:=\AW_\Z\otimes _\Z\kk$. Then  $\Hom_{\mathcal C}(\mu,\lambda)$ has basis given by   $\PMat_{\lambda,\mu}$ by base change property. Hence,  the linear independence of $\PMat_{\lambda,\mu}$ for $\AW_\kk $ over $\kk$ follows from the linear independence for $\mathcal C$ and the obvious functor from $\AW_\kk$ to $\mathcal C$ sending the required basis element of $\Hom_{\AW_\kk}(\mu,\lambda)$ to the corresponding basis element in  $\Hom_{\mathcal C}(\mu,\lambda)$.  This completes the proof of the linear independence of $\PMat_{\lambda,\mu}$ over $\kk$. 

For $k \ge 0$, we denote
\begin{align}  \label{Dak}
    D_{a}^k=\kk\text{-span} \{ \omega_{a, \eta} \in D_{a}\mid l(\eta)\le k \}.
\end{align}
The following refinement of Lemma \ref{wkdotsspan} will be used later.
\begin{corollary}
\label{cor:blambdamu}
Recall $b_{\lambda,\mu}\in D_{a+b}$ in \eqref{blambdamu}. Then we have 
$b_{\lambda,\mu}\in D_{a+b}^k$ with 
$k=\max\{l(\lambda),l(\mu)\}$.    
\end{corollary}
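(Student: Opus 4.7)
The plan is to refine Lemma~\ref{wkdotsspan} by tracking the number of parts of the partitions $\eta$ appearing in the expansion of $b_{\lambda,\mu}$ in the basis $\{\omega_{a+b,\eta}\}$ of $D_{a+b}$. Without loss of generality I assume $l(\lambda)\le l(\mu)$, so that $k=\min\{l(\lambda),l(\mu)\}=l(\lambda)$. The argument proceeds by induction on $k$, with an inner induction on the total degree $|\lambda|+|\mu|$ to absorb the lower-degree corrections that inevitably arise from the dot-moving relations. The base case $k=0$ is immediate: $b_{\emptyset,\mu}$ has dots only on one strand and the proof of Lemma~\ref{wkdotsspan} applied to it actually outputs a scalar multiple of $1_{a+b}$ after pulling all dots outside the balloon and collapsing via \eqref{equ:r=0}, so the result lies in $\kk = D_{a+b}^{0}$.

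For $k=1$, one can push the single dot $\omega_{\lambda_1}$ on the $a$-strand through the bottom split via \eqref{splitmerge}, rewriting $b_{(\lambda_1),\mu}$ modulo lower-degree terms as $\omega_{a+b,\lambda_1}$ times a balloon $b_{\emptyset,\mu'}$ with dots only on the $b$-strand; by Lemma~\ref{lem:commute} and the base case applied to $b_{\emptyset,\mu'}$, the result sits in $D_{a+b}^{1}$. For the inductive step, apply Lemma~\ref{dotmovefreely}(3) to redistribute the innermost dot $\omega_{\lambda_k}$ across the bottom split. Modulo strictly lower-degree corrections, each resulting term has the form $\omega_{a+b,c}\cdot b_{\lambda',\mu''}$ with $l(\lambda')=k-1$; by the outer inductive hypothesis $b_{\lambda',\mu''}\in D_{a+b}^{k-1}$, and multiplication by the single external factor $\omega_{a+b,c}$ lands the product in $D_{a+b}^{k}$. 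The strictly lower-degree corrections all fit into $D_{a+b}^{k}$ by the inner induction on $|\lambda|+|\mu|$.

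The main obstacle will be the combinatorial bookkeeping: Lemma~\ref{dotmovefreely}(3) produces a sum over decompositions $c+d+e=r$ with binomial weights, and one must verify that each of the numerous resulting balloon-shapes contributes only to $D_{a+b}^{k}$ rather than to some larger filtration piece. The key enabling tools are the commutativity of $D_{a+b}$ (Lemma~\ref{lem:commute}, upgraded by Theorem~\ref{thm:Da}) and the filtration $\equiv$ from \eqref{modLOT}, which together let us work modulo lower-degree terms and treat the $\omega_{a+b,r}$'s as commuting polynomial generators, thereby reducing the tracking of $l(\eta)$ to a clean inductive accounting of how many ``external'' dots are produced at each step.
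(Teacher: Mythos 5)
Your plan takes a genuinely different route from the paper, but as it stands it has several concrete gaps. First, the base case $k=0$ is false: by \eqref{splitmerge} one has $b_{(r),\emptyset}=\binom{a+b-r}{b}\,\omega_{a+b,r}$, which is not a scalar, and the same holds for $b_{\emptyset,\mu}$; pulling dots out of the balloon (Lemma \ref{dotmovefreely}(3), or \eqref{dotmovemergess}) converts them into dots on the thick $(a+b)$-strand plus corrections, it never ``collapses via \eqref{equ:r=0}'' to a multiple of $1_{a+b}$. Since your $k=1$ step quotes the $k=0$ case for $b_{\emptyset,\mu'}$, it inherits this problem. Second, the inductive step misdescribes what Lemma \ref{dotmovefreely}(3) (i.e.\ \eqref{dotmovesplitss}) gives: redistributing $\omega_{\lambda_k}$ across the bottom split does not produce terms of the form $\omega_{a+b,c}\cdot b_{\lambda',\mu''}$; it produces balloons in which $\omega_c$ remains on the left leg and an extra $\omega_d$ is created on the right leg, so nothing becomes an external factor (for that you need \eqref{splitmerge}, which applies only when the dot is alone on its leg), and the length of the right-leg packet grows. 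Hence the count ``one external dot per part of $\lambda$'' is not what the relations deliver. Third, the inner induction on degree is circular: the lower-degree corrections are again balloons $b_{\lambda'',\mu''}$, and $\min\{l(\lambda''),l(\mu'')\}$ need not be $\le k$ because the dot-moving relations do not decrease the lengths of the packets, so the inductive hypothesis at level $k$ does not apply to them. Finally, the WLOG reduction $l(\lambda)\le l(\mu)$ requires an argument, since $\AW$ has no strict left--right mirror symmetry: dots pass crossings only modulo correction terms \eqref{dotmovecrossing}, so the reflected balloon agrees with $b_{\mu,\lambda}$ only up to lower-degree terms, which feeds back into the previous issue.

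For comparison, the paper's proof is not an induction on diagrams at all. Using Lemma \ref{wkdotsspan} and Theorem \ref{thm:Da} it writes $b_{\lambda,\mu}=\sum_{\nu}a_{\nu}\,\omega_{a+b,\nu}$, evaluates both sides under $\mathcal F_{M^{\text{gen}}}$ on $1\otimes 1\otimes v_1\wedge\cdots\wedge v_{a+b}$, and compares leading terms via Corollary \ref{actionofelempoly}: each $\omega_{a+b,\nu}$ acts at top order by the elementary symmetric polynomial $e_{\nu}(h_1,\ldots,h_{a+b})$, these are linearly independent, and the top-order action of the balloon is an explicit sum of products of elementary symmetric polynomials in the variables attached to the two legs; the bound on $l(\nu)$ is then extracted from this symmetric-function comparison. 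The essential content of the corollary is thus a statement about which $e_{\nu}$ can occur in that expansion, and any correct proof has to control this expansion explicitly; a purely formal diagrammatic bookkeeping of ``external'' dots, which never computes it, cannot establish the bound, as the failure of your base case already indicates. If you want an argument independent of the paper's, you would need to carry out that symmetric-function analysis (or an equivalent filtration argument on $D_{a+b}$) rather than rely on the inductive scheme sketched here.
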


\begin{proof}
 Write $b_{\lambda,\mu}$ as a $\Z$-linear combination  
 $b_{\lambda,\mu}= \sum_{\nu}a_{\nu} \omega_{a+b,\nu}.$
 Then we use Corollary \ref{actionofelempoly} to compare the leading term of both sides of the above equation on $1\otimes 1\otimes v_1\wedge \ldots \wedge v_{a+b}$, and we may conclude that $a_{\nu}\neq0$ only if 
 $l(\nu)\le k$.
\end{proof}

\subsection{Affine Hecke algebras}
\label{affinehecke}

Recall (see e.g., \cite{Kle05}) the degenerate affine Hecke algebra $\widehat{\mathcal H}_m$ is the associative unital $\kk$-algebra generated by $x_i$ and $s_j$, for $1\le i\le m$, $1\le j\le m-1$, subject to the relations for all admissible $i,j,l$:
\begin{align}
        s_i^2=1, \quad s_is_{i+1}s_i &=s_{i+1}s_is_{i+1}, \quad s_js_i =s_is_j \; (|i-j|>1),
        \label{ss} \\
         x_ix_l& =x_lx_i,  \quad     s_jx_i=x_is_j\; (j\neq i,i+1),
        \label{xx} \\
        x_{i+1}s_i &=s_ix_i-1. 
        \label{xs}
\end{align}  

\begin{proposition}
\label{isotodeaffine}
We have an algebra isomorphism $\End_{\AW}(1^m) \cong \widehat {\mathcal H}_m$.
\end{proposition}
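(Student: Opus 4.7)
The plan is to construct an algebra homomorphism $\phi: \widehat{\mathcal H}_m \to \End_{\AW}(1^m)$ on generators by sending $s_i$ to the thin crossing of the $i$-th and $(i+1)$-th strands and $x_i$ to a single dot on the $i$-th strand (with identity strands elsewhere in both cases), and then show that $\phi$ is bijective by matching bases.

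To check well-definedness, I would verify \eqref{ss}--\eqref{xs} in $\End_{\AW}(1^m)$. The Coxeter relations \eqref{ss} are the specializations of \eqref{symmetric} and \eqref{braid} to thickness one, together with the trivial commutation of disjoint local diagrams in a strict monoidal category. The commutations \eqref{xx} are immediate since the relevant dots/crossings act on disjoint strands. The only substantive relation is the slider \eqref{xs}, which I would derive by specializing the first equation of \eqref{equ:dotmovecrossing-simplify} to $a=b=1$: the sum ranges over $t \in \{0,1\}$, and the $t=0$ summand reproduces $s_i x_i$ (a crossing with a dot moved below it), while the $t=1$ summand contributes the constant correction because the rung of thickness $1$ between two thin strands collapses to a pair of trivial split/merge generators via \eqref{mergesplit}, leaving the identity diagram. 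This yields $x_{i+1} s_i = s_i x_i - 1$ after moving the dot to its standard position.

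For bijectivity, I would invoke the basis Theorem~\ref{basisAW}. Specialised to $\lambda = \mu = (1^m)$, the set $\PMat_{(1^m),(1^m)}$ consists of permutation matrices $A_w$ (since all row and column sums equal $1$) decorated by a partition in $\Par_1 = \{(1^k) : k \ge 0\}$ on each nonzero entry. The corresponding elementary CFD is the permutation diagram of some $w \in \mathfrak S_m$ with $a_i$ dots at the bottom of the $i$-th input strand; this is precisely the image under $\phi$ of $w \cdot x_1^{a_1}\cdots x_m^{a_m}$, where $w$ is written in a reduced expression in the $s_j$'s (the resulting diagram is independent of the chosen reduced expression by the already-verified Coxeter relations). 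Since $\{\, w\cdot x_1^{a_1}\cdots x_m^{a_m} \mid w\in\mathfrak S_m,\, a_i\in\mathbb N\,\}$ is a standard PBW basis of $\widehat{\mathcal H}_m$, and $\phi$ carries it to the elementary CFD basis of $\End_{\AW}(1^m)$, the map $\phi$ is an isomorphism.

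The main obstacle is extracting the slider relation $x_{i+1}s_i = s_ix_i - 1$ with the correct sign and coefficient from the thick-strand formula \eqref{equ:dotmovecrossing-simplify}. Over a characteristic zero field one can shortcut this by using the simpler presentation of $\AW$ in \S\ref{sec:C}, where the relation \eqref{dotmovecrossingC} for $\AH$ is imposed directly; over $\mathbb Z$ or a general commutative ring $\kk$ one must unpack the $t=0$ and $t=1$ terms and recognise the $t=1$ contribution as the identity morphism, which is where the care is needed.
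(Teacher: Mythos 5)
Your proposal is correct and follows essentially the same route as the paper: the paper also defines $\phi$ on generators by sending $x_j$ to a dot and $s_i$ to a thin crossing, verifies the relations \eqref{ss}--\eqref{xs} using \eqref{symmetric}--\eqref{braid}, the dot--crossing relation, and the monoidal structure, and then concludes bijectivity by matching the PBW basis $\{wx_1^{k_1}\cdots x_m^{k_m}\}$ of $\widehat{\mathcal H}_m$ with the elementary CFD basis of $\End_{\AW}(1^m)$ from Theorem~\ref{basisAW}. Your extra care about extracting $x_{i+1}s_i=s_ix_i-1$ from the $t=0$ and $t=1$ terms of \eqref{equ:dotmovecrossing-simplify} at $a=b=1$ is exactly the right (and only nontrivial) verification, and it works as you describe.
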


\begin{proof}
We claim to have an algebra homomorphism 
   $\phi: \widehat {\mathcal H}_m\rightarrow \End_{\AW}(1^m) $ such that 
   $$ 
   \begin{aligned}
x_j &\mapsto 
\begin{tikzpicture}[baseline = 3pt, scale=0.5, color=\clr]
\draw[-,line width=1pt] (-1,-.2) to (-1,1.2);
\node at (-1,-.5) {$\scriptstyle 1$};
\node at (-.4,.4) {$\ldots$};
\draw[-,line width=1pt] (.5,-.2) to (.5,1.2);
\node at (.5,-.5) {$\scriptstyle 1$};
\draw[-,line width=1pt] (1,-.2) to (1,1.2);
\draw(1,0.5) \bdot;
\node at (1,-.5) {$\scriptstyle 1$};
\draw[-,line width=1pt] (1.5,-.2) to (1.5,1.2);
\node at (1.5,-.5) {$\scriptstyle 1$};
\node at (2.1,.4) {$\ldots$};
\draw[-,line width=1pt] (3,-.2) to (3,1.2);
\node at (3,-.5) {$\scriptstyle 1$};
\end{tikzpicture}, \\
s_i& \mapsto 
\begin{tikzpicture}[baseline = 3pt, scale=0.5, color=\clr]
\draw[-,line width=1pt] (-1,-.2) to (-1,1.2);
\node at (-1,-.5) {$\scriptstyle 1$};
\node at (-.4,.4) {$\ldots$};
\draw[-,line width=1pt] (.5,-.2) to (.5,1.2);
\node at (.5,-.5) {$\scriptstyle 1$};
\draw[-,line width=1pt] (1,-.2) to (2,1.2); 
\node at (1,-.5) {$\scriptstyle 1$};
\draw[-,line width=1pt] (2,-.2) to (1,1.2);
\node at (2,-.5) {$\scriptstyle 1$};
\draw[-,line width=1pt] (2.5,-.2) to (2.5,1.2);
\node at (2.5,-.5) {$\scriptstyle 1$};
\node at (3.2,.4) {$\ldots$};
\draw[-,line width=1pt] (4,-.2) to (4,1.2);
\node at (4,-.5) {$\scriptstyle 1$};
\end{tikzpicture}, 
   \end{aligned}$$
where the dot is on the $j$th strand, and the crossing is between $i$th and $(i+1)$st strands. Indeed, the relations \eqref{ss}--\eqref{xx} are satisfied by the images under $\phi$ of the corresponding generators thanks to \eqref{symmetric}--\eqref{braid} and \eqref{dotmovecrossing}. The remaining relation \eqref{xs} under $\phi$ clearly holds by the monoidal structure of $\AW$. 

Now the isomorphism follows since $\phi$ sends the well-known basis elements in  $\{wx_1^{k_1}\ldots x_m^{k_m}\mid w\in \mathfrak S_m, k_i\in \N, 1\le i\le m\}$ to the corresponding  basis elements of  $\End_{\AW}(1^m)$ in Theorem~\ref{basisAW}.
\end{proof}

\subsection{Centers of endomorphism algebras in $\AW$}

There are rich combinatorial structures inside $\AW$.

\begin{conjecture}
\label{con:centerAW}
For any $\lambda\in\Lambda_{\text{st}}(m)$, the center of the endomorphism algebra $\End_{\AW}(\lambda)$ is isomorphic to the ring of symmetric functions in $m$ variables.
\end{conjecture}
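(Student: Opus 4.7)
The strategy is to reduce to the base case $\lambda=(1^m)$, where Proposition~\ref{isotodeaffine} identifies $\End_{\AW}(1^m)\cong \widehat{\mathcal H}_m$ and the center of the degenerate affine Hecke algebra is classically known to be $\kk[x_1,\ldots,x_m]^{\mathfrak S_m}=:\Lambda_m$. For general $\lambda=(\lambda_1,\ldots,\lambda_k)\in\Lambda_{\text{st}}(m)$, the plan is to construct an injective ring homomorphism $\Psi\colon \Lambda_m\to Z(\End_{\AW}(\lambda))$ and then show it is surjective. For $\Psi$, realize each power sum $p_r=\sum_j x_j^r$ by the symmetrized element
\[
\Psi(p_r) \;:=\; \sum_{i=1}^{k} \mathrm{id}_{\lambda_1}\otimes\cdots\otimes \mathrm{id}_{\lambda_{i-1}}\otimes \pi_{\lambda_i,r}\otimes \mathrm{id}_{\lambda_{i+1}}\otimes\cdots\otimes \mathrm{id}_{\lambda_k}\;\in\;\End_{\AW}(\lambda),
\]
where $\pi_{a,r}\in D_a$ is the power-sum analog in the dot packets $\omega_{a,s}$ (expressed via Newton's identities); the version over $\Z$ uses elementary symmetric functions in place of power sums. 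Centrality is checked generator-by-generator against the morphisms of Definition~\ref{def-affine-web}: against dot packets via Lemma~\ref{lem:commute}, against crossings via \eqref{dotmovecrossing}, and against merges/splits via Lemma~\ref{lem:splitmerge-simplify} combined with \eqref{dotmovesplitss}, \eqref{dotmovemergess}, exploiting the symmetry of the defining sum over $i$.

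Injectivity of $\Psi$ follows from the representation $\mathcal F_{M^{\text{gen}}}\colon \AW\to \mathfrak g\text{-mod}$ of Proposition~\ref{functorofaff}. By \S\ref{subsec:independence}, this representation is faithful on $\End_{\AW}(\lambda)$ for $N\gg m$. Under it, Corollary~\ref{actionofelempoly} shows that $\mathcal F(\Psi(p_r))$ acts on $1\otimes 1\otimes z_\lambda$ (with $z_\lambda$ as in \S\ref{subsec:independence}) by $\sum_{j=1}^{m} h_{\sigma(j)}^r$ modulo lower-order terms, where the indices $\sigma(j)$ run over the $m$ distinct coordinates encoding the $\lambda$-strands. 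Algebraic independence of the power sums in these $m$ variables then forces injectivity of $\Psi$.

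For surjectivity, the plan is to analyze an arbitrary $f\in Z(\End_{\AW}(\lambda))$ in the elementary chicken-foot basis $\PMat_{\lambda,\lambda}$ of Theorem~\ref{basisAW}. The argument proceeds in two stages. \emph{Stage 1}: show that only terms of the diagonal shape $A_0=\mathrm{diag}(\lambda_1,\ldots,\lambda_k)\in\Mat_{\lambda,\lambda}$ survive. A non-diagonal reduced CFD contains a crossing of thin strands; commuting $f$ with an appropriate crossing or with $\mathrm{id}^{\otimes(i-1)}\otimes\omega_{\lambda_i,1}\otimes\mathrm{id}^{\otimes(k-i)}$ and tracking leading terms under $\mathcal F_{M^{\text{gen}}}$ forces all non-diagonal coefficients to vanish. \emph{Stage 2}: for the $A_0$-component, which by Theorem~\ref{thm:Da} lies in $D_{\lambda_1}\otimes\cdots\otimes D_{\lambda_k}\cong \kk[y^{(1)}]\otimes\cdots\otimes \kk[y^{(k)}]$ (polynomial algebras with $\lambda_i$ variables each), the condition of commuting with a merge-split $\mathrm{id}\otimes\merge\splits\otimes\mathrm{id}$ at adjacent positions $(i,i+1)$, combined with \eqref{splitmerge} and \eqref{dotmovemergess}, forces the polynomial data to be symmetric across the interface; iterating across all adjacent pairs reduces to $\mathfrak S_m$-symmetry in the combined $m$ variables, yielding the image of $\Psi$.

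The main obstacle is Stage 2: showing that adjacent-merge commutativity is precisely the condition for a diagonal-shape element to come from $\Lambda_m$. The subtle point is to control lower-degree corrections: the commutation relations are only clean modulo lower-degree terms (as in Lemma~\ref{dotmovefreely}), so one must run a degree-filtered induction, where the associated graded identification with $\kk[y_1,\ldots,y_m]^{\mathfrak S_m}$ is classical (it is an invariant-theoretic statement about a product of symmetric groups embedded in $\mathfrak S_m$), and then argue that no genuinely non-symmetric deformation can be central in the filtered algebra. An alternative, possibly cleaner route is to first upgrade Proposition~\ref{isotodeaffine} to a Morita-type statement $\End_{\AW}(\lambda)\cong e_\lambda\,\widehat{\mathcal H}_m\,e_\lambda$ parallel to Proposition~\ref{cor-isom-schur} (at least after inverting $\prod_i\lambda_i!$), from which $Z(\End_{\AW}(\lambda))\cong Z(\widehat{\mathcal H}_m)=\Lambda_m$ follows formally; extension to arbitrary $\kk$ would then proceed by base change from the $\Z$-form provided by Theorem~\ref{basisAW}.
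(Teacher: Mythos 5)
You should first note that the paper does not prove this statement at all: it is stated as Conjecture~\ref{con:centerAW}, and the only support offered is the three examples following it ($\lambda=(1^m)$ via Proposition~\ref{isotodeaffine} and the classical description of $Z(\widehat{\mathcal H}_m)$, $\lambda=(m)$ via Theorem~\ref{thm:Da}, and an explicit computation for $\lambda=(2,1)$). So there is no paper proof to compare against; the question is whether your plan actually closes the conjecture, and as written it does not — it is a strategy with several steps that are asserted rather than proved, and at least one step that fails as stated.

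Concretely: (a) Your map $\Psi$ is not well defined as described, because the proposed elements $\sum_i 1\otimes\cdots\otimes\pi_{\lambda_i,r}\otimes\cdots\otimes 1$ need not be central on the nose. The relations you invoke for the centrality check, \eqref{dotmovecrossing}, \eqref{dotmovesplitss}, \eqref{dotmovemergess}, hold only with binomial lower-degree correction terms (the clean statements in Lemma~\ref{dotmovefreely} are $\equiv$-relations modulo lower degree), and the paper's own $\lambda=(2,1)$ example shows the phenomenon: the listed central generator $z_2$ equals the naive symmetric-function candidate $\omega_{2,2}\otimes 1+\omega_{2,1}\otimes\omega_{1,1}$ corrected by the extra term $-\,1\otimes\omega_{1,1}$. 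Producing the corrected lifts of $e_r$ (or $p_r$) and proving they are exactly central — over $\Z$, not just over $\C$ — is essentially the content of the conjecture, not a routine generator-by-generator verification. (b) Surjectivity (your Stage~2) is only a plan; the key claim that ``no genuinely non-symmetric deformation can be central in the filtered algebra'' is precisely what must be proved, and no argument is given (your Stage~1 elimination of non-diagonal shapes is likewise only sketched). (c) The ``cleaner route'' does not follow formally. Even granting an isomorphism of the type $\End_{\AW}(\lambda)\cong \x_\lambda\widehat{\mathcal H}_m\x_\lambda$ (not established in this paper, and requiring $\prod_i\lambda_i!$ invertible to form the idempotent), for an idempotent $e$ in an algebra $H$ one only has an injection $Z(H)e\hookrightarrow Z(eHe)$ in good cases; equality $Z(eHe)=Z(H)e$ requires fullness $HeH=H$ or a separate argument, and fullness is not automatic here (already in $\kk\mathfrak S_m$ the two-sided ideal generated by $\x_\lambda$ is proper for $\lambda\neq(1^m)$). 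Finally, centers do not in general commute with base change, so deducing the statement over arbitrary $\kk$ from the $\Z$-form of Theorem~\ref{basisAW} also needs justification. The injectivity argument via $\mathcal F_{M^{\text{gen}}}$ and Corollary~\ref{actionofelempoly} is the one part that looks solid, but by itself it only produces a lower bound for the center once the corrected central elements exist.
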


Let us provide some supporting evidence below. 

\begin{example}
\begin{enumerate}
    \item    
For $\la=(1^m)$, we have $\End_{\AW}(1^m) \cong \widehat {\mathcal H}_m$ by Proposition~ \ref{isotodeaffine}. It is well known that the center of $\widehat {\mathcal H}_m$ is isomorphic to $\kk[x_1,x_2,\ldots, x_m]^{\mathfrak S_m}$, cf. \cite{Kle05}. 
\item 
For $\la=(m)$, the conjecture holds by Theorem~ \ref{thm:Da}. 
\item
Let $\lambda=(2,1)$. One can check that the center of $\End_{\AW}(\lambda) $  is freely generated by 
\[z_1=~
\begin{tikzpicture}[baseline = 3pt, scale=0.4, color=\clr]
\draw[-,line width=1.5pt](0,-.5) to (0,1.2);
\draw(0,.5) \bdot;
\draw[-,line width=1pt](.5,-.5) to (0.5,1.2);
\draw(0,-.8) node {$\scriptstyle 2$};
\draw(0.5,-.8) node {$\scriptstyle 1$};
\end{tikzpicture}
+
\begin{tikzpicture}[baseline = 3pt, scale=0.4, color=\clr]
\draw[-,line width=1.5pt](0,-.5) to (0,1.2);
\draw(0.5,.5) \bdot;
\draw[-,line width=1pt](.5,-.5) to (0.5,1.2);
\draw(0,-.8) node {$\scriptstyle 2$};
\draw(0.5,-.8) node {$\scriptstyle 1$};
\end{tikzpicture}
-
\begin{tikzpicture}[baseline = 3pt, scale=0.4, color=\clr]
\draw[-,line width=1.5pt](0,-.5) to (0,1.2);
\draw[-,line width=1pt](.5,-.5) to (0.5,1.2);
\draw(0,-.8) node {$\scriptstyle 2$};
\draw(0.5,-.8) node {$\scriptstyle 1$};
\end{tikzpicture}
,~\quad
z_2=
\begin{tikzpicture}[baseline = 3pt, scale=0.4, color=\clr]
\draw[-,line width=1.5pt](0,-.5) to (0,1.2);
\draw(0,.5) \bdot;
\draw(-.65,0.5)node {$\scriptstyle \omega_{2}$};
\draw[-,line width=1pt](.5,-.5) to (0.5,1.2);
\draw(0,-.8) node {$\scriptstyle 2$};
\draw(0.5,-.8) node {$\scriptstyle 1$};
\end{tikzpicture}
+
\begin{tikzpicture}[baseline = 3pt, scale=0.4, color=\clr]
\draw[-,line width=1.5pt](0,-.5) to (0,1.2);
\draw(0.5,.5) \bdot;
\draw(0,.5) \bdot;
\draw[-,line width=1pt](.5,-.5) to (0.5,1.2);
\draw(0,-.8) node {$\scriptstyle 2$};
\draw(0.5,-.8) node {$\scriptstyle 1$};
\end{tikzpicture}
-
\begin{tikzpicture}[baseline = 3pt, scale=0.4, color=\clr]
\draw[-,line width=1.5pt](0,-.5) to (0,1.2);
\draw(0.5,.5) \bdot;
\draw[-,line width=1pt](.5,-.5) to (0.5,1.2);
\draw(0,-.8) node {$\scriptstyle 2$};
\draw(0.5,-.8) node {$\scriptstyle 1$};
\end{tikzpicture}
,~\quad
z_3= 
\begin{tikzpicture}[baseline = 3pt, scale=0.4, color=\clr]
\draw[-,line width=1.5pt](0,-.5) to (0,1.2);
\draw(0,.5) \bdot;
\draw(0.5,.5) \bdot;
\draw(-.65,0.5)node {$\scriptstyle \omega_{2}$};
\draw[-,line width=1pt](.5,-.5) to (0.5,1.2);
\draw(0,-.8) node {$\scriptstyle 2$};
\draw(0.5,-.8) node {$\scriptstyle 1$};
\end{tikzpicture} .
\]
\end{enumerate} 
\end{example}

\begin{rem}
For $\lambda =(\la_1, \ldots, \la_l) \in\Lambda_{\text{st}}$, $\End_{\AW}(\lambda_1) \otimes \ldots \otimes \End_{\AW}(\lambda_l)$ is a  commutative subalgebra of the endomorphism algebra $\End_{\AW}(\lambda)$ by Theorem~\ref{thm:Da}. We expect that it is a maximally commutative subalgebra.
\end{rem}
\section{Cyclotomic web categories and finite $W$-algebras}
 \label{sec:WSchur}

In this section, we introduce the cyclotomic web categories and establish a functor from a cyclotomic web category to the module category of an appropriate finite $W$-algebra of type A. 
We also establish precise connections between the cyclotomic web categories and the $W$-Schur algebras arising from the Schur duality for finite $W$-algebras introduced in \cite{BK08}. 

By default we work over an arbitrary commutative ring $\kk$ in this section unless otherwise specified. We specify $\kk=\C$ when working with finite $W$-algebras. 

\subsection{The cyclotomic web categories}
Recall from \eqref{def-gau} the elements 
\[
g_{r,u} =\sum_{0\le i\le r}(-1)^i\prod_{0\le j\le i-1}(u+j)\:\:
 \begin{tikzpicture}[baseline = -1mm,scale=.7,color=\clr]
\draw[-,line width=1.5pt] (0.08,-.6) to (0.08,.5);
\node at (.08,-.8) {$\scriptstyle r$};
\draw(0.08,0) \bdot;
\draw(.65,0)node {$\scriptstyle \omega_{r-i}$};
\end{tikzpicture}\in \End_{\AW}(r), 
\]
for any $r\ge 1$ and $u\in\kk$. 
\begin{definition}
 \label{def:cycWeb}
    Suppose  $\ell\ge 1$. For any $\bfu=(u_1,\ldots, u_\ell) \in \kk^\ell$, the cyclotomic web category $\W_{\bfu}$ is the quotient category of $\AW$ by the right tensor ideal generated by $\prod_{1\le j\le \ell }g_{r,u_j}$, for $r\ge 1$. 
\end{definition}

Recall from \eqref{dottedreduced} the set $\PMat_{\lambda,\mu}$ of all elementary chicken foot diagrams from $\mu$ to $\la$.
For any matrix  $ P=(\nu_{ij})_{l(\lambda)\times l(\mu)}$ of partitions, denote 
\[
l(P)=\max\{l(\nu_{ij})\mid 1\le i\le l(\lambda), 1\le j\le l(\mu)\}.
\]
We define
\begin{equation}
\label{Def-spansetof-cycweb}    
\PMat_{\lambda,\mu}^\ell:=\{(A,P)\in\PMat_{\lambda,\mu}\mid l(P)\le \ell-1\},
\end{equation}
and regard it as the set of all level $\ell$ elementary chicken foot diagrams from $\mu$ to $\la$. Note that, for $\ell=1$, $\PMat_{\lambda,\mu}^\ell =\Mat_{\lambda,\mu}.$

\begin{lemma}
\label{lem:cycweb-span}
For any $\lambda,\mu \in\Lambda_{\text{st}}(m)$, $\Hom_{\W_{\bfu}}(\mu,\lambda)$ is spanned  by $ \PMat_{\lambda,\mu}^\ell$.    
\end{lemma}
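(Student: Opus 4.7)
The plan is to bootstrap from Theorem~\ref{basisAW}, which exhibits $\PMat_{\lambda,\mu}$ as a basis of $\Hom_{\AW}(\mu,\lambda)$; applying the quotient functor $\AW \twoheadrightarrow \W_\bfu$ immediately produces a spanning set for $\Hom_{\W_\bfu}(\mu,\lambda)$. It therefore suffices to show that any elementary CFD whose decoration $\omega_{a,\nu}$ on some leg has $l(\nu)\ge \ell$ can be rewritten in $\W_\bfu$ as a $\kk$-linear combination of elementary CFDs drawn from $\PMat_{\lambda,\mu}^\ell$.

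First I would localize the problem: it is enough to prove that, for every $a\ge 1$ and every $\nu$ with $l(\nu)\ge \ell$ and parts bounded by $a$, the element $\omega_{a,\nu}\in \End_{\AW}(a)$ lies in $D_a^{\ell-1}+I_a$, where $I_a := \End_{\AW}(a)\cap I$ and $I$ denotes the defining tensor ideal of $\W_\bfu$. Granted this single-leg statement, an induction in the degree filtration on $\End_{\AW}(a)$, together with the span-propagation arguments already used in the proof of Proposition~\ref{prospaningset} that control how one dot packet interacts with the merges, splits, and crossings inside a CFD, globalizes the leg-local reduction to the desired spanning claim leg by leg.

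To prove the local statement, I would induct on $|\nu|$. The key input is the defining cyclotomic relation $\prod_{j=1}^\ell g_{r,u_j}=0$ in $\End_{\W_\bfu}(r)$ for every $r\ge 1$. Since $g_{r,u_j}=\omega_{r,r}+(\text{strictly lower-degree terms})$ by \eqref{def-gau}, this relation has leading term $\omega_{r,(r^\ell)}$ in $D_r$. By \eqref{splitmerge}, the split-merge composite associated with the decomposition $a=r+(a-r)$ identifies $\omega_{r,r'}$ on the inner $r$-substrand with an invertible scalar multiple of $\omega_{a,r'}$ on the outer $a$-strand, so composing this split-merge pair with the cyclotomic relation on the inner $r$-strand yields, in $\End_{\W_\bfu}(a)$, a reduction of $\omega_{a,(r^\ell)}$ modulo strictly lower-degree terms. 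For non-rectangular $\nu$ of length $\ge \ell$, I would vary $r$ from $1$ up to $a$, split the $a$-strand into multiple substrands of different thicknesses, and apply the appropriate cyclotomic relation on each piece, all the while using Lemma~\ref{dotmovefreely} to slide dots through merges, splits, and crossings modulo lower-degree terms. The inductive hypothesis then absorbs the lower-order remainders.

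The hard part will be combinatorial: while the cyclotomic relation at thickness $r$ directly reduces only the rectangular $\omega_{r,(r^\ell)}$, the lemma requires reducing $\omega_{a,\nu}$ for \emph{every} partition $\nu$ with $l(\nu)\ge \ell$. Verifying that the repertoire of split-merge applications sketched above actually covers every such $\nu$ is the crux, and will likely demand a careful inductive scheme on the lexicographic triple $(a,|\nu|,l(\nu))$. A useful sanity check is the identity $\#\{\nu : l(\nu)\le \ell-1,\ \nu_i\le a\}=\binom{a+\ell-1}{a}$, which matches the dimension of the natural symmetric-polynomial shadow $\kk[y_1,\ldots,y_a]^{\mathfrak{S}_a}/\langle \prod_{j=1}^\ell(y_i-u_j)\mid 1\le i\le a\rangle$, indicating that the spanning set $\PMat_{\lambda,\mu}^\ell$ is of the correct size and reassuring us that the tensor-ideal relations really do cut $D_a$ down to $D_a^{\ell-1}$ in the quotient.
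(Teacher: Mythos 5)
There are two genuine gaps, and they sit exactly at the two places where the paper's proof does real work. First, your ``local'' reduction of $\omega_{a,\nu}$ for $l(\nu)\ge\ell$ is not carried out: you yourself note that the cyclotomic relation only directly kills the rectangle $\omega_{r,(r^\ell)}$, and your proposed remedy (splitting the $a$-strand into several substrands and applying the relation on each piece) does not reach a general non-rectangular $\nu$; moreover your claim that \eqref{splitmerge} identifies $\omega_{r,r'}$ on an inner substrand with an \emph{invertible} scalar multiple of $\omega_{a,r'}$ is false over a general commutative ring, since the scalar is a binomial coefficient. The paper's mechanism is different and avoids any inversion: setting $r=\nu_\ell$, one splits off a single leg of thickness exactly $r$ carrying the full dot $\omega_r$ (so the scalar is $1$), redistributes the remaining packet $\bar\nu=(\nu_1,\dots,\nu_{\ell-1})$ over the two legs via Lemma~\ref{dotmovefreely}(3) modulo lower-degree terms, and observes that the unique summand $\nu'=(r^{\ell-1})$ produces the rectangle $(r^\ell)$ on the thickness-$r$ leg, which dies by \eqref{cycweb-polyr}; every other summand is absorbed by an induction on $\min\{\nu_1,\dots,\nu_\ell\}$ and on degree, together with Corollary~\ref{cor:blambdamu} to re-express the resulting balloon inside $D_{a}^{\ell-1}$. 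This inductive scheme is precisely the ``careful combinatorics'' you defer, so the crux of the lemma is missing from your argument.

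Second, your globalization ``leg by leg'' is not licit as stated. By Definition~\ref{def:cycWeb} the quotient is by a \emph{right} tensor ideal, so an identity in $\End_{\W_\bfu}(a)$ (equivalently, membership in $D_a^{\ell-1}+I_a$) cannot be substituted on a leg that has strands to its left: $1_b\otimes x$ need not lie in the ideal when $x$ does. This is why the paper, before invoking the single-strand claim, first uses \eqref{movedottofirststrand} (a consequence of \eqref{dotmovecrossing} and \eqref{symmetric}) to move the offending dot packet to the leftmost strand of the diagram, at the cost of lower-degree error terms, and then closes the argument by induction on degree. Your proposal never addresses this left-tensoring obstruction, and the ``span-propagation arguments of Proposition~\ref{prospaningset}'' do not supply it, since there one works in $\AW$ where no ideal relations are being applied.
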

\begin{proof}
Recall $D_a^k$ from \eqref{Dak}. We make the following
    
{\bf Claim}. For any $\nu\in \Par_a$ with $l(\nu)\ge\ell$, we have
\begin{equation}
\begin{tikzpicture}[baseline = 3pt, scale=0.5, color=\clr]
\draw[-,line width=1.5pt] (0,-.2) to[out=up, in=down] (0,1.2);
\draw(0,0.5) \bdot; \node at (0.6,0.5) {$ \scriptstyle \nu$};
\node at (0,-.4) {$\scriptstyle a$};
\end{tikzpicture}\in D_{a}^{\ell-1}.
\end{equation}
We prove the claim by induction on $\min\{\nu_j\mid 1\le j\le \ell \}$ and on degree. 
Suppose $\nu_\ell=r$ and write $\bar \nu=(\nu_1,\nu_2,\ldots,\nu_{\ell-1})$. If $r=a$, then we must have $\nu=(r,r,\ldots,r)$ and the claim holds since 
 \begin{equation}
 \label{cycweb-polyr}
   \prod_{1\le j\le \ell}g_{r,u_j}=0  
 \end{equation}
 in $\W_\bfu$. Suppose $r<a$. 
 Let $\Lambda_{r,\ell-1}(\nu)$ be the set of pair of compositions 
 $(\nu',\nu'')$ such that $l(\nu'),l(\nu'')\le \ell-1$, $\nu'_j+\nu''_j=\nu_j$, and $0\le \nu_j'\le r$ for all $1\le j\le \ell-1$.  
 we have 
\begin{align*}
\begin{tikzpicture}[baseline = 3pt, scale=0.5, color=\clr]
\draw[-,line width=1.5pt] (0,-.2) to[out=up, in=down] (0,1.2);
\draw(0,0.5) \bdot; \node at (0.4,0.5) {$ \scriptstyle \nu$};
\node at (0,-.4) {$\scriptstyle a$};
\end{tikzpicture}
&\overset{\eqref{splitmerge}}=
\begin{tikzpicture}[baseline = 3pt, scale=0.7, color=\clr]
\draw[-,line width=1.5pt] (0,.5) to[out=up, in=down] (0,1.2);
\draw(0,.8) \bdot; 
\node at (0.4,.8) {$ \scriptstyle \bar\nu$};
\draw[-,line width=1pt](0,-.5) to [out=left,in=left] (0,0.5);
\draw[-,line width=1pt](0,-.5) to [out=right,in=right] (0,0.5);
\draw[-,line width=1.5pt] (0,-.8) to (0,-.5);
\draw(-.3,0) \bdot; 
\node at (-.7,0) {$ \scriptstyle \omega_r$};
\node at (-.3,-.6) {$\scriptstyle r$};
\node at (.6,-.5) {$\scriptstyle {a-r}$};
\end{tikzpicture}
\overset{Lem\; \ref{dotmovefreely}(3)}\equiv
\sum_{(\nu',\nu'')\in\Lambda_{r,\ell-1}(\nu) }
\begin{tikzpicture}[baseline = 3pt, scale=0.7, color=\clr]
\draw[-,line width=1.5pt] (0,.5) to[out=up, in=down] (0,1);
\draw[-,line width=1pt](0,-.5) to [out=left,in=left] (0,0.5);
\draw[-,line width=1pt](0,-.5) to [out=right,in=right] (0,0.5);
\draw[-,line width=1.5pt] (0,-.8) to (0,-.5);
\draw(-.3,.2) \bdot;
\node at (-.7,.2) {$ \scriptstyle \nu'$};
\draw(.3,.1) \bdot;
\node at (.7,.1) {$ \scriptstyle \nu''$};
\draw(-.3,-.2) \bdot; 
\node at (-.7,-.2) {$ \scriptstyle \omega_r$};
\node at (-.3,-.6) {$\scriptstyle r$};
\end{tikzpicture}\\
&
\overset{\eqref{cycweb-polyr}}\equiv
\sum_{\overset{(\nu',\nu'')\in\Lambda_{r,\ell-1}(\nu)}{\nu'\neq (r,r,\ldots,r)}}
\begin{tikzpicture}[baseline = 3pt, scale=0.7, color=\clr]
\draw[-,line width=1.5pt] (0,.5) to[out=up, in=down] (0,1);
\draw[-,line width=1pt](0,-.5) to [out=left,in=left] (0,0.5);
\draw[-,line width=1pt](0,-.5) to [out=right,in=right] (0,0.5);
\draw[-,line width=1.5pt] (0,-.8) to (0,-.5);
\draw(-.3,.2) \bdot;
\node at (-.7,.2) {$ \scriptstyle \nu'$};
\draw(.3,.1) \bdot;
\node at (.7,.1) {$ \scriptstyle \nu''$};
\draw(-.3,-.2) \bdot; 
\node at (-.7,-.2) {$ \scriptstyle \omega_r$};
\node at (-.3,-.6) {$\scriptstyle r$};
\end{tikzpicture}
\end{align*}
 which belongs to $D_a^{\ell-1}$ by the inductive assumption and Corollary~ \ref{cor:blambdamu}.
 Now the claim follows by applying the above argument repeatedly.

Note that the morphism spaces of $\W_{\bfu}$ are spanned by all elementary chicken foot diagrams. By \eqref{dotmovecrossing} and \eqref{symmetric}, 
we have 
\begin{equation}
\label{movedottofirststrand}
  \begin{tikzpicture}[baseline = -1mm,scale=0.7,color=\clr]
	\draw[-,thick] (0.2,-.6) to (0.2,.6);
	\draw[-,thick] (-0.2,-.6) to (-0.2,.6);
        \node at (0.2,-.75) {$\scriptstyle b$};
        \node at (-0.2,-.75) {$\scriptstyle a$};
        \draw (.2, 0) \bdot;
        \node at (0.6,0) {$\scriptstyle \omega_r$};
\end{tikzpicture}
\equiv
\begin{tikzpicture}[baseline = -1mm,scale=0.7,color=\clr]
	\draw[-,thick] (0.28,0) to[out=90,in=-90] (-0.28,.6);
	\draw[-,thick] (-0.28,0) to[out=90,in=-90] (0.28,.6);
	\draw[-,thick] (0.28,-.6) to[out=90,in=-90] (-0.28,0);
	\draw[-,thick] (-0.28,-.6) to[out=90,in=-90] (0.28,0);
        \node at (0.3,-.75) {$\scriptstyle b$};
        \draw (-.28, 0) \bdot;
        \node at (-0.7,0) {$\scriptstyle \omega_r$};
        \node at (-0.3,-.75) {$\scriptstyle a$};
\end{tikzpicture}.
\end{equation}
Then for any elementary CFD $(A,P)$ with  $l(\nu_{ij})\ge \ell$ for some $\nu_{ij}$, we first use \eqref{movedottofirststrand} to move the dot $\omega_{\nu_{ij}}$ to the first strand of the diagram (up to lower degree terms) and then applying the above claim to see that $(A,P)$ can be written as a linear combination of some elementary CFD's $(\tilde A,\tilde P)$ with $ l(\tilde\nu_{ij})<\ell$ up to lower degree terms. Then the result follows by applying this procedure repeatedly and induction on degrees.  
\end{proof}

\begin{theorem}
\label{basis-cyc-web} 
Suppose that ${\bfu} =(u_1, \ldots, u_\ell) \in\kk^\ell$. Then $\Hom_{\W_{\bfu}}(\mu,\lambda)$ has a basis given by $\PMat_{\lambda,\mu}^\ell$, for any $\lambda,\mu \in\Lambda_{\text{st}}(m)$.    
\end{theorem}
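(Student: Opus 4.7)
The proof naturally decomposes into the spanning part and the linear independence part, of which the spanning has already been handled.

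Step 1 (Spanning). This is precisely the content of Lemma~\ref{lem:cycweb-span}: any morphism in $\Hom_{\W_{\bfu}}(\mu,\lambda)$ is a $\kk$-linear combination of elements of $\PMat_{\lambda,\mu}^\ell$. The key point already established there is that, thanks to the cyclotomic relation $\prod_{j=1}^\ell g_{r,u_j}=0$ and the commutativity of dot packets on a single strand, any elementary dot packet $\omega_\nu$ of length $\ell(\nu)\ge \ell$ may be rewritten modulo lower degree terms as a $\kk$-linear combination of elementary dot packets of length $<\ell$, after which one iterates on degree.

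Step 2 (Linear independence). The plan is to work in a fixed Hom-space of strict compositions of a fixed $m$, and to show that the images of $\PMat_{\lambda,\mu}^\ell$ under a suitable faithful (or asymptotically faithful) functor are linearly independent. Two routes are available, and I would pursue the one that requires the least additional machinery:

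\textbf{Route A (via the $W$-algebra functor).} Over $\kk=\C$, apply the functor $\hat{\Phi}:\W_{\bfu}\to W(\Lambda)\text{-mod}$ from \eqref{PhiWW}, where $\Lambda$ is chosen with sufficiently many parts (so that the asymptotic faithfulness of $\hat{\Phi}$, asserted in Theorem~\ref{thm:sameHom}, applies on $\Hom_{\W_{\bfu}}(\mu,\lambda)$). Linear independence over $\C$ can then be established by evaluating the images on a distinguished vector of $V_{\bfc}^{\otimes m}$ and extracting leading coefficients in a filtration, following the same pattern (wedge part + symmetric-polynomial leading coefficient) employed in \S\ref{subsec:independence} for $\AW$. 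The $\Z$-case follows by realizing everything over $\Z$ and base changing to an arbitrary commutative ring $\kk$.

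\textbf{Route B (via the Schur supercategory).} Embed $\W_{\bfu}$ as a full subcategory into the cyclotomic Schur category $\Sch_{\bfu}$ of the sequel \cite{SW2}, where Hom-spaces are known to have explicit bases (and are in particular free over $\kk$). Then linear independence in $\W_{\bfu}$ follows directly from linear independence in $\Sch_{\bfu}$, together with the verification that distinct level $\ell$ elementary CFDs are sent to distinct basis elements on the Schur side.

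The main obstacle is Step 2. Route B is cleaner but pushes the work into the companion paper; Route A is self-contained within this paper but requires unwinding the filtration arguments of Section~\ref{sec:AWbasis} in the presence of the cyclotomic truncation, which forces one to keep careful track of the restriction $\ell(\nu_{ij})\le \ell-1$ when comparing leading terms. Either way, once one has linear independence over $\Z$, the final assertion over $\kk$ is a routine base change.
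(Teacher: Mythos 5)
Your Route B is exactly the paper's proof: spanning is quoted from Lemma~\ref{lem:cycweb-span}, and linear independence is obtained by composing with the fully faithful functor $\W_\bfu \rightarrow \Sch_\bfu$ of \cite{SW2}, under which $\la \mapsto (\emptyset^\ell,\la)$ and the spanning set $\PMat_{\lambda,\mu}^\ell$ is sent to part of the basis of $\Hom_{\Sch_\bfu}((\emptyset^\ell,\mu),(\emptyset^\ell,\lambda))$ established in \cite[Theorem~5.17]{SW2}; so the proposal is correct and takes essentially the same approach.

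One caution on your Route A: as phrased it would be circular inside this paper, because the asymptotic faithfulness of $\hat\Phi$ (Theorem~\ref{thm:sameHom}) is itself proved by invoking the full faithfulness of $\W_\bfu \rightarrow \Sch_\bfu$ from \cite{SW2}, i.e.\ the very input underlying the basis theorem. A genuinely independent version of Route A must avoid Theorem~\ref{thm:sameHom} altogether and argue directly with the functor $\hat\Phi$ of Proposition~\ref{prop:Phihat} (with $\Lambda'_\ell \ge m$), evaluating on a suitable vector of $\bigwedge^\mu V_\bfc$ and extracting leading terms while respecting the truncation $l(\nu_{ij})\le \ell-1$; this is the "direct lengthy proof" the paper states is possible but deliberately skips.
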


\begin{proof}
By Lemma \ref{lem:cycweb-span}, it remains to prove the linear independence of the spanning set  $\PMat_{\lambda,\mu}^\ell$ for $\Hom_{\W_{\bfu}}(\mu,\lambda)$. 
A direct lengthy proof of this using representations of $\W_\bfu$ on $\mathfrak{gl}_N$-mod is possible but will be skipped. Instead, we shall enlarge affine/cyclotomic web categories to affine/cyclotomic Schur categories denoted $\ASch$ and $\Sch_\bfu$ in \cite{SW2} and establish a basis theorem for $\Sch_\bfu$ in \cite[Theorem~E or 5.17]{SW2}. Then the linear independence of $\PMat_{\lambda,\mu}^\ell$ in $\Hom_{\W_{\bfu}}(\mu,\lambda)$ follows readily via a functor $\W_\bfu \rightarrow \Sch_\bfu$ (see \cite[\S 5.5]{SW2}
), which maps $\la$ to $(\emptyset^\ell, \la)$ and the spanning set $\PMat_{\lambda,\mu}^\ell$ for $\Hom_{\W_{\bfu}}(\mu,\lambda)$ to a basis for $\Hom_{\Sch_{\bfu}}((\emptyset^\ell,\mu), (\emptyset^\ell,\lambda))$. 
\end{proof}

\begin{proposition}
The cyclotomic web category $\W_{(0)}$ for $\ell=1$ and  $\bfu=(0)$ is isomorphic  to the web category $\W$ as $\kk$-linear categories.  In particular, $\W$ is both a subcategory and a quotient category of $\AW$.
\end{proposition}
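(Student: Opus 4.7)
The plan is to exhibit an explicit functor $F : \W \to \W_{(0)}$, then use the basis theorems already in hand to show that $F$ induces a bijection on every Hom-space. First I would specialize the element $g_{r,u}$ of \eqref{def-gau} at $u=0$: for $i \ge 1$ the product $\prod_{0\le j\le i-1}(0+j)$ vanishes because of its $j=0$ factor, so only the $i=0$ summand survives and one obtains $g_{r,0} = \omega_{r,r}$. Thus $\W_{(0)}$ is simply the quotient of $\AW$ by the right tensor ideal generated by the full-length elementary dot packets $\omega_{r,r}$, for all $r \ge 1$.

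Next I would construct $F : \W \to \W_{(0)}$ as the composition of the natural functor $\W \to \AW$ (which exists because $\W$ is defined by a subset of the generators and a subset of the relations of $\AW$) with the quotient functor $\AW \to \W_{(0)}$. On objects $F$ is the identity, and on the generating morphisms $\merge$, $\splits$, $\crossing$ of $\W$ it is the identity. The well-definedness of $F$ is automatic because every defining relation of $\W$ is among the defining relations of $\AW$.

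Now comes the main step: checking that $F$ is bijective on Hom-spaces. By \cite[Theorem~4.10]{BEEO} (recorded in the discussion preceding Proposition~\ref{cor-isom-schur}), $\Hom_{\W}(\mu,\lambda)$ has a basis indexed by $\Mat_{\lambda,\mu}$, consisting of reduced chicken foot diagrams. On the other hand, Theorem~\ref{basis-cyc-web} applied at $\ell=1$ gives a basis $\PMat_{\lambda,\mu}^{1}$ of $\Hom_{\W_{(0)}}(\mu,\lambda)$; but the defining condition $l(\nu_{ij}) \le \ell-1 = 0$ forces every partition $\nu_{ij}$ attached to a leg to be empty, so $\PMat_{\lambda,\mu}^{1} = \Mat_{\lambda,\mu}$. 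The functor $F$ sends a reduced chicken foot diagram of shape $A \in \Mat_{\lambda,\mu}$ in $\W$ to the undecorated reduced chicken foot diagram of the same shape in $\W_{(0)}$, so $F$ carries the first basis bijectively to the second. Hence $F$ is an isomorphism of $\kk$-linear categories.

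For the final clause, note that the natural functor $\W \to \AW$ is faithful because its composition with the quotient $\AW \to \W_{(0)}$ is the isomorphism $F$; in particular $\W$ is realized as a subcategory of $\AW$. Simultaneously, the quotient functor $\AW \to \W_{(0)} \cong \W$ exhibits $\W$ as a quotient category of $\AW$. The only subtle point in the argument is the identification $g_{r,0} = \omega_{r,r}$; once that is in place, the argument is a clean comparison of bases, so I do not anticipate any serious technical obstacle.
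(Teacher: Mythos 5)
Your proposal is correct, but it takes a different route from the paper at the decisive step. The paper does not invoke Theorem~\ref{basis-cyc-web} at all: it observes via Lemma~\ref{lem:cycweb-span} that the Hom-spaces of $\W_{(0)}$ are spanned by undecorated reduced chicken foot diagrams, and then proves their linear independence directly by evaluating the functor $\mathcal F_M$ at the trivial module $M=\C$ (which factors through $\W_{(0)}$), passing from $\C$ to $\Z$ and then to $\kk$ by base change; composing $\W\to\AW\to\W_{(0)}$ then gives the isomorphism exactly as you do. Your argument instead cites Theorem~\ref{basis-cyc-web} at $\ell=1$, where $\PMat^{1}_{\lambda,\mu}=\Mat_{\lambda,\mu}$, and matches bases through $F$. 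This is logically sound and not circular, but note that the linear-independence half of Theorem~\ref{basis-cyc-web} is established in the paper only by passing through the cyclotomic Schur category of the sequel \cite{SW2}; so your proof imports that external dependence, whereas the paper's evaluation argument keeps this basic structural fact (that $\W$ embeds in $\AW$ and is also a quotient) self-contained within the present paper. Your preliminary identification $g_{r,0}=\omega_{r,r}$ is correct (the $j=0$ factor kills all $i\ge 1$ terms of \eqref{def-gau}) but is not actually needed for either argument, and your deduction of faithfulness of $\W\to\AW$ from the composite being an isomorphism is fine.
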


\begin{proof} 
 By Lemma \ref{lem:cycweb-span}, the morphism spaces in $\W_{(0)}$ are generated by reduced chicken foot diagrams (without dots). These elements are linearly independent over $\C$ (and over $\Z$ and hence over $\kk$ by base change) by considering the functor $\mathcal F_M$ with $M$ being $\C$ which factors through $\W_{(0)}$.
 We conclude that each morphism space of $\W_{(0)}$ has a basis given by reduced chicken foot diagrams, the same as $\W$.
 Now composing the obvious functor $\W\rightarrow \AW$ with the quotient functor $\AW\rightarrow \W_{(0)}$
gives us the required isomorphism $\W\stackrel{\cong}{\rightarrow} \W_{(0)}$. The proposition follows. 
\end{proof}

Recall \cite{BEEO} that $\W$ (a cyclotomic web category for $\ell=1$) is isomorphic to the Schur category and the usual Schur algebras are Morita equivalent to the path algebras for $\W$ (see Proposition~\ref{cor-isom-schur}). Hence, it is natural to ask about possible generalizations for cyclotomic web categories, for $\ell \ge 1$. This will be answered affirmatively later in this section. We first need some preparation.

\subsection{A representation of $\W_\bfu$}

We will define a new representation of $\AW$, which factors through a quotient of $\AW$ which is a cyclotomic web category. This functor will also establish a connection between the cyclotomic web categories and finite $W$-algebras in the next subsections.

Fix 
a strict composition $\Lambda=(\lambda_1,\ldots, \lambda_n)$ of $N$ such that 
 $\lambda_1\le \ldots \le \lambda_n$, and  $l(\Lambda')=\ell$ (i.e.,  $l(\Lambda)=n$ and  $\lambda_n=\ell$), where $\Lambda'$ is the transpose partition of $\Lambda$.
We number the rows of the Young  diagram of $\Lambda$ by
$1,\ldots,n$ from top to bottom, and columns by $1,\ldots, \ell$ from left to right. Moreover, we number the boxes $1,2,\ldots,N$ in the Young diagram of $\Lambda$ along the rows starting at the top row (This differs from the numbering convention along the columns in \cite{BK08} but will be convenient when we consider adding more rows of $\ell$ boxes to the bottom of $\Lambda$ later on). 
For example, suppose $\Lambda=(1,2,3,3)\in \Lambda_{\text{st}}(9)$, the Young diagram for $\Lambda$ is 
\begin{align}
\label{pyramid-lambda}
   \begin{ytableau}
    1 & \none & \none  \\
    2 & 3  & \none \\
    4 & 5  & 6 \\
7& 8& 9\\
\end{ytableau}   
\end{align}
The column and row numbers of the $i$th box are denoted by $\text{col}(i)$ and $\text{row}(i)$.
Let $V$ be the free $\kk$-module with basis $\{v_i\mid i\in [N]\}$, where $[N]=\{1,2,\ldots, N\}$. 
Let $e\in \End_\kk(V)$ be the nilpotent matrix that maps $v_i$ to $v_{j}$ such that  the $j$-box is immediately to the left of the $i$th box of $\Lambda$
if the $i$th box is not the leftmost box in its row, or to zero otherwise.
For example, for \eqref{pyramid-lambda}, 
$e=e_{2,3}+e_{4,5}+e_{5,6}+e_{7,8}+e_{8,9}$.

Let 
$\kk$-mod$^\wedge$ be the category consisting of 
$\{\bigwedge^\mu V\mid \mu \in \Lambda_{\text{st}}\}$ with $\kk$-linear maps as morphisms.
Recall we have already a functor $\Phi: \W \rightarrow \kk$-mod$^\wedge$ from  Proposition~ \ref{functorofweb}.
We aim at defining a functor 
\[
\hat\Phi: \AW \longrightarrow \kk \text{-mod}^\wedge  
\]
 (which is \emph{not} a monoidal functor).
 On the object level, $ \hat \Phi$ is the same as $\Phi$.
 They are also the same on the generating  morphisms (as a $\kk$-linear category)
$1_* \splits 1_*$, $1_* \merge 1_*$, $1_* \crossing 1_* $. It remains to define the action of  $1_\mu \wkdotr 1_\nu$ on $\bigwedge ^\mu V\otimes \bigwedge^r V\otimes \bigwedge^\nu V$.

We first define the action of 
$1_\mu \dotgen 1_\nu$ (under $\hat\Phi$) on 
$\bigwedge^\mu V\otimes V\otimes \bigwedge^\nu V$ for $\mu, \nu \in \Lambda_{\text{st}}$. Write the natural basis elements  of $\bigwedge ^\mu V$ as 
\[ 
v_{[\mathbf i]}:=v_{\mathbf i_1}\otimes \ldots \otimes v_{\mathbf i_h},
\]
where $ h=l(\mu)$ and $ v_{\mathbf i_j}=v_{i_{j,1}}\wedge \ldots\wedge v_{i_{j,\mu_j}}$ with $\mathbf i_j\in [N]^{\mu_j}$ and $i_{j,1}<\ldots< i_{j,\mu_j}$.
Similarly, we have a natural basis for $ \bigwedge ^\nu V$.

Let $\bfc=(c_1,\ldots,c_\ell)\in \kk^\ell$ be a fixed $\ell$-tuple.
As in \cite[Lemma 3.4]{BK08}, let $\bfu=(u_1,\ldots,u_\ell)\in \kk^\ell$ be given by 
\begin{align}  \label{BKui}
u_i=(c_i+\lambda'_i-n), \qquad 1\leq i\le \ell.
\end{align}
Recall the action of a dot on $V^{\otimes d}$ in \cite[Lemma 3.3]{BK08} is given as 
\begin{equation}
\label{equ:actionofcycheckev}
\begin{aligned}
    v_{\mathbf i} \cdot  1_{1^{k-1}} \dotgen 1_{1^{d-k}} 
& =
v_{i_1}\otimes \ldots \otimes v_{i_{k-1}}\otimes e(v_{i_k})\otimes v_{i_{k+1}}\otimes \ldots\otimes  v_{i_d}+ u_{\text{col}(i_k)}v_{\mathbf i}\\
&+ \sum_{\overset{1\le j<k}{\text{col}(i_j)\ge \text{col}(i_k)}}v_{\mathbf i \cdot (j,k)} - \sum_{\overset{k\le j\le d}{\text{col}(i_j)< \text{col}(i_k)}}v_{\mathbf i \cdot (j,k)}.
\end{aligned}
\end{equation}
By restricting the above action of a dot to the subspace  $\bigwedge^{\mu}V\otimes  V \otimes \bigwedge ^\nu V$  of $V^{\otimes d}$ with $d=m+n+1$, we have the following result which can be checked directly.

\begin{lemma}  \label{lem:dot}
The subspace $\bigwedge^{\mu}V\otimes  V \otimes \bigwedge ^\nu V$ is preserved by $1_{\mu}\dotgen 1_{\nu}$. More explicitly,  
$1_{\mu} \dotgen 1_{\nu}$ sends 
$v_{[\mathbf i],k, [\mathbf j]}:= v_{[\mathbf i]}\otimes v_k \otimes v_{[\mathbf j]}$ to 
\begin{equation}
\label{actionofdotonv}
 v_{[\mathbf i]}\otimes e(v_k) \otimes v_{[\mathbf j]}+u_{\text{col}(k)} v_{[\mathbf i],k, [\mathbf j]} +\sum_{(\mathbf i',k')\in S_{\mathbf i,k} } v_{[\mathbf i'],k',[\mathbf j] }- \sum_{(r',\mathbf j')\in S_{k,\mathbf j} } v_{[\mathbf i],r' ,[\mathbf j'] }    
\end{equation}  
where $S_{\mathbf i, k}$ is the set consisting of $(\mathbf i',k')$ such that $k'=i_{a,b}$ with $ \text{col}(i_{a,b})\ge \text{col}(k)$ for some $1\le a\le h$ and $1\le b\le \mu_a$, and $\mathbf i'$ is obtained from $\mathbf i=(\mathbf i_1, \ldots, \mathbf i_h)$ by substituting $i_{a,b}$ with $k$. Similarly, $S_{k,\mathbf j}$ consists of $(r',\mathbf j')$ such that $r'=j_{c,d}$ with $ \text{col}(j_{c,d})< \text{col}(k)$ for some $1\le c\le l(\nu)$ and $1\le d\le \nu_c$, and $\mathbf j'$ is obtained from $\mathbf j=(\mathbf j_1, \ldots, \mathbf j_t)$ by substituting $j_{c,d}$ with $k$.
\end{lemma}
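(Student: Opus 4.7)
The plan is to deduce the formula \eqref{actionofdotonv} directly from \eqref{equ:actionofcycheckev} by restricting the action on $V^{\otimes d}$ (with $d=|\mu|+1+|\nu|$) to the antisymmetric subspace $\bigwedge^\mu V\otimes V\otimes \bigwedge^\nu V$. First I would lift a basis vector $v_{[\mathbf i],k,[\mathbf j]}$ to the pure tensor
\[
v_{i_{1,1}}\otimes\cdots\otimes v_{i_{h,\mu_h}}\otimes v_k\otimes v_{j_{1,1}}\otimes\cdots\otimes v_{j_{t,\nu_t}} \in V^{\otimes d},
\]
so that $v_{[\mathbf i],k,[\mathbf j]}$ equals the image of this tensor under the signed antisymmetrizer $\mathrm{Alt}_\mu\otimes 1\otimes \mathrm{Alt}_\nu$ (up to a nonzero normalization). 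Since the dot sits at the distinguished middle position, which is not antisymmetrized with any of the others, the action of $1_\mu\dotgen 1_\nu$ given by \eqref{equ:actionofcycheckev} commutes with this antisymmetrizer; hence it suffices to apply \eqref{equ:actionofcycheckev} to the pure-tensor lift and then project back.

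Next I would analyze the four types of terms in \eqref{equ:actionofcycheckev} separately. The $e$-summand acts only on the middle factor $v_k$ and the scalar $u_{\mathrm{col}(k)}$-summand is diagonal, so both preserve the subspace and reproduce the first two terms in \eqref{actionofdotonv} verbatim. For the positive transposition sum in \eqref{equ:actionofcycheckev}, I would reindex by locating the swapping position $j<k_0$ (where $k_0$ is the slot of the middle tensor) inside some block $\mathbf i_a$ of $\mathbf i$; the swap $(j,k_0)$ then replaces $i_{a,b}$ by $k$ inside $\mathbf i_a$ and simultaneously installs $i_{a,b}$ in the middle. The wedge $v_{i_{a,1}}\wedge\cdots\wedge v_k\wedge\cdots\wedge v_{i_{a,\mu_a}}$ that results is, by the multilinearity and alternating property of $\wedge$, either zero (if $k$ already appears in $\mathbf i_a$) or, up to sign absorbed into the wedge notation, of the required form $v_{[\mathbf i'],k',[\mathbf j]}$ with $(\mathbf i',k')\in S_{\mathbf i,k}$. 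The condition $\mathrm{col}(i_{a,b})\ge \mathrm{col}(k)$ is inherited from the one in \eqref{equ:actionofcycheckev}. The negative sum over $j>k_0$ is treated in the same way on the $\mathbf j$-side and produces the last summand of \eqref{actionofdotonv}; the $j=k_0$ contribution drops out since $\mathrm{col}(i_{k_0})<\mathrm{col}(i_{k_0})$ is false.

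The main bookkeeping issue — not a genuine obstacle — is making sure that no spurious signs are introduced when $k$ is inserted into a wedge block at a position different from its sorted location, and that the restrictions on the column indices in $S_{\mathbf i,k}$ and $S_{k,\mathbf j}$ exactly match the inequalities $\mathrm{col}(i_j)\ge \mathrm{col}(i_{k_0})$ and $\mathrm{col}(i_j)<\mathrm{col}(i_{k_0})$ appearing in \eqref{equ:actionofcycheckev}. Once this is verified, the subspace $\bigwedge^\mu V\otimes V\otimes\bigwedge^\nu V$ is seen to be preserved as a by-product, and \eqref{actionofdotonv} holds term by term, completing the proof.
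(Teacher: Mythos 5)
Your proposal is correct and is essentially the paper's own argument: the paper proves the lemma simply by restricting the Brundan--Kleshchev formula \eqref{equ:actionofcycheckev} to the subspace $\bigwedge^\mu V\otimes V\otimes\bigwedge^\nu V$ and checking it directly, which is exactly the term-by-term matching (the $e$-term, the scalar $u_{\mathrm{col}(k)}$-term, and the two transposition sums, with signs absorbed into the wedge notation) that you carry out. One small caution: since this section works over an arbitrary commutative ring, realize the wedge basis vector as the full signed sum $v\cdot(\x_\mu\otimes 1\otimes \x_\nu)$ and use that the dot at the middle position commutes with these operators (the relation $x_{k_0}s_j=s_jx_{k_0}$ for $j\neq k_0-1,k_0$), rather than invoking an idempotent antisymmetrizer ``up to a nonzero normalization,'' which would require inverting factorials.
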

 We then define the action of $1_\mu \dotgen 1_\nu$ on
$\bigwedge^{\mu}V\otimes  V \otimes \bigwedge ^\nu V$
by Lemma \ref{lem:dot} and \eqref{actionofdotonv}.

 
Now we are ready to define the action of $ 1_{\mu} \wkdotr 1_\nu $ on $\bigwedge ^\mu V\otimes \bigwedge^r V\otimes \bigwedge^\nu V$. Let $x_1,\ldots, x_r$ be the diagram of the image of $\phi$ in Proposition \ref{isotodeaffine} in $\End_{\AW}(1^r)$, i.e., $x_j$ is obtained from the identity morphism of $1^r$ by adding a dot on the $j$th strand. In \eqref{actionofdotonv} (or in \cite[(4.14)]{BCNR}), we already have the action of $1_{1^{m}}x_j1_{1^n}$ on $V^{\otimes m+r+n}$. We define the action of $ 1_{\mu} \wkdotr 1_\nu $ on $\bigwedge ^\mu V\otimes \bigwedge^r V\otimes \bigwedge^\nu V $ for any 
$\mu\in \Lambda_{\text{st}}(m) $ and 
$\nu\in \Lambda_{\text{st}}(n)$ to be the restriction of the action of $1_{1^{m}}\prod_{1\le j\le r}x_j1_{1^n}$ to the subspace $\bigwedge ^\mu V\otimes \bigwedge^r V\otimes \bigwedge^\nu V$.
One checks  that this is a well-defined  linear operator on $\bigwedge ^\mu V\otimes \bigwedge^r V\otimes \bigwedge^\nu V$ by using  the fact that $\prod_{1\le j\le r}x_j$ is central in $\End_{\AW}(1^r)$. 

\begin{proposition}
\label{th:actionV}
The functor $\hat \Phi: \AW \rightarrow \kk \text{-mod}^\wedge$  above is well defined and  it factors through $\W_{\bfu}$ to a functor $\hat \Phi : \W_\bfu\rightarrow \kk \text{-mod}^\wedge$. 
\end{proposition}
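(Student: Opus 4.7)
The proof breaks into two tasks: showing $\hat\Phi$ is a well-defined functor on $\AW$, and showing that it kills the generators of the right tensor ideal defining $\W_\bfu$. For the first task, I would verify the defining relations \eqref{webassoc}--\eqref{intergralballon} on the specified images of the generators. The relations \eqref{webassoc}, \eqref{mergesplit}, and \eqref{equ:r=0} involve only merges, splits, and crossings, and so follow from Proposition~\ref{functorofweb} applied slotwise to the middle tensor factor (the outer factors $\bigwedge^\mu V$ and $\bigwedge^\nu V$ are passive). The dot-involving relations should be reduced to the degenerate affine Hecke algebra $\widehat{\mathcal H}_d$: by Proposition~\ref{isotodeaffine} combined with the explicit formula \eqref{equ:actionofcycheckev} of Brundan--Kleshchev, the action of $1_\mu \wkdotr 1_\nu$ is by construction the restriction to $\bigwedge^\mu V\otimes \bigwedge^r V\otimes \bigwedge^\nu V$ of the action of $1_{1^m}\,x_1\cdots x_r\,1_{1^n}$ on $V^{\otimes d}$. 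Hence \eqref{equ:dotmovecrossing-simplify} becomes the consequence of $x_{i+1}s_i = s_ix_i-1$; \eqref{dotmovesplits+merge-simplify} becomes the $\mathfrak S$-equivariance of split/merge combined with \eqref{xx}; and \eqref{intergralballon} expresses $r!\,\omega_{r,r}$ as $x_1\cdots x_r$ wrapped by split--merge, which is consistent because $x_1\cdots x_r$ is $\mathfrak S_r$-central in $\End_{\AW}(1^r)$ (this same centrality also ensures the restriction to the wedge subspace in the first place is well-defined).

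For the second task, it suffices to show that for every $r\ge 1$ and every $\sigma\in\Lambda_{\text{st}}$, the endomorphism $\prod_{j=1}^\ell g_{r,u_j}\otimes 1_\sigma$ acts as zero on $\bigwedge^r V\otimes \bigwedge^\sigma V$; any more general element of the right tensor ideal will then be killed by functoriality. I would first treat $r=1$: here $g_{1,u_j} = \omega_1 - u_j$, and by Lemma~\ref{lem:dot} together with \eqref{actionofdotonv} the action of $\omega_1$ is block-triangular in the column-filtration of $V$, with diagonal block on column $c$ equal to $u_c$ plus a nilpotent part coming from $e$ and the correction terms, which all lower the column index. The $g_{1,u_j}$'s commute (they are polynomials in $\omega_1$), and applying them in the order $g_{1,u_c}, g_{1,u_{c-1}},\ldots, g_{1,u_1}$ successively lowers the column of any basis vector to $0$, annihilating it. (Equivalently, this is the Brundan--Kleshchev cyclotomic relation $\prod_{j=1}^\ell (x_1-u_j)=0$ on $V^{\otimes d}$.) For $r\ge 2$, I would use Lemma~\ref{lem:gru}: the balloon with $r$ thin strands each carrying the decoration $g_{1,u}$ equals $r!\,g_{r,u}$. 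Under $\hat\Phi$, this balloon acts on $\bigwedge^r V\otimes \bigwedge^\sigma V$ as the antisymmetrization of $\prod_{k=1}^r g_{1,u}(x_k)$. Consequently $(r!)^\ell \prod_{j=1}^\ell g_{r,u_j}$ acts as the antisymmetrization of $\prod_{k=1}^r \prod_{j=1}^\ell (x_k-u_j)$, which vanishes by the $r=1$ case applied to each $k$.

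The main obstacle is getting the cyclotomic vanishing to pass to $\bigwedge^r V$ integrally: over $\C$ the factor $(r!)^\ell$ is invertible and the argument above is immediate, but over an arbitrary $\kk$ one cannot divide. I would handle this by treating $u_1,\ldots,u_\ell$ as formal indeterminates and proving the identity $\prod_{j=1}^\ell g_{r,u_j}=0$ on $\bigwedge^r V\otimes \bigwedge^\sigma V$ first over $\Z[u_1,\ldots,u_\ell]$ (where the $(r!)^\ell$-divisibility can be exhibited by writing the antisymmetrizer in its usual integral form as a sum over $\mathfrak S_r$), and then specializing to $\kk$. Alternatively, one can observe that $\prod_j g_{r,u_j}$ already lies in the commutative polynomial subalgebra $D_r$ (Theorem~\ref{thm:Da}), so its action on $\bigwedge^r V\otimes\bigwedge^\sigma V$ is determined by its polynomial expression in the elementary dot packets; comparing leading terms via Corollary~\ref{actionofelempoly} reduces the vanishing to the same triangular $r=1$ argument applied in each column, avoiding any division.
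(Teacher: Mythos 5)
Your overall architecture matches the paper's (web relations via Proposition~\ref{functorofweb}, dots via the Brundan--Kleshchev action \eqref{equ:actionofcycheckev}, the level-one cyclotomic vanishing of \cite[Lemma 3.4]{BK08} promoted to thickness $r$ by the balloon identity, and a characteristic-zero-then-base-change step to absorb the factorials), but there is a genuine gap in the well-definedness half. You propose to ``verify the defining relations \eqref{webassoc}--\eqref{intergralballon} on the specified images of the generators,'' treating the outer tensor factors as passive. That would suffice if $\hat\Phi$ were monoidal, but it is not: the image of $1_\mu\,\dotgen\,1_\nu$ in \eqref{actionofdotonv} genuinely acts on the outer factors $\bigwedge^\mu V$ and $\bigwedge^\nu V$ through the correction sums $S_{\mathbf i,k}$ and $S_{k,\mathbf j}$. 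Consequently $\AW$ must be presented as a plain $\kk$-linear category, and besides the monoidal relations one must also check the interchange relations \eqref{equ:commuting-relations} for the images: that two dots in disjoint regions commute, and that a dot commutes with a distant split, merge or crossing. Neither of these is automatic (the paper gets the two-dot case from \cite[\S 3.4]{BK08} and checks the dot-versus-web-generator case directly from \eqref{actionofdotonv}), and your proposal never addresses them. Your reduction of the thick-strand relations \eqref{equ:dotmovecrossing-simplify}--\eqref{dotmovesplits+merge-simplify} to the single relation $x_{i+1}s_i=s_ix_i-1$ is also substantially glossed; the paper instead verifies only the thin relation \eqref{dotmovecrossingC} and deduces the thick ones over $\C$ from the simplified presentation of Section~\ref{sec:C}, then base changes from $\Z[\tilde u_1,\ldots,\tilde u_\ell]$, and some such detour (or an honest symmetrizer computation) is needed.

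In the cyclotomic half there is a smaller but real misstatement: it is not true that $\prod_{j}(x_k-u_j)$ annihilates $V^{\otimes d}_{\bfc}$ for each $k$ individually (``the $r=1$ case applied to each $k$''). The triangularity argument fails for $k\ge 2$ because the correction terms $v_{\mathbf i\cdot(j,k)}$ with $j<k$ can raise the column index in position $k$; already for $\ell=1$, $e=0$ one has $(x_2-u_1)$ acting as a nonzero place permutation. The conclusion you want is still correct, because the $x_k$'s commute and the $k=1$ factor alone kills the module, so the full product $\prod_{k}\prod_{j}(x_k-u_j)$ vanishes; you should say it this way. With that repair, your route through Lemma~\ref{lem:gru} (composing $\ell$ balloons, which costs $(r!)^\ell$ and needs the centrality of symmetric polynomials in $\widehat{\mathcal H}_r$ to collapse the intermediate merge-splits) plus the $\Z[u_1,\ldots,u_\ell]$ torsion-freeness argument is a workable variant of the paper's cleaner step, which uses the single identity $r!\prod_j g_{r,u_j}=$ balloon of Lemma~\ref{cyc-web-C} and the observation that the balloon lies in the right tensor ideal generated by $\prod_j g_{1,u_j}$.
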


\begin{proof}
It suffices to check that $\hat \Phi$ respects the relations between the generating morphisms
$1_* \splits 1_*$, $1_* \merge 1_*$, $1_* \crossing 1_* $ and $
1_* \dotgen 1_*
$ 
for $\AW$ as a $\kk$-linear category which involves the defining relations of $\AW$ as a monoidal category (i.e., \eqref{webassoc}--\eqref{intergralballon}) and the following additional  commuting relations 
\begin{equation}
\label{equ:commuting-relations}
   (1_\mathbf a\otimes g\otimes 1_{\bfc\otimes \mathbf d_1\otimes \mathbf e })\circ (1_{\mathbf a\otimes \mathbf b_1\otimes c}\otimes h\otimes 1_\mathbf e)= (1_{\mathbf a\otimes \mathbf b_2\otimes \bfc}\otimes h\otimes 1_{\mathbf  e })\circ (1_{\mathbf a}\otimes g\otimes 1_{\mathbf  c\otimes \mathbf d_1\otimes \mathbf e}) 
\end{equation} 
for all objects $\mathbf a,\bfc, \mathbf e\in \Lambda_{\text{st}}$ and all generating morphisms $g: \mathbf b_1\rightarrow \mathbf b_2$ and $h: \mathbf d_1\rightarrow \mathbf d_2$. We refer to \cite[\S 2.6]{BCNR} for general remarks on relations for a monoidal category without referring to the monoidal structure. 

The relations involving \eqref{webassoc}--\eqref{equ:r=0} and the commuting relations which do not involve the dots follow from those for the functor $\Phi$ given in  Proposition \ref{functorofweb}. The relation \eqref{intergralballon} follows by definition. 
Note that the action of $ 1_\mu \dotgen 1_\nu$ 
given in \eqref{actionofdotonv} and Lemma~\ref{lem:dot} is the restriction of 
$1_{1^m}\dotgen 1_{1^n}$ 
from  \cite[Lemma~ 3.3]{BK08} to the subspace $\bigwedge^\mu V\otimes V\otimes \bigwedge^\nu V$.
This implies the relation involving \eqref{dotmovecrossingC} over $\C$ since it holds in a larger space $V^{\otimes m+n+2}$ as proved in \cite[\S 3.4]{BK08}. Since the above action of generators are defined over the polynomial ring  $\Z[\tilde u_1,\ldots,\tilde u_\ell]$ in $\ell$ variables, the additional relations  \eqref{equ:dotmovecrossing-simplify}--\eqref{dotmovesplits+merge-simplify} follows from the simplified relations over $\C$ by the base change arguments.
By the same reasoning, the commuting relations involving two dots 
also follow from \cite[\S 3.4]{BK08}.
Finally, the commuting relations involving one dot and one split (or merge, or crossing) can be checked directly from the definition of the action given in \eqref{actionofdotonv}.

Write $g=\prod_{1\le j\le \ell}g_{1,u_j} =\prod_{1\le j\le \ell} (\dotgen -u_j )$.
It is proved in \cite[Lemma 3.4]{BK08} that  $g\otimes 1_{1^n}$ is annihilated by $\hat\Phi$. By restriction to $V\otimes \bigwedge ^\nu V$, we see that $\hat \Phi$ annihilates the right tensor ideal generated by $g$. Since $\hat \Phi(g) =\hat \Phi (\prod_{1\le j\le \ell}g_{1,u_j})=0$, we have 
$\hat \Phi(\prod_{1\le j\le \ell  }g_{r,u_j}) =0$, for any $r\ge 1$, over $\C$ by Lemma \ref{cyc-web-C}, and then over $\Z[\tilde u_1,\ldots,\tilde u_\ell]$ by a standard base change argument as above since $ \hat\Phi(\prod_{1\le j\le \ell  }g_{r,u_j} ) $ is also defined over the polynomial ring $\Z[\tilde u_1,\ldots,\tilde u_\ell]$. Therefore, $\hat \Phi$ factors through $\W_\bfu$. 
\end{proof}

\subsection{Finite $W$-algebras}

We assume that $\kk=\mathbb C$  in this subsection.

We begin by recalling some notations in \cite{BK08}. Recall $\mathfrak g:=\mathfrak {gl}_{N}(\kk)$ and the nilpotent matrix $e\in \mathfrak g$ from  the previous section; see, e.g., \eqref{pyramid-lambda}. Let $\mathfrak g_e$ be the centralizer of $e$ in $\mathfrak g$.  The universal enveloping algebra $U(\mathfrak g_e)$ admits a certain filtered deformation $W(\Lambda)$, the finite $W$-algebra associated with the nilpotent $e$.
More explicitly, set $\mathfrak p=\oplus_{r\ge 0}\mathfrak g_r$ and $\mathfrak m= \oplus_{r< 0}\mathfrak g_r$, where the $\Z$-grading of $\mathfrak g$ is defined by declaring $\deg (e_{i,j})=\text{col}(j)-\text{col}(i)$. Let $\eta$ be the algebra automorphism of $U(\mathfrak p)$ defined by sending $e_{i,j}$ to $e_{i,j}+\delta_{i,j}(n-q_{\text{col}(j)}-q_{\text{col}(j)+1}-\ldots-q_{\ell})$ for any $e_{i,j}\in\mathfrak p$, where we denote $\Lambda'=(q_1,\ldots, q_\ell)$. Then $W(\Lambda)$ is defined to be a subalgebra of  $U(\mathfrak p)$:
\[
W(\Lambda):= \{w\in U(\mathfrak p)\mid [x,\eta(y)]\in U(\mathfrak g)I_\chi \text{ for all } x\in\mathfrak m \}, 
\]
where  $I_\chi$ is the kernel of the homomorphism $\chi: U(\mathfrak m)\mapsto \C$ given by $x\mapsto (x,e)$ and $(\cdot,\cdot)$ is the trace form on $\mathfrak g$.

For  $d\in \N$, let $V_{\bfc}^{\otimes d}$ (identified as $V^{\otimes d}$ as vector spaces)  be the $U(\mathfrak p)$-module  with the action obtained by twisting the usual action by the automorphism $\eta_\bfc$ of $U(\mathfrak p)$ such that 
$\eta_{\bfc}(e_{i,j})=e_{i,j}+\delta_{i,j} c_{\text{col}(i)}$.
Similarly, we have the $U(\mathfrak p)$-submodule $\bigwedge^\mu V_{\bfc}$ (identified with $\bigwedge^\mu V$ as vector spaces) with the twisting action for any $\mu\in \Lambda_{\text{st}}$.
By restriction to $W(\Lambda)$, both $V_{\bfc}^{\otimes d}$
and $\bigwedge^\mu V_{\bfc}$ are $W(\Lambda)$-modules.

 Let $W(\Lambda)$-mod be the category of finite-dimensional $W(\Lambda)$-modules.
Hence, we have a subcategory of $W(\Lambda)$-mod
with the same objects as $\kk$-mod$^\wedge$ (i.e., by identifying the objects $\bigwedge^\mu V_{\bfc}$ with $\bigwedge^\mu V$).

\begin{proposition}
 \label{prop:Phihat}
Suppose $\kk=\mathbb C$. The functor $\hat \Phi$ in Proposition \ref{th:actionV}  gives rise to a functor  \begin{align}  \label{hPhi}
 \hat \Phi : \W_\bfu \longrightarrow W(\Lambda)\text{-mod}.
\end{align}
\end{proposition}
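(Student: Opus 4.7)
The plan is to build on Proposition~\ref{th:actionV}, which already shows that $\hat\Phi$ is a well-defined functor from $\W_\bfu$ to $\kk$-mod$^\wedge$. The remaining task is purely a matter of upgrading the codomain: each vector space $\bigwedge^\mu V$ is to be identified with $\bigwedge^\mu V_\bfc$, viewed as a $W(\Lambda)$-submodule of $V_\bfc^{\otimes |\mu|}$ through antisymmetrization, and I must verify that $\hat\Phi(f)$ is a $W(\Lambda)$-module homomorphism for every generating morphism $f$ of $\AW$. Since the functoriality and all the $\AW$-relations are already accounted for in Proposition~\ref{th:actionV}, only the $W(\Lambda)$-equivariance of the four types of generators $\merge$, $\splits$, $\crossing$, $\wkdotaa$ needs attention.

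The key input I would invoke is Brundan--Kleshchev's higher level Schur duality \cite{BK08}: on $V_\bfc^{\otimes d}$ the left $W(\Lambda)$-action and the right action of the degenerate affine Hecke algebra $\widehat{\mathcal H}_d$ commute. In particular, each symmetric group generator $s_i$ and each dot operator $x_j$ acts as a $W(\Lambda)$-module endomorphism of $V_\bfc^{\otimes d}$.

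For the merges, splits, and crossings, I would observe that these are precisely the $U(\mathfrak{gl}_N)$-linear maps built in Proposition~\ref{functorofweb}; they all preserve the total tensor power of $V$. The twist defining $V_\bfc^{\otimes d}$ adds to each $e_{i,i}$ only a scalar depending on $\mathrm{col}(i)$, so the scalar correction contributed by $\eta_\bfc$ through the coproduct depends only on the total number of tensor factors and is therefore identical on the source and target of such a map. Consequently these maps remain $U(\mathfrak p)$-equivariant for the twisted action and restrict to $W(\Lambda)$-module maps between the relevant wedge submodules.

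For the dot generators, the thin-strand case $\hat\Phi\big(1_\mu\,\dotgen\,1_\nu\big)$ is by Lemma~\ref{lem:dot} exactly the restriction to $\bigwedge^\mu V\otimes V\otimes \bigwedge^\nu V$ of the action of $x_{|\mu|+1}$ on $V_\bfc^{\otimes(|\mu|+1+|\nu|)}$, which commutes with $W(\Lambda)$ by \cite{BK08}. The thick-strand dot $\hat\Phi\big(1_\mu\,\wkdotr\,1_\nu\big)$ is defined as the restriction of $1_{1^{|\mu|}}\, x_1x_2\cdots x_r\, 1_{1^{|\nu|}}$ to the wedge subspace; as a product of commuting $W(\Lambda)$-linear operators it is $W(\Lambda)$-linear, and the restriction makes sense because the wedge subspace is stable by the central element $x_1x_2\cdots x_r$ of $\End_{\AW}(1^r)$. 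The subtlest point to write out carefully will be this last stability and compatibility of restrictions, but that is precisely what the construction preceding Proposition~\ref{th:actionV} verifies. Combining the three cases with the factorization through $\W_\bfu$ supplied by Proposition~\ref{th:actionV} yields the desired functor $\hat\Phi:\W_\bfu\to W(\Lambda)$-mod.
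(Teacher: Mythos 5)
Your proposal is correct and follows essentially the same route as the paper: the paper likewise disposes of the merges, splits and crossings by noting they are $\mathfrak g$-module maps (your "scalar correction from $\eta_\bfc$" remark is just a rephrasing of the fact that the twisted $W(\Lambda)$-action factors through $U(\mathfrak g)$, with which these maps commute), and handles the dots by citing \cite[\S 3.4]{BK08} for commutation with $W(\Lambda)$ on $V_\bfc^{\otimes d}$ together with stability of the wedge subspaces, exactly as you do. The factorization through $\W_\bfu$ is, as you say, already supplied by Proposition~\ref{th:actionV}, so no further argument is needed.
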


\begin{proof}
  The actions of splits, merges, and crossing commute with the action of $W(\Lambda)$ on $\bigwedge^\nu V_{\bfc}$ since they commute with the action of $\mathfrak g$. For the action of dots, it is explained in \cite[\S 3.4]{BK08} that they commute with the action of $W(\Lambda)$
 on $V_{\bfc}^{\otimes n}$. Then the same holds in the subspace $\bigwedge ^\nu V_{\bfc}$ since it is invariant under the actions of dots in $\W_\bfu$ and $ W(\Lambda)$.
\end{proof}

We shall establish in Theorem~\ref{thm:sameHom} a more precise relation between Hom-spaces in $\W_{\bfu}$ and Hom-spaces for $W(\Lambda)$-modules.

\subsection{The special idempotent $e_{\bf S}$ and Schur functor}

In this subsection, we review and rework some constructions from \cite{BK08} (where $\kk=\C$ is assumed) over any commutative ring $\kk$.

Recall the degenerate cyclotomic Hecke algebra $\mathcal{H}_{m,\bfu}$ with parameter $\bfu$ is the quotient of the affine Hecke algebra $\widehat {\mathcal{H}}_m$ (see \S\ref{affinehecke}) by the two-sided ideal generated by $\prod_{1\le i\le \ell}(x_1-u_i)$.

Let $\text{Tab}^m(\Lambda)$ be the set of tableaux obtained from the Young diagram of $\Lambda$ by filling non-negative integers in each box such that the sum of entries is
$m$.
Associated to each $A\in \text{Tab}^m(\Lambda)$, there is an $\ell$-multicomposition $\mu_A=(\mu_A^{(1)}, \ldots, \mu_A^{(\ell)})$ of $m$ such that 
$\mu_A^{(i)}$ is the $i$th column reading (from top to bottom).

A tableau $A\in \text{Tab}^m(\Lambda)$ is called an idempotent tableau if all entries of $A$ except the rightmost entries in each row are zero. Let $\text{Idem}^m(\Lambda)$ denote the set of all idempotent tableaux.

For any $\mathbf i,\mathbf j\in [N]^m$, let 
$e_{\mathbf i,\mathbf j}:=e_{i_1,j_1}\otimes \ldots\otimes e_{i_m,j_m}\in \End(V^{\otimes m})$.
Suppose that  $A\in\text{Idem}^m(\Lambda)$.
Let 
$\mathbf i(A)=(1^{a_1}, \ldots, N^{a_N})\in [N]^m$, where $(a_1,\ldots,a_N)$ is the row reading of $A$ starting from the top row.
Moreover, 
there is an idempotent $e_A$ (cf. \cite[(6.1)]{BK08}) of $\End(V^{\otimes m})$ defined as 
\begin{align}
    e_{A}=\sum_{\overset{\mathbf i\in [N]^m}{\text{row}(\mathbf i)\sim \text{row}(\mathbf i(A))} }e_{\mathbf i,\mathbf i}
\end{align}
where $\text{row}(\mathbf i)=(\text{row}(i_1),\ldots,\text{row}(i_m))$ and $\sim$ means lying in the same $\mathfrak S_m$-orbit.
For example, given 
\begin{align*}
   \begin{ytableau}
    0 & \none   \\
    0 & 1   \\
    0 & 1   \\
\end{ytableau}
\qquad
\in \text{Idem}^2(\Lambda) ,
\end{align*}
with $\Lambda=(1,2,2)$, we have $\mathbf i(A)=(3,5)$ and $e_A=e_{2,4}+e_{4,2}+e_{2,5}+e_{5,2}+e_{4,3}+e_{3,4}+e_{3,5}+e_{5,3}$.

It is clear that  
$\{e_A\mid A\in\text{Idem}^m(\Lambda)\}$ are mutually orthogonal idempotents summing to 1.
Moreover, one can check via \eqref{equ:actionofcycheckev} that $e_A$ commutes with the action of $\mathcal H_{m,\bfu}$ on $V^{\otimes m}_\mathbf c$.

Suppose that $\Lambda'_\ell\ge m$. We denote 
\[
\text{Idem}_\ell^m(\Lambda):=\{A\in \text{Idem}^m(\Lambda)\mid \mu_A^{(\ell)}\in \Lambda_{\text{st}}(m)\},
\]
that is, $\mu_A^{(i)}=\emptyset$ ($1\le i\le \ell-1$), for any $A\in\text{Idem}_\ell^m(\Lambda) $.

\begin{lemma}
\label{lem:ea=permu1}
  Suppose that $\Lambda'_\ell\ge m$.  We have $e_AV^{\otimes m}_\mathbf c= v_{\mathbf i(A)}\mathcal H_{m,\bfu}$, for any $A\in \text{Idem}_\ell^m(\Lambda)$.
\end{lemma}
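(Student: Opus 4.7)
The plan is to prove both inclusions of $e_A V^{\otimes m}_{\bfc} = v_{\mathbf{i}(A)} \mathcal{H}_{m,\bfu}$ directly. For $v_{\mathbf{i}(A)} \mathcal{H}_{m,\bfu} \subseteq e_A V^{\otimes m}_{\bfc}$, I would first note that $e_A v_{\mathbf{i}(A)} = v_{\mathbf{i}(A)}$ because $\text{row}(\mathbf{i}(A))$ trivially lies in its own $\mathfrak{S}_m$-orbit. Then I would observe that $e_A V^{\otimes m}_{\bfc}$ is $\mathcal{H}_{m,\bfu}$-stable, since both kinds of generators preserve the multiset $\text{row}(\mathbf{i})$: the transpositions $s_i$ permute the entries of $\mathbf{i}$, while every term appearing in $v_{\mathbf{i}} \cdot x_k$ from \eqref{equ:actionofcycheckev} has the same multiset of rows (the vector $e(v_{i_k})$ lies in the same row as $v_{i_k}$, and the swap terms $v_{\mathbf{i} \cdot (j,k)}$ merely permute entries).

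For the reverse inclusion, I would first note that every $v_{\mathbf{i}} \in e_A V^{\otimes m}_{\bfc}$ satisfies $\text{row}(\mathbf{i}) \sim \text{row}(\mathbf{i}(A))$; since every entry of $\mathbf{i}(A)$ occupies column $\ell$ of a row of length $\ell$, it follows that each entry of $\mathbf{i}$ lies in a row of length exactly $\ell$. Define the column-deficiency
\[
\sigma(\mathbf{i}) := \sum_{k=1}^{m} (\ell - \text{col}(i_k)) \ge 0,
\]
and prove $v_{\mathbf{i}} \in v_{\mathbf{i}(A)} \mathcal{H}_{m,\bfu}$ by induction on $\sigma(\mathbf{i})$. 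In the base case $\sigma(\mathbf{i}) = 0$, every entry of $\mathbf{i}$ is forced into column $\ell$; combined with the row constraint, $\mathbf{i}$ must then be a permutation of $\mathbf{i}(A)$, so $v_{\mathbf{i}} = v_{\mathbf{i}(A)} \cdot w$ for some $w \in \mathfrak{S}_m \subseteq \mathcal{H}_{m,\bfu}$.

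For the inductive step with $\sigma(\mathbf{i}) > 0$, pick a position $k$ with $\text{col}(i_k) < \ell$, and let $\mathbf{i}'$ be obtained from $\mathbf{i}$ by replacing $i_k$ with the box one column to the right in the same row of $\Lambda$ (this box exists precisely because the row has length $\ell$). Then $\sigma(\mathbf{i}') = \sigma(\mathbf{i}) - 1$, and using $e(v_{i'_k}) = v_{i_k}$ together with \eqref{equ:actionofcycheckev} yields
\[
v_{\mathbf{i}'} \cdot x_k = v_{\mathbf{i}} + u_{\text{col}(i'_k)} v_{\mathbf{i}'} + \sum_{j < k} \pm v_{\mathbf{i}' \cdot (j,k)} - \sum_{j \ge k} \pm v_{\mathbf{i}' \cdot (j,k)},
\]
where every swap term shares the multiset of columns with $\mathbf{i}'$, hence has column-deficiency equal to $\sigma(\mathbf{i}) - 1$, and has the required row pattern. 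By induction, each correction term lies in $v_{\mathbf{i}(A)} \mathcal{H}_{m,\bfu}$, and $v_{\mathbf{i}'} \cdot x_k$ does as well by induction applied to $\mathbf{i}'$. Rearranging isolates $v_{\mathbf{i}}$ in $v_{\mathbf{i}(A)} \mathcal{H}_{m,\bfu}$.

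The only real obstacle is the bookkeeping in the inductive step: one must check that every correction term in \eqref{equ:actionofcycheckev} stays inside the equivalence class defined by $e_A$ and has strictly smaller column-deficiency, which is immediate since all such modifications either permute the multiset of entries or shift a single entry one column to the left. The hypothesis $\Lambda'_\ell \ge m$ plays no direct role in this argument; it ensures instead that the map $A \mapsto \mu_A^{(\ell)}$ on $\text{Idem}^m_\ell(\Lambda)$ realizes every element of $\Lambda_{\text{st}}(m)$, which is what makes this lemma useful in the subsequent identification of the $W$-Schur category with $\W_{-\bfu}$.
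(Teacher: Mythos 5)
Your proposal is correct and follows essentially the same route as the paper: your column-deficiency $\sigma(\mathbf i)=\sum_k(\ell-\mathrm{col}(i_k))$ is exactly the paper's degree $\deg(v_k)=\ell-\mathrm{col}(k)$, and the induction (base case a place permutation of $\mathbf i(A)$, inductive step shifting one entry right and applying $x_k$ via \eqref{equ:actionofcycheckev} with $e(v_{i_k'})=v_{i_k}$) is the paper's argument, with the paper simply normalizing to position $1$ and invoking $e_A\in W_{m,\bfu}(\Lambda)$ for the easy inclusion where you verify row-multiset stability directly. Your explicit check that each entry lies in a row of length exactly $\ell$ (so the right-neighbor box exists) makes precise a point the paper leaves implicit.
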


\begin{proof}
First note that 
$e_AV^{\otimes m}_\mathbf c$ has a basis 
\[ \{v_{\mathbf i}\mid \mathbf i\in [N]^m, \text{row}(\mathbf i)\sim \text{row}(\mathbf i(A))\}. \]
 In particular, $v_{\mathbf i(A)}\in  e_AV^{\otimes m}_\mathbf c$  and hence 
 $v_{\mathbf i(A)}\mathcal H_{m,\bfu}\subset e_AV^{\otimes m}_\mathbf c$ since 
 $e_A\in W_{m,\bfu}(\Lambda)$ implies that  $e_AV^{\otimes m}_\mathbf c$ is a right $\mathcal H_{m,\bfu} $-module.

  We may check the converse inclusion by induction on the degree of $v_{\mathbf i}$, where $\deg (v_k)=\ell-\text{col}(k)$. The minimal degree (of degree 0) elements are of the form $v_{\mathbf i(A)w}$ for some $w\in \mathfrak S_m$ and hence are contained in $v_{\mathbf i(A)}\mathcal H_{m,\bfu}$. In general, suppose $v_\mathbf i\in e_AV^{\otimes m}_\mathbf c$ with $\deg (v_{\mathbf i})>0$. By permuting the positions we may assume that 
 $\text{col}(i_1)<\ell$. Let $\mathbf i'=(i_1+1,i_2,\ldots, i_m)$. Then
 $v_{\mathbf i'}\in e_AV^{\otimes m}_\mathbf c$
 with degree $\deg(v_\mathbf i)-1$.
 Using \eqref{equ:actionofcycheckev} we see that 
 $v_\mathbf i=v_{\mathbf i'}x_1+\sum v_\mathbf j$ for some $\mathbf j$ obtained from $\mathbf i'$ by place permutation. By induction on the degree, we have $v_{\mathbf i'}$ and all $v_\mathbf j$'s are contained in $v_{\mathbf i(A)}\mathcal H_{m,\bfu}$ and hence $v_\mathbf i\in v_{\mathbf i(A)}\mathcal H_{m,\bfu}$. This completes the proof.
\end{proof}

Let ${\bf S} \in \text{Idem}_\ell^m (\Lambda)$ be the special idempotent tableau with associated $\ell$-multicomposition 
\[
\mu_{\bf S}=(\emptyset^{\ell-1},(1^m)).
\]

\begin{lemma}
\label{lem:isomorm}
 Suppose $\Lambda'_\ell\ge m$.   We have $e_{\bf S} V^{\otimes m}_\mathbf c\cong \mathcal H_{m,\bfu}$ as right $\mathcal H_{m,\bfu}$-modules. In particular, 
    \begin{align}
    \label{equ:isomorphisofcychekce}
      \End_{\mathcal H_{m,\bfu}}(e_{\bf S}V^{\otimes m}_\mathbf c)\cong \End_{\mathcal H_{m,\bfu}}(\mathcal H_{m,\bfu})\cong \mathcal H_{m,\bfu}.  
    \end{align}
\end{lemma}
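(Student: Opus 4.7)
The plan is to reduce the statement to proving injectivity of a natural surjective map, and then to establish that injectivity by a leading-term argument in a suitable filtration on $V^{\otimes m}_\mathbf c$. By Lemma~\ref{lem:ea=permu1}, we have $e_{\bf S} V^{\otimes m}_\mathbf c = v_{\mathbf i({\bf S})} \mathcal H_{m,\bfu}$, so the right $\mathcal H_{m,\bfu}$-linear map $\phi: \mathcal H_{m,\bfu} \to e_{\bf S} V^{\otimes m}_\mathbf c$, $h \mapsto v_{\mathbf i({\bf S})} \cdot h$, is automatically surjective. The endomorphism identity \eqref{equ:isomorphisofcychekce} would then follow from the resulting module isomorphism together with the standard identification $\End_{\mathcal H_{m,\bfu}}(\mathcal H_{m,\bfu}) \cong \mathcal H_{m,\bfu}$ via $\psi \mapsto \psi(1)$. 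So the essential task is to prove that $\phi$ is injective.

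First I would invoke the PBW-type basis $\{w\, x_1^{k_1} \cdots x_m^{k_m} : w \in \mathfrak S_m,\ 0 \le k_a \le \ell - 1\}$ of $\mathcal H_{m,\bfu}$ (standard for cyclotomic Hecke algebras, see e.g.\ \cite{Kle05}), which reduces injectivity of $\phi$ to the linear independence of $\{v_{\mathbf i({\bf S})} \cdot w\, x_1^{k_1} \cdots x_m^{k_m}\}$ in $V^{\otimes m}_\mathbf c$. To detect this I would equip $V^{\otimes m}_\mathbf c$ with the filtration induced by $\deg(v_k) := \ell - \text{col}(k)$, so that $v_{\mathbf i({\bf S})}$ sits in degree $0$ (all entries of $\mathbf i({\bf S})$ lie in column $\ell$). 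An inspection of the explicit formula \eqref{equ:actionofcycheckev} shows that each $x_k$ raises the filtration by at most one, the unique strictly degree-raising term being the $e$-contribution that shifts one tensor factor by one column to the left, while the scalar shift and the transposition corrections all preserve degree. A straightforward induction would then identify the leading symbol of $\phi(w\, x_1^{k_1} \cdots x_m^{k_m})$ with the pure tensor $v_{i_{w(1)} - k_1} \otimes \cdots \otimes v_{i_{w(m)} - k_m}$, where $i - j$ denotes the box $j$ steps to the left of box $i$ within its row, a well-defined element of $[N]$ because each $i_a$ lies in column $\ell$ and $0 \le k_a \le \ell - 1$.

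The combinatorial crux is that the assignment $(w, k) \mapsto (i_{w(j)} - k_j)_{j=1}^m$ is injective. The hypothesis $\Lambda'_\ell \ge m$, combined with the specific shape $\mu_{\bf S} = (\emptyset^{\ell-1},(1^m))$, guarantees that the entries of $\mathbf i({\bf S})$ come from $m$ distinct rows of the Young diagram of $\Lambda$; consequently $\text{row}(i_{w(j)} - k_j) = \text{row}(i_{w(j)})$ uniquely recovers $w(j)$, after which the column of $i_{w(j)} - k_j$ recovers $k_j$. Pairwise distinct leading symbols then yield the required linear independence, proving that $\phi$ is injective and hence an isomorphism.

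The hardest step will be the bookkeeping in the iterated leading-term computation: at each application of $x_k$ one accumulates many correction terms coming from the scalar $u_\ell$ and from the transpositions in \eqref{equ:actionofcycheckev}, and one must verify that none of these corrections either jumps up to the maximal filtration degree or produces a leading symbol that collides with one coming from a different $(w, k)$. This should reduce to the observation that each $e$-step strictly drops the column of exactly one tensor factor, whereas the remaining correction terms only rearrange entries within a fixed column profile; writing this induction out carefully is the one place where real care is required, and it is precisely the distinctness of the rows of $\mathbf i({\bf S})$ (ensured by $\Lambda'_\ell \ge m$) that prevents any collision of leading symbols.
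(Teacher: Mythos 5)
Your proposal is correct and follows essentially the same route as the paper: surjectivity of $\phi: h\mapsto v_{\mathbf i({\bf S})}h$ via Lemma~\ref{lem:ea=permu1}, then injectivity by comparing highest-degree terms of $v_{\mathbf i({\bf S})}wx_1^{k_1}\cdots x_m^{k_m}$ with respect to the filtration $\deg(v_k)=\ell-\mathrm{col}(k)$. Your write-up merely makes explicit the leading-symbol bookkeeping that the paper leaves terse, and the details you supply (the $e$-shift as the unique degree-raising term, distinct rows of $\mathbf i({\bf S})$ preventing collisions) are accurate.
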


\begin{proof}
    Consider the right $\mathcal H_{m,\bfu}$-module homomorphism 
\begin{equation}
\label{equ:isoofrighthmodule}
  \begin{aligned} \phi: \mathcal H_{m,\bfu} &\longrightarrow e_{\bf S} V^{\otimes m}_\mathbf c\\
        1& \mapsto v_{\mathbf i({\bf S})}
        \end{aligned}
\end{equation}
  By Lemma \ref{lem:ea=permu1}, the above map is surjective. Considering the highest degree term of $v_{\mathbf i({\bf S})} wx_1^{n_1}\ldots x_m^{n_m}$ for any basis element of $\mathcal H_{m,\bfu}$, we see that $\phi$ is also injective and hence an isomorphism. 
\end{proof}
Via the isomorphism \eqref{equ:isomorphisofcychekce}, $\mathcal H_{m,\bfu}$ also acts on the left of $ e_{\bf S} V^{\otimes m}_\mathbf c$ and it becomes as an
$(\mathcal H_{m,\bfu},\mathcal H_{m,\bfu})$-bimodule.
This allows us to upgrade the isomorphism in \eqref{equ:isoofrighthmodule} as follows. 

\begin{lemma}
\label{lem:bimoduiso}
    Suppose $\Lambda'_\ell\ge m$.   We have $e_{\bf S} V^{\otimes m}_\mathbf c\cong \mathcal H_{m,\bfu}$ as  $\mathcal H_{m,\bfu}$-bimodules.
\end{lemma}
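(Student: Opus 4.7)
The plan is to promote the right $\mathcal{H}_{m,\bfu}$-module isomorphism $\phi : \mathcal{H}_{m,\bfu} \xrightarrow{\sim} e_{\bf S}V^{\otimes m}_\bfc$, $h \mapsto v_{\mathbf{i}(\bf S)} \cdot h$, from Lemma~\ref{lem:isomorm} to a bimodule isomorphism by simply tracking through the definition of the left action. The left $\mathcal{H}_{m,\bfu}$-structure on $e_{\bf S}V^{\otimes m}_\bfc$ is, by construction, transported from the canonical algebra isomorphism $\mathcal{H}_{m,\bfu} \cong \End_{\mathcal{H}_{m,\bfu}}(\mathcal{H}_{m,\bfu})$ (sending $h$ to left multiplication $L_h$) through $\phi$, as in \eqref{equ:isomorphisofcychekce}. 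Concretely, the endomorphism $L_h$ of $e_{\bf S}V^{\otimes m}_\bfc$ corresponding to $h$ is the unique right $\mathcal{H}_{m,\bfu}$-linear map determined by $L_h(v_{\mathbf{i}(\bf S)}) = v_{\mathbf{i}(\bf S)} \cdot h$, since $v_{\mathbf{i}(\bf S)}$ generates $e_{\bf S}V^{\otimes m}_\bfc$ as a right module by Lemma~\ref{lem:ea=permu1}.

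With this setup, the computation is immediate: for any $h, h' \in \mathcal{H}_{m,\bfu}$, using right $\mathcal{H}_{m,\bfu}$-linearity of $L_h$,
\[
h \cdot \phi(h') \;=\; L_h(v_{\mathbf{i}(\bf S)} \cdot h') \;=\; L_h(v_{\mathbf{i}(\bf S)}) \cdot h' \;=\; (v_{\mathbf{i}(\bf S)} \cdot h) \cdot h' \;=\; v_{\mathbf{i}(\bf S)} \cdot hh' \;=\; \phi(hh'),
\]
so $\phi$ intertwines left multiplication on $\mathcal{H}_{m,\bfu}$ with the left action on $e_{\bf S}V^{\otimes m}_\bfc$. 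Combined with Lemma~\ref{lem:isomorm}, this finishes the proof.

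The only point requiring care is the convention in the canonical identification $\End_A(A_A) \cong A$: one must use the left-multiplication version, which is an algebra isomorphism (the right-multiplication version gives an anti-isomorphism). Once that convention is fixed --- and it is indeed the one implicit in \eqref{equ:isomorphisofcychekce} via Lemma~\ref{lem:isomorm} --- no further obstacle appears, and no additional structural input beyond the cyclic-generation statement of Lemma~\ref{lem:ea=permu1} and the right-module isomorphism of Lemma~\ref{lem:isomorm} is needed.
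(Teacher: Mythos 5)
Your proof is correct and follows essentially the same route as the paper: both take the right-module isomorphism $\phi$ of Lemma~\ref{lem:isomorm}, note that by the very definition of the left action via \eqref{equ:isomorphisofcychekce} the operator attached to $h$ is the unique right $\mathcal H_{m,\bfu}$-linear endomorphism sending $v_{\mathbf i({\bf S})}$ to $v_{\mathbf i({\bf S})}h$, and then verify left-linearity of $\phi$ by the same one-line chain of equalities. Your extra remark on choosing the left-multiplication convention in $\End_{\mathcal H_{m,\bfu}}(\mathcal H_{m,\bfu})\cong \mathcal H_{m,\bfu}$ is exactly the convention the paper uses implicitly, so there is no discrepancy.
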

\begin{proof}
    It suffices to show that $\phi$ in \eqref{equ:isoofrighthmodule} is a left $\mathcal H_{m,\bfu}$-homomorphism.
    For any $h',h\in\mathcal H_{m,\bfu}$, we have 
    \[h'\phi(h)=h'(v_{\mathbf i({\bf S})} h)=(h'v_{\mathbf i({\bf S})}) h
   = (v_{\mathbf i({\bf S})}h')h=\phi(h'h) \]
   where the third equality follows since the left action of $\mathcal H_{m,\bfu}$ on $e_{\bf S} V^{\otimes m}_\mathbf c$ is given by 
   $\End_{\mathcal H_{m,\bfu}}(e_{\bf S}V^{\otimes m}_\mathbf c)$ via the isomorphism in \eqref{equ:isomorphisofcychekce}, i.e., left multiplication of $h'$ corresponds to sending $v_{\mathbf i({\bf S})}$ to $v_{\mathbf i({\bf S})}h'$.  The lemma is proved.
\end{proof}

 The permutation module $M^{\mu_A}$ is by definition (cf. \cite{DJM98}) the right ideal $\mathrm{x_{\mu_A}}\mathcal H_{m,\bfu} $ of $\mathcal H_{m,\bfu}$, where $\x_{\mu_A}=\sum_{w\in \mathfrak S_{\mu_A}}w$ for any  $A\in\text{Idem}_\ell^m(\Lambda) $.

Note that any left $\mathcal H_{m,\bfu}$-module can be regarded as a right $\mathcal H_{m,\bfu}$-module via the anti-involution 
 $*: \mathcal H_{m,\bfu}\rightarrow \mathcal H_{m,\bfu}$, $x_j\mapsto x_j, s_i\mapsto s_i$. This is how we view the module $e_{\bf S} \bigwedge^{\mu_A} V_\bfc$ as a right module in the lemma below. 
 
\begin{lemma} 
\label{lem:isopermutationv}
Suppose that $\Lambda'_\ell\ge m$.
    For any $A\in \text{Idem}_\ell^m(\Lambda)$, 
  we have   $e_{\bf S} \bigwedge^{\mu_A} V_\bfc \cong M^{\mu_A}$  as right $\mathcal H_{m,\bfu}$-modules.
\end{lemma}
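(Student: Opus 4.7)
The strategy is to realize $e_{\bf S}\bigwedge^{\mu_A}V_\bfc$ as the right $\mathfrak S_{\mu_A}$-coinvariants of $e_{\bf S}V^{\otimes m}_\bfc$ and then to use the bimodule isomorphism $\Psi:\mathcal H_{m,\bfu}\xrightarrow{\sim}e_{\bf S}V^{\otimes m}_\bfc$ of Lemma~\ref{lem:bimoduiso} to identify this quotient with $M^{\mu_A}$. Note that $\mathfrak S_{\mu_A}=\mathfrak S_{\mu_A^{(\ell)}}$ since the other components of $\mu_A$ are empty.

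The first step is to verify that, under the right action of $\mathcal H_{m,\bfu}$ on $V^{\otimes m}_\bfc$ inherited via the anti-involution $*$ from the left action of \cite{BK08} (so that $s_i$ acts by signed place permutation $v\cdot s_i=-P_{i,i+1}(v)$), the wedge product admits the quotient description
\[
\bigwedge\nolimits^{\mu_A}V \;\cong\; V^{\otimes m}\otimes_{\kk\mathfrak S_{\mu_A}}\mathrm{triv}
\]
as right $\mathcal H_{m,\bfu}$-modules, where $\mathrm{triv}$ is the trivial $\mathfrak S_{\mu_A}$-module. Indeed, imposing $v\cdot w=v$ for $w\in\mathfrak S_{\mu_A}$ in the tensor product becomes $(-1)^{\ell(w)}P_w(v)=v$, which is exactly the block antisymmetry defining $\bigwedge^{\mu_A}V$.

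The second step is to note that the idempotent $e_{\bf S}$ commutes with the right action of $\mathfrak S_{\mu_A}$ (both depend only on row multisets of multi-indices), so applying $e_{\bf S}$ commutes with the coinvariants:
\[
e_{\bf S}\bigwedge\nolimits^{\mu_A}V_\bfc \;\cong\; (e_{\bf S}V^{\otimes m}_\bfc)\otimes_{\kk\mathfrak S_{\mu_A}}\mathrm{triv}
\;\stackrel{\Psi}{\cong}\; \mathcal H_{m,\bfu}\otimes_{\kk\mathfrak S_{\mu_A}}\mathrm{triv}
\]
as right $\mathcal H_{m,\bfu}$-modules. Finally, the map $h\mapsto\x_{\mu_A}h$ factors through the coinvariants since $\x_{\mu_A}(1-w)=0$ for every $w\in\mathfrak S_{\mu_A}$, and induces a right $\mathcal H_{m,\bfu}$-isomorphism $\mathcal H_{m,\bfu}\otimes_{\kk\mathfrak S_{\mu_A}}\mathrm{triv}\stackrel{\sim}{\to} \x_{\mu_A}\mathcal H_{m,\bfu}=M^{\mu_A}$; its kernel is identified with the augmentation ideal $\sum_{w\in\mathfrak S_{\mu_A}}(1-w)\mathcal H_{m,\bfu}$ using the freeness of $\mathcal H_{m,\bfu}$ as a left $\kk\mathfrak S_{\mu_A}$-module. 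Composing gives the desired isomorphism $e_{\bf S}\bigwedge^{\mu_A}V_\bfc\cong M^{\mu_A}$.

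The main obstacle lies in the first step: pinning down the sign conventions of the right $\mathcal H_{m,\bfu}$-action (reconciling the $*$-twisted BK-action with the antisymmetry built into $\bigwedge^{\mu_A}V$) and verifying that under these conventions the block antisymmetrization of $v_{\mathbf i({\bf S})}$, namely $v_{\mathbf i({\bf S})}\cdot\x_{\mu_A}$, is precisely the canonical image of a generator of $\bigwedge^{\mu_A}V$. Once that identification is in hand, the rest is a formal tensor-product manipulation via $\Psi$.
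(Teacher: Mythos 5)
Your route has a genuine left/right gap that the anti-involution $*$ is needed to close, and you never invoke it. Write $I^{+}$ for the augmentation ideal of $\kk\mathfrak S_{\mu_A}$. Your chain takes coinvariants of $e_{\bf S}V^{\otimes m}_\bfc$ for the \emph{right} $\mathfrak S_{\mu_A}$-action and transports them through the bimodule isomorphism of Lemma~\ref{lem:bimoduiso}, which produces $\mathcal H_{m,\bfu}/\mathcal H_{m,\bfu}I^{+}$; but your final map $h\mapsto \x_{\mu_A}h$ only factors through the \emph{left} coinvariants $\mathcal H_{m,\bfu}/I^{+}\mathcal H_{m,\bfu}$ (the identity $\x_{\mu_A}(1-w)=0$ kills $(1-w)h$, not $h(1-w)$), so the composite you describe is not defined. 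Moreover the intermediate claims ``as right $\mathcal H_{m,\bfu}$-modules'' are unfounded: $\mathcal H_{m,\bfu}I^{+}$ is not a right ideal — already $(s_i-1)x_i=x_{i+1}(s_i-1)+(x_{i+1}-x_i+1)$ by \eqref{xs}, and even inside $\kk\mathfrak S_m$ one has $(s_1-1)s_2\notin \kk\mathfrak S_m(s_1-1)$ — so the right $\mathcal H_{m,\bfu}$-action does not descend to your quotient; likewise $e_{\bf S}\bigwedge^{\mu_A}V_\bfc$ is not a right $\mathcal H_{m,\bfu}$-submodule of $V^{\otimes m}_\bfc$. The right-module structure in the statement is, as the paper stresses just before the lemma, the $*$-twist of the \emph{left} action coming from the bimodule structure. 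The paper's proof is exactly the repaired version of your plan: $e_{\bf S}\bigwedge^{\mu_A}V_\bfc=e_{\bf S}V^{\otimes m}_\bfc\,\x_{\mu_A}\cong \mathcal H_{m,\bfu}\x_{\mu_A}$ as left modules, and then $*$ together with $\x_{\mu_A}^{*}=\x_{\mu_A}$ converts this to $\x_{\mu_A}\mathcal H_{m,\bfu}=M^{\mu_A}$ as right modules. Without bringing in $*$ at this point, no amount of bookkeeping with coinvariants will yield the asserted right-module isomorphism.

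There is a second, independent problem with your first step in the stated generality (this subsection reworks \cite{BK08} over an arbitrary commutative ring $\kk$): the relations $v\cdot w=v$, $w\in\mathfrak S_{\mu_A}$, under the signed place-permutation action only force $2\,v_{\mathbf i}\equiv 0$ for a basis tensor with a repeated index inside a block, not $v_{\mathbf i}\equiv 0$. Hence $V^{\otimes m}\otimes_{\kk\mathfrak S_{\mu_A}}\mathrm{triv}$ acquires extra $\kk/2\kk$-summands and is strictly larger than $\bigwedge^{\mu_A}V$ over $\Z$ or in characteristic $2$ (already for $m=2$ and $\mu_A^{(\ell)}=(2)$). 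The identification the paper uses, $V^{\otimes m}_\bfc\,\x_{\mu_A}=\bigwedge^{\mu_A}V_\bfc$, realizes the wedge as the \emph{image} of the antisymmetrizer (a submodule, not a quotient) and is valid over any $\kk$; your quotient description is only correct when $2$ is invertible. If you replace the coinvariants by this image, keep track of the left-module structure through $\Psi$, and finish with $*$, your argument collapses to the paper's proof.
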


\begin{proof}
Recall the action of $s_j$ on $V^{\otimes m}$ is given by the sign permutation and hence $V^{\otimes m}_\bfc \x_{\mu_{A}}= \bigwedge\nolimits^{\mu_A} V_{\bfc}$.
This together with the bimodule isomorphism in 
Lemma \ref{lem:bimoduiso}
implies the isomorphism of left $\mathcal H_{m,\bfu}$-modules
\[ 
\mathcal H_{m,\bfu} \x_{\mu_{A}} \cong e_{\bf S} V^{\otimes m}_\bfc \x_{\mu_{A}}=  e_{\bf S} \bigwedge\nolimits^{\mu_A} V_{\bfc}.
\]
Thus, $ e_{\bf S} \bigwedge^{\mu_A} V_{\bfc}\cong  \x_{\mu_{A}} \mathcal H_{m,\bfu}= M^{\mu_A}$ as right 
$\mathcal H_{m,\bfu}$-modules since $ \x_{\mu_A}$ is fixed under $*$.
\end{proof}

 The {\em $W$-Schur algebra} (also called the higher level Schur algebra in \cite[\S 3.7]{BK08}, where $\kk=\C$ was assumed) is defined as 
\begin{align}  \label{WSalg}
W_{m,\bfu}(\Lambda):= \End_{\mathcal H_{m,\bfu}}(V_\bfc^{\otimes m}).
\end{align}

Suppose that $\Lambda'_\ell\ge m$. It follows from the isomorphism in \eqref{equ:isomorphisofcychekce} and the isomorphism $e_{\bf S}W_{m,\bfu}(\Lambda)e_{\bf S} =\End_{\mathcal H_{m,\bfu}}(e_{\bf S}V^{\otimes m}_\mathbf c)$ that 
\begin{equation}
\label{equ:isoidemwalge}
e_{\bf S} W_{m,\bfu}(\Lambda) e_{\bf S}\cong \mathcal  H_{m,\bfu}.    
\end{equation}
See also \cite[Lemma 6.8]{BK08} over $\C$.
The Schur functor (which gives the equivalence of categories \cite[Theorem~ 5.10, Lemma 6.8]{BK08}) is given by the idempotent truncation functor associated to $e_{\bf S}$:
\begin{equation}
\label{equ:truncationfun}
  e_{\bf S}: W_{m,\bfu}(\Lambda)\text{-mod}\longrightarrow \mathcal H_{m,\bfu}\text{-mod},\quad  M\mapsto e_{\bf S}M  
\end{equation} 
via the isomorphism \eqref{equ:isoidemwalge},
where $\mathcal H_{m,\bfu}\text{-mod}$ is the category of left $\mathcal H_{m,\bfu}$-modules.

\subsection{Asymptotically faithfulness}

We are back to finite $W$-algebra $W(\Lambda)$, over $\kk=\C$, in this subsection.
The following result (generalizing the case for $\ell=1$ in \cite{CKM, BEEO}) will not be used elsewhere in this paper and its proof uses some facts which follow directly through connections to cyclotomic Schur category in \cite{SW2}.

\begin{theorem}
\label{thm:sameHom}
Suppose that $\kk=\C$. Then the functor $\hat \Phi:  \W_\mathbf u\rightarrow W(\Lambda)\text{-mod}$ in \eqref{hPhi} is asymptotically faithful in the sense that it induces an isomorphism
  $\Hom_{\W_\bfu}(\mu,\nu)\cong \Hom_{W(\Lambda)}(\bigwedge\nolimits^\mu V_\bfc, \bigwedge\nolimits^ \nu V_\bfc)$, for any $\mu,\nu \in \Lambda_{\text{st}}(m)$ with $m \le \Lambda'_\ell$.
\end{theorem}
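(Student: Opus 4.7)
The plan is to establish the isomorphism by transferring through a chain of known equivalences, then comparing bases on both ends. The hypothesis $m \le \Lambda'_\ell$ is precisely what guarantees that the Schur functor cutting down from $W(\Lambda)$ to the cyclotomic Hecke algebra is a Morita equivalence, so we can compare Hom-spaces cleanly.

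First I would reduce from $W(\Lambda)$ to the $W$-Schur algebra. By Brundan-Kleshchev's higher-level Schur duality, the action of $W(\Lambda)$ on $V_\bfc^{\otimes m}$ factors through the surjection $W(\Lambda) \twoheadrightarrow W_{m,\bfu}(\Lambda) = \End_{\mathcal H_{m,\bfu}}(V_\bfc^{\otimes m})$, and this action is centralized by $\mathcal H_{m,\bfu}$. Since each $\bigwedge^\mu V_\bfc$ is a $W_{m,\bfu}(\Lambda)$-stable subspace of $V_\bfc^{\otimes m}$, we obtain
\[
\Hom_{W(\Lambda)}\bigl(\bigwedge\nolimits^\mu V_\bfc, \bigwedge\nolimits^\nu V_\bfc\bigr) = \Hom_{W_{m,\bfu}(\Lambda)}\bigl(\bigwedge\nolimits^\mu V_\bfc, \bigwedge\nolimits^\nu V_\bfc\bigr).
\]

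Second I would apply the Schur functor $e_{\bf S}$. The hypothesis $m \le \Lambda'_\ell$ is exactly the condition in Lemma~\ref{lem:bimoduiso} and Lemma~\ref{lem:isopermutationv}, ensuring that $e_{\bf S} V_\bfc^{\otimes m} \cong \mathcal H_{m,\bfu}$ as bimodules. Combined with the isomorphism $e_{\bf S} W_{m,\bfu}(\Lambda) e_{\bf S} \cong \mathcal H_{m,\bfu}$ of \eqref{equ:isoidemwalge} and the fact that $V_\bfc^{\otimes m}$ (hence each $\bigwedge^\mu V_\bfc$) is a projective generator summand of the regular module in this range, the functor $e_{\bf S}$ becomes a Morita equivalence between $W_{m,\bfu}(\Lambda)\text{-mod}$ and $\mathcal H_{m,\bfu}\text{-mod}$, identifying $e_{\bf S} \bigwedge^\mu V_\bfc$ with the permutation module $M^\mu$ by Lemma~\ref{lem:isopermutationv}. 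Consequently,
\[
\Hom_{W_{m,\bfu}(\Lambda)}\bigl(\bigwedge\nolimits^\mu V_\bfc, \bigwedge\nolimits^\nu V_\bfc\bigr) \cong \Hom_{\mathcal H_{m,\bfu}}(M^\mu, M^\nu).
\]

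Third, I would compare the two sides dimension-wise and identify the map. By Theorem~\ref{basis-cyc-web}, $\Hom_{\W_\bfu}(\mu,\nu)$ has basis $\PMat_{\nu,\mu}^\ell$. On the other side, $\Hom_{\mathcal H_{m,\bfu}}(M^\mu, M^\nu)$ is the $(\mu,\nu)$-Hom-space of the degenerate cyclotomic Schur algebra of Dipper--James--Mathas, which is known to be free over $\kk$ with a basis indexed by (a set in natural bijection with) $\PMat_{\nu,\mu}^\ell$; this will also be made available through the companion paper \cite{SW2} which identifies the endomorphism algebras in the cyclotomic Schur category $\Sch_\bfu$ with those Schur algebras. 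Hence both sides have the same rank over $\kk$.

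The remaining and main step is to verify that the composition
\[
\Hom_{\W_\bfu}(\mu,\nu) \xrightarrow{\hat\Phi} \Hom_{W(\Lambda)}\bigl(\bigwedge\nolimits^\mu V_\bfc, \bigwedge\nolimits^\nu V_\bfc\bigr) \xrightarrow{e_{\bf S}} \Hom_{\mathcal H_{m,\bfu}}(M^\mu, M^\nu)
\]
is an isomorphism. Fullness and faithfulness will follow from a single argument: one checks that the composite sends the basis $\PMat_{\nu,\mu}^\ell$ of Theorem~\ref{basis-cyc-web} to the corresponding basis of $\Hom_{\mathcal H_{m,\bfu}}(M^\mu, M^\nu)$. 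Merges and splits map to the permutation-module homomorphisms of Proposition~\ref{cor-isom-schur} (by compatibility with the $\ell=1$ case via the natural functor $\W \to \W_\bfu$), and the elementary dot packets act on $e_{\bf S} \bigwedge^\mu V_\bfc \cong M^\mu$ through the explicit formula \eqref{equ:actionofcycheckev} used in the construction of $\hat\Phi$, which on the permutation-module side recovers the Jucys--Murphy-type generators of the Dipper--James--Mathas basis. I expect this last compatibility, namely matching the elementary-dot-packet decoration on a leg of a reduced chicken foot diagram with the corresponding symmetric polynomial of Jucys--Murphy elements acting on $M^\mu$, to be the main technical obstacle; the proof relies on Corollary~\ref{actionofelempoly} together with a careful unwinding of the Schur functor on the bimodule $e_{\bf S} V_\bfc^{\otimes m} \cong \mathcal H_{m,\bfu}$.
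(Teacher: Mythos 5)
Your overall architecture coincides with the paper's: both arguments compare $\hat\Phi$ with Hecke-side Hom-spaces through the Schur functor $e_{\bf S}$, using the equivalence of \cite[Theorem 5.10, Lemma 6.8]{BK08} (available precisely because $m\le \Lambda'_\ell$), the identification $e_{\bf S}\bigwedge\nolimits^{\mu_A}V_\bfc\cong M^{\mu_A}$ of Lemma \ref{lem:isopermutationv}, and input from the companion paper \cite{SW2}. Your reduction from $W(\Lambda)$ to $W_{m,\bfu}(\Lambda)$ and your use of the Schur functor are fine (though the claim that each $\bigwedge\nolimits^\mu V_\bfc$ is a ``projective generator summand'' is neither needed nor justified; the equivalence realized by $e_{\bf S}$ is exactly the cited theorem of \cite{BK08}). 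The genuine gap is the step you yourself flag as the main technical obstacle: showing that $e_{\bf S}\circ\hat\Phi$ carries the basis $\PMat_{\nu,\mu}^\ell$ of Theorem \ref{basis-cyc-web} to a basis of $\Hom_{\mathcal H_{m,\bfu}}(M^{\mu_A},M^{\mu_B})$, i.e.\ matching each elementary dot packet on a leg with the appropriate symmetric polynomial in Jucys--Murphy elements inside a Dipper--James--Mathas-type basis. That verification is never carried out, and equality of dimensions together with the mere existence of the map gives neither injectivity nor surjectivity; as written the proof is incomplete at its crux.

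The paper closes the triangle much more cheaply, and you could too: since $e_{\bf S}\circ\hat\Phi$ is determined by its values on the generating morphisms (merges, splits, crossings, single dots), it suffices to check on generators that it agrees with the map $\mathcal F'$ obtained by composing the fully faithful functor $\W_\bfu\to\Sch_\bfu$ of \cite[\S5.5]{SW2} with the isomorphism between $\Sch_\bfu$ and the cyclotomic Schur algebra category \cite[Theorem 4.5]{SW2}; for merges and splits this is the description of Proposition \ref{cor-isom-schur} (sending $\x_{\mu_A}$ to $\x_{\mu_A^{\vartriangle_i}}$, etc.), and for the dot it is formula \eqref{equ:actionofcycheckev}. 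Then $e_{\bf S}\circ\hat\Phi=\mathcal F'$ is an isomorphism on Hom-spaces, the vertical map induced by $e_{\bf S}$ is an isomorphism because $e_{\bf S}$ is an equivalence (and $\Hom_{W(\Lambda)}=\Hom_{W_{m,\bfu}(\Lambda)}$ here, as you noted), so $\hat\Phi$ is forced to be an isomorphism. In short, the ``main technical obstacle'' you anticipate is exactly the content of \cite{SW2} that the paper imports as a functor statement; without either importing it in that form or actually proving the dot-packet/Jucys--Murphy compatibility, your argument does not yet establish the theorem.
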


\begin{proof}
Let the functor $\mathcal F'$ be the composition of the functors $\W_\bfu \stackrel{\mathcal F}{\rightarrow} \Sch_\bfu \stackrel{\mathcal G}{\rightarrow}  \Sc_\bfu$, where $\mathcal F$ from $\W_\bfu$ to the cyclotomic Schur category $\Sch_\bfu$ is fully faithful (see \cite[\S5.5]{SW2}) and $\mathcal G$ from $\Sch_\bfu$ to the category of cyclotomic Schur algebras is an isomorphism (see \cite[Theorem~4.5]{SW2}).  
Let $A, B\in \text{Idem}_\ell^m(\Lambda)$ be such that $\mu_A^{(\ell)}=\mu$ and $\mu_B^{(\ell)}=\nu$. We claim to have the following commutative diagram 
    \begin{displaymath}
    \xymatrix{
        \Hom_{\W_\bfu}(\mu,\nu)  \ar[dr]^{\mathcal F'} \ar[r]^{\hat \Phi~~\quad} & \Hom_{W(\Lambda)}(\bigwedge\nolimits^\mu V_\bfc, \bigwedge\nolimits^ \nu V_\bfc) \ar[d]^{e_{\bf S}} \\
     & \Hom_{\mathcal H_{m,\bfu}}(M^{\mu_A}, M^{\mu_B}) ,   
    }
\end{displaymath}
where $\mathcal F'$ is an isomorphism by the results cited above. 
Indeed, the commutativity of the diagram can be checked directly on generators 
$1_* \splits 1_*$, $1_* \merge 1_*$,  $
1_* 
\begin{tikzpicture}[baseline = 3pt, scale=0.5, color=\clr]
\draw[-,thick] (0,0) to[out=up, in=down] (0,1.4);
\draw(0,0.6) \bdot;
\node at (0,-.3) {$\scriptstyle 1$};
\end{tikzpicture}
1_*$. 
For example, the image of $1_* \merge 1_*$ for $a=\mu_i, b=\mu_{i+1}$ under $\mathcal F'$ and $e_{\bf S}\circ \hat\Phi$ is given by sending $\x_{\mu_A}$ to $\x_{\mu_A^{\vartriangle_i}}$. 

On the other hand, by Lemma \ref{lem:isopermutationv}, the map $e_{\bf S}$ in the vertical arrow is also an isomorphism since the functor $e_{\bf S}$ is an equivalence of categories. Hence $\hat\Phi$ is also an isomorphism. 
\end{proof}

Recall the finite $N$ version of the affine web category, $\AW_N$ from Remark~ \ref{rem:finiteaffineweb}. Let $\W_{\bfu,N}$ be the quotient category of  $\AW_N$  by the right tensor ideal generated by $\prod_{1\le j\le \ell }g_{r,u_j}$, for $r\ge 1$.

\begin{conjecture}
Suppose that $\kk=\mathbb C$.
   The functor $\hat \Phi:  \W_\bfu\longrightarrow W(\Lambda)\text{-mod}$ is full. Moreover, it induces a fully faithful functor from  $\W_{\bfu,N}$ to $W(\Lambda)\text{-mod}$.
\end{conjecture}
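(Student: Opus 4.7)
The plan is to combine Theorem~\ref{thm:sameHom} with an enlargement-of-$\Lambda$ reduction, and then leverage the basis theorem (Theorem~\ref{basis-cyc-web}) to upgrade to the fully-faithful statement on $\W_{\bfu,N}$.

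For fullness of $\hat\Phi:\W_\bfu\to W(\Lambda)\text{-mod}$ on $\Hom_{\W_\bfu}(\mu,\nu)$ with $\mu,\nu\in\Lambda_{\text{st}}(m)$, the case $m\le\Lambda'_\ell$ is already contained in Theorem~\ref{thm:sameHom}. For $m>\Lambda'_\ell$, I would pass to $\Lambda^a$ with $a$ large enough that $(\Lambda^a)'_\ell\ge m$; since the parameters $\bfu$ in \eqref{BKui} attached to $\Lambda^a$ are independent of $a$, the enlarged functor $\hat\Phi^a:\W_\bfu\to W(\Lambda^a)\text{-mod}$ sends $\mu$ to $\bigwedge\nolimits^\mu V^{(a)}_{\bfc}$ (with $V^{(a)}$ of dimension $N+a\ell$) and is full by Theorem~\ref{thm:sameHom}. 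The core reduction is to show that restriction of $W(\Lambda^a)$-equivariant maps along a suitable inclusion $V\hookrightarrow V^{(a)}$ of natural modules produces all $W(\Lambda)$-equivariant maps on the corresponding wedge subspaces. Intuitively the matrix units of $\mathfrak{gl}_{N+a\ell}$ whose indices lie in the new $a\ell$ boxes act trivially on $V\subset V^{(a)}$, so restriction should yield a surjection on Hom-spaces; combined with fullness of $\hat\Phi^a$ this yields fullness of $\hat\Phi$.

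For the fully-faithful statement on $\W_{\bfu,N}$, I would use the basis $\PMat^\ell_{\lambda,\mu}$ of Theorem~\ref{basis-cyc-web}. Any elementary CFD possessing an internal leg of thickness $a>N$ is killed in $\W_{\bfu,N}$, and also by $\hat\Phi$ since $\bigwedge\nolimits^a V=0$; hence $\hat\Phi$ descends through the quotient. To establish faithfulness on the surviving basis (elementary CFDs with all internal thicknesses at most $N$), I would adapt the leading-term argument of \S\ref{subsec:independence}: view the generic Verma module $M^{\text{gen}}$ from \eqref{defofgenericverma} as a $W(\Lambda)$-module by restriction from $U(\mathfrak p)\supset W(\Lambda)$, and observe that Corollary~\ref{actionofelempoly} only uses the Cartan action, which survives restriction to $W(\Lambda)$. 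The wedge-index/symmetric-polynomial calculation then distinguishes distinct basis elements inside $\Hom_{W(\Lambda)}\bigl(\bigwedge\nolimits^\mu V_{\bfc},\bigwedge\nolimits^\nu V_{\bfc}\bigr)$, yielding the required linear independence.

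The principal obstacle is the comparison step in the fullness proof: verifying rigorously that every $W(\Lambda)$-equivariant morphism between wedges of $V$ arises as the restriction of a $W(\Lambda^a)$-equivariant morphism between wedges of $V^{(a)}$. This is a structural statement about the compatibility of the Brundan-Kleshchev constructions of $W(\Lambda)$ and $W(\Lambda^a)$, and while plausible in view of \cite[\S 3]{BK08}, a clean proof may require a delicate matrix-unit analysis or a truncation-type identification $W_{m,\bfu}(\Lambda)\cong W_{m,\bfu}(\Lambda^\infty)/J_N$ for an appropriate ideal $J_N$. If this reduction proves elusive, the alternative is to attack fullness directly by exhibiting an explicit generating set for $\End_{W(\Lambda)}\bigl(\bigoplus_\mu\bigwedge\nolimits^\mu V_{\bfc}\bigr)$ inside the image of $\W_\bfu$, which would likely require genuinely new combinatorial input beyond the tools developed in this paper.
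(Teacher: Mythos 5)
The first thing to say is that the statement you set out to prove is not proved in the paper at all: it is stated as a conjecture, with only the remark that the $\ell=1$ case is known from \cite{BEEO}. So the question is whether your outline actually closes the gap, and it does not. For fullness, your reduction to the stable range by replacing $\Lambda$ with $\Lambda^a$ leaves unproven exactly the step that carries all the content: a comparison between $\Hom_{W(\Lambda^a)}\bigl(\bigwedge\nolimits^\mu V(a)_{\bfc},\bigwedge\nolimits^\nu V(a)_{\bfc}\bigr)$ and $\Hom_{W(\Lambda)}\bigl(\bigwedge\nolimits^\mu V_{\bfc},\bigwedge\nolimits^\nu V_{\bfc}\bigr)$ compatible with $\hat\Phi$. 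Neither direction of such a comparison is available: a $W(\Lambda^a)$-equivariant map need not preserve the subspace $\bigwedge\nolimits^\mu V_{\bfc}\subset\bigwedge\nolimits^\mu V(a)_{\bfc}$, so ``restriction'' is not even defined without further argument, and what you actually need is that every $W(\Lambda)$-map lifts to a $W(\Lambda^a)$-map lying in the image of the web category. This is not a formal consequence of \cite{BK08}: the identification $\mathfrak e_a W_{m,\bfu}(\Lambda^a)\mathfrak e_a\cong W_{m,\bfu}(\Lambda)$ lives at the level of endomorphisms of tensor space, not of Hom-spaces between wedge modules over the finite $W$-algebras outside the range $m\le\Lambda'_\ell$ covered by Theorem~\ref{thm:sameHom}. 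You flag this yourself as the ``principal obstacle''; at that point the proposal simply restates the conjecture rather than proving it.

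The faithfulness half has a more concrete flaw. The descent of $\hat\Phi$ through $\W_{\bfu,N}$ is indeed immediate (any morphism in the tensor ideal factors through an object with a part $>N$, whose image involves $\bigwedge^{a}V=0$ for $a>N$), but faithfulness of $\hat\Phi$ means injectivity of the maps $\Hom_{\W_{\bfu,N}}(\mu,\nu)\to\Hom_{W(\Lambda)}\bigl(\bigwedge\nolimits^\mu V_{\bfc},\bigwedge\nolimits^\nu V_{\bfc}\bigr)$, i.e.\ a statement about the specific finite-dimensional operators that $\hat\Phi$ produces; proving that basis diagrams stay independent under some other functor cannot establish it. Moreover, the tool you propose is unusable here: $\mathcal F_{M^{\text{gen}}}$ does not factor through the cyclotomic quotient, since the relations $\prod_{j}g_{r,u_j}=0$ fail on $M^{\text{gen}}\otimes-$ (the Cartan part of the generic Verma module acts by independent polynomial generators), and the leading-term computation of Corollary~\ref{actionofelempoly} used in \S\ref{subsec:independence} relies precisely on that free polynomial part, which has no counterpart in $\bigwedge\nolimits^\mu V_{\bfc}$, where the dots satisfy the cyclotomic polynomial relations and only packets $\omega_\nu$ with $l(\nu)\le\ell-1$ survive (Theorem~\ref{basis-cyc-web}). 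A genuine proof would have to show directly that the images of the level-$\ell$ elementary CFDs with all thicknesses at most $N$ remain linearly independent (and spanning, for the fullness claim) inside $\Hom_{W(\Lambda)}\bigl(\bigwedge\nolimits^\mu V_{\bfc},\bigwedge\nolimits^\nu V_{\bfc}\bigr)$ for all $m$, including $m\gg\Lambda'_\ell$ where Theorem~\ref{thm:sameHom} gives no information; that is exactly the open content of the conjecture, and your outline does not supply it.
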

Note that the conjecture holds for $\ell=1$ 
and for any field $\kk$; cf., e.g., \cite{BEEO}.

\subsection{Cyclotomic webs and $W$-Schur algebras}

In this subsection we work over any commutative ring $\kk$ with 1, and continue to assume that $\Lambda'_\ell\ge m$.

Then we have an idempotent algebra $e_\ell W_{m,\bfu}(\Lambda)e_\ell$ of $W_{m,\bfu}(\Lambda)$ with 
\[
e_\ell:= \sum_{A\in\text{Idem}_\ell^m(\Lambda) } e_A.
\]

For $A\in \text{Idem}_\ell^m(\Lambda)$, define the right $\mathcal H_{m,\bfu}$-module:
\[
N^{\mu_A}:={\rm y}_{\mu_A}\mathcal H_{m,\bfu}, 
\qquad
\text{ where }\; {\rm y}_{\mu_A}=\sum_{w\in \mathfrak S_{\mu_A}}(-1)^{l(w)}w.
\]

\begin{lemma}
\label{lem:permusign}
For any $A\in \text{Idem}_\ell^m(\Lambda)$, we have 
$e_A V^{\otimes m}_\mathbf c\cong N^{\mu_A}$ as 
 right $\mathcal H_{m,\bfu}$-modules.
\end{lemma}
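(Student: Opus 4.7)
The plan is to exhibit the isomorphism explicitly as the map
$\psi: N^{\mu_A} \to e_A V^{\otimes m}_\bfc$ defined on generators by
$\mathrm{y}_{\mu_A} h \mapsto v_{\mathbf i(A)} \cdot h$,
and then verify it via a basis argument. The key observation is a common antisymmetry under $H := \mathfrak S_{\mu_A}$: the identity $\mathrm{y}_{\mu_A} u = (-1)^{l(u)} \mathrm{y}_{\mu_A}$ is the standard anti-symmetrizer property (using the length parity $l(wu^{-1}) \equiv l(w) + l(u) \pmod 2$), while $v_{\mathbf i(A)} \cdot u = (-1)^{l(u)} v_{\mathbf i(A)}$ holds for any $u \in H$ because $u$ permutes positions within each block $v_{p_j}^{\otimes b_j}$ of $v_{\mathbf i(A)}$ (acting as identity on the underlying vectors) and the sign-permutation action on $V^{\otimes m}_\bfc$ contributes $(-1)^{l(u)}$.

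Next, I would establish a $\kk$-basis of $N^{\mu_A}$ indexed by $\mathcal D \times \{0, \ldots, \ell-1\}^m$, where $\mathcal D \subset \mathfrak S_m$ is a set of minimal-length right coset representatives for $H$ in $\mathfrak S_m$. Starting from the PBW basis $\{w x_1^{k_1} \cdots x_m^{k_m} : w \in \mathfrak S_m,\ 0 \le k_i < \ell\}$ of $\mathcal H_{m,\bfu}$ and the unique factorization $w = u d$ with $u \in H$ and $d \in \mathcal D$, the antisymmetry yields $\mathrm{y}_{\mu_A} w x^{\mathbf k} = (-1)^{l(u)} \mathrm{y}_{\mu_A} d x^{\mathbf k}$, so $\{\mathrm{y}_{\mu_A} d x^{\mathbf k}\}$ spans $N^{\mu_A}$, and linear independence is immediate from the PBW basis (the coefficients of $u d x^{\mathbf k}$ in the expansion $\mathrm{y}_{\mu_A} d x^{\mathbf k} = \sum_u (-1)^{l(u)} u d x^{\mathbf k}$ are separated by $u$). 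Hence $N^{\mu_A}$ is $\kk$-free of rank $|\mathcal D|\ell^m = (m!/|H|)\ell^m$.

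With this in hand, well-definedness of $\psi$ reduces to a formal manipulation: writing $h = \sum_{u,d,\mathbf k} c_{u,d,\mathbf k}\, u d x^{\mathbf k}$, one has
\[
\mathrm{y}_{\mu_A} h \;=\; \sum_{d,\mathbf k}\Big(\sum_{u} (-1)^{l(u)} c_{u,d,\mathbf k}\Big) \mathrm{y}_{\mu_A} d x^{\mathbf k}, \qquad
v_{\mathbf i(A)} h \;=\; \sum_{d,\mathbf k}\Big(\sum_{u} (-1)^{l(u)} c_{u,d,\mathbf k}\Big) v_{\mathbf i(A)} d x^{\mathbf k},
\]
where the \emph{same} coefficients $c'_{d,\mathbf k} := \sum_u (-1)^{l(u)} c_{u,d,\mathbf k}$ control both expansions. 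Hence $\mathrm{y}_{\mu_A} h = 0$ forces all $c'_{d,\mathbf k} = 0$ by the basis above, and so $v_{\mathbf i(A)} h = 0$; this gives $\psi$ well-defined, and surjective by Lemma \ref{lem:ea=permu1}. To finish, I would compute the rank of $e_A V^{\otimes m}_\bfc$ directly as the number of standard basis vectors $v_\mathbf j$ with the same row profile $(a_1, \ldots, a_n)$ as $\mathbf i(A)$: since $A \in \text{Idem}_\ell^m(\Lambda)$ forces all nonzero rows of $A$ to have length $\ell$, this enumeration gives $\binom{m}{a_1,\ldots,a_n}\prod_r \ell^{a_r} = (m!/|H|)\ell^m$, matching the rank of $N^{\mu_A}$. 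A surjection between free $\kk$-modules of equal finite rank is an isomorphism, completing the proof.

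The main obstacle is the middle step: establishing the basis of $N^{\mu_A}$ cleanly from a PBW basis of $\mathcal H_{m,\bfu}$, so that the cancellation argument proceeds over an arbitrary commutative ring $\kk$ without needing to invert $|H|$ (the naive approach via $v_{\mathbf i(A)} \mathrm{y}_{\mu_A} = |H|\, v_{\mathbf i(A)}$ would only work in characteristic zero). Once the basis is in hand, everything else reduces to straightforward bookkeeping and a standard rank argument.
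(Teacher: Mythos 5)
Your proposal is correct, and it constructs the same isomorphism as the paper (matching $\mathrm{y}_{\mu_A}h$ with $v_{\mathbf i(A)}h$), but it reaches the two key ingredients by a different, more self-contained route. For the basis of $N^{\mu_A}$, the paper argues indirectly: it transports the known basis of the permutation module $\tilde M^{\mu_A}$ over the auxiliary algebra $\tilde{\mathcal H}_{m,\bfu}$ (cited from \cite[Theorem 6.9]{BK08}) through the sign-twist isomorphism $\sigma\colon \mathcal H_{m,\bfu}\to\tilde{\mathcal H}_{m,\bfu}$, $s_i\mapsto -s_i$; you instead derive the same basis $\{\mathrm{y}_{\mu_A}d\,x^{\mathbf k}\}$ directly from the PBW basis of $\mathcal H_{m,\bfu}$ using $\mathrm{y}_{\mu_A}u=(-1)^{l(u)}\mathrm{y}_{\mu_A}$ and the unique factorization $w=ud$, and your observation that distinct $(d,\mathbf k)$ produce expansions supported on disjoint PBW elements with coefficients $\pm1$ does give linear independence over any commutative $\kk$. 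For injectivity, the paper goes the other way around: it maps $e_AV^{\otimes m}_\bfc=v_{\mathbf i(A)}\mathcal H_{m,\bfu}$ (Lemma \ref{lem:ea=permu1}) injectively into $\mathcal H_{m,\bfu}$ via the isomorphism $e_{\bf S}V^{\otimes m}_\bfc\cong\mathcal H_{m,\bfu}$ of Lemma \ref{lem:isomorm} and identifies the image with $N^{\mu_A}$, whereas you avoid Lemma \ref{lem:isomorm} entirely by computing the rank of $e_AV^{\otimes m}_\bfc$ to be $(m!/|\mathfrak S_{\mu_A}|)\ell^m$ (using that all nonzero rows of an idempotent tableau in $\text{Idem}_\ell^m(\Lambda)$ have length $\ell$) and invoking the standard fact that a surjection between free $\kk$-modules of the same finite rank is an isomorphism, so no inversion of $|\mathfrak S_{\mu_A}|$ is needed. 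A small further difference: since you define the map out of $N^{\mu_A}$, you must (and do) check well-definedness via the coefficient-matching argument, while the paper defines its map on a basis of $e_AV^{\otimes m}_\bfc$ and this issue is absorbed into checking right $\mathcal H_{m,\bfu}$-linearity. Both arguments rely on the signed place-permutation identity $v_{\mathbf i(A)}\cdot u=(-1)^{l(u)}v_{\mathbf i(A)}$ and work over an arbitrary commutative ring; yours trades the external citation and Lemma \ref{lem:isomorm} for an explicit combinatorial count, while the paper's is shorter given those inputs.
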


\begin{proof}
Let $\tilde{\mathcal H}_{m,\bfu}$ be the cyclotomic Hecke algebra with the same generators as $\mathcal H_{m,\bfu}$ and with relation $x_{i+1}s_i=s_ix_i+1$ in replace of \eqref{xs}. There is an isomorphism    $\sigma: \mathcal H_{m,\bfu} \rightarrow \tilde{\mathcal H}_{m,\bfu} $ sending $s_i$ to $-s_i$ and fixing $x_j$'s. Moreover, $\sigma(N^{\mu_A})=\tilde M^{\mu_A}$, the permutation module of $\tilde{\mathcal H}_{m,\bfu}$ generated by $\rm x_{\mu_A}$.
  Under this isomorphism and applying the basis 
  $\{{\rm x}_{\mu_A} wx_1^{r_1}\ldots x_m^{r_m}\mid w\in (\mathfrak S_{\mu_A}/ \mathfrak S_m)_{min} , 0\le r_i\le \ell-1\}$ (e.g. \cite[Theorem 6.9]{BK08}) of $\tilde M^{\mu_A}$, we have the following basis of $N^{\mu_A}$:
  \[
  \mathcal M:=\{
  {\rm y}_{\mu_A} wx_1^{r_1}\ldots x_m^{r_m}\mid w\in (\mathfrak S_{\mu_A}/ \mathfrak S_m)_{\min}, 0\le r_i\le \ell-1\}.
  \]
  This implies that $v_{\mathbf i(A)}\mathcal H_{m,\bfu}$ has a spanning set $\{ v_{\mathbf i(A)}h \mid h\in \mathcal M\}$, which must be a basis of $v_{\mathbf i(A)}\mathcal H_{m,\bfu}$ by rank consideration.  Furthermore, the  obvious linear map 
  sends $v_{\mathbf i(A)}h $ to $v_{\mathbf i({\bf S})}{\rm y}_{\mu_A} h$ is an injective right $\mathcal H_{m,\bfu}$-module homomorphism. Composing this morphism with  
  the isomorphism in Lemma \ref{lem:isomorm}, we see that the image is $N^{\mu_A}$. The lemma follows.
\end{proof}

The locally unital algebra associated to the cyclotomic web category $\W_\bfu$ is 
\[
\mathfrak{Web}_{\bfu}:= \bigoplus_{\mu,\nu\in\Lambda_{\text{st}} }\Hom_{\W_\bfu}(\mu,\nu)= \bigoplus_{m\in \N}   \mathfrak{Web}_{m,\bfu}
\]
where 
\begin{align}
   \label{algWebu}
\mathfrak{Web}_{m, \bfu}:=\bigoplus_{\mu,\nu\in\Lambda_{\text{st}}(m) }\Hom_{\W_\bfu}(\mu,\nu) 
\end{align}
is an idempotent subalgebra of $\mathfrak{Web}_\bfu$.
Recall the isomorphism $\End_{\AW}(1^m)\cong \hat {\mathcal H}_m$ in Proposition \ref{isotodeaffine}.
This isomorphism induces an isomorphism
\[\End_{\W_\bfu}(1^m)\cong \mathcal H_{m,\bfu}\]
by the basis theorem of $\W_\bfu$ in Theorem~\ref{basis-cyc-web} and the well-known basis for $\mathcal H_{m,\bfu}$.
Then we see that $1_{1^m}\mathfrak {Web}_{m,\bfu}1_{1^m}\cong \mathcal H_{m,\bfu}$ and the idempotent truncation with respect to $1_{1^m}$ is the Schur functor.
\begin{theorem}  
\label{moritatheorem}
    Suppose that $\Lambda'_\ell\ge m$. Then 
    $e_\ell W_{m,\bfu}(\Lambda)e_\ell \cong  \mathfrak{Web}_{m,-\bfu}$. Furthermore, if $\kk=\C$, then the algebra $ \mathfrak{Web}_{m,-\bfu}$ is Morita equivalent to 
   the $W$-Schur algebra $W_{m,\bfu}(\Lambda)$.
\end{theorem}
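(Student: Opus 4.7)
The plan is to unpack $e_\ell W_{m,\bfu}(\Lambda) e_\ell$ in three steps. Since $V_\bfc^{\otimes m}$ decomposes under the orthogonal idempotent $e_\ell = \sum_{A \in \text{Idem}_\ell^m(\Lambda)} e_A$, we obtain
\begin{equation*}
e_\ell W_{m,\bfu}(\Lambda) e_\ell = \bigoplus_{A,B \in \text{Idem}_\ell^m(\Lambda)} \Hom_{\mathcal{H}_{m,\bfu}}(e_A V_\bfc^{\otimes m}, e_B V_\bfc^{\otimes m}).
\end{equation*}
Applying Lemma~\ref{lem:permusign} gives $e_A V_\bfc^{\otimes m} \cong N^{\mu_A}$, and since each $A \in \text{Idem}_\ell^m(\Lambda)$ has $\mu_A = (\emptyset^{\ell-1}, \mu_A^{(\ell)})$ with $\mu_A^{(\ell)} \in \Lambda_{\text{st}}(m)$, the assignment $A \leftrightarrow \mu_A^{(\ell)}$ is a bijection and $N^{\mu_A}$ coincides with $N^{\mu_A^{(\ell)}} = {\rm y}_{\mu_A^{(\ell)}} \mathcal{H}_{m,\bfu}$. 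Thus the right-hand side becomes $\bigoplus_{\mu,\nu \in \Lambda_{\text{st}}(m)} \Hom_{\mathcal{H}_{m,\bfu}}(N^\mu, N^\nu)$.

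Next I would exploit the algebra involution $\phi: \widehat{\mathcal{H}}_m \to \widehat{\mathcal{H}}_m$ sending $s_i \mapsto -s_i$ and $x_i \mapsto -x_i$. A direct check against \eqref{ss}--\eqref{xs} shows $\phi$ is well defined (the crucial point being that $\phi(x_{i+1}s_i) = x_{i+1}s_i = \phi(s_ix_i-1)$). On cyclotomic quotients $\prod_i(x_1 - u_i)$ maps to $(-1)^\ell \prod_i(x_1 + u_i)$, so $\phi$ descends to an isomorphism $\mathcal{H}_{m,\bfu} \cong \mathcal{H}_{m,-\bfu}$. Since $\phi(w) = (-1)^{l(w)} w$ for $w \in \mathfrak{S}_m$, we have $\phi({\rm y}_\mu) = \x_\mu$, hence $\phi$ carries the right $\mathcal{H}_{m,\bfu}$-module $N^\mu$ onto the right $\mathcal{H}_{m,-\bfu}$-module $M^\mu$; consequently
\begin{equation*}
\Hom_{\mathcal{H}_{m,\bfu}}(N^\mu, N^\nu) \cong \Hom_{\mathcal{H}_{m,-\bfu}}(M^\mu, M^\nu).
\end{equation*}
Finally, invoking the sequel \cite{SW2} exactly as in the proof of Theorem~\ref{basis-cyc-web}, the embedding $\W_{-\bfu} \hookrightarrow \Sch_{-\bfu}$ via $\mu \mapsto (\emptyset^{\ell-1},\mu)$ is fully faithful, and the endomorphism algebras of $\Sch_{-\bfu}$ are identified with the Dipper--James--Mathas cyclotomic Schur algebras. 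The Dipper--James--Mathas permutation module attached to $(\emptyset^{\ell-1},\mu)$ equals the ordinary $M^\mu$ because its parameter twist collapses to $1$, so $\Hom_{\W_{-\bfu}}(\mu,\nu) \cong \Hom_{\mathcal{H}_{m,-\bfu}}(M^\mu, M^\nu)$. Summing over $\mu,\nu \in \Lambda_{\text{st}}(m)$ identifies the three expressions above with $\mathfrak{Web}_{m,-\bfu}$, completing the first assertion.

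For the Morita equivalence when $\kk = \C$, note that the special idempotent tableau ${\bf S}$ lies in $\text{Idem}_\ell^m(\Lambda)$, so $e_{\bf S}$ is a summand of $e_\ell$ and $W_{m,\bfu}(\Lambda) e_{\bf S} W_{m,\bfu}(\Lambda) \subseteq W_{m,\bfu}(\Lambda) e_\ell W_{m,\bfu}(\Lambda)$. By \cite[Lemma~6.8]{BK08}, the Brundan--Kleshchev Schur functor associated to $e_{\bf S}$ is an equivalence of categories over $\C$, equivalently $W_{m,\bfu}(\Lambda) e_{\bf S} W_{m,\bfu}(\Lambda) = W_{m,\bfu}(\Lambda)$. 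Hence $W_{m,\bfu}(\Lambda) e_\ell W_{m,\bfu}(\Lambda) = W_{m,\bfu}(\Lambda)$, so $e_\ell$ is a full idempotent, and standard Morita theory together with the first assertion yields the desired equivalence $W_{m,\bfu}(\Lambda)\text{-mod} \simeq \mathfrak{Web}_{m,-\bfu}\text{-mod}$. The main obstacle I anticipate is the third step of the first part, which rests on the cyclotomic Schur category constructions of \cite{SW2}; all other pieces---Lemma~\ref{lem:permusign}, the sign involution calculation, and the observation $e_{\bf S} \leq e_\ell$---are essentially direct.
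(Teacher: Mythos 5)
Your proposal is correct and follows essentially the same route as the paper: the same $e_A$-decomposition, the same appeal to Lemma~\ref{lem:permusign}, the sign twist $s_i\mapsto -s_i$, $x_i\mapsto -x_i$ giving $\mathcal H_{m,\bfu}\cong\mathcal H_{m,-\bfu}$ (the paper just factors this through $\tilde{\mathcal H}_{m,\bfu}$ in two steps), the same reliance on \cite{SW2} to identify $\bigoplus_{\mu,\nu}\Hom_{\mathcal H_{m,-\bfu}}(M^\mu,M^\nu)$ with $\mathfrak{Web}_{m,-\bfu}$, and the same use of the Brundan--Kleshchev Schur-functor equivalence for the Morita statement. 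Your fullness argument $W e_{\bf S} W = W \Rightarrow W e_\ell W = W$ is just a slightly more explicit phrasing of the paper's concluding step, not a different method.
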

(The sign difference in parameters is due to different conventions on slider relations here and affine Hecke relations in literature.)

\begin{proof}
There is an isomorphism between $\mathcal H_{m,-\bfu}$ and
$\tilde{\mathcal H}_{m,\bfu}$ sending $x_i$ to $-x_i$ and fixing $s_j$'s. Under this isomorphism, the permutation module $M^{\mu_A}$ corresponds to $\tilde M^{\mu_A}$. Then  
 we have 
\begin{align*}
   e_\ell W_{m,\bfu}(\Lambda)e_\ell
   &= \oplus_{A, B\in\text{Idem}_\ell^m(\Lambda)    }\Hom_{\mathcal{H}_{m,\bfu}}(e_AV^{\otimes r}_\bfc,e_B V^{\otimes r}_\bfc)\\
   & \cong \oplus _{A, B\in\text{Idem}_\ell^m(\Lambda)    }\Hom_{\mathcal{H}_{m,\bfu}}(N^{\mu_A}, N^{\mu_B}) \quad \text{ by Lemma \ref{lem:permusign}}\\
   & \cong \oplus_{A, B\in\text{Idem}_\ell^m(\Lambda)}
   \Hom_{\tilde {\mathcal H}_{m,\bfu}}(\tilde M^{\mu_A}, \tilde M^{\mu_B})\\
   & \cong \oplus_{A, B\in\text{Idem}_\ell^m(\Lambda)    }\Hom_{ \mathcal{H}_{m,-\bfu}}( M^{\mu_A},  M^{\mu_B})\\
   &\cong  \mathfrak{Web}_{m,-\bfu}, 
\end{align*}
  where the last equality follows from the basis theorem of $\W_{-\bfu}$ and its more detailed description in  \cite[\S5.5]{SW2}. 

  Note that $\mathcal H_{m,\bfu}\cong \mathcal H_{m,-\bfu}$
  via $s_i\mapsto -s_i$ and $x_i\mapsto -x_i$. Then 
the second statement follows from \eqref{equ:isoidemwalge} and  the fact in \cite[Theorem 5.10, Lemma~ 6.8]{BK08} that $W_{m,\bfu}(\Lambda)$ is Morita equivalent to $\mathcal H_{m,\bfu}\cong \mathcal H_{m,-\bfu}$ and $1_{1^m}\mathfrak {Web}_{m,-\bfu}1_{1^m}\cong \mathcal H_{m,-\bfu}$.
\end{proof}

\begin{rem}
For $\ell=1$, $\mathcal{H}_{m,\bfu}$ is the group algebra of symmetric group and 
$W_{m,\bfu}(\Lambda)$ is the usual Schur algebra. Theorem~\ref{moritatheorem} for $\ell=1$ specializes to the fact \cite{BEEO} that 
the path algebras associated to the web category over $\C$ are Morita equivalent to the Schur algebras.  
\end{rem}

\subsection{The $W$-Schur category}
Recall $\Lambda =(\la_1, \ldots, \la_n)$ with $\la_n=\ell$. 
For any $a\in \N$, let $\Lambda^a=(\lambda_1,\ldots,\lambda_n,\ell^a)$ which is a composition of $N(a)=N+a\ell$.
Associated to Young diagram of  $\Lambda^a$, we have the associated idempotent $e(a)\in \End(V(a)^{\otimes m})$, where $V(a)$ is the natural module of $\mathfrak {gl}_{N(a)}$ with basis $\{v_i\mid i\in [N(a)]\}$. Moreover, we have the associated finite $W$-algebra $W(\Lambda^a)$ (over $\C$).

Since $\bfu$ defined in \eqref{BKui} remains unchanged when $\Lambda$ is replaced by $\Lambda^a$ for any $a$,
the same cyclotomic Hekce algebra $\mathcal H_{m,\bfu}$ acts on $V(a)^{\otimes  m}_\bfc$, for all $a$, via \eqref{equ:actionofcycheckev}.
Moreover, we have  the $W$-Schur algebra associated to $\Lambda^a$
\[W_{m,\bfu}(\Lambda^a):= \End_{\mathcal H_{m,\bfu}}(V(a)^{\otimes m}_\bfc ). \]
Define the idempotent  $\mathfrak e_a:=\sum_Ae_A$ of $W_{m,\bfu}(\Lambda^a)$,
where the sum is over all $A\in \text{Idem}^m(\Lambda^a)$ such that all entries in the last $a$ rows of $A$ are zero.
It was proved in \cite[Lemma 6.3]{BK08}
that $\mathfrak e_a W_{m,\bfu}(\Lambda^a)\mathfrak e_a\cong W_{m,\bfu}(\Lambda)$.

We want to formulate an infinite version of $W_{m,\bfu}(\Lambda^a)$, i.e, for $a=\infty$. 
Note that  we also have the action of $\mathcal H_{m,\bfu}$ on $V(\infty)^{\otimes m}_{\bfc}$ via \eqref{equ:actionofcycheckev} (by considering the action restricting to subspaces $V(a)^{\otimes m}_{\bfc}$).
Then we have the $W$-Schur algebra
$W_{m,\bfu}(\Lambda^\infty):=\End_{\mathcal H_{m,\bfu}}( V(\infty)^{\otimes m}_{\bfc})$.
Let $ W_{m,\bfu}(\Lambda^\infty)$-mod be the category of finite-dimensional modules of $ W_{m,\bfu}(\Lambda^\infty)$.

Associated to the locally unital algebra $\bigoplus_{m\in \N}e_\ell^m W_{m,\bfu}(\Lambda^\infty)e_\ell^m$, where 
\[
e_\ell^m :=\sum_{A\in\text{Idem}_\ell^m(\Lambda^\infty) } e_A,
\]
we define a {\em $W$-Schur category} $\WSch_\bfu$ as follows.  
The object set of $\WSch_\bfu$ is $\Lambda_{\text{st}}$. For any $\mu\in \Lambda_{\text{st}}(m)$ and $\nu \in \Lambda_{\text{st}}(m')$,
the morphism space $\Hom_{\WSch_\bfu}(\mu,\nu)=0$ unless $m=m'$; in case $m=m'$, we have 
\begin{align}
  \label{HomWSu}
    \Hom_{\WSch_\bfu}(\mu,\nu) =e_BW_{m,\bfu}(\Lambda^\infty) e_A, \qquad \text{for } \mu_A=(\emptyset^{\ell-1},\mu), \;\; \mu_B=(\emptyset^{\ell-1},\nu). 
\end{align}
The following result follows from Theorem \ref{moritatheorem}.

\begin{theorem}
\label{thm:CycWeb=WSchur}
The category $\WSch_\bfu$ is isomorphic to $\W_{-\bfu}$. Furthermore, if $\kk=\C$, the category $ W_{m,\bfu}(\Lambda^\infty)$-mod is  equivalent to $\mathfrak {Web}_{m,-\bfu}$-mod,  for any $m\ge 1$.
\end{theorem}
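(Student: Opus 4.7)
The plan is to specialize Theorem~\ref{moritatheorem} with $\Lambda$ replaced by $\Lambda^\infty$. Since $(\Lambda^\infty)'_\ell = \infty \ge m$ for every $m \ge 1$, the hypothesis of that theorem is vacuously satisfied, and its proof applies verbatim because for any fixed idempotent tableau $A$ of total entry $m$, only finitely many rows of $\Lambda^\infty$ contribute to $e_A V(\infty)^{\otimes m}_\bfc$, so all computations reduce to the finite $\Lambda^a$ case. This yields, for each $m \ge 1$, a canonical $\kk$-algebra isomorphism
\[
e_\ell^m\, W_{m,\bfu}(\Lambda^\infty)\, e_\ell^m \;\cong\; \mathfrak{Web}_{m,-\bfu}.
\]

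To promote this to an isomorphism of categories $\WSch_\bfu \cong \W_{-\bfu}$, I would first observe that both categories share the object set $\Lambda_{\text{st}}$ with all Hom-spaces between objects of distinct total size equal to zero. For $\mu,\nu \in \Lambda_{\text{st}}(m)$, restricting the above algebra isomorphism to the pair of idempotents $e_A, e_B$ corresponding to $\mu_A = (\emptyset^{\ell-1}, \mu)$ and $\mu_B = (\emptyset^{\ell-1}, \nu)$ gives
\[
\Hom_{\WSch_\bfu}(\mu,\nu) \;=\; e_B\, W_{m,\bfu}(\Lambda^\infty)\, e_A \;\cong\; \Hom_{\W_{-\bfu}}(\mu,\nu),
\]
realized as the composite of Lemma~\ref{lem:permusign} (identifying $e_A V(\infty)^{\otimes m}_\bfc$ with the sign permutation module $N^{\mu_A}$), the sign-twist isomorphism $\mathcal H_{m,\bfu} \cong \mathcal H_{m,-\bfu}$ sending $s_i \mapsto -s_i$, $x_j \mapsto x_j$ (which sends $N^{\mu_A}$ to $M^{\mu_A}$), and Proposition~\ref{cor-isom-schur} combined with the basis Theorem~\ref{basis-cyc-web} for $\W_{-\bfu}$. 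Each step is $\kk$-linear and compatible with composition, so these Hom-identifications assemble into a $\kk$-linear isomorphism of categories.

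For the second assertion, take $\kk = \C$. Since $(\Lambda^\infty)'_\ell \ge m$, the Brundan--Kleshchev Schur functor \cite[Thm.~5.10, Lem.~6.8]{BK08} gives an equivalence of categories $e_{\bf S}(-)\colon W_{m,\bfu}(\Lambda^\infty)\text{-mod} \xrightarrow{\sim} \mathcal H_{m,\bfu}\text{-mod}$; equivalently, $e_{\bf S}$ is a full idempotent, i.e. $W_{m,\bfu}(\Lambda^\infty)\, e_{\bf S}\, W_{m,\bfu}(\Lambda^\infty) = W_{m,\bfu}(\Lambda^\infty)$. Because $\mu_{\bf S} = (\emptyset^{\ell-1}, (1^m)) \in \text{Idem}_\ell^m(\Lambda^\infty)$, the idempotent $e_{\bf S}$ is a direct summand of $e_\ell^m$; therefore fullness of $e_{\bf S}$ forces fullness of $e_\ell^m$, and the truncation $M \mapsto e_\ell^m M$ produces an equivalence
\[
W_{m,\bfu}(\Lambda^\infty)\text{-mod} \;\simeq\; \bigl(e_\ell^m\, W_{m,\bfu}(\Lambda^\infty)\, e_\ell^m\bigr)\text{-mod} \;\cong\; \mathfrak{Web}_{m,-\bfu}\text{-mod}.
\]
The main obstacle is purely bookkeeping: one must verify that the level-by-level algebra isomorphisms supplied by Theorem~\ref{moritatheorem} glue into a genuine isomorphism of $\kk$-linear categories, which reduces to checking that the three steps in the displayed Hom-chain above respect composition on the generating merges, splits and dotted strands.
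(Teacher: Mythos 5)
Your proposal is correct and follows essentially the same route as the paper, which deduces the theorem directly from Theorem~\ref{moritatheorem} (applied level by level to $\Lambda^\infty$, where $(\Lambda^\infty)'_\ell\ge m$ holds for every $m$), with the module-category equivalence over $\C$ obtained from the Brundan--Kleshchev Schur functor and the full-idempotent argument exactly as in the proof of that theorem. One small correction to your Hom-chain: the isomorphism $\mathcal H_{m,\bfu}\cong\mathcal H_{m,-\bfu}$ sends $s_i\mapsto -s_i$ \emph{and} $x_j\mapsto -x_j$, whereas the map $s_i\mapsto -s_i$, $x_j\mapsto x_j$ you wrote is the isomorphism onto $\tilde{\mathcal H}_{m,\bfu}$ (still with parameter $\bfu$), so the sign twist on the $x_j$'s must be included (and the final identification of $\bigoplus\Hom_{\mathcal H_{m,-\bfu}}(M^{\mu_A},M^{\mu_B})$ with $\mathfrak{Web}_{m,-\bfu}$ rests on the cyclotomic Schur category results of \cite{SW2} rather than on Proposition~\ref{cor-isom-schur} alone), as in the proof of Theorem~\ref{moritatheorem}.
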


Note that for $\ell=1$ the category $\WSch_\bfu$ is the Schur category $\Sch$, and hence the above result specializes to the isomorphism $\W \cong \Sch$ \cite[Theorem~ 4.10]{BEEO}.

\begin{conjecture}
\label{conj:Morita}
The Morita equivalence in 
 Theorem~ \ref{moritatheorem} and the Morita equivalence in Theorem~\ref{thm:CycWeb=WSchur} remain valid over any field $\kk$ of arbitrary characteristic. 
\end{conjecture}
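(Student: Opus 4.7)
The first statement of Theorem~\ref{moritatheorem} (the algebra isomorphism $e_\ell W_{m,\bfu}(\Lambda)e_\ell\cong \mathfrak{Web}_{m,-\bfu}$) was established over an arbitrary commutative ring, and only the passage to Morita equivalence with the full algebra $W_{m,\bfu}(\Lambda)$ (and then with $W_{m,\bfu}(\Lambda^\infty)$ in Theorem~\ref{thm:CycWeb=WSchur}) invoked the $\C$-result of Brundan-Kleshchev that $W_{m,\bfu}(\Lambda)$ is Morita equivalent to $\mathcal H_{m,\bfu}$. Consequently, my plan is to reduce Conjecture~\ref{conj:Morita} to the single statement that the idempotent $e_{\bf S}$ (equivalently $e_\ell$) is \emph{full} in $W_{m,\bfu}(\Lambda)$, i.e.\ $W_{m,\bfu}(\Lambda)\,e_{\bf S}\,W_{m,\bfu}(\Lambda)=W_{m,\bfu}(\Lambda)$, over any field $\kk$. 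Once this is in hand, the standard Morita theorem together with the $\kk$-valid Lemmas~\ref{lem:isomorm} and~\ref{lem:bimoduiso} immediately upgrades the integral isomorphism of Theorem~\ref{moritatheorem} to a genuine Morita equivalence and transports the result to $\WSch_\bfu$ in Theorem~\ref{thm:CycWeb=WSchur}.

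Fullness of $e_{\bf S}$ is equivalent to asking that $V_\bfc^{\otimes m}$ be a generator of $\mathcal H_{m,\bfu}$-mod, i.e.\ admit an epimorphism from a finite direct sum of copies of $e_{\bf S} V_\bfc^{\otimes m}\cong \mathcal H_{m,\bfu}$. The first step is to interpret this generation problem diagrammatically: under the isomorphism of Lemma~\ref{lem:permusign}, $V_\bfc^{\otimes m}\cong\bigoplus_{A\in\text{Idem}^m(\Lambda)}N^{\mu_A}$, so fullness becomes the statement that every signed permutation module $N^{\mu_A}$ (for a general tableau $A$, not only an $\ell$-restricted one) is a homomorphic image of a direct sum of copies of the regular module, with the maps realized by elements of $W_{m,\bfu}(\Lambda)$. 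The second step is to pass through the sign isomorphism $\sigma$ used in the proof of Lemma~\ref{lem:permusign}, and to invoke the Dipper-James-Mathas description of permutation modules $\x_\mu\mathcal H_{m,\bfu}$ and their mutual Hom-spaces, which is known to hold integrally.

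My preferred route is then an integral lifting argument. Working over the polynomial ring $R=\Z[\tilde u_1,\ldots,\tilde u_\ell]$, both $V_\bfc^{\otimes m}$ and $W_{m,\bfu}(\Lambda)$ admit natural $R$-forms with bases compatible (via Proposition~\ref{functorofweb} and the basis theorem Theorem~\ref{basis-cyc-web}) with the bases of $\mathfrak{Web}_{m,\bfu}$. Over $\C$ the relation $1_{W_{m,\bfu}(\Lambda)}=\sum_i\varphi_i\,e_{\bf S}\,\psi_i$ holds by Brundan-Kleshchev. The plan is to exhibit such a decomposition with coefficients already in $R$, using explicit chicken-foot morphisms in $\mathfrak{Web}_{m,-\bfu}$ built from splits, merges, crossings, and elementary dot packets; the balloon identity of Lemma~\ref{lem:gru} is the key integral ingredient that converts powers of $\omega_1$-style dots into the cyclotomic annihilator and lets one realize projections onto each $N^{\mu_A}$. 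Once the identity is written over $R$, base change yields fullness of $e_{\bf S}$ and hence the conjecture over every field.

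The main obstacle is controlling denominators in the integral lift. The Brundan-Kleshchev argument uses highest-weight-category machinery---Verma modules, projective covers, and characteristic-zero Weyl-group averaging---where inverses of factorials intervene through symmetrizers $\frac{1}{|\mathfrak S_\mu|}\x_\mu$. The standard way to circumvent this is to find explicit diagrammatic witnesses within $\mathfrak{Web}_{m,-\bfu}$, using that elementary chicken-foot diagrams span integrally (Theorem~\ref{basis-cyc-web}) and that the merges/splits of $\W$ carry no denominators. Concretely, the challenge reduces to writing the identity endomorphism of each $N^{\mu_A}$ as a composition factoring through $(1^m)$ in $\W_{-\bfu}$ with $R$-integral coefficients, and the hard part will be handling the most singular specializations of $\bfu$ where the Hecke algebra $\mathcal H_{m,\bfu}$ itself fails to be semisimple and where the characteristic-zero projectors used implicitly in \cite{BK08} acquire poles.
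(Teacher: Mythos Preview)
The statement you are addressing is a \emph{conjecture} in the paper, not a theorem: the paper provides no proof and only remarks that the case $\ell=1$ is known from \cite{BEEO}. There is therefore no ``paper's own proof'' to compare against.

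Your proposal is not a proof either, and you acknowledge this. You correctly identify that the conjecture reduces to showing the idempotent $e_{\bf S}$ is full in $W_{m,\bfu}(\Lambda)$ over any field, and you outline a reasonable strategy (integral lifting of the Brundan--Kleshchev decomposition $1=\sum_i\varphi_i e_{\bf S}\psi_i$ to $R=\Z[\tilde u_1,\ldots,\tilde u_\ell]$). But the final paragraph concedes the actual obstacle: the known decomposition over $\C$ uses highest-weight machinery and symmetrizers with factorial denominators, and you have no mechanism to remove them. Saying ``the standard way to circumvent this is to find explicit diagrammatic witnesses'' and that ``the hard part will be handling the most singular specializations of $\bfu$'' is a description of the problem, not a solution. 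In particular, Lemma~\ref{lem:gru} and the integral span of elementary chicken-foot diagrams do not by themselves produce an $R$-integral factorization of the identity through $(1^m)$; that is precisely the content of the conjecture.

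There is also a technical slip: you invoke Lemma~\ref{lem:permusign} to decompose $V_\bfc^{\otimes m}$ as $\bigoplus_A N^{\mu_A}$ over all $A\in\text{Idem}^m(\Lambda)$, but that lemma is only stated and proved for $A\in\text{Idem}_\ell^m(\Lambda)$ (and under the hypothesis $\Lambda'_\ell\ge m$). For general idempotent tableaux $A$ the summand $e_A V_\bfc^{\otimes m}$ need not be a signed permutation module of the simple form $N^{\mu_A}$, so the reduction to that case requires additional argument.
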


The conjecture holds when $\ell=1$; cf. \cite{BEEO}.
Note that the usual Schur algebra (i.e., for $\ell=1$) is not Morita equivalent to Hecke algebra over a field $\kk$ of positive characteristic. 

\section{Affine web category over $\C$}
\label{sec:C}

Throughout this section, we set $\kk$ to be any field of characteristic zero, e.g., $\C$. Under such an assumption on $\kk$, we greatly simplify the presentations of $\AW$ and its cyclotomic quotients. This also helps us to understand better the origins of some more involved general relations in the earlier presentation of $\AW$  (for general $\kk$, e.g., $\kk =\Z$). 

\subsection{A reduced presentation for $\AW$ over $\C$}
 \label{affineweboverC}

Definition~ \ref{def-affine-web} is simplified as follows.

\begin{definition} 
\label{def-affine-webC}
Let $\kk$ to be any field of characteristic zero. 
The affine web category $\AWC$ is the strict $\kk$-linear monoidal category generated by objects $a\in \mathbb Z_{\ge 1}$. The morphisms are generated by 
\begin{align}
\label{merge+split+crossing C}
\begin{tikzpicture}[baseline = -.5mm,color=\clr]
	\draw[-,line width=1pt] (0.28,-.3) to (0.08,0.04);
	\draw[-,line width=1pt] (-0.12,-.3) to (0.08,0.04);
	\draw[-,line width=1.5pt] (0.08,.4) to (0.08,0);
        \node at (-0.22,-.4) {$\scriptstyle a$};
        \node at (0.35,-.4) {$\scriptstyle b$};\node at (0,.55){$\scriptstyle a+b$};\end{tikzpicture} 
&:(a,b) \rightarrow (a+b),&
\begin{tikzpicture}[baseline = -.5mm,color=\clr]
	\draw[-,line width=1.5pt] (0.08,-.3) to (0.08,0.04);
	\draw[-,line width=1pt] (0.28,.4) to (0.08,0);
	\draw[-,line width=1pt] (-0.12,.4) to (0.08,0);
        \node at (-0.22,.5) {$\scriptstyle a$};
        \node at (0.36,.5) {$\scriptstyle b$};
        \node at (0.1,-.45){$\scriptstyle a+b$};
\end{tikzpicture}
&:(a+b)\rightarrow (a,b),&
\begin{tikzpicture}[baseline=-.5mm,color=\clr]
	\draw[-,line width=1pt] (-0.3,-.3) to (.3,.4);
	\draw[-,line width=1pt] (0.3,-.3) to (-.3,.4);
        \node at (0.3,-.42) {$\scriptstyle b$};
        \node at (-0.3,-.42) {$\scriptstyle a$};
         \node at (0.3,.55) {$\scriptstyle a$};
        \node at (-0.3,.55) {$\scriptstyle b$};
\end{tikzpicture}
&:(a,b) \rightarrow (b,a),
\end{align}
(called the merges, splits and crossings, respectively), and 
\begin{equation}
\label{dotgeneratorC}
 \begin{tikzpicture}[baseline = 3pt, scale=0.5, color=\clr]
\draw[-,thick] (0,0) to[out=up, in=down] (0,1.4);
\draw(0,0.6) \bdot;
\node at (0,-.3) {$\scriptstyle 1$};
\end{tikzpicture} \;,
\end{equation}
subject to the following relations \eqref{webassoc C}--\eqref{dotmovecrossingC}, for $a,b,c,d \in \Z_{\ge 1}$ with $d-a=c-b$:
\begin{align}
\label{webassoc C}
\begin{tikzpicture}[baseline = 0,color=\clr]
	\draw[-,thick] (0.35,-.3) to (0.08,0.14);
	\draw[-,thick] (0.1,-.3) to (-0.04,-0.06);
	\draw[-,line width=1pt] (0.085,.14) to (-0.035,-0.06);
	\draw[-,thick] (-0.2,-.3) to (0.07,0.14);
	\draw[-,line width=1.5pt] (0.08,.45) to (0.08,.1);
        \node at (0.45,-.41) {$\scriptstyle c$};
        \node at (0.07,-.4) {$\scriptstyle b$};
        \node at (-0.28,-.41) {$\scriptstyle a$};
\end{tikzpicture}
&=
\begin{tikzpicture}[baseline = 0, color=\clr]
	\draw[-,thick] (0.36,-.3) to (0.09,0.14);
	\draw[-,thick] (0.06,-.3) to (0.2,-.05);
	\draw[-,line width=1pt] (0.07,.14) to (0.19,-.06);
	\draw[-,thick] (-0.19,-.3) to (0.08,0.14);
	\draw[-,line width=1.5pt] (0.08,.45) to (0.08,.1);
        \node at (0.45,-.41) {$\scriptstyle c$};
        \node at (0.07,-.4) {$\scriptstyle b$};
        \node at (-0.28,-.41) {$\scriptstyle a$};
\end{tikzpicture}\:,
\qquad
\begin{tikzpicture}[baseline = -1mm, color=\clr]
	\draw[-,thick] (0.35,.3) to (0.08,-0.14);
	\draw[-,thick] (0.1,.3) to (-0.04,0.06);
	\draw[-,line width=1pt] (0.085,-.14) to (-0.035,0.06);
	\draw[-,thick] (-0.2,.3) to (0.07,-0.14);
	\draw[-,line width=1.5pt] (0.08,-.45) to (0.08,-.1);
        \node at (0.45,.4) {$\scriptstyle c$};
        \node at (0.07,.42) {$\scriptstyle b$};
        \node at (-0.28,.4) {$\scriptstyle a$};
\end{tikzpicture}
=\begin{tikzpicture}[baseline = -1mm, color=\clr]
	\draw[-,thick] (0.36,.3) to (0.09,-0.14);
	\draw[-,thick] (0.06,.3) to (0.2,.05);
	\draw[-,line width=1pt] (0.07,-.14) to (0.19,.06);
	\draw[-,thick] (-0.19,.3) to (0.08,-0.14);
	\draw[-,line width=1.5pt] (0.08,-.45) to (0.08,-.1);
        \node at (0.45,.4) {$\scriptstyle c$};
        \node at (0.07,.42) {$\scriptstyle b$};
        \node at (-0.28,.4) {$\scriptstyle a$};
\end{tikzpicture}\:,
\\
\label{mergesplitC}
\begin{tikzpicture}[baseline = 7.5pt,scale=.9, color=\clr]
	\draw[-,line width=1pt] (0,0) to (.275,.3) to (.275,.7) to (0,1);
	\draw[-,line width=1pt] (.6,0) to (.315,.3) to (.315,.7) to (.6,1);
        \node at (0,1.13) {$\scriptstyle b$};
        \node at (0.63,1.13) {$\scriptstyle d$};
        \node at (0,-.1) {$\scriptstyle a$};
        \node at (0.63,-.1) {$\scriptstyle c$};
\end{tikzpicture}
&=
\sum_{\substack{0 \leq s \leq \min(a,b)\\0 \leq t \leq \min(c,d)\\t-s=d-a}}
\begin{tikzpicture}[baseline = 7.5pt,scale=.9, color=\clr]
	\draw[-,thick] (0.58,0) to (0.58,.2) to (.02,.8) to (.02,1);
	\draw[-,thick] (0.02,0) to (0.02,.2) to (.58,.8) to (.58,1);
	\draw[-,thin] (0,0) to (0,1);
	\draw[-,line width=1pt] (0.61,0) to (0.61,1);
        \node at (0,1.13) {$\scriptstyle b$};
        \node at (0.6,1.13) {$\scriptstyle d$};
        \node at (0,-.1) {$\scriptstyle a$};
        \node at (0.6,-.1) {$\scriptstyle c$};
        \node at (-0.1,.5) {$\scriptstyle s$};
        \node at (0.77,.5) {$\scriptstyle t$};
\end{tikzpicture},
\\
\label{splitmergeC}
\begin{tikzpicture}[baseline = -1mm,scale=.7,color=\clr]
	\draw[-,line width=1.5pt] (0.08,-.8) to (0.08,-.5);
	\draw[-,line width=1.5pt] (0.08,.3) to (0.08,.6);
\draw[-,thick] (0.1,-.51) to [out=45,in=-45] (0.1,.31);
\draw[-,thick] (0.06,-.51) to [out=135,in=-135] (0.06,.31);
        \node at (-.33,-.05) {$\scriptstyle a$};
        \node at (.45,-.05) {$\scriptstyle b$};
\end{tikzpicture}
&= 
\binom{a+b}{a}\:\:
\begin{tikzpicture}[baseline = -1mm,scale=.6,color=\clr]
	\draw[-,line width=1.5pt] (0.08,-.8) to (0.08,.6);
        \node at (.08,-1.2) {$\scriptstyle a+b$};
\end{tikzpicture},
\\
\label{dotmovecrossingC}
\begin{tikzpicture}[baseline = 7.5pt, scale=0.4, color=\clr]
\draw[-,thick] (0,-.2) to  (1,2.2);
\draw[-,thick] (1,-.2) to  (0,2.2);
\draw(0.2,1.6)\bdot;
\node at (0, -.5) {$\scriptstyle 1$};
\node at (1, -.5) {$\scriptstyle 1$};
\end{tikzpicture} 
&=
\begin{tikzpicture}[baseline = 7.5pt, scale=0.4, color=\clr]
\draw[-, thick] (0,-.2) to (1,2.2);
\draw[-,thick] (1,-.2) to(0,2.2);
\draw(.8,0.3)\bdot;
\node at (0, -.5) {$\scriptstyle 1$};
\node at (1, -.5) {$\scriptstyle 1$};
\end{tikzpicture} 
+
\begin{tikzpicture}[baseline = 7.5pt, scale=0.4, color=\clr]
\draw[-, thick] (0,.5) to (0,2.2);
\draw[-, thick] (0,-.2) to (0,.5);
\draw[-, thick]   (1,1.8) to (1,2.2); 
\draw[-, thick] (1,1.8) to (1,-.2); 
\node at (0, -.5) {$\scriptstyle 1$};
\node at (1, -.5) {$\scriptstyle 1$};
\end{tikzpicture},
\qquad
\begin{tikzpicture}[baseline = 7.5pt, scale=0.4, color=\clr]
\draw[-, thick] (0,-.2) to (1,2.2);
\draw[-,thick] (1,-.2) to(0,2.2);
\draw(.2,0.2)\bdot;
\node at (0, -.5) {$\scriptstyle 1$};
\node at (1, -.5) {$\scriptstyle 1$};
\end{tikzpicture}
= 
\begin{tikzpicture}[baseline = 7.5pt, scale=0.4, color=\clr]
\draw[-,thick] (0,-.2) to  (1,2.2);
\draw[-,thick] (1,-.2) to  (0,2.2);
\draw(0.8,1.6)\bdot;
\node at (0, -.5) {$\scriptstyle 1$};
\node at (1, -.5) {$\scriptstyle 1$};
\end{tikzpicture}
+
\begin{tikzpicture}[baseline = 7.5pt, scale=0.4, color=\clr]
\draw[-,thick] (0,-.3) to (0,2.2);
\draw[-,thick] (1,2.2) to (1,-.3); 
\node at (0, -.6) {$\scriptstyle 1$};
\node at (1, -.6) {$\scriptstyle 1$};
\end{tikzpicture}.
\end{align}
\end{definition}

\begin{theorem}
\label{thm:AWisom}
   The monoidal categories $\AWC$ and $\AW$ are isomorphic.
\end{theorem}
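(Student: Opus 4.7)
The plan is to construct mutually inverse strict monoidal $\kk$-linear functors $F\colon \AWC \to \AW$ and $G\colon \AW \to \AWC$, both acting as the identity on objects.

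\textbf{Step 1 (the easy direction $F$).} Define $F$ to send each merge, split, and crossing of $\AWC$ to the corresponding generator of $\AW$, and to send the single dot generator $\dotgen$ to $\wdotgen \in \AW$ (which is the shorthand for the thickness-$1$ dot generator of $\AW$). The relations \eqref{webassoc C}, \eqref{mergesplitC}, \eqref{splitmergeC} are identical to \eqref{webassoc}, \eqref{mergesplit}, \eqref{equ:r=0} in $\AW$. It therefore suffices to check the two equalities \eqref{dotmovecrossingC}; these are precisely the $a=b=1$, $r=1$ specialization of \eqref{equ:dotmovecrossing-simplify}, with $\omega_{1-t}$ contributing a dot for $t=0$ and the identity for $t=1$.

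\textbf{Step 2 (definition of $G$).} Define $G$ as the identity on objects and on merges, splits, crossings. Define
\[
G\bigl(\wkdotaa\bigr) := \frac{1}{a!}\;
\begin{tikzpicture}[baseline = 1.5mm, scale=.6, color=\clr]
\draw[-, line width=2pt] (0.5,2) to (0.5,2.4);
\draw[-, line width=2pt] (0.5,0) to (0.5,-.4);
\draw[-,thick]  (0.5,2) to[out=left,in=up] (-.5,1) to[out=down,in=left] (0.5,0);
\draw[-,thick]  (0.5,2) to[out=left,in=up] (0,1) to[out=down,in=left] (0.5,0);
\draw[-,thick] (0.5,0)to[out=right,in=down] (1.5,1) to[out=up,in=right] (0.5,2);
\draw[-,thick] (0.5,0)to[out=right,in=down] (1,1) to[out=up,in=right] (0.5,2);
\node at (0.5,.5){$\scriptstyle \cdots$};
\draw (-0.5,1) \bdot;  \draw (0,1) \bdot; \draw (1,1) \bdot; \draw (1.5,1) \bdot;
\node at (0.5,-.6) {$\scriptstyle a$};
\end{tikzpicture}\;,
\]
that is, $\frac{1}{a!}$ times the balloon on the left-hand side of \eqref{intergralballon}. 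The factor $\frac{1}{a!}$ is available because $\kk$ has characteristic zero. One must then verify that the defining relations \eqref{webassoc}--\eqref{intergralballon} of $\AW$ all hold under this assignment. Relations \eqref{webassoc}, \eqref{mergesplit}, \eqref{equ:r=0} only involve the shared generators and so hold automatically. Relation \eqref{intergralballon} holds by the definition of $G\bigl(\wkdotaa\bigr)$.

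\textbf{Step 3 (the key verification).} The core of the proof is verifying the ``dot-through-crossing'' relation \eqref{equ:dotmovecrossing-simplify} and the ``dot-through-split/merge'' relation \eqref{dotmovesplits+merge-simplify} under $G$. The plan is induction on $a+b$. The base case essentially reduces to \eqref{dotmovecrossingC} for the thin-strand crossings that make up the balloon, together with the balloon associativity from \eqref{webassoc C}. For the inductive step, one expands the balloon defining $G(\omega_b)$ using \eqref{webassoc C}, then transports the $b$ thin dotted strands across the thick $a$-strand one at a time via \eqref{dotmovecrossingC}; the terms where a thin strand either crosses the $a$-strand transversely or is absorbed into it via \eqref{mergesplitC} are collected, and a Vandermonde-type count shows that the number of ways $t$ of the $b$ thin strands may ``hook'' through the $a$-strand yields precisely the coefficient $t!$ in \eqref{equ:dotmovecrossing-simplify}. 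The identity \eqref{dotmovesplits+merge-simplify} is proved analogously by pushing the balloon of $G(\omega_{a+b})$ through the split/merge and using \eqref{splitmergeC} to resolve the resulting doubled strands, producing the coefficients $e!\binom{a-c}{e}\binom{b-d}{e}$.

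\textbf{Step 4 (mutual inverses).} The composition $G\circ F$ fixes every generator of $\AWC$ and so equals the identity functor. For $F\circ G$, the only nontrivial check is that $F\bigl(G(\wkdotaa)\bigr) = \wkdotaa$ in $\AW$; but $F$ is the identity on merges, splits, crossings, and the thickness-$1$ dot, so $F\bigl(G(\wkdotaa)\bigr)$ equals $\frac{1}{a!}$ times the $\AW$-balloon, which equals $\wkdotaa$ by \eqref{intergralballon}.

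\textbf{Main obstacle.} The obstacle is entirely in Step 3. Although each individual move of a thin dotted strand is governed by the clean identity \eqref{dotmovecrossingC}, controlling how the $a!$-symmetrization inside the balloon interacts with the combinatorics of multiple crossings and of coincident thin strands (which must be cancelled using \eqref{mergesplitC} and \eqref{splitmergeC}) requires a careful bookkeeping argument. Getting the coefficients $t!$ and $e!\binom{a-c}{e}\binom{b-d}{e}$ to match on the nose is the delicate part; everything else is essentially formal.
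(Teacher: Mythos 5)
Your plan is correct and is essentially the paper's own proof: the paper likewise constructs the functor $\AW\to\AWC$ by sending $\omega_{r}$ to the balloon divided by $r!$ (its \eqref{newgendot}), observes that \eqref{webassoc C}--\eqref{splitmergeC} coincide with \eqref{webassoc}--\eqref{equ:r=0} and that \eqref{dotmovecrossingC} is a special case of \eqref{equ:dotmovecrossing-simplify}, and then carries out your Step 3 -- the only substantive work -- in Lemmas \ref{definadotred} and \ref{lem:dotmovemersplitC}, by inductions (on $a$ and $b$, peeling one thin dotted strand off the balloon at a time and using \eqref{dotmovecrossingC}, \eqref{sliders}, \eqref{swallows}, \eqref{splitmergeC}) very close to what you sketch. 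One small correction: the defining relation \eqref{dotmovesplits+merge-simplify} you must verify in $\AWC$ is only the clean statement that $\omega_{a+b}$ pushed through a split (or merge) becomes $\omega_a\otimes\omega_b$ on the two legs; the coefficients $e!\binom{a-c}{e}\binom{b-d}{e}$ appear only in the derived relations \eqref{dotmovesplitss}--\eqref{dotmovemergess}, which need not be re-proved in $\AWC$, so that part of your Step 3 is lighter than you feared.
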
  

\begin{proof}
To relate $\AWC$ to $\AW$, we define the following morphisms  in $\AWC$ which correspond to the generators $\wkdotr$, for $ r\in\N$, in $\AW$:
\begin{equation}
\label{newgendot}
\wkdotr :=\frac{1}{r!}
\begin{tikzpicture}[baseline = 1.5mm, scale=.5, color=\clr]
\draw[-, line width=1.2pt] (0.5,2) to (0.5,2.5);
\draw[-, line width=1.2pt] (0.5,0) to (0.5,-.4);
\draw[-,thin]  (0.5,2) to[out=left,in=up] (-.5,1)
to[out=down,in=left] (0.5,0);
\draw[-,thin]  (0.5,2) to[out=left,in=up] (0,1)
 to[out=down,in=left] (0.5,0);      
\draw[-,thin] (0.5,0)to[out=right,in=down] (1.5,1)
to[out=up,in=right] (0.5,2);
\draw[-,thin] (0.5,0)to[out=right,in=down] (1,1) to[out=up,in=right] (0.5,2);
\node at (0.5,.7){$\scriptstyle \cdots$};
\draw (-0.5,1) \bdot; 
\draw (0,1) \bdot; 
\node at (0.5,-.6) {$\scriptstyle r$};
\draw (1,1) \bdot;
\draw (1.5,1) \bdot; 
\node at (-.22,0) {$\scriptstyle 1$};
\node at (1.2,0) {$\scriptstyle 1$};
\node at (.3,0.3) {$\scriptstyle 1$};
\node at (.7,0.3) {$\scriptstyle 1$};
\end{tikzpicture} \; .
\end{equation}
There is a natural functor $\Psi: \AW \rightarrow \AWC$, matching the generating objects and generating morphisms in the same notation.

The only difference now is that there are weaker defining relations \eqref{webassoc C}--\eqref{dotmovecrossingC} for $\AWC$ than defining relations \eqref{webassoc}--\eqref{intergralballon} for $\AW$. 
Indeed, the relations \eqref{webassoc C}--\eqref{splitmergeC} for $\AWC$ are the same as \eqref{webassoc}--\eqref{equ:r=0} for $\AW$. Note also that the relations \eqref{dotmovecrossingC} in $\AWC$ are special cases of \eqref{equ:dotmovecrossing-simplify} in $\AW$.

To show that $\Psi$ is an isomorphism, it suffices to verify that the remaining defining relations \eqref{equ:dotmovecrossing-simplify}--\eqref{intergralballon} hold for $\AWC$.

The relation \eqref{intergralballon} holds for $\AWC$ by definition \eqref{newgendot}. 

The remaining relations \eqref{equ:dotmovecrossing-simplify}--\eqref{dotmovesplits+merge-simplify}  in $\AWC$ are established in Lemma~\ref{definadotred} and Lemma~\ref{lem:dotmovemersplitC} below, respectively. 

This proves that $\Psi$ is an isomorphism.
\end{proof}

\subsection{Additional relations over $\C$}

We establish relations in Lemma~\ref{definadotred} and Lemma~\ref{lem:dotmovemersplitC} as required in the proof of Theorem~\ref{thm:AWisom} above. 

\begin{lemma}  
\label{definadotred}
The relations \eqref{equ:dotmovecrossing-simplify} hold in $\AWC$, 
that is,
\begin{equation*}
.
\end{align} 
\end{rem}

\subsection{Cyclotomic web categories over $\C$}
\label{subsec:cycwebC}

\begin{definition}
Let $\kk$ be any field of characteristic zero. For any $\mathbf u=(u_1,\ldots, u_\ell) \in \kk^\ell$ and $\ell\ge 1$,  let $\W'_{\mathbf u}$ be the quotient category of $\AW$ by the right tensor ideal generated by $\prod_{1\le j\le \ell }g_{1,u_j}$. 
\end{definition}

The following result shows that the above definition simplifies the definition of cyclotomic web category $\W_\mathbf u$ over $\mathbb C$ (or any field of characteristic zero).
\begin{theorem}  \label{th:WebuSame}
    The categories $\W'_{\mathbf u}$ and $\W_{\mathbf u}$ are isomorphic.
\end{theorem}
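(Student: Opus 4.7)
Plan. Since the right tensor ideal generated by $\prod_{j=1}^\ell g_{1,u_j}$ (defining $\W'_{\bfu}$) is contained in the right tensor ideal generated by $\prod_{j=1}^\ell g_{r,u_j}$ for all $r\ge 1$ (defining $\W_{\bfu}$), the universal property of quotients yields a canonical identity-on-objects, full functor $F:\W'_{\bfu}\longrightarrow\W_{\bfu}$. To prove $F$ is an isomorphism, it suffices to verify that the additional defining relations of $\W_{\bfu}$ already hold in $\W'_{\bfu}$, i.e.\ that $\prod_{j=1}^\ell g_{r,u_j}\in D_r$ vanishes in $\W'_{\bfu}$ for every $r\ge 2$ (the case $r=1$ being built into $\W'_{\bfu}$).

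The strategy rests on Lemma \ref{lem:gru}, which presents $r!\,g_{r,u_j}$ as the $r$-strand balloon whose $r$ thin strands each carry the decoration $g_{1,u_j}$; since $\kk$ has characteristic zero, $r!$ is invertible. Concretely, I would establish by induction on $\ell$ an identity in $D_r\subset\End_{\AW}(r)$ of the form
\[
(r!)^\ell\,\prod_{j=1}^\ell g_{r,u_j}\;=\;B^{(\ell)}_r\;+\;R^{(\ell)}_r,
\]
where $B^{(\ell)}_r$ is the single $r$-balloon whose $i$-th thin strand is decorated by the combined product $\prod_{j=1}^\ell g_{1,u_j}$, and the remainder $R^{(\ell)}_r$ is a $\kk$-linear combination of diagrams each of which either contains $\prod_{j=1}^\ell g_{1,u_j}$ on some thin strand, or involves $\prod_{j=1}^\ell g_{r',u_j}$ for some $r'<r$. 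The base case $\ell=1$ is Lemma \ref{lem:gru}. For the inductive step I would write the left-hand side as $(r!)^{\ell-1}\prod_{j<\ell}g_{r,u_j}\cdot\bigl(r!\,g_{r,u_\ell}\bigr)$, apply the inductive hypothesis to the first factor and Lemma \ref{lem:gru} to the second, and then collapse the two $r$-balloons (stacked along their common $r$-thick waist) by expanding the intermediate merge-then-split via \eqref{mergesplit}, \eqref{webassoc}, and \eqref{sliders}, absorbing the resulting thin-strand crossings through \eqref{dotmovecrossing}, and rerouting the decorations onto a common set of strands using \eqref{dotmovesplitss}--\eqref{dotmovemergess} and the commutativity Lemma \ref{lem:commute} in $D_r$.

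Granting this identity, the theorem follows. Every thin-strand decoration $\prod_j g_{1,u_j}$ is zero in $\W'_{\bfu}$ by definition, so $B^{(\ell)}_r$ vanishes and, by induction on $r$ (with $r=1$ being definitional), so does $R^{(\ell)}_r$. Dividing by $(r!)^\ell\in\kk^\times$ yields $\prod_{j=1}^\ell g_{r,u_j}=0$ in $\W'_{\bfu}$ for all $r\ge 2$, so the extra defining relations of $\W_{\bfu}$ are redundant in $\W'_{\bfu}$ and $F$ is an isomorphism. The main obstacle is the diagrammatic collapse in the inductive step: expanding the merge-then-split at the shared $r$-thick waist of two stacked balloons via \eqref{mergesplit} produces a sum indexed by internal crossings whose thickness labels must be tracked carefully in the degree filtration \eqref{modLOT}, in order to confirm that a single genuine balloon $B^{(\ell)}_r$ emerges at top degree and that all correction terms are absorbed into $R^{(\ell)}_r$.
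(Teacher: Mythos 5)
Your overall reduction is the same as the paper's: construct the canonical functor $\W'_{\bfu}\to\W_{\bfu}$ and then show that $\prod_{j}g_{r,u_j}$ already vanishes in $\W'_{\bfu}$ for all $r\ge 2$, using Lemma \ref{lem:gru} and invertibility of $r!$. The gap is in the central identity you propose to prove by double induction. You assert that $(r!)^{\ell}\prod_j g_{r,u_j}$ equals the balloon $B^{(\ell)}_r$ (each thin strand decorated by $\prod_j g_{1,u_j}$) plus a remainder $R^{(\ell)}_r$ consisting of diagrams that either carry the \emph{full} product $\prod_j g_{1,u_j}$ on some thin strand or involve $\prod_j g_{r',u_j}$ with $r'<r$, and you propose to produce this by collapsing two stacked balloons. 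But the corrections generated in that collapse do not obviously have this shape: expanding the intermediate merge-then-split gives a sum over all matchings/permutations of the thin strands, and straightening the resulting decorated crossings via \eqref{dotmovecrossing} lowers the degrees of individual $\omega$'s, so the typical error term carries \emph{partial or broken} products of the $g_{1,u_j}$'s on its thin strands. Such terms are not in the ideal generated by $\prod_j g_{1,u_j}$ and do not visibly reorganize into $\prod_j g_{r',u_j}$ with $r'<r$, so neither of your two vanishing mechanisms applies to them. Since the entire theorem hinges on this identity, and you yourself flag this step as the unresolved obstacle, the proof as proposed is incomplete, and it is not clear it can be completed in the form stated.

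The paper avoids all correction terms by a sharper device, which you may want to adopt. Iterating \eqref{dotmovesplits+merge-simplify} gives the exact identity ``$\omega_{r}$ on the thick $r$-strand followed by the full split equals the full split followed by a dot on every thin strand''; applying the shift automorphism $\sigma_u$ of $\AW$ (which fixes merges, splits, crossings and sends the dot to the dot minus $u$) turns this into \eqref{omegaovertor}: $g_{r,u}$ slides through the full split and becomes $g_{1,u}$ on every thin strand, with no remainder. Inside a single balloon whose thin strands each carry $\prod_j g_{1,u_j}$, one then peels off the layers $g_{1,u_\ell},\dots,g_{1,u_1}$ one at a time and slides each down onto the thick strand as $g_{r,u_j}$; the bare split-merge left at the end contributes $r!$ by \eqref{equ:r=0}. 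This yields the exact relation $r!\,\prod_j g_{r,u_j}=B^{(\ell)}_r$ of Lemma \ref{cyc-web-C} (note the single factor $r!$, not $(r!)^{\ell}$), from which the theorem follows immediately as in your final paragraph: $B^{(\ell)}_r$ lies in the right tensor ideal generated by $\prod_j g_{1,u_j}$, and $r!$ is invertible in characteristic zero. Replacing your stacked-balloon collapse by this slider argument closes the gap.
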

The proof of Theorem \ref{th:WebuSame} follows by definitions of $\W'_{\mathbf u}$ and $\W_{\mathbf u}$ together with the following.

\begin{lemma}
\label{cyc-web-C}
Suppose $\kk=\C$. In $\AW$, we have
\begin{equation}
\label{equ:cycdoymove}
r! \prod_{1\le j\le \ell  }g_{r,u_j}= \begin{tikzpicture}[baseline = 5pt, scale=.5, color=\clr]
\draw[-, line width=1.2pt] (0.5,2) to (0.5,2.3);
\draw[-, line width=1.2pt] (0.5,0) to (0.5,-.3);
\draw[-,thin](0.5,2) to[out=left,in=up] (-.5,1) to[out=down,in=left] (0.5,0);
\draw[-,thin]  (0.5,2) to[out=left,in=up] (0,1) to[out=down,in=left] (0.5,0);   
\draw[-,thin] (0.5,0)to[out=right,in=down] (1.5,1)to[out=up,in=right] (0.5,2);
\draw[-,thin] (0.5,0)to[out=right,in=down] (1,1)
 to[out=up,in=right] (0.5,2);
\node at (0.5,.7){$\scriptstyle \cdots$};
\draw (-0.5,1) \bdot; 
 \node at (-.75,1) {$\scriptstyle f$}; 
\draw (0,1) \bdot; 
\node at (0.3,1) {$\scriptstyle f$};
\draw (1,1) \bdot;
\draw (1.5,1) \bdot; 
\node at (1.8,1) {$\scriptstyle f$};
\node at (-.4,0) {$\scriptstyle 1$};
 \node at (.2,0.3) {$\scriptstyle 1$};
\node at (.7,0.3) {$\scriptstyle 1$};
\node at (1.2,0) {$\scriptstyle 1$};
\draw (.5,-.5) node{$\scriptstyle {r}$};
\end{tikzpicture}    
\end{equation}
for any  $\ell\ge 1$, $u_1,\ldots, u_\ell\in\C$ and $r\in \Z_{>0}$, 
where $f=\prod_{1\le j\le \ell}g_{1,u_j}$.
    \end{lemma}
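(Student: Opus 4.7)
The plan is induction on $\ell$, with base case $\ell=1$ given by Lemma~\ref{lem:gru}.

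For the inductive step, set $h := \prod_{j=1}^{\ell-1} g_{1, u_j} \in \End_{\AW}(1)$ and $u := u_\ell$, so that the dot decoration on each thin strand of the balloon in \eqref{equ:cycdoymove} equals $g_{1, u}\cdot h$. Denote by $B_r(\phi)$ the balloon with $r$ thin strands of thickness $1$ each carrying a dot $\phi \in \End_{\AW}(1)$; then the right-hand side of \eqref{equ:cycdoymove} is precisely $B_r(g_{1, u}\cdot h)$. Both sides of \eqref{equ:cycdoymove} lie in the commutative subalgebra $D_r \subseteq \End_{\AW}(r)$, the left-hand side by construction together with Lemma~\ref{lem:commute}, and the right-hand side by Lemma~\ref{wkdotsspan}. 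The inductive hypothesis reads $B_r(h) = r! \prod_{j<\ell} g_{r, u_j}$, and by Lemma~\ref{lem:commute} we have $r! \prod_{j\le\ell} g_{r, u_j} = g_{r, u}\cdot B_r(h)$. Hence the inductive step reduces to the ``pull-out'' identity in $D_r$:
\[
B_r(g_{1,u}\cdot h) \;=\; g_{r,u}\cdot B_r(h).
\]

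To establish this pull-out identity, I would perform a second induction, this time on $r$, that closely parallels the proof of Lemma~\ref{lem:gru}, with $h$ carried along as a passive decoration placed near the bottom split on each thin strand, where it will be undisturbed by the subsequent moves. The base case $r=1$ is immediate since the $1$-strand balloon is just a strand carrying the indicated dot. For the inductive step one extracts a single thin strand from the balloon via \eqref{splitmerge} to produce a smaller $(r-1)$-balloon tensored with one thin strand; slides the extracted $g_{1,u}$ across the resulting crossings via \eqref{dotmovecrossing}; collapses the interior using \eqref{intergralballon} together with the $(r-1)$-case of the pull-out identity already established; and finally reassembles via \eqref{splitmerge} once more. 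The coefficient telescoping that produces the falling factorial $\prod_{k=0}^{i-1}(u+k)$ appearing in the definition \eqref{def-gau} of $g_{r,u}$ is structurally identical to the one carried out in the proof of Lemma~\ref{lem:gru}.

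The main technical obstacle is the bookkeeping in this telescoping: at each step of the inductive reduction one must verify that $h$ commutes past the intermediate dots, crossings, merges, and splits, so that the coefficient combinatorics of Lemma~\ref{lem:gru} is preserved unchanged. This is guaranteed by Lemma~\ref{lem:commute} (exact commutativity within $D_r$) together with the observation that $h$ remains on single strands of thickness $1$ throughout the argument, so it never interacts nontrivially with the thick merges, splits, or crossings that appear in the reduction. Once the bookkeeping is verified, the pull-out identity follows from the telescoping, and the outer induction on $\ell$ then yields Lemma~\ref{cyc-web-C}.
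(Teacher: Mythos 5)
Your outer reduction is the same as the paper's: peel off one cyclotomic factor at a time by induction on $\ell$, with Lemma~\ref{lem:gru} as the base case, so everything hinges on the pull-out identity $B_r(g_{1,u}h)=g_{r,u}B_r(h)$. The paper gets this from the split-level exchange identity \eqref{omegaovertor}, i.e.\ $g_{1,u}^{\otimes r}\circ(\text{$r$-fold split})=(\text{$r$-fold split})\circ g_{r,u}$, which it deduces by iterating \eqref{dotmovesplits+merge-simplify} to get $x^{\otimes r}\circ\mathrm{split}=\mathrm{split}\circ\omega_{r,r}$ and then applying the automorphism $\sigma_u$ of $\AW$ (over $\C$, determined by $x\mapsto x-u$ via \eqref{newgendot}), together with Lemma~\ref{lem:gru} to identify $\sigma_u(\omega_{r,r})=g_{r,u}$. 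Your substitute for this step --- rerunning the induction-on-$r$ telescoping of Lemma~\ref{lem:gru} with $h$ ``carried along as a passive decoration'' --- is where the gap lies, and the justification you give for the bookkeeping is not correct.

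Concretely, the claim that $h$ ``never interacts nontrivially with the thick merges, splits, or crossings'' fails on two counts. First, a dot on a thin strand does interact with a crossing: sliding it across a thickness-$i$ leg produces correction terms by \eqref{dotmovecrossing}, and it is exactly these corrections that generate the telescoping factors $(i+1)(u+i)$ in Lemma~\ref{lem:gru}; if the extracted thin strand carries $g_{1,u}h$ with $\deg h=\ell-1\ge 1$ and any part of $h$ must pass that crossing, the coefficient combinatorics is no longer ``structurally identical.'' Second, and more seriously, once you invoke the $(r-1)$-case of the pull-out identity, the spectator data no longer sits on thickness-one strands: it becomes $B_{r-1}(h)\in D_{r-1}$ on the thick $(r-1)$-leg. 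The closing steps of Lemma~\ref{lem:gru}'s computation open that leg up via \eqref{equ:r=0andr=a} and then apply \eqref{splitmerge} and \eqref{intergralballon} to configurations in which one leg carries a single elementary packet and the others are bare; with the extra dots $B_{r-1}(h)$ (and $h$ on the thin leg) present, those relations do not apply verbatim, and the ``decorated'' versions you would need --- e.g.\ that $\mathrm{merge}\circ(\omega_{r-1,k}B_{r-1}(h)\boxtimes h)\circ\mathrm{split}$ is a scalar multiple of $\omega_{r,k}B_r(h)$ --- are essentially equivalent to the distribution identity \eqref{omegaovertor} you are trying to avoid, so the argument as sketched is circular (Lemma~\ref{lem:commute} only gives commutativity of dots on a single strand and does not let you pass dots through merges and splits). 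The clean repair is to prove the split-level identity first --- either by the paper's $\sigma_u$ trick, or by iterating \eqref{dotmovesplitss} and carrying out the resulting generating-function computation --- after which $B_r(g_{1,u}h)=\mathrm{merge}\circ h^{\otimes r}\circ g_{1,u}^{\otimes r}\circ\mathrm{split}=B_r(h)\,g_{r,u}$ is immediate and your induction on $\ell$ finishes the proof.
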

    
\begin{proof}
    Using \eqref{dotmovesplits+merge-simplify} repeatedly, we have 
\[ 
\begin{tikzpicture}[baseline = 1.5mm, scale=.4, color=\clr]
\draw[-, line width=1.2pt] (0.5,0) to (0.5,-.8);
\draw (0.5,-.51) \bdot;
\node at (1.2,-.5) {$\scriptstyle \omega_r$};
\node at (0.5,-1.1) {$\scriptstyle r$};
\draw[-,thin]  (-0.5,1.5) to (-.5,1)
to[out=down,in=left] (0.5,0);
\draw[-,thin]  (0,1.5) to (0,1)
 to[out=down,in=left] (0.5,0);      
\draw[-,thin] (0.5,0)to[out=right,in=down] (1.5,1)
to (1.5,1.5);
\draw[-,thin] (0.5,0)to[out=right,in=down] (1,1) to (1,1.5);
\node at (0.5,.7){$\scriptstyle \cdots$};
\node at (-.5,1.8) {$\scriptstyle 1$};
\node at (0,1.8) {$\scriptstyle 1$};
\node at (1,1.8) {$\scriptstyle 1$};
\node at (1.5,1.8) {$\scriptstyle 1$};
\end{tikzpicture}
=
\begin{tikzpicture}[baseline = 1.5mm, scale=.4, color=\clr]
\draw[-, line width=1.2pt] (0.5,0) to (0.5,-.8);
\node at (0.5,-1.1) {$\scriptstyle r$};
\draw[-,thin]  (-0.5,1.5) to (-.5,1)
to[out=down,in=left] (0.5,0);
\draw[-,thin]  (0,1.5) to (0,1)
 to[out=down,in=left] (0.5,0);      
\draw[-,thin] (0.5,0)to[out=right,in=down] (1.5,1)
to (1.5,1.5);
\draw[-,thin] (0.5,0)to[out=right,in=down] (1,1) to (1,1.5);
\node at (0.5,.7){$\scriptstyle \cdots$};
\draw (-0.5,1) \bdot; 
\draw (0,1) \bdot; 
\draw (1,1) \bdot;
\draw (1.5,1) \bdot; 
\node at (-.5,1.8) {$\scriptstyle 1$};
\node at (0,1.8) {$\scriptstyle 1$};
\node at (1,1.8) {$\scriptstyle 1$};
\node at (1.5,1.8) {$\scriptstyle 1$};
\end{tikzpicture}.
\]
There is an obvious isomorphism $\sigma_u$ of $\AW$ which fixes all splits, merges and crossings, and sends 
\begin{tikzpicture}[baseline = 3pt, scale=0.4, color=\clr]
\draw[-,thick] (0,0) to[out=up, in=down] (0,1.4);
\draw(0,0.6) \bdot;
\node at (0,-.3) {$\scriptstyle 1$};
\end{tikzpicture} to 
$
\begin{tikzpicture}[baseline = 3pt, scale=0.4, color=\clr]
\draw[-,thick] (0,0) to[out=up, in=down] (0,1.4);
\draw(0,0.6) \bdot;
\node at (0,-.3) {$\scriptstyle 1$};
\end{tikzpicture}
-u \begin{tikzpicture}[baseline = 3pt, scale=0.4, color=\clr]
\draw[-,thick] (0,0) to[out=up, in=down] (0,1.4);
\node at (0,-.3) {$\scriptstyle 1$};
\end{tikzpicture}$ 
for any $u\in\kk$.
Applying $\sigma_u$ to the above equation yields
\begin{equation}
\label{omegaovertor}
  \begin{tikzpicture}[baseline = 1.5mm, scale=.4, color=\clr]
\draw[-, line width=1.2pt] (0.5,0) to (0.5,-.8);
\draw (0.5,-.51) \bdot;
\node at (1.4,-.5) {$\scriptstyle g_{r,u}$};
\node at (0.5,-1.1) {$\scriptstyle r$};
\draw[-,thin]  (-0.5,1.5) to (-.5,1)
to[out=down,in=left] (0.5,0);
\draw[-,thin]  (0,1.5) to (0,1)
 to[out=down,in=left] (0.5,0);      
\draw[-,thin] (0.5,0)to[out=right,in=down] (1.5,1)
to (1.5,1.5);
\draw[-,thin] (0.5,0)to[out=right,in=down] (1,1) to (1,1.5);
\node at (0.5,.7){$\scriptstyle \cdots$};
\node at (-.5,1.8) {$\scriptstyle 1$};
\node at (0,1.8) {$\scriptstyle 1$};
\node at (1,1.8) {$\scriptstyle 1$};
\node at (1.5,1.8) {$\scriptstyle 1$};
\end{tikzpicture}
=
\begin{tikzpicture}[baseline = 1.5mm, scale=.4, color=\clr]
\draw[-, line width=1.2pt] (0.5,0) to (0.5,-.8);
\node at (0.5,-1.1) {$\scriptstyle r$};
\draw[-,thin]  (-0.5,1.5) to (-.5,1)
to[out=down,in=left] (0.5,0);
\draw[-,thin]  (0,1.5) to (0,1)
 to[out=down,in=left] (0.5,0);      
\draw[-,thin] (0.5,0)to[out=right,in=down] (1.5,1)
to (1.5,1.5);
\draw[-,thin] (0.5,0)to[out=right,in=down] (1,1) to (1,1.5);
\node at (0.5,.5){$\scriptstyle \cdots$};
\draw (-0.5,1) \bdot; 
\node at (-1,1) {$\scriptstyle g$};
\draw (0,1) \bdot; 
\node at (.25,1) {$\scriptstyle g$};
\draw (1,1) \bdot;
\draw (1.5,1) \bdot; 
\node at (1.8,1) {$\scriptstyle g$};
\node at (-.5,1.8) {$\scriptstyle 1$};
\node at (0,1.8) {$\scriptstyle 1$};
\node at (1,1.8) {$\scriptstyle 1$};
\node at (1.5,1.8) {$\scriptstyle 1$};
\end{tikzpicture},  
\qquad\qquad \text{ where } g=g_{1,u}.
\end{equation}
Then \eqref{equ:cycdoymove} follows by using Lemma \ref{lem:gru} and \eqref{omegaovertor} repeatedly.
\end{proof}

\bibliographystyle{alpha}
\bibliography{affineWeb}

\begin{thebibliography}{BEAEO20}

\bibitem[Ari96]{Ar96}
Susumu Ariki.
\newblock On the decomposition numbers of the {H}ecke algebra of {$G(m,1,n)$}.
\newblock {\em J. Math. Kyoto Univ.}, 36(4):789--808, 1996.

\bibitem[BCNR17]{BCNR}
Jonathan Brundan, Jonathan Comes, David Nash, and Andrew Reynolds.
\newblock A basis theorem for the affine oriented {B}rauer category and its cyclotomic quotients.
\newblock {\em Quantum Topol.}, 8(1):75--112, 2017.

\bibitem[BEAEO20]{BEEO}
Jonathan Brundan, Inna Entova-Aizenbud, Pavel Etingof, and Victor Ostrik.
\newblock Semisimplification of the category of tilting modules for {$GL_ n$}.
\newblock {\em Adv. Math.}, 375:107331, 37, 2020.

\bibitem[BERT21]{BERT21}
Elijah Bodish, Ben Elias, David E.~V. Rose, and Logan Tatham.
\newblock Type {C} webs.
\newblock \arxiv{2103.14997}, 2021.

\bibitem[BK08]{BK08}
Jonathan Brundan and Alexander Kleshchev.
\newblock Schur-{W}eyl duality for higher levels.
\newblock {\em Selecta Math. (N.S.)}, 14(1):1--57, 2008.

\bibitem[BK21]{BKu21}
Gordon~C. Brown and Jonathan~R. Kujawa.
\newblock Webs of type {Q}.
\newblock {\em Algebr. Comb.}, 4(6):1027--1072, 2021.

\bibitem[BT24]{BT24}
Elijah Bodish and Daniel Tubbenhauer.
\newblock Orthogonal webs and semisimplification.
\newblock \arxiv{2401.00704}, 2024.

\bibitem[BW23]{BWu23}
Elijah Bodish and Haihan Wu.
\newblock Webs for the quantum orthogonal group.
\newblock \arxiv{2309.03623}, 2023.

\bibitem[CKM14]{CKM}
Sabin Cautis, Joel Kamnitzer, and Scott Morrison.
\newblock Webs and quantum skew {H}owe duality.
\newblock {\em Math. Ann.}, 360(1-2):351--390, 2014.

\bibitem[DJM98]{DJM98}
Richard Dipper, Gordon James, and Andrew Mathas.
\newblock Cyclotomic {$q$}-{S}chur algebras.
\newblock {\em Math. Z.}, 229(3):385--416, 1998.

\bibitem[DKM21]{DKM21}
Nicholas Davidson, Jonathan Kujawa, and Robert Muth.
\newblock Webs of type {P}.
\newblock \arxiv{2109.03410}, 2021.

\bibitem[DKMZ24]{DKMZ}
Nicholas Davidson, Jonathan Kujawa, Robert Muth, and Jieru Zhu.
\newblock Superalgebra deformations of web categories {II}: affine webs.
\newblock In preparation, 2024.

\bibitem[Eli15]{El15}
Ben Elias.
\newblock Light ladders and clasp conjectures.
\newblock \arxiv{1510.06840}, 2015.

\bibitem[Kle05]{Kle05}
Alexander Kleshchev.
\newblock {\em Linear and projective representations of symmetric groups}, volume 163 of {\em Cambridge Tracts in Mathematics}.
\newblock Cambridge University Press, Cambridge, 2005.

\bibitem[ST19]{ST19}
Antonio Sartori and Daniel Tubbenhauer.
\newblock Webs and {$q$}-{H}owe dualities in types {BCD}.
\newblock {\em Trans. Amer. Math. Soc.}, 371(10):7387--7431, 2019.

\bibitem[SW24]{SW2}
Linliang Song and Weiqiang Wang.
\newblock Affine and cyclotomic {S}chur categories.
\newblock \arxiv{2407.10119}, 2024.

\end{thebibliography}

\end{document}